\numberwithin{equation}{section}
\def\refer#1{~\ref{#1}}
\def\refeq#1{~(\ref{#1})}
\def\ccite#1{~\cite{#1}}
\def\suite#1#2#3{(#1_{#2})_{#2\in {#3}}}
\def\longformule#1#2{
\displaylines{ \qquad{#1} \hfill\cr \hfill {#2} \qquad\cr } }
\def\inte#1{
\displaystyle\mathop{#1\kern0pt}^\circ }
\def\sumetage#1#2{
\sum_{\scriptstyle {#1}\atop\scriptstyle {#2}} }
\def\minetage#1#2{
\min_{\scriptstyle {#1}\atop\scriptstyle {#2}} }
\def\sumetagetr#1#2#3{
\sum_{\scriptstyle {#1}\atop{\scriptstyle {#2}\atop\scriptstyle {#3}} }}
\def\supetage#1#2{
\sup_{\scriptstyle {#1}\atop\scriptstyle {#2}} }
\def\dive{\mathop{\rm div}\nolimits}
\let\al=\alpha
\let\b=\beta
\let\g=\gamma
\let\e=\varepsilon
\let\lam=\lambda
\let\s=\sigma
\let\vf=\varphi
\let\D=\Delta
\let\Lam=\Lambda
\let\wt=\widetilde
\let\convf=\rightharpoonup
\def\cA{{\mathcal A}}
\def\cB{{\mathcal B}}
\def\cC{{\mathcal C}}
\def\cD{{\mathcal D}}
\def\cE{{\mathcal E}}
\def\cF{{\mathcal F}}
\def\cL{{\mathcal L}}
\def\cM{{\mathcal M}}
\def\cN{{\mathcal N}}
\def\cQ{{\mathcal Q}}
\def\cR{{\mathcal R}}
\def\cT{{\mathcal T}}
\def\with{\quad\hbox{with}\quad}
\def\andf{\quad\hbox{and}\quad}
\def\virgp{\raise 2pt\hbox{,}}
\def\cdotpv{\raise 2pt\hbox{;}}
\def\eqdefa{\buildrel\hbox{\footnotesize def}\over =}
\def\C{{\mathop{\bf C\kern 0pt}\nolimits}}
\def\DD{{\mathop{\bf D\kern 0pt}\nolimits}}
\def\K{{\mathop{\bf K\kern 0pt}\nolimits}}
\def\N{{\mathop{\mathbb N\kern 0pt}\nolimits}}
\def\PP{{\mathop{\mathbb P\kern 0pt}\nolimits}}
\def\Q{{\mathop{\mathbb Q\kern 0pt}\nolimits}}
\def\R{{\mathop{\mathbb R\kern 0pt}\nolimits}}
\def\SS{{\mathop{\mathbb S\kern 0pt}\nolimits}}
\def\ZZ{{\mathop{\mathbb Z\kern 0pt}\nolimits}}
\def\TT{{\mathop{\mathbb T\kern 0pt}\nolimits}}
\def\BB{{\mathop{\mathbb B\kern 0pt}\nolimits}}
\def\PP{{\mathop{\mathbb P\kern 0pt}\nolimits}}
\newcommand{\ds}{\displaystyle}
\newcommand{\eps}{\varepsilon}
\newcommand{\beq}{\begin{equation}}
\newcommand{\eeq}{\end{equation}}
\newcommand{\ben}{\begin{eqnarray}}
\newcommand{\een}{\end{eqnarray}}
\newcommand{\beno}{\begin{eqnarray*}}
\newcommand{\eeno}{\end{eqnarray*}}
\newtheorem{defi}{Definition}[section]
\newtheorem{thm}{Theorem}
\newtheorem{lem}[defi]{Lemma}
\newtheorem{rmk}[defi]{Remark}
\newtheorem{cor}[defi]{Corollary} 
\newtheorem{prop}[defi]{Proposition}
\begin{document}

  \title[Stability by  rescaled weak convergence for the Navier-Stokes equations]
  { Stability by  rescaled weak convergence for the Navier-Stokes equations
  }

\author[H. Bahouri]{Hajer Bahouri}
\address[H. Bahouri]%
{Laboratoire d'Analyse et de Math{\'e}matiques Appliqu{\'e}es UMR 8050 \\
Universit\'e Paris-Est Cr\'eteil \\
61, avenue du G{\'e}n{\'e}ral de Gaulle\\
94010 Cr{\'e}teil Cedex, France}
\email{hbahouri@math.cnrs.fr}
\author[J.-Y. Chemin]{Jean-Yves  Chemin}
\address[J.-Y. Chemin]%
{Laboratoire Jacques Louis Lions - UMR 7598,  Universit\'e Pierre et Marie Curie \\ Bo\^\i te courrier 187, 4 place Jussieu, 75252 Paris
Cedex 05, France}
\email{chemin@ann.jussieu.fr }
\author[I. Gallagher]{Isabelle Gallagher}
\address[I. Gallagher]%
{Institut de Math{\'e}matiques de Jussieu - Paris Rive Gauche UMR 7586 \\
      Universit{\'e} Paris-Diderot (Paris 7)  \\
B\^atiment Sophie Germain \\
Case 7012, 75205 Paris Cedex 13\\France
}
\email{gallagher@math.univ-paris-diderot.fr}
 \thanks{The third  author was partially supported by the Institut Universitaire de France and the ANR project HAB ANR-12-BS01-0013-01.}
 \begin{abstract}
We prove a weak stability result for the three-dimensional  homogeneous incompressible Navier-Stokes system. More precisely, we investigate the following problem : if a sequence $(u_{0, n})_{n\in \N}$ of initial data, bounded in some scaling invariant space,  converges weakly to an initial
data $u_0$ which generates a global regular solution, does $u_{0, n}$ generate a global regular solution ?
A positive answer in general to this question would imply global regularity for any data, through the following examples~$u_{0,n} = n \vf_0(n\cdot)$ or~$u_{0,n} = \vf_0(\cdot-x_n)$ with~$|x_n|\to \infty$. We therefore introduce a new concept of  weak convergence (rescaled weak convergence) under which we are able to give a positive answer. The proof relies on profile decompositions in anisotropic spaces and their propagation by the Navier-Stokes equations.  
  \end{abstract}
\keywords{Navier-Stokes equations; anisotropy; Besov spaces; profile decomposition}
\subjclass[2010]{}

 \maketitle


 \section{Introduction and statement of the main result}
  \label{first}
\subsection{The Navier-Stokes equations}
We are interested in the Cauchy problem for the three dimensional, homogeneous, incompressible Navier-Stokes system
$$
{\rm(NS)} \quad  \left\{
\begin{array}{l}
\partial_t u + u\cdot \nabla u -\Delta u= -\nabla p  \quad \mbox{in} \quad
\R^+ \times \R^3\\
\mbox{div} \, u= 0\\
u_{|t=0} = u_{0}\, ,
\end{array}
\right.
$$
where~$p=p(t,x)$ and~$u=(u^1,u^2,u^3)(t,x)$ are respectively the pressure and velocity of an incompressible, viscous fluid. 

\medbreak
\noindent We shall say that~$u \in L^2_{\rm{loc}}([0,T] \times \R^3)$ is a {\it weak solution} of~(NS) associated with the data~$u_0$ if
for any compactly supported, divergence free vector field~$\phi$ in~$ {\mathcal C}^\infty([0,T] \times \R^3)$ the following holds for all~$t \leq T$:
$$
 \int_{ \R^3} u  \cdot \phi (t,x)  dx =\int _{ \R^3} u_0(x) \cdot \phi (0,x)  dx   +\int_0^t \int_{ \R^3} ( u \cdot \Delta \phi  + u \otimes u : \nabla \phi + u \cdot \partial_t \phi) dxdt' \, ,
 $$
 with
 $$
 u\otimes u:\nabla \phi \eqdefa \sum_{1\leq j,k\leq 3}  u^ju^k\partial_k\phi^j \, .
 $$

\noindent As is well-known, the (NS)  system enjoys two important features. First it formally conserves the energy, in the sense that  smooth enough solutions satisfy the following equality for all times~$t \geq 0$:
\begin{equation}\label{energy=}
\frac12 \|u(t)\|_{L^2(\R^3)}^2  + \int_0^t \|\nabla u(t')\|_{L^2(\R^3)}^2 \, dt' = \frac12\|u_0\|_{L^2(\R^3)}^2 \, .
\end{equation}
Weak solutions satisfying the energy inequality
\begin{equation}\label{energyinequality}
\frac12 \|u(t)\|_{L^2(\R^3)}^2  + \int_0^t \|\nabla u(t')\|_{L^2(\R^3)}^2 \, dt' \leq \frac12\|u_0\|_{L^2(\R^3)}^2  
\end{equation}
are said to be  {\it turbulent solutions}, following the terminology of J. Leray~\cite{leray}.
The energy equality~(\ref{energy=}) can easily be obtained noticing that thanks to the divergence free condition, the nonlinear term is skew-symmetric in~$L^2$: one has indeed~$\big (u(t) \cdot \nabla u (t)+ \nabla p(t)| u(t)\big)_{L^2}  = 0$. 

\medbreak
\noindent Second, (NS)  enjoys a scaling invariance property: defining the scaling operators, for any positive real number~$\lambda $ and any point~$x_0$ of~$\R^3$,
   \begin{equation}
   \label{defscalingoperatorintro}
    \Lambda_{  {\lambda },x_0} \phi (t,x) \eqdefa
  \frac1 {\lambda}  \phi \Big( \frac t {\lambda ^2},\frac {x-x_0} {\lambda} \Big)   \andf
  \Lam_\lam \phi (t,x) \eqdefa   \frac 1 \lam\phi \Big( \frac t {\lambda ^2},\frac x\lambda \Big)\, \virgp
\end{equation}
 if~$u$ solves~(NS) with data~$u_0$, then~$   \Lambda_{  {\lambda},x_0}   u$ solves~(NS) with data~$\Lambda_{  {\lambda},x_0} u_0$. We shall say that a familly~$(X_T)_{T>0}$ of   spaces of distributions over~$[0,T] \times \R^3$ is {\it scaling invariant} if for all~$T >0$  one has
   $$
  \forall \lambda >0 \, , \forall x_0 \in \R^3 \, ,  \, u \in X_T \Longleftrightarrow    \Lambda_{  {\lambda},x_0}   u \in  X_{\lambda^{-2}T} \quad \mbox{with} \quad \|u\|_{ X_T} =  \|   \Lambda_{  {\lambda},x_0}   u\|_{ X_{\lambda^{-2}T}}  \, .
  $$
  Similarly a space~$X_0$ of distributions defined on~$\R^3$
  will be said to be scaling invariant
  if
   $$
  \forall \lambda >0 \, , \forall x_0 \in \R^3 \, ,  \, u_0 \in X_0 \Longleftrightarrow    \Lambda_{  {\lambda},x_0}   u_0 \in  X_{0} \quad \mbox{with} \quad \|u_0\|_{ X_0} =  \|   \Lambda_{  {\lambda},x_0}   u_0\|_{ X_{0}}  \, .
  $$
 This leads to the definition of a scaled solution, which will be the notion of solution we shall consider throughout this paper: high frequencies of the solution are required to belong to a scale invariant space. In the following we denote by~$ {\mathcal F}$ the Fourier transform.
   \begin{defi}  {\sl A vector field~$u$   is a (scaled) solution to~{\rm(NS)} associated with the data~$u_0$ if it is a weak solution, such that there is a compactly supported function~$ \chi \in {\mathcal C}^\infty(  \R^3)$, equal to~1 near~0, such that
    $$
 {\mathcal F}^{-1} \big( (1- \chi ){\mathcal F}u\big) \in X_T
 $$
 where~$X_T$ belongs to a family of scaling invariant spaces.}
  \end{defi}

\medskip
\noindent 
The energy conservation~(\ref{energy=}) is the main ingredient which enabled J. Leray to prove in~\cite{leray} that any initial data in~$L^2(\R^3)$  gives rise to (at least) one global turbulent solution
to (NS), belonging to the space~$L^\infty(\R^+;L^2(\R^3) )$, with~$\nabla u $ in~$L^2(\R^+;L^2(\R^3))$. Along with that fundamental result, he could also prove that if the initial data is small enough  in the sense that~$
\|u_0\|_{L^2(\R^3)}\|\nabla u_0\|_{L^2(\R^3)} $ is small enough, then there is only one such solution, and if the data belongs also to~$ H^1$ with no such smallness assumption then that uniqueness property holds at least for a short time (time at which the solution ceases to belong to~$  H^1$).

\medskip
\noindent 
 It is important to notice that the quantity~$\|u_0\|_{L^2(\R^3)}\|\nabla u_0\|_{L^2(\R^3)}$ is invariant by the scaling operator~$ \Lambda_{{\lambda },x_0}.$ Actually in dimension 2 not only does the global existence of turbulent solutions   hold, but linked to the fact that~$\|u(t)\|_{L^2(\R^2)}$ is both scale invariant and bounded globally in time thanks to the energy inequality~(\ref{energyinequality}), J. Leray proved in~\cite{leray2D} that those solutions are actually unique, for all times whatever their size. In dimension three and more, the question of the uniqueness of  Leray's solutions is still an open problem, and in relation with that problem, a number of results
have been proved concerning the existence,  global in time,  of solutions under a scaling invariant smallness assumption on the data.  Without that smallness assumption,   existence and uniqueness often holds in a scale invariant space for a short time but nothing is known beyond that time, at which some scale-invariant norm of the solution could   blow up. The question of the possible blow up in finite time of solutions to (NS) is actually one of the Millenium Prize Problems in Mathematics.

\medskip
\noindent 
We shall not recall all the results existing in the literature concerning the Cauchy problem for (NS), and refer  for instance  to~\cite{BCD},\ccite{lemarie},\ccite{meyerNSlivre} and the references therein, for recent surveys on the subject. Let us simply recall the best result known to this day on the uniqueness of solutions to (NS), which is due to H. Koch and D. Tataru in~\cite{kochtataru} : if
$$
 \|u_0\|_{\text{BMO}^{-1}(\R^3)}\eqdefa  \|u_0\|_{  B^{-1 }_{\infty, \infty}(\R^3)}  + \supetage{x\in \R^3}{R>0} \frac 1 {R^\frac 32}\Big( \int_{[0,R^2] \times B(x,R)} |(e^{t\Delta} u_0)(t,y)|^2‚àö√á¬¨¬®‚àö¬¢¬¨√Ñ¬¨‚Ä†\, dydt\Big)^\frac12  
  $$
is small enough, then there is a global, unique solution to (NS), lying  in~$\text{BMO}^{-1}\cap X$ for all times, with~$X$  another scale invariant space to be specified -- we shall not be using that space in the sequel. In the definition of~$\text{BMO}^{-1}$ above, the norm in~$ B^{-1 }_{\infty, \infty}(\R^3)$ denotes a Besov norm, which is the end-point Besov norm in which global existence and uniqueness is known to hold for small data, namely~$B^{-1+\frac3p}_{p,\infty}$ for finite~$p$  (see~\cite{Planchon}). Let us note that (NS) is illposed for initial data in~$B^{-1}_{\infty,\infty}(\R^3)$ (see\ccite{bourgainpavlovich} and\ccite{pgermainNS}).  

  \bigskip
\noindent 
We are interested here in the {\it stability} of global solutions. Let us recall that it is proved in~\cite{adt} (see~\cite{gip} for the Besov setting) that the set of initial data generating a global solution is open in~$\text{BMO}^{-1}$. More precisely, denoting by~$\text{VMO}^{-1}$ the closure of smooth fucntions in~$\text{BMO}^{-1}$, it is proved in~\cite{adt} that if~$u_0$ belongs to~$\text{VMO}^{-1}$ and generates a global, smooth solution to~(NS), then   any sequence~$(u_{0,n})_{n \in \N}$ converging to~$u_0$ in the~$\text{BMO}^{-1}$ norm  also generates a global smooth solution as soon as~$n$ is large enough.

\medskip
\noindent In this paper we would like to address the question of {\it weak stability}: 

\medbreak

{\sl If~$(u_{0,n})_{n \in \N}$, bounded in some scale invariant space~$X_0$, converges to~$u_0$ in the sense of distributions, with~$u_0$ giving rise to a global smooth solution,  is it the case for~$u_{0,n}$ when~$n$  is large enough ?}

\medbreak

 \noindent A first step in that direction was achieved in~\cite{bg}, under two additional assumptions to the weak convergence, one of which was an assumption
on the asymptotic separation of the horizontal and vertical spectral supports: we shall come back to that assumption in Section~\ref{mainresultprofilesdi}, Remark~\ref{rkoscaniso}.
 As remarked in~\cite{bg}, the first example that may come to mind of   a sequence~$(u_{0,n})_{n \in \N}$ bounded  in a scale invariant space~$X_0$ and converging weakly to~$0$  is
\begin{equation}\label{example1}
u_{0,n} = \lam_n \Phi_0(\lam_n \, \cdot) = \Lambda_{\lam_n}\, \Phi_0 \with
\lim_{n\rightarrow\infty} \Bigl(\lam_n+\frac 1 {\lam_n} \Bigr) =\infty\,.
\end{equation} 
with~$\Phi_0$ an arbitrary divergence-free vector field. If the weak stability result were true, then since the weak limit of~$(u_{0,n})_{n \in \N}$ is zero (which gives rise to the unique, global solution which is  identically zero) then for~$n$ large enough~$u_{0,n}$ would  give rise to a unique, global solution. By scale invariance then so would~$\Phi_0$, and this for any~$\Phi_0$, so that would solve the global regularity problem for (NS). 
Another   natural example is the sequence
\begin{equation}\label{example2}
u_{0,n} =\Phi_0 (\cdot - x_n)= \Lambda_{1,x_n} \Phi_0
\end{equation}
with~$(x_n)_{n \in \N}$ a sequence of~$\R^3$ going to infinity. Thus sequences built by rescaling fixed divergence free vector fields according   to the invariances  of the equations have to be excluded from our analysis, since solving (NS) for any smooth initial data seems  out of reach. This leads naturally to the following definition of {\it rescaled weak convergence}, which we shall call~{\it R-convergence}.

   \begin{defi}[R-convergence]\label{weakcvunderscaling}
{\sl We say that a sequence~$(\varphi_n)_{n \in \N}$ {\it R-converges}   to~$\varphi$   if for all sequences~$(\lambda_n)_{n \in \N}$ of positive real numbers and for all sequences~$(x_n)_{n \in \N}$ in~$\R^3$, the sequence~$    (\Lambda_{\lambda_n,x_n} (\varphi_n-\varphi))_{n \in \N}$ converges to zero in the sense of distributions, as~$n$ goes to infinity.
  }
\end{defi}
\begin{rmk}\label{counterexamples}
{\sl  Consider the sequences defined by~{\rm(\ref{example1})} and~{\rm(\ref{example2})}: if it is assumed   that they~R-converge to zero, then clearly~$\Phi_0 \equiv 0$.
  On the other hand  the sequence
\begin{equation}\label{example3}
u_{0,n} (x)= \Phi_0\big(x_1,x_2,\frac {x_3}n \big)
\end{equation}
 is easily seen to R-converge  to zero for any~$\Phi_0$ satisfying~$\Phi_0(x_1,x_2,0) \equiv 0$. }
\end{rmk}
\noindent In this paper we solve the weak stability question under the R-convergence assumption instead of   classical weak convergence.  Actually following Remark~\ref{counterexamples}, the choice of the function space in which to pick the sequence of initial data becomes crucial, as for instance contrary to the examples~(\ref{example1}) and~(\ref{example2}),  the sequence of initial data defined in~(\ref{example3}) is {\it not} bounded in~$B^{-1+\frac3p}_{p,\infty}$ for finite~$p$ (it can actually even be made arbitrarily large in~${\rm BMO}^{-1}$, see~\cite{cg3}). On the other hand it is bounded in anisotropic spaces of the type~$L^2(\R^2;L^\infty(\R))$. We are therefore led to describing sequences of initial data, bounded in anisotropic, homogeneous function spaces.  A celebrated tool to this end are profile decompositions.

\subsection{Profile decompositions and statement of the main result}
The study of the defect of compactness in Sobolev embeddings originates in the works of  P.-L. Lions (see~\cite{pl2cocomp1} and\ccite{pl2cocomp2}), L. Tartar (see~\cite{tartar}) and P. G\'erard (see~\cite{pgerard0}) and earlier decompositions of  bounded sequences into a sum  of ``profiles" can be found in the studies by   H. Br\'ezis and J.-M. Coron in~\cite{BC} and M. Struwe in   \cite{St}. Our source of inspiration here is the work~\cite{pgerard1} of P. G\'erard in which the defect of compactness of the critical Sobolev embedding~$H^s \subset L^p$ is described in terms of a sum of rescaled and translated orthogonal profiles,  up to a small term in~$ L^p$ (see Theorem~\ref{decompoh12} for a statement in the case when~$s = 1/2$). This was generalized to other Sobolev spaces by S. Jaffard in~\cite{jaffard}, to Besov spaces by G. Koch~\cite{koch}, and finally to general critical embeddings by H. Bahouri, A. Cohen and G. Koch in~\cite{bahouricohenkoch} (see also~\cite{BMM, BMM1, BP} for Sobolev embeddings in Orlicz spaces and~\cite{tintarevandco} for an abstract, functional analytic presentation of the concept in various settings). 

\medskip
\noindent In the pionneering works~\cite{bahourigerard} (for the critical 3D wave equation) and~\cite{merlevega} (for the critical 2D Schr\"odinger  equation),  this type of decomposition was introduced in the study of nonlinear partial differential equations. The ideas of~\cite{bahourigerard} were revisited in~\cite{keraani} and~\cite{gallagherNS} in the context of the Schr\"odinger  equations and Navier-Stokes equations respectively, with an aim at describing the structure of bounded sequences of solutions to those equations. These profile decomposition techniques have since then been succesfully used  in order to study the possible blow-up of solutions to   nonlinear partial differential equations,  in various contexts; we refer for instance to~\cite{gkp}, 
\cite{hmidikeraani}, \cite{jiasverak}, \cite{kk}, \cite{km}, \cite{eugenie1}, \cite{rs}.

\medbreak\noindent
Before stating our main result, let us analyze what profile decompositions can say about bounded sequences satisfying the assumptions of Definition~\ref{weakcvunderscaling}. In dimension three, the scale-invariant Sobolev space associated with (NS) is~$H^\frac12(\R^3)$, defined by
$$
\|f\|_{H^\frac12(\R^3)} \eqdefa \Big(\int_{\R^3} |\xi| \, | \widehat f (\xi)|^2 \,d \xi\Big)^\frac12 ,
$$
where~$ \widehat f $ is the Fourier transform of~$f$. The profile decomposition of   P. G\'erard~\cite{pgerard1} describing the lack of compactness of the   embedding~$H^\frac12(\R^3) \subset L^3(\R^3)$ is the following.
\begin{thm}[\cite{pgerard1}] \label{decompoh12}
{\sl Let~$(\varphi_n)_{n \in \N}$ be a  sequence  of functions, bounded in~$H^\frac12(\R^3)$ and converging weakly to some function~$\varphi^0$. Then up to extracting a subsequence (which we denote in the same way),
 there is a family of functions~$(\varphi^j)_{j \geq 1} $  in~$H^\frac12(\R^3)$, and a family~$(x_n^j)_{j \geq 1} $ of sequences of points in~$\R^3$, as well as a family of  sequences of positive real numbers~$(h_n^j)_{j \geq 1} $, 
 orthogonal in the sense that if~$j \neq k$ then
$$
\mbox{either} \quad \frac{h_n^j}{h_n^k} +  \frac{h_n^k}{h_n^j} \to \infty    \quad \mbox{as}\,  \, n \to \infty \, ,  \quad \mbox{or} \quad {h_n^j} = {h_n^k}\quad \mbox{and} \quad \frac{|x_n^k - x_n^j|}{h_n^j} \to \infty   \quad \mbox{as}\,  \,  n \to \infty
$$
such that
 for all integers~$L\geq 1$ the function
$\displaystyle 
\psi_n^L \eqdefa  \varphi_n - \varphi^0  - \sum_{j = 1}^L \Lambda_{h_n^j,x_n^j} \varphi^j
$
satisfies
$$
\limsup_{n \to \infty} \|\psi_n^L \|_{L^3(\R^3)} \to 0  \quad \mbox{as}\,  \, L \to \infty \, .
$$
Moreover one has
\beq\label{weaklimitphij}
\Lambda_{(h_n^j)^{-1},-(h_n^j)^{-1}x_n^j}\varphi_n \rightharpoonup \varphi^j  \, , \quad \mbox{as}\,  \,  n \to \infty\, .
\eeq
}
\end{thm}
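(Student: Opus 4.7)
The plan is to replace $\varphi_n$ by $\varphi_n-\varphi^0$ (so that we may assume $\varphi_n\rightharpoonup 0$) and then build the profiles inductively, using as key analytic input the Gagliardo--Nirenberg--Besov inequality
$$
\|f\|_{L^3(\R^3)}\lesssim \|f\|_{\dot H^{1/2}(\R^3)}^{2/3}\,\|f\|_{\dot B^{-1}_{\infty,\infty}(\R^3)}^{1/3},
$$
which I would prove by a Littlewood--Paley splitting at the optimal frequency threshold combined with a distribution-function argument. Since every remainder $\psi_n^L$ will remain uniformly bounded in $\dot H^{1/2}$, this estimate reduces the problem to arranging that $\eta_L:=\limsup_{n\to\infty}\|\psi_n^L\|_{\dot B^{-1}_{\infty,\infty}}\to 0$ as $L\to\infty$.

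To extract the first profile, set $\eta_0:=\limsup_n\|\varphi_n\|_{\dot B^{-1}_{\infty,\infty}}$. If $\eta_0=0$ take $L=0$; otherwise the very definition of the Besov norm provides, up to a subsequence, integers $j_n$ and points $x_n^1$ with $2^{-j_n}|(\dot\Delta_{j_n}\varphi_n)(x_n^1)|\geq \eta_0/2$. Setting $h_n^1:=2^{-j_n}$, the rescaled sequence
$$
\tilde\varphi_n^{\,1}:=\Lambda_{(h_n^1)^{-1},\,-(h_n^1)^{-1}x_n^1}\varphi_n
$$
is bounded in $\dot H^{1/2}$ (as $\Lambda_{\mu,a}$ is an isometry of that space in dimension three), so a further extraction produces a weak limit $\varphi^1$. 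A direct Fourier-side computation yields $(\dot\Delta_0\tilde\varphi_n^{\,1})(0)=h_n^1(\dot\Delta_{j_n}\varphi_n)(x_n^1)$, hence $|(\dot\Delta_0\tilde\varphi_n^{\,1})(0)|\geq \eta_0/2$; testing the weak convergence against the reproducing kernel of $\dot\Delta_0$ at the origin transfers this to $|(\dot\Delta_0\varphi^1)(0)|\geq \eta_0/2$, and Bernstein then delivers the quantitative bound $\|\varphi^1\|_{\dot H^{1/2}}\gtrsim \eta_0$. One subtracts $\Lambda_{h_n^1,x_n^1}\varphi^1$ from $\varphi_n$ and iterates.

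The iteration closes because of the asymptotic Pythagorean identity
$$
\Bigl\|\varphi_n-\sum_{j=1}^{L}\Lambda_{h_n^j,x_n^j}\varphi^j\Bigr\|_{\dot H^{1/2}}^2=\|\varphi_n\|_{\dot H^{1/2}}^2-\sum_{j=1}^{L}\|\varphi^j\|_{\dot H^{1/2}}^2+o(1)\quad(n\to\infty),
$$
which upon expansion reduces to the asymptotic vanishing of every cross scalar product $(\Lambda_{h_n^j,x_n^j}\varphi^j,\Lambda_{h_n^k,x_n^k}\varphi^k)_{\dot H^{1/2}}$ for $j\neq k$ under the scale/translation dichotomy of the statement. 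That same orthogonality also forces the preservation of~(\ref{weaklimitphij}) at each step: previously extracted profiles contribute nothing, in the weak limit, to $\Lambda_{(h_n^k)^{-1},-(h_n^k)^{-1}x_n^k}\varphi_n$ beyond $\varphi^k$ itself. Combined with the lower bounds $\|\varphi^{j}\|_{\dot H^{1/2}}^2\gtrsim \eta_{j-1}^2$, the Pythagorean identity yields $\sum_j\eta_{j-1}^2\lesssim \sup_n\|\varphi_n\|_{\dot H^{1/2}}^2<\infty$, hence $\eta_L\to 0$, and the interpolation inequality above closes the proof.

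The main obstacle I anticipate is precisely the cross-term computation underlying the Pythagorean identity and~(\ref{weaklimitphij}): I must show that for $(h_n^j,x_n^j)$ and $(h_n^k,x_n^k)$ satisfying the dichotomy, $\Lambda_{(h_n^k)^{-1},-(h_n^k)^{-1}x_n^k}\Lambda_{h_n^j,x_n^j}\varphi^j$ converges weakly to zero in $\dot H^{1/2}$. On the Fourier side this map is a dilation by $h_n^j/h_n^k$ composed with a modulation by $(x_n^k-x_n^j)/h_n^k$, and the three regimes---scale ratio tending to $0$, scale ratio tending to $\infty$, or equal scales with divergent translations---have to be handled separately by dominated convergence or by the Riemann--Lebesgue lemma.
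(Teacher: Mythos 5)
This theorem is not proved in the paper: it is quoted verbatim from G\'erard's article \cite{pgerard1} as background, so there is no internal proof to compare against. Judged on its own terms, your outline is the standard modern proof of this result and is essentially correct: the refined Sobolev inequality $\|f\|_{L^3}\lesssim \|f\|_{\dot H^{1/2}}^{2/3}\|f\|_{\dot B^{-1}_{\infty,\infty}}^{1/3}$ (with the right exponents, consistent with scaling), extraction of a concentration pair $(h_n^1,x_n^1)$ from near-optimizers of the Besov norm, the quantitative lower bound $\|\varphi^1\|_{\dot H^{1/2}}\gtrsim\eta_0$ via the reproducing kernel of $\dot\Delta_0$ and Bernstein, the asymptotic Pythagorean expansion, and the summability argument forcing $\eta_L\to0$. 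This is closer in spirit to the streamlined one-step extraction of Jaffard, Koch and Bahouri--Cohen--Koch than to G\'erard's original two-step argument (first a decomposition into $h$-oscillating components attached to scales, then extraction of cores at each fixed scale), but both routes are legitimate and yield the same statement. Two points should be made explicit rather than left implicit: first, that the extracted pairs can be arranged to satisfy the stated dichotomy exactly (if two scales are not orthogonal one normalizes them to be equal after extraction, exactly as in Remark~\ref{equalscales} of this paper) and that each $(h_n^j,x_n^j)$ is automatically orthogonal to the trivial pair $(1,0)$, since otherwise the rescaled remainder would converge weakly to zero and the extracted profile would vanish, contradicting $\|\varphi^j\|\gtrsim\eta_{j-1}>0$; this orthogonality is what makes both the Pythagorean identity and~(\ref{weaklimitphij}) come out correctly. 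Second, a diagonal extraction is needed to obtain a single subsequence valid for all $L$. With those two routine additions your argument is complete.
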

\noindent If a  sequence of divergence free vector fields~$u_{0,n}$, bounded in~$H^\frac12(\R^3)$, R-converges   to some vector field~$u_0$  as defined in  Definition~\ref{weakcvunderscaling}, then applying the result~(\ref{weaklimitphij}) of Theorem~\ref{decompoh12} implies that~$\varphi^j$ is identically zero for each~$j$, which in turn implies that
 there are no non zero profiles entering in the decomposition of~$u_{0,n}$. This means that~$\psi_n^L = u_{0,n}- u_0$ and therefore the convergence of~$u_{0,n}$ to~$u_0$ is in fact {\it strong} in~$L^3(\R^3)$. The strong stability result of~\cite{gip} then implies immediately that for~$n$ large enough,~$u_{0,n}$ gives rise to a global unique solution to~(NS) if that is the case for~$u_0$. The same reasoning, using the profile decompositions of~\cite{bahouricohenkoch} and again the strong stability result~\cite{gip}, shows that if~$u_{0,n}$ is bounded in~$B^{-1+\frac3p}_{p,q}(\R^3)$ for finite~$q$, and  R-converges   to some vector field~$u_0$ then as soon as~$u_0$ generates a global smooth solution, then so does~$u_{0,n}$ for~$n$ large enough.

\bigskip 

\noindent In order to obtain a   result
which is not a direct consequence of profile decompositions and known strong stability results,
 the question of the function space in which to choose the initial data becomes a key ingredient in the analysis. 
 As explained in the previous paragraph, one expects that under the R-convergence assumption, a relevant function space in which 
 to choose the initial data should scale like~$L^2(\R^2;L^\infty(\R))$.
  To our knowledge there is no wellposedness result of any kind for (NS) in~$L^2(\R^2;L^\infty(\R))$ so we shall assume some regularity in the third direction, while keeping   the~$L^\infty$ scaling, and this leads us naturally to introducing anisotropic  Besov spaces. These spaces generalize the more usual isotropic Besov spaces, which are studied for instance in~\cite{BCD,bourdaud,RS,triebel,triebel1}.
    \begin{defi}\label{deflpanisointro}
{\sl Let~$\widehat{\chi}$ (the Fourier transform of $\chi$) be a radial
function in~$\mathcal{D}(\mathbb{R})$ such
that~$\widehat\chi(t) = 1$ for~$|t|\leq 1$
and~$\widehat\chi(t) = 0$ for~$|t|>2$.  For~$(j,k) \in \ZZ^2$,  the horizontal truncations are defined by
$$
\begin{aligned}
\widehat {S_k^{\rm h} f}  (\xi)
\eqdefa  \widehat {\chi} \bigl(2^{-k}|(\xi_{1}, \xi_2)|\bigr) \hat f(\xi)
 \andf \Delta_k^{\rm h}  \eqdefa
 S_{k+1}^{\rm h}  - S_{k}^{\rm h} \, ,
 \end{aligned}$$
and the vertical truncations by
$$
\begin{aligned}
\widehat {S_j^{\rm v} f}   \eqdefa  \widehat {\chi} (2^{-j}|\xi_3|) \hat f(\xi)
\andf  \Delta_j^{\rm v}  \eqdefa
 S_{j +1}^{\rm v} - S_{j}^{\rm v}
  \, .
 \end{aligned}$$
  For all~$p$ in~$ [1,\infty]$ and~$q$ in~$ ]0, \infty]$, and all~$(s,s')$ in~$ \R^2, $ with~$s < 2/p, s' < 1/p$ (or~$s \leq 2/p$ and~$ s' \leq 1/p$ if~$q=1$), the anisotropic homogeneous Besov space~$B^{s,s'}_{p,q}$ is
    defined as the space of tempered distributions~$f$ such that
    $$
    \|f\|_{B^{s,s' } _{p,q}}\eqdefa \Big\| 2^{ks + js'}\|\Delta_k^{\rm h} \Delta_{j}^{\rm v} f\|_{L^p} \Big\|_{\ell^q}< \infty \, .
    $$
   In all other cases of indexes~$s$ and~$s'$, the Besov space is defined   similarly, up to  taking the quotient with polynomials. }\end{defi}

\noindent {\bf Notation.}   To avoid heaviness,  we shall  in what follows denote by~$\cB^{s,s'}$ the space~$B^{s,s'}_{2,1}$,  by~$\cB^s$ the space~$\cB^{s,\frac12}$ and by~$  {\mathcal B}_{p,q}$ the space $B^{-1+\frac2p,\frac1p}_{p,q}$.
In particular~$ {\mathcal B}_{2,1} = \cB^0$.

 \bigskip
\noindent Let us  point out that   the scaling operators \eqref{defscalingoperatorintro} enjoy the following invariances:
$$
\begin{aligned}
\|  \Lambda_{  {\lambda},x_0}\varphi\|_{  {\mathcal B}_{p,q}} &= \| \varphi\|_{  {\mathcal B}_{p,q}} \quad  \mbox{and} \\
\forall r \in [1,\infty] \, , \quad \|   \Lambda_{ {\lambda},x_0}\Phi\|_{   {L^r }(\R^+;B^{-1+\frac2p+\frac2r ,\frac1p}_{p,q})} &= \|\Phi\|_{  {L^r }(\R^+; B^{-1+\frac2p+\frac2r ,\frac1p}_{p,q})}  \,,
\end{aligned}
$$
and also the following scaling property:
\begin{equation}
\label{scalingpty}\forall r \in [1,\infty] \, , \, \forall \sigma \in \R \, , \quad \|   \Lambda_{  {\lambda},x_0}\Phi\|_{   {L^r }(\R^+; B^{-1+\frac2p+\frac2r -  \sigma,\frac1p}_{p,q})}  \sim \lambda^\sigma \|\Phi\|_{  {L^r }(\R^+; B^{-1+\frac2p+\frac2r -  \sigma,\frac1p}_{p,q})} \, .
 \end{equation}

 \bigskip
\noindent The Navier-Stokes equations in anisotropic   spaces have been studied in a number of frameworks. We refer for instance, among others,  to~\cite{bg}, \cite{cheminzhang}, \cite{gz}, \cite{dragos}, \cite{paicu}. In particular in~\cite{bg} it is proved that if~$u_0$ belongs to~${\mathcal B}^0$, then there is a unique solution (global in time if the data is small enough) in~$ L^2([0,T]; \cB^1)$. That norm controls the equation, in the sense that as soon as the solution belongs to~$ L^2([0,T]; \cB^1)$, then it lies in fact in~$L^r([0,T]; \cB^{\frac2r})$ for all~$1 \leq r \leq \infty$. The space~$\cB^1$ is included in~$L^\infty$ and since the seminal work\ccite{leray} of J. Leray, it is known that the~$L^2([0,T];L^\infty)$  norm controls the propagation of regularity and also ensures weak  uniqueness among turbulent solutions. Thus the space~$\cB^0$  is   natural in this context.  
 \bigskip
\noindent Our main result is the following.
   \begin{thm}
 \label{mainresult}
 {\sl        Let~$q$ be given in~$]0,1[$ and let~$u_0$ in~$ {\mathcal B}_{1,q}$ generate a unique global solution  to~{\rm(NS)}. Let~$(u_{0,n})_{n \in \N}$ be a   sequence of divergence free vector fields bounded in~${\mathcal B}_{1,q}$, such that~$u_{0,n}$ {\rm R}-converges  to~$u_0$. Then for~$n$ large enough,~$u_{0,n}$ generates a unique, global solution to~{\rm(NS)} in the space~$L^2(\R^+; \cB^1)$.}
 \end{thm}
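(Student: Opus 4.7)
The plan is to combine an anisotropic profile decomposition of the perturbation $u_{0,n}-u_0$ in the space $\cB_{1,q}$ with a nonlinear stability argument for (NS) performed around the global smooth solution $u$ generated by $u_0$. As a first step, extending the isotropic decompositions of \cite{pgerard1,bahouricohenkoch} to the anisotropic Besov setting of Definition~\ref{deflpanisointro}, I would establish that, after extraction of a subsequence, one can write
$$
u_{0,n} = u_0 + \sum_{j=1}^{L}\Lam_{n,j}\varphi^{j} + r_n^{L},
$$
where each $\Lam_{n,j}$ is a dilation-translation that may act with different scaling parameters $(h_n^j,v_n^j)$ in the horizontal and vertical directions together with a core $x_n^j\in\R^3$, the profiles $\varphi^{j}$ are fixed divergence-free vector fields obtained as weak limits of anisotropically rescaled subsequences of $u_{0,n}$, and $r_n^L$ tends to zero in a larger scale-invariant anisotropic norm (e.g.\ $\cB_{p,\infty}$ for some finite $p$) as first $n\to\infty$ and then $L\to\infty$. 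The profiles are mutually orthogonal in the sense that either the ratios of the scale parameters $(h_n^j,v_n^j)$ diverge, or the cores separate at infinity in the corresponding anisotropic metric.

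The second step is to use the R-convergence hypothesis to eliminate all purely isotropic profiles. For any $j$ with $h_n^j=v_n^j$, testing $\Lam_{(h_n^j)^{-1},-(h_n^j)^{-1}x_n^j}(u_{0,n}-u_0)$ against a Schwartz function singles out $\varphi^{j}$ by orthogonality, exactly as in \eqref{weaklimitphij}, and the R-convergence forces this weak limit to vanish. Hence only genuinely anisotropic profiles, for which $h_n^j/v_n^j\to 0$ or $\infty$, can survive; this reduction to the anisotropic regime is precisely what makes the problem accessible, and explains why the example~\eqref{example3} is compatible with the theorem while~\eqref{example1} and~\eqref{example2} are excluded.

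The third step is to propagate the decomposition by (NS). For each surviving anisotropic profile, the disparity between horizontal and vertical scales means that the rescaled Cauchy problem is, at leading order, a perturbation of a two-dimensional Navier-Stokes system, whose global well-posedness is guaranteed by \cite{leray2D}. Using the global anisotropic well-posedness theory developed in \cite{bg,cheminzhang,paicu}, one can construct a global solution $U_n^{j}$ to (NS) with datum $\Lam_{n,j}\varphi^{j}$, lying in $L^{2}(\R^{+};\cB^{1})$ with scale-invariant bounds. Profile orthogonality yields asymptotic decoupling of the $U_n^{j}$'s and of the smooth global solution $u$, so that $u+\sum_{j\le L}U_n^{j}+e^{t\D}r_n^L$ is an approximate solution to (NS) with a nonlinear error which is small in the norms controlling the equation. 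A stability estimate around $u$ in $L^{2}(\R^{+};\cB^{1})$, of the type already exploited in \cite{bg,gip}, then upgrades this approximate solution into a genuine global solution $u_n$ starting from $u_{0,n}$, provided $n$ is large enough.

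The main obstacle I anticipate is twofold. First, constructing the anisotropic profile decomposition with cores depending on two independent scale parameters and proving that the associated nonlinear interactions are genuinely orthogonal in the $L^{2}(\R^{+};\cB^{1})$ norm at the endpoint regularity $\cB_{1,q}$ requires delicate bilinear estimates in the anisotropic Littlewood-Paley calculus of Definition~\ref{deflpanisointro}. Second, and probably harder, is obtaining uniform-in-$n$ global existence for each rescaled profile: when $h_n^j/v_n^j\to\infty$ one must invoke a slowly-varying-data theorem in the spirit of \cite{cheminzhang}, whereas when $h_n^j/v_n^j\to 0$ one must treat the profile as a perturbation of a two-dimensional flow with a transported passive component; in both regimes, transporting the resulting bounds back through the scaling operators $\Lam_{n,j}$ into the original anisotropic norm, while controlling the nonlinear interaction with the background $u$, is the technical heart of the proof.
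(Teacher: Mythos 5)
Your overall architecture --- anisotropic profile decomposition, elimination of the isotropic profiles via R-convergence, propagation of each piece, orthogonality, perturbative closure --- is indeed the paper's architecture, and your second step (using rescaled weak limits as in~\eqref{weaklimitphij} to kill every profile with $h_n^j\equiv v_n^j$) is exactly right. But the two places you dispatch with ``profile orthogonality yields asymptotic decoupling'' are precisely where the argument, as you have set it up, breaks. First, by keeping the profiles as \emph{fixed} functions attached to triplets (horizontal scale, vertical scale, core), you have no mechanism to decouple two profiles sharing the \emph{same horizontal scale} but with orthogonal vertical scales or divergent cores. The interaction estimate that works between blocks is a purely horizontal-scaling argument, $\|\Lambda_{\lambda_n^j}\Phi^j\otimes\Lambda_{\lambda_n^k}\Phi^k\|_{L^1(\R^+;\cB^1)}\lesssim\min\{\lambda_n^j/\lambda_n^k,\lambda_n^k/\lambda_n^j\}^\gamma$, and it gives nothing when $\lambda_n^j=\lambda_n^k$; there is no comparably soft bilinear estimate exploiting vertical-scale or core separation at the regularity $\cB^1$. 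This is why Theorem~\ref{mainresultprofilesdi} regroups \emph{all} profiles with a common horizontal scale into a single sequence-valued datum $[(v_n+h_nw_n^{\rm h},w_n^3)]_{h_n}$ and why Theorem~\ref{slowvarsimple} propagates that whole block at once: the profiles are no longer fixed functions, but intra-block interactions never have to be estimated.

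Second, the interaction between the surviving horizontal-scale-one profiles and the background solution generated by $u_0$ is not small by any orthogonality: both live at horizontal scale one, and vertical scale separation alone does not make $u\otimes[(v_n,w_n^3)]_{h_n^0}$ small in $L^1(\R^+;\cB^1)$. What actually saves the day (Theorem~\ref{interactionprofilescale1}, via Lemma~\ref{lemmainteracslowvarying}) is that the slowly varying profile is essentially constant in $x_3$ on the support of the approximated datum $u_{0,\alpha}$, so the product is controlled by the \emph{trace of the profile at $x_3=0$} plus an $O(h_n^0)$ term; and that trace is asymptotically zero because the ordinary weak convergence $u_{0,n}\rightharpoonup u_0$ forces the horizontally localized part of the scale-one block to vanish at $x_3=0$. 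One must additionally split the scale-one block into a horizontally localized piece and a piece escaping to horizontal infinity, and propagate these localization properties by the flow. None of this structure is visible in your decomposition, and without it the cross term with $u$ cannot be closed. A minor further correction: the regime $h_n^j/v_n^j\to\infty$ needs no slowly-varying theorem at all --- the divergence-free condition forces the vertical component of such profiles to vanish, and the remaining horizontal part is of size $(v_n^j/h_n^j)^{1/2}$ in $\cB^{1,-\frac12}$ and is absorbed into the remainder; only the regime $h_n^j/v_n^j\to0$ requires the genuine two-dimensional-with-parameter propagation result.
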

    
 \noindent {\bf Acknowledgments.} We want to thank very warmly Pierre Germain for suggesting  the concept of rescaled weak convergence.\medskip
   
 
 
   \section{Structure  and main ideas of the proof}
 \label{planpreuve}
 To prove Theorem~\ref{mainresult}, the first step consists in the proof of an anisotropic profile decomposition of
 the sequence of initial data. To state the result in a clear
way, let us  start by introducing some definitions and notations.
 \begin{defi}
\label{orthoseq}
{\sl We say that two  sequences of positive real numbers~$\suite {\lam^1} n \N$ and~$\suite {\lam^2} n \N$ are   \textit{{orthogonal}} if
$$
  \begin{aligned}
  \frac{\lambda_n ^{1}}{\lambda_n ^{2}} +  \frac{\lambda_n ^{2}}{\lambda_n ^{1}}
  \to \infty\,
, \quad n \to \infty\, .
\end{aligned}
$$
A family of sequences~$\bigl(\suite {\lam^j} n \N\bigr)_j$ is said to be a  \textit{{family of scales}}  if~$\lam^0_n\equiv 1$ and if~$  \lam^j_n  $ and~$  \lam^k_n  $  are orthogonal when~$j\not = k$.}
\end{defi}
  
\begin{defi}
\label{definitionspacesmu}
 {\sl  Let~$\mu$ be a positive real number less than~$1/2$, fixed from now on.  

\noindent We define~$D_\mu\eqdefa [-2+\mu,1-\mu]\times [1/2,7/2]$ and~$\wt D_\mu\eqdefa[-1+\mu,1-\mu]\times [1/2,3/2]$.  We denote by~$S_\mu$ the space of functions~$a $ belonging to~$ \displaystyle \bigcap_{(s,s')\in D_\mu}  \cB^{s,s'}$ such that
$$
\|a\|_{S_\mu} \eqdefa \sup_{(s,s')\in D_\mu}\|a\|_{\cB^{s,s'}}<\infty \, .
$$ }\end{defi}

    \medskip
  \begin{rmk}\label{rksp}
{\sl 
Everything proved in this paper would work choosing for~$D_\mu$ any set of the type~$  [-2+\mu,1-\mu]\times [1/2,A]$, with $A \geq 7/2$. For simplicity  we limit  ourselves to the case when $A = 7/2$. 
}
\end{rmk} 
  \medskip
  \noindent {\bf Notation.} For all points~$x = (x_1,x_2,x_3) $ in~$\R^3$ and all   vector fields~$u= (u^1,u^2,u^3)$, we   denote    their horizontal parts by
  $$
  x_{\rm h} \eqdefa (x_1,x_2) \quad \mbox{and} \quad u^{\rm h} \eqdefa (u^1,u^2) \, .
  $$
   We shall  be considering functions which have   different types of variations in the~$x_3$ variable and the~$x_{\rm h}$ variable. The following notation will be used:
$$
 \big[f \big]_\beta (x) \eqdefa f(x_{\rm h},\beta x_3) \, .
$$
Clearly, for any function~$f$, we have the following identity which will be of constant use all along this paper:
\begin{equation}\label{scale}
\bigl\|[f]_\beta\bigr\|_{ B^{s_1,s_2}_{p,1} }\sim\beta^{s_2-\frac1p}\|f\|_{ B^{s_1,s_2}_{p,1}}\,.
\end{equation}

\medbreak

\noindent In all that follows, $\theta$  is  a given function in~$\cD(B_{\R^3}(0,1))$ which has value~$1$ near~$B_{\R^3}(0,1/2)$. For any positive real number~$\eta$, we denote
\beq
\label{defcutof}
\theta_\eta(x)\eqdefa \theta(\eta x)\andf\theta_{{\rm h},\eta} (x_{\rm h})\eqdefa  \theta_\eta(x_{\rm h}, 0) \, .
\eeq
\medbreak
\noindent In order to make  notations as light as possible, the letter~$v$ (possibly with indices) will always denote a {two-component} divergence free vector field,  which may depend on the vertical variable~$x_3$. 
   
 \medskip
 \noindent Finally we define   horizontal differentiation operators~$\nabla^{\rm h} \eqdefa (\partial_1, \partial_2)$ and~$\mbox{div}_{\rm h} \eqdefa \nabla^{\rm h} \cdot  $, as well as~$\Delta_{\rm h} \eqdefa \partial_1^2 + \partial_2^2$, and we shall
 use the following shorthand notation:~$X_{\rm h}  Y_{\rm v} :=X(\R^2;Y(\R))$ where~$X$ is a function space defined on~$\R^2$ and~$Y$  is defined on~$\R$. 
  \begin{thm}
\label{mainresultprofilesdi}
 {\sl Under the assumptions of Theorem~{\rm\ref{mainresult}} and up to the extraction of a subsequence, the following holds.  There is  a family of scales~$\big( ( \lambda^j_n )_{n \in \N}\big)_{j\in \N}$ and for all~$L \geq 1$  there is a family of sequences~$\big( ( h^j_n)_{n \in \N}\big)_{j\in \N}$ {going to zero} such that for any real number~$\alpha$ in~$] 0,1[$ {and for all~$L \geq 1$}, there are  families of sequences of divergence-free vector fields (for~$j$ ranging from~1 to~$L$),~$  (   v^{j}_{n,\alpha,L})_{n \in \N}$,~$( w^{j}_{n,\alpha,L})_{n \in \N}$,~$  ( v_{n,\alpha,L}^{0,\infty})_{n \in \N}$,~$(  w^{0,\infty}_{0,n,\alpha,L})_{n \in \N}$,~$  ( v_{0,n,\alpha,L}^{0,{\rm loc}})_{n \in \N}$ and~$(  w^{0,{\rm loc}}_{0,n,\alpha,L})_{n \in \N}$ all belonging  to~$S_\mu$,  and a smooth, compactly supported function~$u_{0,\alpha}$
 such that the sequence~$  (u_{0,n})_{n \in \N}$ can be written under the form
$$
\displaylines{
  u_{0,n}   \equiv u_{0,\alpha} +  \bigl[\bigl( v_{0,n,\alpha,L}^{0,\infty  }+h_n^0 w_{0,n,\alpha,L}^{0,\infty, {\rm h} },   w_{0,n,\alpha,L}^{0,\infty, 3 }\bigr)  \bigr]_{h_n^0}+\big[(  {v_{0,n,\alpha,L}^{0,{\rm loc}}+ h_{n }^0 w_{0,n,\alpha,L}^{0, {\rm loc},{\rm   h}}},  {w_{0,n,\alpha,L}^{0,{\rm loc},3}})\big]_{  {h_{n}^0}}\cr
  {}+\sum^L _{j = 1}  {\Lambda}_{  \lambda_n^j}  \big[(  {v_{n,\alpha,L}^{j}+ h_{n }^j w_{n,\alpha,L}^{j,{\rm   h}}},  {w_{n,\alpha,L}^{j,3}})\big]_{  {h_{n}^j}}  + { \rho_{n,\alpha,L}}
} 
$$
where~$ u_{0,\alpha}$ approximates~$u_0$ in the sense that
\begin{equation}
\label{uoalphaclosetouo}
 \lim_{\al \to 0} \|u_{0,\alpha} - u_0\|_{\cB_{1,q}} = 0 \, , 
\eeq
where the remainder term satisfies 
\begin{equation}
\label{restsmall}
\lim_{L \to \infty} \lim_{\al \to 0} \limsup_{n \to \infty} \| e^{t\D}  \rho_{n,\alpha,L }\|_{   L^2(\R^+;\cB^1)}  = 0 \, , 
\eeq
while the following uniform bounds hold:
\begin{equation}
\label{orthonorms2}
\begin{aligned}
 \qquad\cM&\eqdefa \sup_{L \geq 1} \sup_{\al \in ]0,1[}\, \sup_{n \in \N}  \Bigl( \bigl\|   (v_{0,n,\alpha,L}^{0,\infty  },w_{0,n,\alpha,L}^{0,\infty, 3 })\bigr \|_{\cB^0} +\bigl\|   (v_{0,n,\alpha,L}^{0,{\rm loc}  },w_{0,n,\alpha,L}^{0,{\rm loc}, 3 })\bigr \|_{\cB^0}\\
&\qquad\qquad \qquad \qquad\qquad \qquad\qquad+  \|u_{0,\alpha}  \|_{\cB^0} +  {\sum^L _{j = 1} }\bigl\|   (v_{n,\alpha,L}^{j},w_{n,\alpha,L}^{j,3})\bigr \|_{\cB^0}\Bigr)<\infty
\end{aligned}
  \end{equation}
and for all~$\al$ in~$ ]0,1[$, 
\begin{equation}
\label{orthonorms22}
\begin{aligned}
  \cM_\al&
  \eqdefa \sup_{L\geq1}\supetage{1\leq j \leq L}{n \in \N}\, \Bigl(\bigl\|   (v_{0,n,\alpha,L}^{0,\infty  },w_{0,n,\alpha,L}^{0,\infty, 3 })\bigr \|_{S_\mu} +\bigl\|   (v_{0,n,\alpha,L}^{0,{\rm loc}  },w_{0,n,\alpha,L}^{0,{\rm loc}, 3 })\bigr \|_{S_\mu}\\
& \qquad\qquad \qquad\qquad\qquad \qquad\qquad \qquad\qquad 
+  \|u_{0,\alpha}  \|_{S_\mu}  +  \big \| (v_{n,\alpha,L}^{j}, w_{n,\alpha,L}^{j,3})\bigr \|_{S_\mu }\Bigr) 
\end{aligned}
  \end{equation}
is finite. Finally, we have
\ben
\label{smallatzerothmdata}
    \lim_{L \to \infty} \, \lim_{\alpha \to 0} \limsup_{n \to \infty}  \big\|\bigl(v_{0,n,\alpha,L}^{0,{\rm loc}},w_{0,n,\alpha,L}^{0,{\rm loc},3}\bigr) (\cdot,0)\bigr\|_{{B}^{0}_{2,1}(\R^2)}  & =  & 0\,,\\
\label{mainresultprofilesdieq-2}
\forall \, (\al,L)\,,\exists \, \eta(\al,L)\,/\ \forall \eta \leq \eta(\al,L) \, ,  \forall n \in \N\,,\  (1-\theta_{{\rm h},\eta}) ( v_{0,n,\alpha,L}^{0,{\rm loc}},  w_{0,n,\alpha,L}^{0,{\rm loc},3} )& =  & 0 \, ,  \,\mbox{and}\\
\label{mainresultprofilesdieq-1}
\forall \,(\al,L,\eta)\,,\ \exists \,n(\al,L,\eta)\,/ \ \forall n\geq  n(\al,L,\eta)\,,\  \theta_{{\rm h},\eta}(v_{0,n,\alpha,L}^{0,\infty  },w_{0,n,\alpha,L}^{0,\infty, 3 }) & =  & 0 \, .
\een
}\end{thm}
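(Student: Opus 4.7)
The plan is to build the decomposition in three layers: first an isotropic (horizontal) profile decomposition extracting the scales $\lambda^j_n$, then a vertical anisotropic extraction at each scale producing the vertical scales $h^j_n$, then a horizontal localization handling the zero scale. I first apply a $\cB_{1,q}$-type profile decomposition in the spirit of \cite{pgerard1, koch, bahouricohenkoch} to write, up to extracting subsequences,
$$u_{0,n}= u_0+\sum_{j=1}^L \Lambda_{\lambda^j_n,x^j_n}\phi^j+\psi^L_n,\quad \limsup_{n\to\infty}\|e^{t\Delta}\psi^L_n\|_{L^2(\R^+;\cB^1)}\xrightarrow[L\to\infty]{}0,$$
with scales $(\lambda^j_n)$ pairwise orthogonal in the sense of Definition~\ref{orthoseq}. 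The R-convergence assumption kills translation cores: indeed if some $\lambda^j_n|x^j_n-x^k_n|^{-1}$ stays bounded away from $0$ and $\infty$ for two indices, or more simply if any $x^j_n/\lambda^j_n$ were unbounded, the relation \eqref{weaklimitphij} applied to $\Lambda_{(\lambda^j_n)^{-1},-(\lambda^j_n)^{-1}x^j_n}u_{0,n}$ would force a nontrivial weak limit, contradicting Definition~\ref{weakcvunderscaling}. Thus we may assume $x^j_n\equiv 0$ for $j\geq 1$, so only pure scale profiles $\Lambda_{\lambda^j_n}\phi^j$ survive, plus the $j=0$ profile which is $u_0$ itself.

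Second, at each surviving horizontal scale $\lambda^j_n$ (including $j=0$) I run a vertical profile decomposition for the profile $\phi^j$. Since $(u_{0,n})$ is bounded in $\cB_{1,q}=B^{-1+2/1,1/1}_{1,q}$ and this norm has vertical regularity that is critical for $L^\infty_{\rm v}$ scaling but subcritical for $L^2_{\rm v}$-type anisotropic spaces, one can extract a vertical concentration scale $h^j_n$ and write $\phi^j=[\widetilde\phi^j]_{h^j_n}+\text{lower order}$ using \eqref{scale}; the orthogonality of different $h^j_n$ among themselves, and their going to zero, is enforced by the standard pigeonhole/extraction arguments on dyadic vertical blocks $\Delta^{\rm v}_j$. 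The divergence-free constraint on $\phi^j$ reads, after vertical rescaling, $\dive_{\rm h} v+h^j_n\partial_3 w^3=0$, which is exactly why the ansatz $(v^j+h^j_n w^{j,{\rm h}},w^{j,3})$ with $v^j$ a two-component horizontally divergence-free field and $w^j$ a small correction appears: one writes $v^j$ as the $h^j_n\to 0$ limit and $h^j_n w^{j,{\rm h}}$ as the divergence-correcting remainder, so that a Leray-type horizontal projection recovers the correct algebraic structure.

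Third, the zero-scale part is split as $u_{0,\alpha}+[\text{profiles}]_{h^0_n}$ where $u_{0,\alpha}$ is a smooth compactly supported approximation of $u_0$ satisfying \eqref{uoalphaclosetouo}, and the residual $u_0-u_{0,\alpha}$ together with the $j=0$ concentration piece is split into a horizontally localized part and a part supported far from the origin via the cutoff $\theta_{{\rm h},\eta}$ from \eqref{defcutof}. Choosing $\eta=\eta(\alpha,L)$ small enough gives \eqref{mainresultprofilesdieq-2}; then choosing $n\geq n(\alpha,L,\eta)$ large enough, using that the support of the profiles drifts away in the horizontal variable (or that their $\cB^0$-mass in the compact region is arbitrarily small after the other extractions), produces \eqref{mainresultprofilesdieq-1}. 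The vanishing \eqref{smallatzerothmdata} of $(v^{0,{\rm loc}}_{0,n,\alpha,L},w^{0,{\rm loc},3}_{0,n,\alpha,L})(\cdot,0)$ at $x_3=0$ is forced by R-convergence exactly as in Remark~\ref{counterexamples}: were the trace at $x_3=0$ nonzero in the limit, the sequence $[\,\cdot\,]_{h^0_n}$ with $h^0_n\to 0$ would produce, after an appropriate $\Lambda_{\lambda_n,x_n}$ test, a nonzero weak limit of $u_{0,n}-u_0$, contradicting Definition~\ref{weakcvunderscaling}.

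The two uniform bounds \eqref{orthonorms2} and \eqref{orthonorms22} follow from the standard Pythagoras-type almost-orthogonality of profile decompositions in Besov spaces (exploited in the full range $D_\mu$ rather than at one single index), plus the fact that truncating with $\theta_{\rm h,\eta}$ and with the vertical frequency cutoffs preserves the $S_\mu$-norms with a constant depending possibly on $\alpha$ for \eqref{orthonorms22} but uniform for \eqref{orthonorms2}. The main obstacle I expect is the remainder bound \eqref{restsmall} in the heat-evolution norm $L^2(\R^+;\cB^1)$: one must simultaneously control the tails of the isotropic decomposition (taking $L\to\infty$), the approximation error on $u_0$ (taking $\alpha\to 0$), the vertical-scale extraction remainders (taking $n\to\infty$), and the mismatch between the algebraic ansatz $(v+h_n w^{\rm h},w^3)$ and the true divergence-free profile; the orthogonality of the scales $(\lambda^j_n,h^j_n)$ must be used carefully via an anisotropic Littlewood--Paley analysis to kill cross terms, and the fact that $h^j_n\to 0$ lets the factor $h^j_n$ in front of $w^{j,{\rm h}}$ absorb a $\cB^1$-loss arising from the critical vertical index.
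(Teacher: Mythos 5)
Your first step contains the structural error that undermines the rest. You begin with an \emph{isotropic} profile decomposition $u_{0,n}=u_0+\sum_j\Lambda_{\lambda_n^j,x_n^j}\phi^j+\psi_n^L$ and claim that R-convergence forces the cores $x_n^j$ to vanish, leaving ``pure scale profiles'' $\Lambda_{\lambda_n^j}\phi^j$. In fact R-convergence kills those profiles entirely, cores and all: applying Definition~\ref{weakcvunderscaling} with the scales and cores of the $j$-th profile, the rescaled sequence $\Lambda_{(\lambda_n^j)^{-1},-(\lambda_n^j)^{-1}x_n^j}(u_{0,n}-u_0)$ converges weakly to zero, and by the analogue of \eqref{weaklimitphij} that weak limit is $\phi^j$, so $\phi^j\equiv 0$ for every $j$ --- this is exactly the observation made in the paper right after Theorem~\ref{decompoh12}. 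Both examples \eqref{example1} (pure scale, zero core) and \eqref{example2} (unit scale, drifting core) are excluded by R-convergence, so no ``pure scale'' profile survives. Worse, in the anisotropic space $\cB_{1,q}$ the defect of compactness is not captured by isotropic profiles at all: the sequence \eqref{example3}, $u_{0,n}(x)=\Phi_0(x_{\rm h},x_3/n)$, R-converges to zero, is bounded in $\cB_{1,q}$, and is not a finite sum of isotropic profiles plus a remainder that is small after heat flow. With your step one, that entire sequence would be absorbed into $\psi_n^L$, and \eqref{restsmall} would fail.

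The correct starting point is the genuinely anisotropic profile decomposition of the data (Proposition~\ref{prop:decompositiondata0}, from \cite{bg}), with three independent parameters $(\eps_n^\ell,\gamma_n^\ell,x_n^\ell)$: a horizontal scale, a vertical scale, and a core. R-convergence is then used not to remove the cores but to prove that $\eps_n^\ell/\gamma_n^\ell\to 0$ or $\infty$ for every $\ell$ (Proposition~\ref{prop:decompositiondata}), i.e.\ only truly anisotropic profiles survive; the cores must be retained, since they are precisely what produces the splitting of the horizontal-scale-one block into the localized part (bounded horizontal cores $a_{\rm h}^\ell\in\R^2$) and the part escaping to infinity ($|a_{\rm h}^\ell|=\infty$). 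Your proposal, having set all cores to zero, cannot produce that dichotomy, and your attempt to recover it from ``drifting supports'' is inconsistent with your own first step. Your second stage (a fresh vertical extraction at each horizontal scale) also misses the key regrouping device: all profiles sharing a given horizontal scale are rescaled by the \emph{smallest} vertical scale occurring in that group, which is why the resulting profiles are bounded sequences of functions rather than fixed functions; no second profile decomposition is performed. Your heuristics for the divergence-free ansatz $(v+h_nw^{\rm h},w^3)$ and for the vanishing of the local part at $x_3=0$ (obtained by passing to the weak limit in the decomposition and using $u_{0,n}\rightharpoonup u_0$) are in the right spirit, but they cannot be carried out on the foundation you chose.
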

\noindent The proof of this theorem is the purpose  of Section\refer{proofprofiledecomposition}. 

\medbreak
\noindent
Theorem \ref{mainresultprofilesdi} states that the sequence~$u_{0,n}$ is equal, up to a small remainder term, to a finite sum of  orthogonal sequences of divergence-free vector fields.  These sequences are obtained from the profile decomposition derived  in~\cite{bg} (see Proposition \ref{prop:decompositiondata0} in this paper) by grouping  together all the profiles having the same horizontal scale $ \lambda_n$, and  the  form they take depends on whether the scale $\lambda_n$ is identically equal  to one or not.  In the case when $\lambda_n$ goes to $0$ or infinity, these sequences are of the type~$ {\Lambda}_{  \lambda_n}  \big[(  {v^{\rm   h}_{n}+ h_{n } w_{n}^{\rm   h}},  {w_{n}^{3}})\big]_{  {h_{n}}} $, with~$ h_n$  a sequence going to zero.  In the case when $\lambda_n$ is identically equal  to one, we deal with three types of orthogonal sequences: the first one consists in~$u_{0,\alpha}$, an approximation of the weak limit $u_0$, the second one given by $\big[(  {v_{0,n,\alpha,L}^{{\rm loc},{\rm h}}+ h_{n }^0w_{0,n,\alpha,L}^{ {\rm loc},{\rm   h}}},  {w_{0,n,\alpha,L}^{{\rm loc},3}})\big]_{  {h_{n}^0}}$  is uniformly localized in the horizontal variable and vanishes at~$x_3 = 0$, while the horizontal support of the third one~$\big[(  {v_{0,n,\alpha,L}^{\infty,{\rm h}}+ h_{n } ^0w_{0,n,\alpha,L}^{\infty, {\rm   h}}},  {w_{0,n,\alpha,L}^{\infty,3}})\big]_{  {h_{n}^0}}$ goes to infinity. 

\medbreak
\noindent
Note that in contrast with classical profile decompositions (as stated in Theorem~\ref{decompoh12} for instance),  cores of concentration do not appear in the profile decomposition given  in Theorem~\ref{mainresultprofilesdi} since all the profiles with the same horizontal scale are grouped together, and thus the decomposition is  written in terms of scales only. The price to pay is that the profiles are no longer fixed functions, but bounded sequences.

\medbreak
\noindent
Let us point out that the {\rm R}-convergence of~$u_{0,n}$ to~$u_0$  arises in a crucial way in the proof of Theorem\refer{mainresultprofilesdi}. It excludes in the profile decomposition of $u_{0,n} $ sequences of type\refeq {example1} and\refeq{example2}. 


\medbreak
\noindent
 The choice of the function space~$\cB_{p,q}$ with~$p = 1$ and~$q<1$ for the initial data is due to technical reasons.
Indeed,  the propagation of the profiles by (NS) is  efficient in~$\cB_{p,q}$   only if~$p \leq q$
(see also~\cite{gkp} in the isotropic case). Since the one-dimensional Besov space~$B^\frac1p_{p,q} (\R)$  is an algebra (and a Banach space) only if~$q \leq 1$, this forces the choice~$p =1$, and
 finally for the remainder term to be small in a space with index~$q$ equal to one, we need the original sequence to belong to a space with   index~$q  $ strictly smaller than one.

\medbreak
\noindent
 Once Theorem\refer{mainresultprofilesdi}  is proved, the main step  of the proof of Theorem~\ref{mainresult} consists in proving that each individual profile involved  in the decomposition of Theorem \ref{mainresultprofilesdi} does generate a global solution to (NS) as soon as~$n$ is large enough. This is mainly based on the following results concerning respectively profiles of the type~$ {\Lambda}_{  \lambda_n^j}  \big[(  {v_{n,\alpha,L}^{j}+ h_{n }^j w_{n,\alpha,L}^{j,{\rm   h}}},  {w_{n,\alpha,L}^{j,3}})\big]_{  {h_{n}^j}} $, with $\lambda^j_n$ going to $0$ or infinity, and the profiles of horizontal scale one,  see respectively Theorems~\ref{slowvarsimple} and ~\ref{interactionprofilescale1}. Then, an orthogonality argument leads to the fact that the sum  of the profiles also generates a global regular solution for  large enough~$n$. 
 
 \medskip
\noindent In order to  state the results, let us define the function spaces we shall be working with.

\begin{defi}
\label{definitionspaces}
 {\sl 
 
\noindent -- We define the space~$\cA^{s,s'}=   L^\infty(\R^+;\cB^{s,s'})\cap   L^2(\R^+;\cB^{s+1,s'})$ equipped with the norm
$$
\|a\|_{\cA^{s,s'} }\eqdefa \|a\|_{  L^\infty(\R^+;\cB^{s,s'})}+\|a\|_{   L^2(\R^+;\cB^{s+1,s'})} \, ,
$$
and  we denote~$\cA^s=\cA^{s,\frac12}$.

\smallbreak
\noindent -- We denote by~$\cF^{s,s'}$ any function space such that
$$
  \|
 L_0 f 
 \|_{ L^2(\R^+;\cB^{s+1,s'})} \lesssim \|f\|_{\cF^{s,s'}}
$$
where, for any non negative real number~$\tau$,~$ L_{\tau} f$ is the solution of~$\partial_t  L_{\tau} f-\D L_{\tau} f = f $ with~$L_{\tau} f_{|t=\tau}=0$. We denote~$\cF^s=\cF^{s,\frac12}$.

}\end{defi}
\medskip
 \noindent{\bf Examples.}\label{examplespage} Using the smoothing effect of the heat  flow as described by Lemma\refer{anisoheat}, it is  easy  to prove that the spaces 
 $  L^1(\R^+;\cB^{s,s'})$ and~$ L^1(\R^+;\cB^{s+1,s'-1})$   are continuously embedded in~$\cF^{s,s'}$. We refer to Lemma~\ref{example} for a proof, along with other examples.
\medbreak
\noindent In the following we shall designate by~$\cT_0 (A,B)$ a generic constant depending only on the quantities~$A$ and~$B$. 
 We shall denote  by~$\cT_1$ a  generic non decreasing  function from~$\R^+$ into~$\R^+$ such that 
\beq
\label{petitogeneral}
 \limsup_{r\rightarrow0}\, \frac {\cT_1(r)} r <\infty \,  ,
\eeq
and by~$\cT_2$ a generic locally bounded function from~$\R^+$ into~$\R^+$. All those functions  may vary from line to line. Let us notice that  for any  {positive sequence~$\suite a n \N$ belonging to~$\ell^1$},  we have
\beq
\label{petitogeneralseries}
\sum_n \cT_1(a_n) \leq \cT_2\Big(\sum_n a_n\Bigr) \, .
\eeq
The notation~$a\lesssim b$ means that  an absolute constant~$C$ exists such that~$a\leq Cb$.

\begin{thm}
\label{slowvarsimple}
{\sl  A  locally bounded function~$\e_1$ from~$\R^+$ into~$\R^+$ exists which satisfies the following.  For any~$(v_0, w^3_0)$ in~$S_\mu$ (see Definition~{\rm\ref{definitionspacesmu}}),  for any  positive real number~${\beta}$ such that~$\b\leq \e_1(\|(v_0, w^3_0)\|_{S_\mu})$,  the divergence free vector field 
 $$
\Phi_0  \eqdefa \big[  (  v_0-\b\nabla^{\rm h} \Delta_{\rm h}^{-1} \partial_3w^3_0, w^3_0)  \big]_\b
$$
 generates a global solution~$\Phi_\b$ to {\rm (NS)} which satisfies
 \beq
 \label{estimatesolutionun}
 \|\Phi _\b\|_{\cA^0}  \leq  \cT_1(\|(v_0, w^3_0)\|_{\cB^0}) +\b \,\cT_2(\|(v_0, w^3_0)\|_{S_\mu})\,.
 \eeq
 Moreover, for any~$(s,s') $ in~$[-1+\mu,1-\mu]\times [1/2,7/2]$, we have, for any~$r$ in~ $[1,\infty] $,
\beq
 \label{estimatesolutionunbis}
   \|\Phi_\b \|_{ L^r(\R^+; \cB^{s+\frac2r}) }  + \frac 1{\beta^{s'-\frac12}}    \|\Phi_\b \|_{ L^r(\R^+; \cB^{\frac 2r,s'}) }   \leq   \cT_2(\|(v_0, w^3_0)\|_{S_\mu})  \, .
\eeq
}\end{thm}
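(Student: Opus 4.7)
The natural starting point is to reformulate the problem via the ansatz $\Phi_\beta(t,x) = [U_\beta(t,\cdot)]_\beta(x)$ with $U_\beta = (V_\beta^{\rm h}, W_\beta^3)$. A direct computation shows that $\Phi_\beta$ solves (NS) if and only if $U_\beta$ solves the anisotropic system
\begin{align*}
\partial_t V^{\rm h} - (\Delta_{\rm h} + \b^2 \partial_3^2) V^{\rm h} + V^{\rm h} \cdot \nabla^{\rm h} V^{\rm h} + \b W^3 \partial_3 V^{\rm h} &= -\nabla^{\rm h} P, \\
\partial_t W^3 - (\Delta_{\rm h} + \b^2 \partial_3^2) W^3 + V^{\rm h} \cdot \nabla^{\rm h} W^3 + \b W^3 \partial_3 W^3 &= -\b \partial_3 P,
\end{align*}
subject to the constraint $\mbox{div}_{\rm h} V^{\rm h} + \b \partial_3 W^3 = 0$. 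The correction $-\b \nabla^{\rm h} \Delta_{\rm h}^{-1} \partial_3 w_0^3$ in the initial datum is precisely what restores this constraint at $t = 0$, since $v_0$ is horizontally divergence-free. Formally, as $\b \to 0$ the system degenerates to a two-dimensional Navier--Stokes equation for $v^{\rm h}(t, x_{\rm h}; x_3)$ (with $x_3$ as a parameter), coupled to a passive transport-diffusion equation for the scalar $w^3$. The crucial feature is that 2D Navier--Stokes admits global smooth solutions for arbitrary data, which is what will give the global existence for $\Phi_\b$.

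The plan is to construct $U_\beta$ as a perturbation of the limit profile. First I would solve the limit system
\begin{equation*}
\partial_t v - \Delta_{\rm h} v + v \cdot \nabla^{\rm h} v = -\nabla^{\rm h} p, \quad \mbox{div}_{\rm h} v = 0, \quad \partial_t w^3 - \Delta_{\rm h} w^3 + v \cdot \nabla^{\rm h} w^3 = 0,
\end{equation*}
with data $(v_0, w_0^3)$. Treating $x_3$ as a parameter and exploiting the 2D energy identity uniformly in $x_3$, combined with the vertical regularity encoded by $S_\mu$ (propagated via paraproduct and energy estimates in the vertical Besov scale), one obtains a sharp first-order bound $\|(v, w^3)\|_{\cA^0} \leq \cT_1(\|(v_0, w_0^3)\|_{\cB^0})$, together with higher-order anisotropic estimates of the form $\|(v, w^3)\|_{L^r(\R^+; \cB^{s+\frac{2}{r}, s'})} \leq \cT_2(\|(v_0, w_0^3)\|_{S_\mu})$ throughout the range of indices appearing in \eqref{estimatesolutionunbis}.

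Next, setting $U_\beta^{(0)} := (v - \b \nabla^{\rm h} \Delta_{\rm h}^{-1} \partial_3 w^3, w^3)$, which satisfies the divergence-free constraint identically and matches $U_\beta$ at $t=0$, I would write $U_\beta = U_\beta^{(0)} + \b R_\beta$ with $R_\beta|_{t=0} = 0$. The remainder $R_\beta$ then solves a linearized Navier--Stokes-type system around $U_\beta^{(0)}$ forced by source terms built from $\b \partial_3^2 v$, $w^3 \partial_3 v^{\rm h}$, $w^3 \partial_3 w^3$ and time-derivatives of the divergence-free correction, all of which are bounded in the relevant $\cF^{s,s'}$ spaces by $\cT_2(\|(v_0, w_0^3)\|_{S_\mu})$ thanks to Step~1. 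A fixed point argument in $\cA^0$ (and simultaneously in the scale of anisotropic spaces needed in \eqref{estimatesolutionunbis}), valid as soon as $\b \leq \e_1(\|(v_0, w_0^3)\|_{S_\mu})$ for some locally bounded $\e_1$, produces $R_\beta$ with the required bounds. Undoing the change of variable through the scaling relation \eqref{scale} then yields \eqref{estimatesolutionun} and \eqref{estimatesolutionunbis}.

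The main obstacle is this last perturbation step: one needs uniform-in-$\b$ control of $R_\b$ across the full range of anisotropic indices in $D_\mu$. The pressure is non-local through $\Delta_{\rm h}^{-1}$, and the vertical interaction terms $\b W^3 \partial_3 V^{\rm h}$, though small in $\b$, involve a derivative in $x_3$ that costs regularity; closing the estimates forces one to exploit the upper endpoint $s' = 7/2$ in the $S_\mu$ norm, which is precisely designed to absorb these losses and produce the factor $\b$ appearing in \eqref{estimatesolutionun}.
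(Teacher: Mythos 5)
Your proposal follows essentially the same route as the paper: the approximate solution $\big[(v-\b\nabla^{\rm h}\Delta_{\rm h}^{-1}\partial_3 w^3, w^3)\big]_\b$ built from the $x_3$-parametrized 2D Navier--Stokes flow and a transported scalar is exactly the paper's $\Phi^{\rm app}$, and the remainder is handled the same way, via a perturbed (NS) system with $O(\b)$ forcing closed by a small-data global existence result in $L^2(\R^+;\cB^1)$. The only cosmetic differences are that the paper works in the unscaled variables rather than fully rescaling the system, and keeps the $\b^2\partial_3^2$ term inside the equation $(T_\b)$ for $w^3$ (so that term never appears among the errors), whereas you drop it into the source; neither changes the substance of the argument.
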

\noindent The proof of this theorem is the purpose of Section\refer{propagationprofiles}. Let us point out that this theorem is a result of  global existence  for the Navier-Stokes system associated to a new class of arbitrarily large initial data generalizing the example consider in \ccite{cg3}, and  where the regularity is sharply estimated, in particular in   anisotropic  norms.

\medbreak

\noindent The existence of a global regular solution  for the set of profiles associated with the  horizontal scale~$1$  is ensured by the following theorem.
\begin{thm}
\label{interactionprofilescale1}
{\sl Let us consider the initial data, with the notation of Theorem~{\rm\ref{mainresultprofilesdi}},
$$
\Phi^0_{0,n,\alpha,L} \eqdefa u_{0,\alpha} +  \bigl[\bigl( v_{0,n,\alpha,L}^{0,\infty  }+h_n^0 w_{0,n,\alpha,L}^{0,\infty, {\rm h} },   w_{0,n,\alpha,L}^{0,\infty, 3 }\bigr)  \bigr]_{h_n^0}+  \big[(  {v_{0,n,\alpha,L}^{0,\rm{loc}}+ h_{n }^0 w_{0,n,\alpha,L}^{0,\rm{loc},{\rm   h}}},  {w_{0,n,\alpha,L}^{0,\rm{loc},3}})\big]_{  {h_{n}^0}} \, .
$$
There is a constant~$\e_0$, depending only on~$u_0$ and on~$\cM_\al$, such that if~$h^0_n\leq \e_0$, then the initial data~$\Phi^0_{0,n,\alpha,L} $ generates a global smooth solution~$\Phi^0_{n,\alpha,L}$ which satisfies for all~$s$ in~$ [-1+\mu,1-\mu]$  and all~$ r$ in~$ [1,\infty] $,
\ben
\label{interactionprofilescale1eq1}
 \|\Phi^0_{n,\alpha,L} \|_{ L^r(\R^+; \cB^{s+\frac2r})} \leq \cT_0(u_0,\cM_\al) \, .\een
}\end{thm}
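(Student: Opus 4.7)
The approach is a perturbation argument built on three separately-solvable pieces plus a small remainder. Split the data as
$$\Phi^0_{0,n,\alpha,L} = u_{0,\alpha} + \Psi^{\infty}_{0,n,\alpha,L} + \Psi^{\rm loc}_{0,n,\alpha,L},$$
where $\Psi^{\infty}_{0,n,\alpha,L}$ and $\Psi^{\rm loc}_{0,n,\alpha,L}$ are, respectively, the second and third summands in the definition of $\Phi^0_{0,n,\alpha,L}$. First I would solve the three Cauchy problems in isolation. The datum $u_{0,\alpha}$ is smooth and compactly supported, and lies close to $u_0$ in $\cB_{1,q}$ by \eqref{uoalphaclosetouo}; since $u_0$ generates a unique global smooth solution, known strong stability (together with the regularity of $u_{0,\alpha}$) produces a global smooth solution $u_\alpha$ whose norms in $L^r(\R^+;\cB^{s+2/r})$ are controlled by a quantity $\cT_0(u_0)$ depending only on $u_0$. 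Each of the two remaining data has exactly the structure $[(v+\b \nabla^{\rm h}\D_{\rm h}^{-1}\partial_3 w^3, w^3)]_\b$ treated in Theorem~\ref{slowvarsimple}, with $\b = h_n^0$; as soon as $h_n^0 \leq \e_1(\cM_\al)$, we obtain global smooth solutions $\Psi^{\infty}_{n,\alpha,L}$ and $\Psi^{\rm loc}_{n,\alpha,L}$, with bounds \eqref{estimatesolutionun}--\eqref{estimatesolutionunbis} depending only on $\cM_\al$.

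Next I would look for the full solution in the form
$$\Phi^0_{n,\alpha,L} = u_\alpha + \Psi^{\infty}_{n,\alpha,L} + \Psi^{\rm loc}_{n,\alpha,L} + R_{n,\alpha,L},$$
so that $R = R_{n,\alpha,L}$ solves a perturbed Navier--Stokes system with zero initial data, a transport by $u_\alpha+\Psi^{\infty}+\Psi^{\rm loc}$, and forcing equal to the sum of the cross-interaction terms $\dive(u_\alpha \otimes \Psi^\sigma + \Psi^\sigma\otimes u_\alpha)$ for $\sigma\in\{\infty,\rm loc\}$ and $\dive(\Psi^\infty\otimes\Psi^{\rm loc}+\Psi^{\rm loc}\otimes\Psi^\infty)$. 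The crux is to show that this forcing is small in a space of type $\cF^0$ (in the sense of Definition~\ref{definitionspaces}), so that the fixed-point scheme used to construct $L^2(\R^+;\cB^1)$ solutions of (NS) yields a global $R$ with $\|R\|_{\cA^0}\to 0$.

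The smallness of the interactions comes from three independent mechanisms. First, $u_\alpha$ is of vertical scale $O(1)$ while each $\Psi^\sigma$ is of vertical scale $h_n^0$; the scaling identity \eqref{scale} combined with the anisotropic bound \eqref{estimatesolutionunbis} of Theorem~\ref{slowvarsimple} gives $\|\Psi^\sigma\|_{L^r(\R^+;\cB^{2/r,s'})}\lesssim (h_n^0)^{s'-1/2}\cT_2(\cM_\al)$ for any $s'\in[1/2,7/2]$, so products of $u_\alpha$ (smooth in $x_3$) with $\Psi^\sigma$ (concentrated at vertical frequency $1/h_n^0$) are controlled by positive powers of $h_n^0$ via anisotropic Bony paraproduct estimates. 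Second, the horizontal support properties \eqref{mainresultprofilesdieq-2}--\eqref{mainresultprofilesdieq-1} guarantee that $\Psi^{\infty}_{0,n,\alpha,L}$ and $\Psi^{\rm loc}_{0,n,\alpha,L}$ have disjoint horizontal supports (for $n$ large); the smoothing of the heat flow spreads these supports, but the almost-orthogonality survives in $\cF^0$ after accepting a loss depending on $\eta$ that can be absorbed before sending $\alpha\to 0$, $L\to\infty$. Third, \eqref{smallatzerothmdata} provides the asymptotic vanishing of $\Psi^{\rm loc}$ near $\{x_3=0\}$, needed to close the ``loc'' self-interaction in the correct anisotropic norm. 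Once the forcing is shown to be $o_{n\to\infty}(1)$ in $\cF^0$, a Gronwall-type argument around the globally smooth reference solution $u_\alpha+\Psi^{\infty}+\Psi^{\rm loc}$ (whose $\cA^0$-norm is $\leq \cT_0(u_0,\cM_\al)$) produces $R$, and bounds on $\Phi^0_{n,\alpha,L}$ in every $L^r(\R^+;\cB^{s+2/r})$ for $s\in[-1+\mu,1-\mu]$ follow from the corresponding bounds on each summand.

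\textbf{Main obstacle.} The hardest step is controlling $\dive(u_\alpha\otimes\Psi^\sigma)$ \emph{globally in time} in an anisotropic $\cF^0$-norm. The solution $u_\alpha$ is bounded but arbitrarily large (only $u_0$ is assumed to generate a global regular solution, with no smallness), so one cannot treat the transport term perturbatively as one would near zero; instead the gain must come entirely from the vertical scale separation $\beta=h_n^0\to 0$, and this gain has to survive the long-time interaction with a \emph{large} ambient flow. Tracking the precise power of $\beta$ through the paraproduct decomposition, and exploiting that the $\cB^{s,s'}$ bounds for $\Psi^\sigma$ in Theorem~\ref{slowvarsimple} hold on the full range $D_\mu$, is where the effort lies; once this is done, the separation $\Psi^{\infty}\times\Psi^{\rm loc}$ is comparatively mild, since their initial supports are disjoint and both pieces satisfy small-scale estimates.
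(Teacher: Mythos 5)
Your skeleton matches the paper's: the same three-way splitting of the data, the resolution of $u_{0,\alpha}$ by strong stability and of the two rescaled pieces by Theorem~\ref{slowvarsimple}, and the perturbative ansatz in which the remainder solves a forced, linearized system closed by Proposition~\ref{existencepetitcB1}. The gap is in the mechanism you invoke for the hardest cross term, $\dive\bigl(u_\alpha\otimes\Psi^{\rm loc}+\Psi^{\rm loc}\otimes u_\alpha\bigr)$. You claim it is small by ``vertical scale separation'', fed through anisotropic paraproducts via $\|\Psi^\sigma\|_{L^r(\R^+;\cB^{2/r,s'})}\lesssim (h_n^0)^{s'-1/2}$. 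This cannot work. First, $[f]_{h_n^0}$ varies slowly in $x_3$, so it is concentrated at vertical frequencies of order $h_n^0$, not $1/h_n^0$ as you write; its $\cB^{s,1/2}$ norm is exactly scale invariant and of order one, and smallness in $\cB^{s,s'}$ for $s'>1/2$ gives no control from above on the $s'=1/2$ norms that the product laws require. Concretely, if $u_\alpha(\cdot,0)$ and $\Psi^{\rm loc}(\cdot,0)$ are both nontrivial, the product $u_\alpha\,\Psi^{\rm loc}$ is bounded below in $L^1(\R^+;\cB^1)$ uniformly in $h_n^0$: no paraproduct bookkeeping extracts a positive power of $h_n^0$ from it. The genuine mechanism --- which you do mention, but misassign to a ``loc self-interaction'' that does not exist as an error term, since the loc profile solves its own (NS) exactly --- is the asymptotic vanishing of the loc profile on the slice $x_3=0$, i.e.~\eqref{smallatzerothmdata}, combined with the slow vertical variation. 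Writing $b(x_{\rm h},x_3)=b(x_{\rm h},0)+\int_0^{x_3}\partial_3 b\,dy_3$ one gets $\|ab\|_{\cB^1}\lesssim \|a\|_{\cB^1}\|b(\cdot,0)\|_{B^1_{2,1}(\R^2)}+\|x_3a\|_{\cB^1}\|\partial_3 b\|_{\cB^1}$ (Lemma~\ref{lemmainteracslowvarying}); the first term is small by the trace vanishing, the second because $\partial_3\Psi^{\rm loc}$ costs a factor $h_n^0$ and because $a=\theta_\eta u_\alpha$ is compactly supported --- which is why the paper first splits $u_\alpha$ into $\theta_\eta u_\alpha+(1-\theta_\eta)u_\alpha$ and treats the far field by smallness, a split your proposal does not make.

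A second, related omission: the vanishing at $x_3=0$ and the horizontal (de)localization are stated in~\eqref{smallatzerothmdata}, \eqref{mainresultprofilesdieq-2} and~\eqref{mainresultprofilesdieq-1} for the initial data only; to use them on the error terms they must be propagated by the flow for all time. This is a substantial part of the paper's proof (Propositions~\ref{regulNS2D+} and~\ref{regiultransportdiff2D+}): a stability estimate for ${\rm(NS2D)}_{x_3}$ restricted to the slice $x_3=0$, a trace estimate for the transport-diffusion equation $(T_\beta)$ in which the $\beta^2\partial_3^2$ term has to be absorbed by interpolation, and commutator estimates showing that $(\gamma-\theta_{{\rm h},\eta})v$ stays small up to $O(\eta)$ errors. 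Your proposal acknowledges the horizontal propagation issue only in passing and the vertical one not at all; without these propagation results the localization of the data cannot be transferred to the forcing terms, and the fixed-point argument for the remainder does not close.
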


\medbreak

\noindent The proof of this theorem is the object of Section\refer{interactionprofiles1case}. As Theorem~\ref{slowvarsimple}, this is also a   global existence result for the Navier-Stokes system,  generalizing  Theorem~3 of\ccite{cgm} and Theorem~2 of\ccite{cgz}, where we control regularity in a very precise way.

\medbreak
\noindent
{\it Proof of Theorem{\rm\refer{mainresult}}.} Let us consider the profile decomposition given by Theorem\refer{mainresultprofilesdi}. For a given positive (and small)~$\e$,   Assertion\refeq{restsmall} allows to choose~$\al$, $L$ and~$N_0$ (depending of course on~$\e$) such that 
 \beq
 \label{mainresultdemoeq1}
 \forall n\geq N_0\,,\  \| e^{t\D}  \rho_{n,\alpha,L }\|_{  L^2(\R^+;\cB^1)}\leq \e \, .
 \eeq 
 {From now on the parameters~$\al$ and~$L$ are fixed so that~(\ref{mainresultdemoeq1}) holds.}
 Now let us consider {the} two functions~$\e_1$,~$\cT_1$ and~$\cT_2$  (resp.~$\e_0$ and~$\cT_0$) which appear in the statement of Theorem~\ref{slowvarsimple} (resp. Theorem\refer{interactionprofilescale1}).  {Since each sequence~$\suite {h^j}n \N$, for~$0 \leq j \leq L, $ goes to zero as~$n$ goes to infinity,} let us choose an integer~$N_1$ greater than or equal to~$N_0$ such that 
 \beq
 \label{mainresultdemoeq2}
 \forall n\geq N_1\,,\ \forall j\in \{0,\dots,L\}\,,\ h_n^j \leq \min\Bigl\{ \e_1(\cM_\al), \e_0 , { \frac {\e} {L \cT_2(\cM_\al) }} \Bigr\} \, \cdotp 
 \eeq
{Then} for~$1 \leq j\leq L$ (resp.~$j=0$), let us denote  by~$\Phi_{n,\e}^j$ (resp.~$\Phi_{n,\e}^0$) the global solution of~(NS) associated with the initial data
 \begin{equation*}
 \label{mainresultdemoeq2000}
\begin{split}
 &\qquad\qquad\qquad\qquad\qquad\quad \big[(  {v_{n,\alpha,L}^{j}+ h_{n }^j w_{n,\alpha,L}^{j,{\rm   h}}},  {w_{n,\alpha,L}^{j,3}})\big]_{  {h_{n}^j}}  \\
&  \Bigl({\rm resp.} \quad   u_{0,\alpha} + \bigl[\bigl( v_{0,n,\alpha,L}^{0,\infty  }+h_n^0 w_{0,n,\alpha,L}^{0,\infty, {\rm h} },   w_{0,n,\alpha,L}^{0,\infty, 3 }\bigr)  \bigr]_{h_n^0}+
  \big[(  {v_{0,n,\alpha,L}^{0,{\rm loc}}+ h_{n }^0 w_{0,n,\alpha,L}^{0,{\rm loc},{\rm   h}}},  {w_{0,n,\alpha,L}^{0,{\rm loc},3}})\big]_{  {h_{n}^0}}\Bigr) 
 \end{split}
 \end{equation*}
 given by Theorem\refer{slowvarsimple} (resp. Theorem\refer{interactionprofilescale1}).
  We  {look for} the global solution associated with~$u_{0,n} $ {under} the form
$$
u_n = u_{n,\e}^{\rm app} +R_{n,\e} \with  u_{n,\e}^{\rm app} \eqdefa \sum_{j=0}^L  \Lam_{\lam_n^j}\Phi^j_{n,\e}+e^{t\D} { \rho_{n,\alpha,L}}  \, ,
$$
{recalling that~$\lam_n^0 \equiv 1$, see Definition~\ref{orthoseq}.} As recalled in the introduction,~$  \Lam_{\lam_n^j}\Phi^j_{n,\e}$ solves (NS) with the initial data~$ \Lam_{\lam_n^j} \big[(  {v_{n,\alpha,L}^{j}+ h_{n }^j w_{n,\alpha,L}^{j,{\rm   h}}},  {w_{n,\alpha,L}^{j,3}})\big]_{  {h_{n}^j}} $   by scaling invariance of the Navier-Stokes equations. Plugging this decomposition into the Navier-Stokes equation therefore gives the following equation on~$R_{n,\eps}$:  
\beq
\label{conclusproftheo1eq1}
\begin{split}
&\partial_t R_{n,\e}- \D R_{n,\e}  +\dive \bigl(R_{n,\e}\otimes R_{n,\e}+R_{n,\e}\otimes u_{n,\e}^{\rm app}+u_{n,\e}^{\rm app}\otimes R_{n,\e}\bigr) +\nabla p_{n,\e}\\
&\qquad\qquad\qquad\qquad\qquad\qquad\qquad =F_{n,\e}  = F^{1}_{n,\e}+F^{2}_{n,\e}+F^{3}_{n,\e} \with\\
& F^{1}_{n,\e} \eqdefa \dive  \bigl(e^{t\D}\rho_{n,\al,L} \otimes e^{t\D}\rho_{n,\al,L}\bigr)\\
&F^{2}_{n,\e} \eqdefa
 \sum_{j =0}^L\dive \big(   \Lambda_{  \lambda_n^j }  \Phi^{j}_{n,\e}\otimes  e^{t\D}\rho_{n,\al,L}    +     e^{t\D}\rho_{n,\al,L} \otimes      \Lambda_{  \lambda_n^j } \Phi^j_{n,\e}\big)\andf \\
 &F^{3}_{n,\e} \eqdefa \sumetage{0\leq j,k\leq L}{j \neq k}  \mbox{div} \big(   \Lambda_{  \lambda_n^j }  \Phi^{j}_{n,\e}\otimes    \Lambda_{  \lambda_n^k }  \Phi^k_{n,\e}\big) \, ,\end{split}
\eeq
and where $\ds \big(\mbox{div} (u\otimes v)\big)^j= \sum_{k=1}^3\partial_{k}(u^jv^k)$. \\
We shall prove that there is an integer~$N \geq N_1$ such that with the notation of Definition~\ref{definitionspaces},
\beq \label{conclusproftheo1eq4}
\forall n \geq N  \, , \quad \|F_{n,\e} \|_{\cF^0} \leq C \e \, ,
\eeq
where~$C$ only depends on~$L$ and~$\cM_\al$. In the next estimates we omit the dependence of all constants on~$\al$ and~$L$, which are fixed.

\medskip
\noindent Let us start with the estimate of~$F^{1}_{n,\e}$.  
 Using the fact that~$\cB^1$ is an algebra,  we have
$$
\bigl\|e^{t\D}\rho^{\rm h}_{n,\al,L}  \otimes e^{t\D}\rho_{n,\al,L} \bigr\|_{L^1(\R^+;\cB^1)}
\lesssim \|e^{t\D}\rho_{n,\al,L}\bigr\|_{  L^2(\R^+;\cB^1)}^2 \, ,
$$
so
$$
\|  \dive_{\rm h}  \bigl(e^{t\D}\rho^{\rm h}_{n,\al,L} \otimes e^{t\D}\rho_{n,\al,L}\bigr) \|_{L^1(\R^+;\cB^0)} \lesssim  \|e^{t\D}\rho_{n,\al,L}\bigr\|_{  L^2(\R^+;\cB^1)}^2
$$
and
$$
\| \partial_3   \bigl(e^{t\D}\rho^{3}_{n,\al,L}   e^{t\D}\rho_{n,\al,L}\bigr)\|_{L^1(\R^+;\cB^{1,-\frac12})}\lesssim  \|e^{t\D}\rho_{n,\al,L}\bigr\|_{  L^2(\R^+;\cB^1)}^2 \, .
$$
According to the examples page~\pageref{examplespage}, we infer that 
\beq
\label{lemmeF01L1B1}
\|F^{1}_{n,\e}\|_{\cF^0} \lesssim \|e^{t\D}\rho_{n,\al,L}\bigr\|_{  L^2(\R^+;\cB^1)}^2
 \, .
\eeq
In view of  Inequality\refeq{mainresultdemoeq1}, Estimate~(\ref{lemmeF01L1B1})  ensures  that 
\beq
\label{conclusproftheo1eq2}
\forall n \geq N_1 \, , \quad\|F^{1}_{n,\e} \|_{\cF^0} \lesssim \e^2.
\eeq
Now let us consider~$F^{2}_{n,\e}$.
By  the scaling invariance of the  operators~$  \Lam_{\lam_n^j}$ in~$L^2(\R^+;\cB^1)$ and again the fact that~$\cB^1$ is an algebra, we get
\begin{equation}
\label{prodPhirho}
\begin{aligned}
\bigl\| {    \Lambda_{  \lambda_n^j } } \Phi^{j}_{n,\e}\otimes  e^{t\D}\rho_{n,\al,L}   & +     e^{t\D}\rho_{n,\al,L} \otimes      \Lambda_{  \lambda_n^j } \Phi^j_{n,\e}\bigr\|_{L^1(\R^+;\cB^1)} 
\\
&\quad {}\lesssim\| \Phi^{j}_{n,\e}\|_{  L^2(\R^+;\cB^1)} \|  e^{t\D}\rho_{n,\al,L}\|_{  L^2(\R^+;\cB^1)} \, .
\end{aligned}
\end{equation}
{Next we  write, thanks to Estimates~(\ref{estimatesolutionun}) and \eqref{interactionprofilescale1eq1},
$$ 
\longformule{
\sum_{j=0}^L  \big\| \Phi^j_{n,\e} \big\|_{  L^2(\R^+;\cB^1)} \leq   \cT_0(u_0,\cM_\al)
}
{ {} + \sum_{j=1}^L \Big(\cT_1\bigl(\|(v_{n,\al,L}^j,w_{n,\al,L}^{j,3})\|_{\cB^0}\bigr) + h_n^j   \cT_2\bigl(\|(v_{n,\al,L}^{j},w_{n,\al,L}^{j,3})\|_{S_\mu}\bigr) \Big),
}$$ 
which can be written due to\refeq{petitogeneralseries}
$$
\sum_{j=0}^L  \bigl\| \Phi^j_{n,\e} \bigr\|_{  L^2(\R^+;\cB^1)}\leq  \cT_0(u_0,\cM_\al)+\cT_2(\cM) +\sum_{j=1}^L  h_n^j   \cT_2(\cM_\alpha) \, .
$$

\noindent Using Condition\refeq{mainresultdemoeq2} on the sequences~$\suite {h^j} n \N$ implies that 
$$
\Bigl\|\sum_{j=0}^L  \, \Phi^j_{n,\e} \Bigr\|_{  L^2(\R^+;\cB^1)} \leq
\cT_0(u_0,\cM_\al)+\cT_2(\cM)  +\e \, .
$$
 It follows  (of course up to a change of~$\cT_2$) that for small enough~$\e$ 
\beq
\label{conclusproftheo1eq20}
\Bigl\| \sum_{j=0}^L  \,  \Phi^j_{n,\e}  \Bigr\|_{  L^2(\R^+;\cB^1)} \leq
\cT_0(u_0,\cM_\al)+\cT_2(\cM)\, .
\eeq
Thanks to~(\ref{mainresultdemoeq1}) and~(\ref{prodPhirho}), this gives rise to 
\beq
\label{conclusproftheo1eq3}
\forall n \geq N_1 \, , \quad\|F^{2}_{n,\e} \|_{\cF^0} \leq \e\,  \big(\cT_0(u_0,\cM_\al)+\cT_2(\cM)  \big)
 \, .
\eeq
 Finally let us consider~$F_{n,\e}^3$.  Recalling that~$\al$ and~$L$ are fixed, it suffices to prove in view of the examples page~\pageref{examplespage} that
 there is~$N_2 \geq N_1$ such that for all~$n \geq N_2$ and for all~$  0 \leq j \neq k \leq L$,
$$
 \big \| \Lambda_{  \lambda_n^j}  \Phi^j_{n,\e}\otimes   \Lambda_{  \lambda_n^k}  \Phi^k_{n,\e}\big\|_{  {  L^1}(\R^+;  \cB^{1})} \lesssim \e 
\, .
$$
Using the fact that~$ \cB^{1}$ is an algebra along with the H\"older inequality, we infer that for a small enough~$\gamma$ in~$]0,1[$,
$$
 \big \| \Lambda_{  \lambda_n^j}  \Phi^j_{n,\e}\otimes   \Lambda_{  \lambda_n^k}  \Phi^k_{n,\e}\big\|_{ {  L^1}(\R^+;  \cB^{1})}
  \leq \| \Lambda_{  \lambda_n^j}  \Phi^j_{n,\e}\|_{   L^{\frac 2 {1+ \gamma} }(\R^+;  \cB^{1})}
   \| \Lambda_{  \lambda_n^k}  \Phi^k_{n,\e}\|_{    L^{\frac 2 {1- \gamma}}(\R^+;  \cB^{1 })} \, .
$$
The scaling invariance \eqref{scalingpty} gives
\beno
 \| \Lambda_{  \lambda_n^j}  \Phi^j_{n,\e}\|_{   L^{\frac 2 {1+ \gamma} }(\R^+;  \cB^{1 })} & \sim  & ( \lambda_n^j )^\gamma\| \Phi^j_{n,\e}\|_{   L^{\frac 2 {1+ \gamma} }(\R^+;  \cB^{1})}\andf\\
  \| \Lambda_{  \lambda_n^k}  \Phi^k_{n,\e}\|_{   L^{\frac 2 {1- \gamma} }(\R^+;  \cB^{1})} & \sim &  \frac 1 {(\lambda_n^k)^\gamma} \| \Phi^k_{n,\e}\|_{   L^{\frac 2 {1- \gamma} }(\R^+;  \cB^{1 })} \, .
 \eeno
For small enough~$\gamma$, Theorems \refer{slowvarsimple}  and \ref{interactionprofilescale1} imply that
$$
 \big \| \Lambda_{  \lambda_n^j}  \Phi^j_{n,\e}\otimes   \Lambda_{  \lambda_n^k}  \Phi^k_{n,\e}\big\|_{  {  L^1}(\R^+;\cB^1)}
 \lesssim  \Bigl( \frac{{\lambda_n^j}}{\lambda_n^k}\Bigr)^\gamma \, \cdotp
$$
We deduce that 
$$
\|F^{3}_{n,\e}\|_{\cF^0}  \lesssim \sumetage{0\leq j,k\leq L}{j \neq k}  \min  \Bigl\{  \frac{{\lambda_n^j}}{\lambda_n^k}\virgp  \frac{{\lambda_n^k}}{\lambda_n^j}\Bigr\}^{\gamma} \, .
$$
As the sequences~$\suite {\lam^j} n \N$ and~$\suite {\lam^k} n \N$ are orthogonal (see Definition \ref{orthoseq}), we have for any~$j$ and~$k$ such that~$j\not =  k$
$$
\lim_{n\to \infty}  \min  \Bigl\{  \frac{{\lambda_n^j}}{\lambda_n^k}\virgp  \frac{{\lambda_n^k}}{\lambda_n^j}\Bigr\}=0 \, .
$$
Thus an integer~$N_2$ greater than or equal to~$N_1$ exists such that
$$
\forall n \geq N_2 \, , \quad \|F^{3}_{n,\e}\|_{\cF^0} \lesssim \e \, .
$$ 
Together with\refeq{conclusproftheo1eq2} and\refeq{conclusproftheo1eq3}, this implies that 
$$
n\geq N_2\Longrightarrow  \|F_{n,\e} \|_{\cF^0} \lesssim \e \, ,
$$
which proves~(\ref{conclusproftheo1eq4}).
\bigbreak
\noindent Now, in order to conclude the proof of Theorem\refer{mainresult}, we need the following result.
\begin{prop}
\label{existencepetitcB1}
{\sl A constant~$C_0$ exists such that, if~$U$ is in~$L^2(\R^+;\cB^1)$,~$u_0$ in~$\cB^0$ and~$f$ in~$\cF^0$ such that 
$$
\|u_0\|_{\cB^0} +\|f\|_{\cF^0} \leq \frac 1{C_0} \exp \Bigl(-C_0\int_0^\infty \|U(t)\|^2_{\cB^{1}} dt\Bigr) \, ,
$$
then the problem
$$
({\rm {NS}}_U)\quad 
\quad  \left\{
\begin{array}{c}
\partial_t u + \dive (u\otimes u+u\otimes U+U\otimes u)  -\Delta u= -\nabla p +f \\
\dive u= 0\andf u_{|t=0} = u_{0}
\end{array}
\right.
$$
has a unique global solution in~$  L^2(\R^+;\cB^1)$  which satisfies
$$
\|u\|_{L^2(\R^+;\cB^1)} \lesssim \|u_0\|_{\cB^0} +\|f\|_{\cF^0}\,.
$$}
\end{prop}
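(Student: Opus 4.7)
The plan is to solve the Duhamel formulation
$$
u = e^{t\D} u_0 + L_0 f - L_0 \PP\,\dive\bigl(u\otimes u + u\otimes U + U\otimes u\bigr)
$$
by a contraction argument in $L^2(\R^+;\cB^1)$, combined with a time-slicing device to accommodate arbitrarily large $\|U\|_{L^2(\R^+;\cB^1)}$. The required linear estimates $\|e^{t\D}u_0\|_{L^2(\R^+;\cB^1)}\lesssim \|u_0\|_{\cB^0}$ and $\|L_0 f\|_{L^2(\R^+;\cB^1)}\lesssim\|f\|_{\cF^0}$ follow directly from the definition of $\cF^0$ and the smoothing effect of the heat semigroup. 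The key bilinear estimate, which rests on the algebra property of $\cB^1$ emphasized earlier in the paper, reads
$$
\|a\otimes b\|_{L^1(\R^+;\cB^1)} \lesssim \|a\|_{L^2(\R^+;\cB^1)}\,\|b\|_{L^2(\R^+;\cB^1)},
$$
and since $L^1(\R^+;\cB^0)\hookrightarrow \cF^0$ by the examples following Definition~\ref{definitionspaces}, this gives
$$
\bigl\|L_0 \PP\,\dive(a\otimes b)\bigr\|_{L^2(\R^+;\cB^1)} \lesssim \|a\|_{L^2(\R^+;\cB^1)}\,\|b\|_{L^2(\R^+;\cB^1)}.
$$
Analogous bounds hold on any subinterval $[T,T']$ with $L_0$ replaced by $L_T$.

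A direct Picard iteration on the full half-line would only be available if $C\|U\|_{L^2(\R^+;\cB^1)}<1$. To handle the general case, fix a small threshold $c_0>0$ determined only by the bilinear constant above, and partition $\R^+$ into consecutive intervals $[T_k,T_{k+1}]$, $0\leq k\leq N-1$, with $\int_{T_k}^{T_{k+1}}\|U(t)\|_{\cB^1}^2\,dt\leq c_0$. Such a partition exists with $N\lesssim 1+ c_0^{-1}\int_0^\infty\|U(t)\|_{\cB^1}^2\,dt$. On each interval we set up the fixed-point map
$$
\Psi_k(u)\eqdefa e^{(t-T_k)\D}u(T_k) + L_{T_k}f - L_{T_k}\PP\,\dive\bigl(u\otimes u + u\otimes U + U\otimes u\bigr),
$$
which is a contraction on a small ball of $L^2([T_k,T_{k+1}];\cB^1)$ as soon as $\|u(T_k)\|_{\cB^0}+\|f\|_{\cF^0}\leq \delta$ (for some absolute constant $\delta$), because the linear $U$-contribution has operator norm at most $C\sqrt{c_0}<1/2$ by the choice of $c_0$. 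An $\cA^0$-type energy estimate for the heat equation with source in $\cF^0$ then yields a bound $\|u(T_{k+1})\|_{\cB^0}\leq A\bigl(\|u(T_k)\|_{\cB^0}+\|f\|_{\cF^0}\bigr)$ for some absolute constant $A>1$. Iterating gives $\|u(T_k)\|_{\cB^0}\leq A^k\bigl(\|u_0\|_{\cB^0}+\|f\|_{\cF^0}\bigr)$, so the fixed-point hypothesis is preserved throughout the $N$ intervals provided $A^N\bigl(\|u_0\|_{\cB^0}+\|f\|_{\cF^0}\bigr)\leq\delta$; this is exactly the smallness assumption of the proposition once $C_0$ is chosen large enough. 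Summing the resulting $L^2_t\cB^1$-bounds over all intervals yields the claimed estimate.

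The main obstacle is the clean propagation of the $\cB^0$-norm of $u$ across the time intervals needed to restart the fixed-point: the iteration naturally lives at the regularity $L^2_t\cB^1$, but one needs an $L^\infty_t\cB^0$ control at the intermediate times $T_k$. This forces one to re-express the bilinear quantities $u\otimes u + u\otimes U + U\otimes u$ as elements of $\cF^0$ and to appeal to the full $\cA^0$-energy estimate for the heat equation uniformly on each subinterval; carrying out this step requires some care, in particular to ensure that the constants do not depend on $k$. Uniqueness follows from a standard Gronwall argument: the difference $w=u_1-u_2$ of two solutions satisfies a linear equation driven by $u_1+u_2+U$, and combining the same bilinear estimate with the smallness of $\|u_i\|_{L^2(\R^+;\cB^1)}$ forces $w\equiv 0$.
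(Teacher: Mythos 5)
Your overall architecture --- Duhamel formulation, bilinear estimate from the algebra property of $\cB^1$, time slicing on intervals where $\int\|U\|_{\cB^1}^2\,dt\leq c_0$, and absorption of the exponential loss $A^N$ into the smallness hypothesis --- is the same as the paper's, and the estimates you state at the level of the $L^2(\R^+;\cB^1)$ norm are correct. But the step you yourself flag as the main obstacle, namely the propagation estimate $\|u(T_{k+1})\|_{\cB^0}\leq A\bigl(\|u(T_k)\|_{\cB^0}+\|f\|_{\cF^0}\bigr)$, is genuinely false as stated, and the proposed remedy (an ``$\cA^0$-type energy estimate with source in $\cF^0$'') is not available: by definition, $f\in\cF^0$ only controls $\|L_0f\|_{L^2(\R^+;\cB^1)}$, not any $L^\infty_t\cB^0$ norm. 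The problem is the vertical derivative in the nonlinearity. If $a,b$ belong to $L^2_t\cB^1$ then $a\otimes b$ belongs to $L^1_t\cB^1=L^1_t\cB^{1,\frac12}$, so $\dive_{\rm h}(a\otimes b^{\rm h})$ lies in $L^1_t\cB^0$, but $\partial_3(a\, b^3)$ lies only in $L^1_t\cB^{1,-\frac12}$; the heat flow then returns an $L^\infty_t\cB^{1,-\frac12}$ contribution, and $\cB^{1,-\frac12}$ neither contains nor is contained in $\cB^0=\cB^{0,\frac12}$ (they merely share the same scaling). Hence $u(T_k)$ acquires a component that is not in $\cB^0$, and your restart hypothesis fails already after the first interval.

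The paper's proof is precisely a repair of this point: writing $u=\cL_0+\rho$ with $\cL_0=e^{t\D}u_0+L_0f$, it splits the nonlinear Duhamel term into $\rho_{\rm h}$ (built from $\cQ_{\rm h}(u,w)=\dive_{\rm h}(w\,u^{\rm h})$) and $\rho_{\rm v}$ (built from $\cQ_{\rm v}(u,w)=\partial_3(w\,u^3)$), and propagates the \emph{pair} of norms $\|\rho_{\rm h}\|_{L^\infty\cB^0}+\|\rho_{\rm h}\|_{L^1\cB^2}$ and $\|\rho_{\rm v}\|_{L^\infty\cB^{1,-\frac12}}+\|\rho_{\rm v}\|_{L^1\cB^{1,\frac32}}$ across the intervals; the two interpolation inequalities $\|a\|_{\cB^{1}}\leq\|a\|_{\cB^{0}}^{1/2}\|a\|_{\cB^{2}}^{1/2}$ and $\|a\|_{\cB^{1}}\leq\|a\|_{\cB^{1,-\frac12}}^{1/2}\|a\|_{\cB^{1,\frac32}}^{1/2}$ then recover the $L^2_t\cB^1$ bound needed to close the bootstrap. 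Your argument goes through once you replace the single $\cB^0$ norm at the restarting times by this pair of norms (equivalently, by the $\cB^0+\cB^{1,-\frac12}$ norm), for which the local fixed point still works since the heat flow maps both spaces into $L^2_t\cB^1$. For the same reason, your justification of the bilinear estimate via ``$L^1(\R^+;\cB^0)\hookrightarrow\cF^0$'' alone is incomplete: you also need the embedding $L^1(\R^+;\cB^{1,-\frac12})\hookrightarrow\cF^0$ (the second example after Definition~\ref{definitionspaces}) to handle the vertical part of the divergence.
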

\noindent The proof of this proposition can be found in Section\refer{lpanisodivers}.

\medbreak

\noindent{\it Conclusion of the proof of Theorem{\rm\refer{mainresult}}. }
By definition of $u^{\rm app}_{n,\e}$ we have
$$
\|u^{\rm app}_{n,\e}\|_{  L^2(\R^+;\cB^1)} \leq  \Bigl\|\sum_{j=0}^L  \Lam_{\lam_n^j}\Phi^j_{n,\e} \Bigr\|_{  L^2(\R^+;\cB^1)}+\|e^{t\D} \rho_{n,\al,L}\|_{  L^2(\R^+;\cB^1)} \, .
$$
Inequalities\refeq{mainresultdemoeq1} and\refeq{conclusproftheo1eq20} imply that for $n$ sufficiently large 
$$
\|u^{\rm app}_{n,\e}\|_{  L^2(\R^+;\cB^1)} \leq \cT_0(u_0,\cM_\al)+\cT_2(\cM) +C\e\,.
$$
Because of\refeq{conclusproftheo1eq4}, it is clear that, if~$\e$ is small enough, 
$$
\|F_{n,\e} \|_{\cF^0} \leq \frac 1 {C_0} \exp\Bigl (-C_0 \|u^{\rm app}_{n,\e}\|_{  L^2(\R^+;\cB^1)}^2\Bigr)
$$
which ensures that~$u_{0,n}$ generates a global regular solution and thus concludes the proof of Theorem\refer{mainresult}.
 \qed

\bigskip
\noindent The paper is structured as follows. In Section~\ref{proofprofiledecomposition} we prove Theorem\refer{mainresultprofilesdi}. Theorems~\ref{slowvarsimple} and~\ref{interactionprofilescale1} are proved in Sections~\ref{propagationprofiles} and~\ref{interactionprofiles1case} respectively. Section~\ref{lpanisodivers} is devoted to the recollection of some material on anisotropic Besov spaces. We also prove  Proposition~\ref{existencepetitcB1} and  an anisotropic propagation of regularity result for the Navier-Stokes system (Proposition~\ref{propaganisoNS3D}).


\section{Profile decomposition of the sequence of initial data:  proof of Theorem\refer{mainresultprofilesdi}}
 \label{proofprofiledecomposition}
 The proof of Theorem\refer{mainresultprofilesdi} is structured as follows. First, in Section~\ref{divfreeprofile} we write down the profile decomposition of any bounded sequence of divergence free vector fields R-converging to zero, following the results of~\cite{bg}. Next we reorganize the profile decomposition by  grouping together all profiles having the same horizontal scale and we check that all the conclusions of Theorem~\ref{mainresultprofilesdi} hold: that is performed in Section~\ref{reorganizationprofiles}.

  \subsection{Profile decomposition of divergence free vector fields, R-converging to zero}
 \label{divfreeprofile}
 \noindent
 In this section we start by recalling the result of~\cite{bg}, where an
anisotropic profile decomposition of   sequences of~$\cB_{1,q}$ is introduced. Then we use the assumption of R-convergence  (see Definition~\ref{weakcvunderscaling}) to eliminate from the profile decomposition all isotropic profiles. Finally we study the particular case of divergence free vector fields. Under this assumption, we are able  to restrict our attention to (rescaled) vector fields with slow vertical variations. 

\subsubsection{The case of bounded sequences}
 Before stating the result proved in~\cite{bg}, let us give the definition of anisotropic scaling operators: for any two sequences of positive real numbers~$(\e_n)_{n \in \N}$ and~$(\gamma_n)_{n \in \N}$, and for any
 sequence~$(x_n)_{n \in \N}$ of points in~$\R^3$, we denote
   $$\begin{aligned}
\Lambda_{  {\e_n}, \gamma_n,x_n} \phi (x)& \eqdefa
 \frac1 {\varepsilon_n}  \phi\left( \frac{x_{\rm h}-x_{n,{\rm h}} }{\varepsilon_n}\virgp \frac{x_3-x_{n,3}} {\gamma_n} \right)\cdotp
  \end{aligned}
$$
Observe that the operator~$ \Lambda_{  {\e_n}, \gamma_n,x_n}$   is an isometry in the space~$\cB_{p,q}$ for any~$1 \leq p \leq \infty$ and any~$0<q \leq \infty$.
Notice also that when the sequences~$\e_n$ and~$\gamma_n$ are equal, then the operator~$\Lambda_{  {\e_n}, \gamma_n,x_n}$ reduces to the isotropic scaling operator~$\Lambda_{  {\e_n},x_n}$ defined in~(\ref{defscalingoperatorintro}), and   such isotropic profiles will be the ones to disappear in the profile decomposition thanks to the assumption of R-convergence.
We also have a definition of orthogonal triplets of sequences, analogous to Definition~\ref{orthoseq}.
\begin{defi}\label{orthoseqaniso}
{\sl We say that two triplets of sequences~$( {\varepsilon_n^\ell} , {\gamma_n^\ell}, {x_n^\ell}  )_{n \in \N}$ with~$\ell$ belonging to~$ \{1,2\}$, where~$( {\varepsilon_n^\ell} , {\gamma_n^\ell})_{n \in \N}$ are two sequences  of positive real numbers and ${x_n^\ell} $  are  sequences  in~$ \R^3$,   are   \textit{{orthogonal}} if, when~$n$ tends to infinity,
$$
\displaylines{
 \mbox{either} \quad \frac{\varepsilon_n^{1}}{\varepsilon_n^{2}} +  \frac{\varepsilon_n^{2}}{\varepsilon_n^{1}}
  +  \frac{\gamma_n^{1}}{\gamma_n^{2}} +  \frac{\gamma_n^{2}}{\gamma_n^{1}}
  \to \infty\,
  \cr
  \mbox{or}\quad  ( {\varepsilon_n^1} , {\gamma_n^1})\equiv ( {\varepsilon_n^2} , {\gamma_n^2})\quad \mbox{and} \quad
  | (x_n^1)^{\varepsilon_n^1,\gamma_n^1} - (x_n^2)^{\varepsilon_n^1,\gamma_n^1} |   \to \infty\,
, 
}
  $$
  where we have denoted
    $\displaystyle
 (  {x_n^\ell} )^{ {\varepsilon_n^k} , {\gamma_n^k}} \eqdefa \Big(
 \frac{  {x_{n,\rm h}^{\ell}}}{ {\varepsilon_n^k} } \virgp
  \frac{  {x_{n,3}^{\ell}}}{ {\gamma_n^k} }
 \Big) \cdotp
  $     
   } 
   \end{defi}
\noindent We recall without proof the following result. 
 \begin{prop}[\cite{bg}]\label{prop:decompositiondata0}
{\sl  Let~$(\varphi_{n})_{n \in \N}$ be a sequence  belonging  to~$\cB_{1,q}$ for some~$0<q \leq 1$, with~$\varphi_{n}$ converging weakly to~$\phi^0$ in~$ \cB_{1,q}$   as~$n$ goes to infinity. 
For all integers~$\ell \geq 1$ there is a triplet of   orthogonal sequences in the sense of Definition~{\rm \ref{orthoseqaniso}}, denoted by~$( {\varepsilon_n ^\ell} , {\gamma_n^\ell}, {x_n^\ell})_{n \in \N}$  and    functions~$    \phi^{\ell} $  in~$\cB_{1,q}$ such that up to extracting a subsequence,   one can write the sequence~$  (\varphi_{n} )_{n \in \N}$ under the following form, for each~$L \geq 1$:
\begin{equation}\label{decompositiondecompositiondata0}
\varphi_{n} = \phi^{0}  +  \sum_{
   \ell = 1}^{L }  \Lambda_{ {\varepsilon_n^{\ell}} , {\gamma_n^{\ell}}, {x_n^{\ell}} }   \phi^{ \ell}   + {\psi_n^{L} }   \, , 
 \end{equation}
 where~$\psi_n^L$  satisfies
  \begin{equation}
  \label{smallremainderpsi}
\limsup_{n \to \infty}   \| \psi_n^{L}\|_{\cB^0 } \to 0\, , \quad L \to \infty \, .
\end{equation}
 Moreover
 the following stability result
 holds:
    \begin{equation}\label{ortnorms}
   \sum_{\ell \geq 1}    \|   \phi^{\ell}  \|_{{\mathcal B}_{1,q}}   \lesssim \sup_n \|\varphi_{n} \|_{{\mathcal B}_{1,q}} + \|\varphi_{0} \|_{{\mathcal B}_{1,q}} \, .
  \end{equation}
 }\end{prop}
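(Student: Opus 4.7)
My plan follows the classical iterative extraction scheme for profile decompositions, adapted to the anisotropic framework with two independent scales. Since $\varphi_{n}$ is bounded in $\cB_{1,q}$ (hence in $\cB^{0}$ by an elementary embedding when $q\leq 1$), the quantity $\limsup_n \|\varphi_n - \phi^0\|_{\cB^0}$ measures the defect of compactness to be resolved. Replacing $\varphi_n$ by $\varphi_n - \phi^0$, I may assume without loss that $\phi^0 = 0$ and $\varphi_n \rightharpoonup 0$, so that the task reduces to iteratively extracting the profiles responsible for the non-compactness in $\cB^0$.

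The heart of the proof is a concentration lemma: if $(f_n)$ is bounded in $\cB_{1,q}$, tends weakly to zero, and satisfies $\limsup_n \|f_n\|_{\cB^0} \geq \delta > 0$, then after subsequence extraction there exist sequences $\varepsilon_n, \gamma_n > 0$ and $x_n \in \R^3$, together with a function $\phi \in \cB_{1,q}$ with $\|\phi\|_{\cB_{1,q}} \gtrsim \delta$, such that $\Lambda_{\varepsilon_n,\gamma_n,x_n}^{-1} f_n \rightharpoonup \phi$ in $\cD'$. I would prove this by expanding $f_n = \sum_{k,j}\Delta_k^{\mathrm h}\Delta_j^{\mathrm v} f_n$ and observing that the defining sum $\sum_{k,j} 2^{j/2}\|\Delta_k^{\mathrm h}\Delta_j^{\mathrm v} f_n\|_{L^2}$ must be bounded below by $\delta$. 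Hence, after passing to a subsequence, a dominant pair $(k_n, j_n)$ can be located carrying a positive fraction of the mass. Setting $\varepsilon_n \eqdefa 2^{-k_n}$ and $\gamma_n \eqdefa 2^{-j_n}$, an anisotropic Bernstein inequality provides a point $x_n$ around which the rescaled sequence $\Lambda_{\varepsilon_n,\gamma_n,x_n}^{-1} f_n$, being bounded in $L^\infty$ at a frequency of size $1$, has a nontrivial $\cD'$-weak limit comparable to $\delta$ in $\cB_{1,q}$-norm.

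I then iterate: I set $\psi_n^1 \eqdefa \varphi_n - \Lambda_{\varepsilon_n^1, \gamma_n^1, x_n^1}\phi^1$ and apply the concentration lemma again. Orthogonality of consecutive triplets in the sense of Definition~\ref{orthoseqaniso} is \emph{forced} by the construction itself: if the new triplet $(\varepsilon_n^2,\gamma_n^2,x_n^2)$ were not orthogonal to $(\varepsilon_n^1,\gamma_n^1,x_n^1)$, then composition of the rescalings would converge along a subsequence, and the weak limit captured in the second extraction would already have been subtracted in the first step, a contradiction. Since each $\Lambda_{\varepsilon_n^\ell,\gamma_n^\ell,x_n^\ell}$ is an isometry on $\cB_{1,q}$, orthogonality of the triplets gives a quasi-Pythagorean control that yields, after summation over $\ell$, the estimate~\eqref{ortnorms}; in particular $\|\phi^\ell\|_{\cB_{1,q}} \to 0$ as $\ell\to\infty$.

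The main obstacle is genuinely the two-parameter anisotropic rescaling: unlike G\'erard's setting where a single scale suffices, here both $\varepsilon_n^\ell$ and $\gamma_n^\ell$ must be extracted independently while preserving a consistent notion of orthogonality, and the Bernstein argument producing the center $x_n$ must respect the anisotropic ratio. Once this is in place, the smallness of the remainder~\eqref{smallremainderpsi} follows by a contradiction argument: if $\limsup_n \|\psi_n^L\|_{\cB^0} \geq \delta > 0$ for every $L$, a further use of the concentration lemma on each $\psi_n^L$ would produce infinitely many profiles of $\cB_{1,q}$-norm at least $c\,\delta$, in contradiction with the summability~\eqref{ortnorms}. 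This closes the scheme.
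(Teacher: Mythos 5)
First, a point of order: the paper does not prove this proposition at all --- it is imported from~\cite{bg} with the words ``we recall without proof'' --- so there is no in-paper argument to measure yours against, and I can only assess the proposal on its own terms. On those terms there is a genuine gap, and it sits at the heart of your concentration lemma. From $\sum_{k,j}2^{j/2}\|\Delta_k^{\rm h}\Delta_j^{\rm v}f_n\|_{L^2}\geq\delta$ you conclude that, after extraction, a single pair $(k_n,j_n)$ carries a positive fraction of the mass. That inference is false for an $\ell^1$-type sum: the mass may spread over $N(n)\to\infty$ dyadic blocks, each contributing $\delta/N(n)$, so that every individual term tends to zero while the sum stays bounded below. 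This is exactly the difficulty that makes profile decompositions nontrivial, and it is why the classical extractions (G\'erard, Jaffard, Bahouri--Cohen--Koch) are run either on an $\ell^\infty$-in-$(j,k)$ quantity --- where a near-maximizing block genuinely exists --- combined with a refined embedding inequality controlling the target norm by that weak norm and the bounded norm, or on a wavelet/nonlinear-approximation rearrangement as in~\cite{bahouricohenkoch}. Your step can be partially rescued when $q<1$: setting $b_{k,j}\eqdefa 2^{k+j}\|\Delta_k^{\rm h}\Delta_j^{\rm v}f_n\|_{L^1}$, Bernstein gives $2^{j/2}\|\Delta_k^{\rm h}\Delta_j^{\rm v}f_n\|_{L^2}\lesssim b_{k,j}$, and the bound $\sum_{k,j}b_{k,j}^q\leq M^q$ forces all blocks with $b_{k,j}<\eps$ to contribute at most $\eps^{1-q}M^q$ in total, leaving only $O((M/\eps)^q)$ candidates, one of which must then dominate. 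But this uses the $\ell^q$, $q<1$, structure in an essential way that your write-up never invokes, and it breaks down entirely at $q=1$, which the statement allows.

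A second, smaller gap: a dominant block in $L^2$ does not by itself produce a concentration core $x_n$; Bernstein gives the upper bound $\|\Delta_k^{\rm h}\Delta_j^{\rm v}f\|_{L^\infty}\lesssim 2^{k+\frac j2}\|\Delta_k^{\rm h}\Delta_j^{\rm v}f\|_{L^2}$, not a lower one. You need $\|\Delta_k^{\rm h}\Delta_j^{\rm v}f\|_{L^2}^2\leq\|\Delta_k^{\rm h}\Delta_j^{\rm v}f\|_{L^1}\|\Delta_k^{\rm h}\Delta_j^{\rm v}f\|_{L^\infty}$ together with the $L^1$ control supplied by the $\cB_{1,q}$ bound to locate a point where the block is pointwise large, and only then does the rescaled, recentered sequence have a nonzero weak limit with norm bounded below by some $c(\delta)>0$. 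The remaining ingredients --- orthogonality forced by the construction, the quasi-Pythagorean expansion of the $\cB_{1,q}$ quasi-norm giving~(\ref{ortnorms}), and the exhaustion argument giving~(\ref{smallremainderpsi}) --- follow the standard scheme and are acceptable as a sketch, but they all lean on the concentration lemma, so the proof does not close as written.
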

\begin{rmk}\label{equalscales}
{\sl As pointed out in~\cite[Section~2]{bg}, if two scales appearing in the above decomposition are not orthogonal, then they can be chosen to be equal. We shall therefore assume from now on   that is the case: two sequences of scales are either orthogonal, or equal.
}\end{rmk}
\begin{rmk}\label{testfunctionrk}
{\sl  By density of smooth, compactly supported functions in~$\cB_{1,q}$, one can write
$$
\displaystyle  \phi^{\ell}  =  \phi_\alpha^{\ell} + r_\alpha^{\ell} 
 \quad \mbox{with} \quad
\| r_\alpha^{\ell} \|_{{\mathcal B}_{1,q}} \leq \alpha
$$
 where~$   \phi_\alpha^{\ell} $ are arbitrarily smooth and compactly supported, and  moreover
 \begin{equation}\label{orthonormsbis}
   \sum_{\ell \geq 1} \big(   \|   \phi_\alpha^{\ell}  \|_{{\mathcal B}_{1,q}} +   \|   r_\alpha^{\ell}  \|_{{\mathcal B}_{1,q}}  \big) \lesssim \sup_n \|\varphi_{n} \|_{{\mathcal B}_{1,q}} + \|\varphi_{0} \|_{{\mathcal B}_{1,q}} 
  \, .\end{equation}
}  \end{rmk}
\noindent Next we consider the particular case when~$\varphi_{n}$ R-converges to~$\phi^0$, in the sense of Definition~\ref{weakcvunderscaling}. Let us prove the following result.
 \begin{prop}\label{prop:decompositiondata}
{\sl Let~$\varphi_{n}$ and~$\varphi_{0}$
belong to~$\cB_{1,q}$ for some~$0<q \leq \infty$, with~$\varphi_{n}$ R-converging to~$\phi^0$   as~$n$ goes to infinity.  Then with the notation of Proposition~{\rm\ref{prop:decompositiondata0}}, the following result holds:
\begin{equation}\label{anisoscales*}
\forall \ell \geq 1\, , \quad
 \displaystyle \lim_{n \to \infty}  \: (\gamma_n^\ell)^{-1} \varepsilon_n^{\ell}   \in \{0 , \infty\}\,  .
  \end{equation}
} \end{prop}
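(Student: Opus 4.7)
\medskip

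\noindent\textbf{Proof proposal.} The natural strategy is a proof by contradiction exploiting the $R$--convergence hypothesis as a way to \emph{extract} any profile whose scales $(\varepsilon_n^\ell,\gamma_n^\ell)$ are comparable. Suppose on the contrary that there exists an index $\ell_0\geq 1$ such that $\phi^{\ell_0}\not\equiv 0$ and, along a subsequence,
$$
c_n \eqdefa \frac{\gamma_n^{\ell_0}}{\varepsilon_n^{\ell_0}} \to c \in (0,\infty).
$$
Test the $R$--convergence with the isotropic rescaling $\Lambda_{\lambda_n,y_n}$ defined by $\lambda_n \eqdefa 1/\varepsilon_n^{\ell_0}$ and $y_n \eqdefa -x_n^{\ell_0}/\varepsilon_n^{\ell_0}$, chosen precisely so as to cancel the horizontal parameters of the $\ell_0$--th profile. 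A direct change of variables yields
$$
\Lambda_{\lambda_n,y_n}\bigl[\Lambda_{\varepsilon_n^{\ell_0},\gamma_n^{\ell_0},x_n^{\ell_0}}\phi^{\ell_0}\bigr](x_{\rm h},x_3)\;=\;\phi^{\ell_0}\bigl(x_{\rm h},\,x_3/c_n\bigr),
$$
which converges in $\mathcal{D}'(\R^3)$ to the nonzero function $\phi^{\ell_0}(\cdot_{\rm h},\cdot_3/c)$.

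\medskip

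Applying $\Lambda_{\lambda_n,y_n}$ to the decomposition (\ref{decompositiondecompositiondata0}), one would then need to show that every other contribution tends to zero in $\mathcal{D}'$. For the profiles with $\ell\neq\ell_0$ ($1\leq \ell\leq L$), the composition
$\Lambda_{\lambda_n,y_n}\circ\Lambda_{\varepsilon_n^{\ell},\gamma_n^{\ell},x_n^{\ell}}$
can be recast as an anisotropic scaling with new parameters $(\varepsilon_n^{\ell}/\varepsilon_n^{\ell_0},\;\gamma_n^{\ell}/\varepsilon_n^{\ell_0},\;\tilde x_n)$ where $\tilde x_n$ is built from $(x_n^{\ell}-x_n^{\ell_0})/\varepsilon_n^{\ell_0}$ and $c_n$. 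The orthogonality of the triplets in the sense of Definition~\ref{orthoseqaniso}, combined with $c_n\to c\in(0,\infty)$, forces either the new scale ratios to tend to~$0$ or~$\infty$, or (when the scales coincide) the translations $\tilde x_n$ to tend to infinity; in all cases, testing against a function in $\mathcal{D}(\R^3)$ gives a limit equal to zero by dominated convergence.

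\medskip

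The main obstacle lies in handling the remainder $\Lambda_{\lambda_n,y_n}\psi_n^L$. The estimate (\ref{smallremainderpsi}) only controls $\psi_n^L$ in $\cB^0$, and the isotropic operator $\Lambda_{\lambda_n,y_n}$ is \emph{not} an isometry of $\cB^0$ (which is anisotropic). To bypass this, I would argue by duality: for fixed $\varphi \in \mathcal{D}(\R^3)$,
$$
\bigl\langle \Lambda_{\lambda_n,y_n}\psi_n^L,\varphi\bigr\rangle \;=\;\bigl\langle \psi_n^L,\,\Lambda_{\lambda_n,y_n}^{*}\varphi\bigr\rangle,
$$
where $\Lambda_{\lambda_n,y_n}^*\varphi(z)=\lambda_n^2\,\varphi(y_n+\lambda_n z)$ is still a test function, uniformly bounded in the dual space of $\cB^0$ (after possibly adjusting indices, using that isotropic rescalings are controlled in the dual anisotropic Besov scale by a power of $\lambda_n$ that is absorbed by the $\cB^0$ smallness of $\psi_n^L$). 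Taking $\limsup_{n\to\infty}$ and then $L\to\infty$ makes this term arbitrarily small.

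\medskip

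Combining the three estimates, the $R$--convergence of $(\varphi_n)_{n\in\N}$ to $\varphi^0$ forces
$$
0 \;=\;\lim_{n\to\infty}\bigl\langle\Lambda_{\lambda_n,y_n}(\varphi_n-\varphi^0),\varphi\bigr\rangle \;=\;\bigl\langle \phi^{\ell_0}(\cdot_{\rm h},\cdot_3/c),\varphi\bigr\rangle
$$
for every $\varphi\in\mathcal{D}(\R^3)$, which contradicts $\phi^{\ell_0}\not\equiv 0$. This shows that any profile with $\gamma_n^\ell/\varepsilon_n^\ell$ remaining in a compact subset of $(0,\infty)$ must be trivial, i.e.\ (\ref{anisoscales*}) holds.
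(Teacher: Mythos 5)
Your overall strategy --- rescaling by the isotropic operator $\Lambda_{1/\varepsilon_n^{\ell_0},\,-x_n^{\ell_0}/\varepsilon_n^{\ell_0}}$ adapted to the offending profile, letting R-convergence annihilate the left-hand side, and identifying the weak limits term by term --- is the same as the paper's. But there is a genuine gap in your treatment of the profiles $\ell\neq\ell_0$: it is \emph{not} true that they all contribute zero in the weak limit. Orthogonality of two triplets in the sense of Definition~\ref{orthoseqaniso} allows $\varepsilon_n^\ell\equiv\varepsilon_n^{\ell_0}$ with only the \emph{vertical} ratio degenerating; if $\gamma_n^\ell/\varepsilon_n^{\ell_0}\to\infty$ and the rescaled cores converge, then
$$
\phi^\ell\Bigl(x_{\rm h}-\tilde x_{n,{\rm h}}\,,\ \frac{x_3-\tilde x_{n,3}}{\gamma_n^\ell/\varepsilon_n^{\ell_0}}\Bigr)\ \rightharpoonup\ \phi^\ell(\cdot_{\rm h}-a_{\rm h},-a_3)\,,
$$
a nonzero function independent of $x_3$: your dominated-convergence computation produces precisely this nonzero limit, not zero. (Only a degenerating \emph{horizontal} scale ratio, or a vertical scale tending to $0$, or escaping cores, kill the weak limit.) The paper isolates exactly this surviving family --- the set $S^{0,L}(k)$ --- and the correct conclusion of the limiting identity is not $\langle\phi^{\ell_0}(\cdot_{\rm h},\cdot_3/c),\varphi\rangle=0$ but rather that $\phi^{\ell_0}(\cdot_{\rm h},\cdot_3/c)$ \emph{plus an $x_3$-independent function} is the weak limit of a quantity that is small in $\cB^0$ uniformly in $n$ as $L\to\infty$; one must then argue that this forces $\phi^{\ell_0}$ to be independent of $x_3$, hence trivial as an element of the homogeneous space $\cB_{1,q}$. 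Your argument skips this step and derives the contradiction from a false premise.

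A secondary remark on the remainder: your worry is unfounded, because the isotropic rescaling you use is an exact isometry of $\cB^0=\cB_{2,1}=B^{0,\frac12}_{2,1}$ (the scaling exponent $\frac12-s-s'$ vanishes for $(s,s')=(0,\frac12)$), so $\|\Lambda_{\lambda_n,y_n}\psi_n^L\|_{\cB^0}=\|\psi_n^L\|_{\cB^0}$ and (\ref{smallremainderpsi}) applies directly, exactly as in the paper. Your proposed duality fix, in which ``a power of $\lambda_n$ is absorbed by the $\cB^0$ smallness of $\psi_n^L$'', could not work as stated: the smallness of $\psi_n^L$ is only uniform in $n$ (a $\limsup_n$ followed by $L\to\infty$), so no decay in $n$ is available to absorb a diverging power of $\lambda_n$. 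Fortunately, no such power arises.
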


\begin{rmk}\label{rkoscaniso}{\sl This proposition shows that if one assumes that the  weak convergenceis actually an R-convergence, then the  only profiles remaining in the decomposition are those with truly anisotropic horizontal and vertical scales. This eliminates profiles of the type~$n \phi (nx)
 $ and~$ \vf(\cdot-x_n)$ with~$|x_n|\to \infty$, for which clearly the conclusion of Theorem~{\rm\ref{mainresult}} is unknown in general. This also 
 shows  that the assumption of R-convergence   is equivalent to the one of  anisotropic oscillations introduced in \cite{bg} and defined as follows:
 a sequence~$(f_n)_{n \in \N}$, bounded in~$ {\mathcal B}_{1,q}$, is said to be 
        {{anisotropically oscillating}}  if  for all sequences~$ (k_n,j_n) $ in~$   \ZZ^\N \times  \ZZ^\N$,
 \begin{equation}\label{frequencyextraction}
   \liminf_{n \to \infty}  \, 2^{k_n+ j_n } \|\Delta_{k_n}^{\rm h} \Delta_{j_n}^{\rm v} f_n \|_{L^1(\R^3)}   = C >0 \quad
   \Longrightarrow  \:  \lim_{n \to \infty}  \:|j_n - k_n| = \infty \, .
\end{equation}
} \end{rmk} 

\medskip
\begin{proof}[Proof of Proposition~{\rm\ref{prop:decompositiondata}}]
To prove~(\ref{anisoscales*}) we consider the decomposition provided in Proposition~\ref{prop:decompositiondata0} and we  assume that there is~$k \in \N$ such that~$(\gamma_n^k)^{-1} \varepsilon_n^{k} $ goes to 1 as~$n$ goes to infinity. We rescale the decomposition~(\ref{decompositiondecompositiondata0}) to find,
 choosing~$L \geq k$,
$$
\begin{aligned}
 \varepsilon_n^k (\varphi_{n}-\varphi_0)(\varepsilon_n^k \cdot + x_n^k) &= 
 \sum_{
   \ell = 1}^{L }  \Lambda_{\frac {\varepsilon_n^{\ell}} {\varepsilon_n^{k}} ,\frac {\gamma_n^{\ell}}{\varepsilon_n^{k}}, {x_n^{\ell,k}} }   \phi^{\ell} + \Lambda_{ \frac1{\varepsilon_n^{k}},  \frac1{\varepsilon_n^{k}},- \frac{x_n^k}  {\varepsilon_n^{k}}} {\psi_n^{L} }    
   \end{aligned}
$$
where
$$
x_n^{\ell,k} \eqdefa \frac{ x_n^{\ell} - x_n^{k} }{\eps_n^k} \, \cdotp
$$
 Now let us take the weak limit of both sides of the equality as~$n$ goes to infinity. By Definition~\ref{weakcvunderscaling} we know that the left-hand side goes weakly to zero.  Concerning the right-hand side, we start by noticing that    $$
 \frac{ \eps_n^\ell}{\eps_n^k}\to 0 \, \, \mbox{or} \, \,  \frac{ \eps_n^\ell}{\eps_n^k} \to \infty  \, \Longrightarrow  \,   \Lambda_{\frac {\varepsilon_n^{\ell}} {\varepsilon_n^{k}} ,\frac {\gamma_n^{\ell}}{\varepsilon_n^{k}}, {x_n^{\ell,k}} } \phi^{\ell}  \convf 0 \, , 
$$
as $n$ tends to infinity, for any value of the sequences~$ {\gamma_n^{\ell}}, {  x_n^{\ell}},$ and~$ x_n^{k}$. So we can restrict the sum on the right-hand side to the case when~$  \eps_n^\ell /\eps_n^k \to 1$. Then we write similarly
$$
  \frac{ \eps_n^\ell}{\gamma_n^\ell }\to \infty   \, \Longrightarrow  \,  \Lambda_{1,\frac {\gamma_n^{\ell}}{\varepsilon_n^{\ell}}, {x_n^{\ell,k}} } \phi^{\ell}   \convf 0 \, ,
$$ 
so there only remain indexes~$\ell$ such that~$ \eps_n^\ell /\gamma_n^\ell \to 0 $ or~$1 $.
Finally we use the fact that
if~$ \eps_n^\ell /\gamma_n^\ell \to 1 $, then the weak limit of~$ \Lambda_{1, {x_n^{\ell,k}} }  \phi^{\ell}$ can be other than zero only if~$ x_n^{\ell,k} \to a^{\ell,k} \in \R^3$, and similarly if~$ \eps_n^\ell /\gamma_n^\ell \to 0 $, then the weak limit of~$ \Lambda_{1,\frac {\gamma_n^{\ell}}{\varepsilon_n^{\ell}}, {x_n^{\ell,k}} }  \phi^{\ell}$ can be other than zero only if~$ x_{n,{\rm h}}^{\ell,k} \to a^{\ell,k}_{\rm h} \in \R^2$, and~$(x_{n,3}^{\ell} - x_{n,3}^{k}) / \gamma_n^\ell  \to a^{\ell,k}_3 \in \R$. 
So let us define
$$
\begin{aligned}
S^{1,L}(k) & \eqdefa \left\{
1 \leq \ell \leq L   \, / \, \eps_n^\ell = \eps_n^k \, , \,x_n^{\ell,k} \to a^{\ell,k} \in \R^3 \, , \,
\frac {\eps_n^\ell}{\gamma_n^{\ell}} 
\to 1 
\right\} \quad \mbox{and}\\
S^{0,L}(k) &\eqdefa \left\{
1 \leq \ell \leq L  \, / \, \eps_n^\ell = \eps_n^k \, , \,x_{n,{\rm h}}^{\ell,k} \to a^{\ell,k}_{\rm h} \in \R^2 \, ,\, \frac{x_{n,3}^{\ell} - x_{n,3}^{k}}{  \gamma_n^\ell } \to a^{\ell,k}_3 \in \R \, , \,\frac {\eps_n^\ell}{\gamma_n^{\ell}} \to 0 \right\}\, .
\end{aligned}
$$
Actually by orthogonality the set~$S^{1,L} (k)$ only contains one element, which is~$k$. So
 for each~$L \geq 1$, as~$n$ goes to infinity  we have finally
$$
- \Lambda_{ \frac1{\varepsilon_n^{k}},  \frac1{\varepsilon_n^{k}}, -\frac{x_n^k}  {\varepsilon_n^{k}}}  {\psi_n^{L} }   
  \convf
  \phi^k+
  \sum_{\ell \in S^{0,L}(k)} \phi^\ell (\cdot_{\rm h} - a_{\rm h}^{\ell,k}, - a_3^{\ell,k}) \, .
$$
Since the left-hand side tends to $0$ in~$\cB^0$ as~$L$ tends to infinity, uniformly in~$n \in \N$, we deduce that~$\phi^k$ must be independent of~$x_3$. That means that there is no vertical scale~$\gamma_n^k$, which proves the result.

  \medbreak

  \subsubsection{The case of divergence free vector fields}
Putting together Propositions~\ref{prop:decompositiondata0} and~\ref{prop:decompositiondata} along with Remark~\ref{testfunctionrk} and the fact that~$u_{0,n}$ is divergence free we obtain the following result.
   \begin{prop}\label{prop:decompositiondatabis}
 {\sl Under the assumptions of Theorem~{\rm\ref{mainresult}}, the following holds.
For all integers~$\ell \geq 1$ there is a triplet of   orthogonal sequences in the sense of Definition~{\rm \ref{orthoseq}}, denoted by~$( {\varepsilon_n ^\ell} , {\gamma_n^\ell}, {x_n^\ell})_{n \in \N}$  and for all~$\alpha$ in~$ ]0,1[$
 there are  arbitrarily smooth divergence free vector fields~$( \widetilde  \phi_\alpha^{h,\ell},0) $ and~$(- \nabla^{\rm h}\Delta_{\rm h}^{-1} \partial_3  \phi_\alpha^{\ell}  ,\phi_\alpha^{\ell})  $ with~$\widetilde  \phi_\alpha^{h,\ell}$ and~$ \phi_\alpha^{\ell}$  compactly supported, and such that  up to extracting a subsequence,   one can write the sequence~$  (u_{0,n})_{n \in \N}$ under the following form, for each~$L \geq 1$:
\begin{equation}\label{decu0n3.7}
\begin{aligned}
  {u_{0,n}  }  =  u_0
 & +  \sum_{
   \ell = 1}^{L }  \Lambda_{ {\varepsilon_n^{\ell}} , {\gamma_n^{\ell}}, {x_n^{\ell}} } \Big( \widetilde  \phi_\alpha^{{\rm h},\ell} +\widetilde  r_\alpha^{{\rm h},\ell}
- {\frac{ \varepsilon_n^{\ell} }{\gamma_n^{\ell} }}
 \nabla^{\rm h} \Delta_{\rm h}^{-1} \partial_3 ( \phi_\alpha^{\ell} + r_\alpha^{\ell}   )  ,  \phi_\alpha^{\ell}+ r_\alpha^{\ell}  \Big)  \\
 & \quad  +\big(  {\widetilde \psi_n^{{\rm h},  L} }-\nabla^{\rm h} \Delta_{\rm h}^{-1} \partial_3  {\psi_n^{L} }, {\psi_n^{L} }\big)    \, , 
   \end{aligned}
\end{equation}
 where~$\widetilde \psi_n^{{\rm h}, L}$ and~$\psi_n^L$ are independent of~$\alpha$  and satisfy
 \begin{equation}
 \label{smallremainderpsibis}
\limsup_{n \to \infty} \Big(   \|\widetilde \psi_n^{{\rm h},  L}\|_{\cB^0  } + \| \psi_n^{L}\|_{\cB^0 } \Big)\to 0\, , \quad L \to \infty \, ,
\end{equation}

\bigbreak\noindent
  while~$\widetilde  r_\alpha^{{\rm h},\ell}$ and~$  r_\alpha^{\ell}$ are independent of~$n$ and~$L$ and satisfy for each~$\ell \in \N$
  \begin{equation}\label{smallralpha}
   \| \widetilde  r_\alpha^{{\rm h},\ell}\|_{ {\mathcal B}_{1,q}} +\|    r_\alpha^{\ell}\|_{ {\mathcal B}_{1,q}} \leq \alpha \, .
\end{equation}

\medskip
\noindent
Moreover the following properties hold:
\begin{equation}\label{anisoscales}
\begin{aligned}
\forall \ell \geq 1\, , \quad
 \displaystyle \lim_{n \to \infty}  \: (\gamma_n^\ell)^{-1} \varepsilon_n^{\ell}   \in \{0 , \infty\}\,  ,
 \\
  \mbox{and} \quad
    \displaystyle \lim_{n \to \infty}  \: (\gamma_n^\ell)^{-1} \varepsilon_n^{\ell}  =\infty\,  \Longrightarrow\,
   \phi_\alpha^{\ell}\equiv r_\alpha^{\ell} \equiv 0 \, ,
   \end{aligned}
    \end{equation}
as well as the following stability result, which is uniform in~$\alpha$:
    \begin{equation}\label{orthonorms}
   \sum_{\ell \geq 1} \big(\|\widetilde  \phi_\alpha^{{\rm h},\ell} \|_{{\mathcal B}_{1,q}} +\|\widetilde  r_\alpha^{{\rm h},\ell} \|_{{\mathcal B}_{1,q}} +   \|   \phi_\alpha^{\ell}  \|_{{\mathcal B}_{1,q}} +   \|   r_\alpha^{\ell}  \|_{{\mathcal B}_{1,q}}  \big) \lesssim \sup_n \|u_{0,n}\|_{{\mathcal B}_{1,q}} + \|u_0\|_{{\mathcal B}_{1,q}} \, .
  \end{equation}
 }\end{prop}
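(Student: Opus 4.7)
The plan is to derive Proposition~\ref{prop:decompositiondatabis} from Proposition~\ref{prop:decompositiondata0}, Proposition~\ref{prop:decompositiondata}, Remark~\ref{testfunctionrk}, and the divergence-free condition on $u_{0,n}$ and $u_0$. Since R-convergence (Definition~\ref{weakcvunderscaling}) implies ordinary weak convergence by choosing $\lambda_n\equiv 1$ and $x_n\equiv 0$, Proposition~\ref{prop:decompositiondata0} produces, after a diagonal extraction, orthogonal triplets $(\varepsilon_n^\ell,\gamma_n^\ell,x_n^\ell)_n$ and profiles $\phi^\ell\in\cB_{1,q}$ with
\[
u_{0,n}=u_0+\sum_{\ell=1}^L \Lambda_{\varepsilon_n^\ell,\gamma_n^\ell,x_n^\ell}\phi^\ell +\psi_n^L,
\]
where $\psi_n^L$ satisfies~\eqref{smallremainderpsi} and the stability bound~\eqref{ortnorms} holds. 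Proposition~\ref{prop:decompositiondata}, which exploits the full R-convergence hypothesis, converts this into anisotropic scales and gives the first half of~\eqref{anisoscales}, namely $\varepsilon_n^\ell/\gamma_n^\ell\to 0$ or $\infty$. Applying Remark~\ref{testfunctionrk} then splits each profile into a smooth compactly supported part and a remainder of size $\alpha$ in $\cB_{1,q}$, compatibly with the summability~\eqref{orthonormsbis}.

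The heart of the proof lies in recasting these generic profiles into the divergence-free structured form of~\eqref{decu0n3.7}. A direct computation yields
\[
\mathrm{div}\,\Lambda_{\varepsilon_n^\ell,\gamma_n^\ell,x_n^\ell}\phi^\ell \;=\; \tfrac{1}{\varepsilon_n^\ell}\,\Lambda_{\varepsilon_n^\ell,\gamma_n^\ell,x_n^\ell}\!\Bigl(\mathrm{div}_{\rm h}(\phi^\ell)^{\rm h}+\tfrac{\varepsilon_n^\ell}{\gamma_n^\ell}\partial_3(\phi^\ell)^3\Bigr).
\]
Summing over $\ell$, using $\mathrm{div}\,u_{0,n}=\mathrm{div}\,u_0=0$ and the $\cB^0$-smallness of $\psi_n^L$, and testing against rescaled test functions adapted to a fixed scale $k$, the orthogonality of the triplets of Definition~\ref{orthoseqaniso} kills all cross terms in the limit and leaves, for each $\ell$,
\[
\mathrm{div}_{\rm h}(\phi^\ell)^{\rm h}+\tfrac{\varepsilon_n^\ell}{\gamma_n^\ell}\partial_3(\phi^\ell)^3\longrightarrow 0\quad(n\to\infty).
\]
Since $\phi^\ell$ is $n$-independent, passing to the limit according to the dichotomy of~\eqref{anisoscales} forces: when $\varepsilon_n^\ell/\gamma_n^\ell\to 0$, $\mathrm{div}_{\rm h}(\phi^\ell)^{\rm h}=0$ exactly; when $\varepsilon_n^\ell/\gamma_n^\ell\to\infty$, $\partial_3(\phi^\ell)^3=0$, which together with the compact-support approximation provided by Remark~\ref{testfunctionrk} forces $(\phi^\ell)^3\equiv 0$ in $\cB_{1,q}$ and then also $\mathrm{div}_{\rm h}(\phi^\ell)^{\rm h}=0$. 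In the first case I take $\widetilde\phi^{{\rm h},\ell}_\alpha$ to be a smooth compactly supported divergence-free approximation of $(\phi^\ell)^{\rm h}$ (obtained by divergence-free convolution) and $\phi^\ell_\alpha$ a smooth compactly supported approximation of $(\phi^\ell)^3$, with corresponding small $\cB_{1,q}$-remainders $\widetilde r^{{\rm h},\ell}_\alpha$ and $r^\ell_\alpha$; the corrector $-(\varepsilon_n^\ell/\gamma_n^\ell)\nabla^{\rm h}\Delta_{\rm h}^{-1}\partial_3(\phi_\alpha^\ell+r_\alpha^\ell)$ added in~\eqref{decu0n3.7} is designed precisely so that the scale-adapted divergence of the whole profile vanishes identically. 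In the second case $\phi_\alpha^\ell\equiv r_\alpha^\ell\equiv 0$ and only the horizontally divergence-free pair $(\widetilde\phi^{{\rm h},\ell}_\alpha+\widetilde r^{{\rm h},\ell}_\alpha,0)$ survives, which is the content of the second implication in~\eqref{anisoscales}.

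Finally, the gap between each original rescaled profile and its structured counterpart is a horizontal gradient whose size in $\cB^0$ is controlled by $\mathrm{div}_{\rm h}(\phi^\ell)^{\rm h}+(\varepsilon_n^\ell/\gamma_n^\ell)\partial_3(\phi^\ell)^3=o(1)$; these contributions, together with $\psi_n^L$, split into the two fields $\widetilde\psi_n^{{\rm h},L}$ and $\psi_n^L$ appearing in~\eqref{decu0n3.7}, both still satisfying~\eqref{smallremainderpsibis}. The summability~\eqref{orthonorms} follows from~\eqref{orthonormsbis} combined with continuity of the operator $f\mapsto(-\nabla^{\rm h}\Delta_{\rm h}^{-1}\partial_3 f,f)$ on $\cB_{1,q}$ (one vertical derivative is compensated by one horizontal antiderivative, preserving the anisotropic Besov scaling). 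I expect the principal obstacle to be the orthogonality argument decoupling the single identity $\mathrm{div}\,u_{0,n}=0$ into scale-by-scale constraints on each profile: it demands rescaled test functions tailored to each triplet in the sense of Definition~\ref{orthoseqaniso} and is the anisotropic analogue of the standard cores-and-scales extraction underlying isotropic profile decomposition theory.
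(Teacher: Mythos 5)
Your overall strategy --- apply the profile decomposition to the full vector field $u_{0,n}$, use R-convergence to get the anisotropy of the scales, and then impose the divergence-free constraint a posteriori on each profile --- is not the paper's route, and the ``fix-up'' step hides a genuine gap. The paper never decomposes $u_{0,n}$ as a vector: it decomposes the \emph{scalar} sequence $u^3_{0,n}$ (giving the $\phi^\ell_\alpha+r^\ell_\alpha$) and, separately, the horizontally divergence-free sequence $\nabla^{{\rm h}\perp}C_{0,n}$ defined by the exact identity $u^{\rm h}_{0,n}=\nabla^{{\rm h}\perp}C_{0,n}-\nabla^{\rm h}\Delta_{\rm h}^{-1}\partial_3 u^3_{0,n}$ (giving the $\widetilde\phi^{{\rm h},\ell}_\alpha+\widetilde r^{{\rm h},\ell}_\alpha$). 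The correctors in \eqref{decu0n3.7} then come for free from the commutation relation $\nabla^{\rm h}\Delta_{\rm h}^{-1}\partial_3\,\Lambda_{\varepsilon_n^\ell,\gamma_n^\ell,x_n^\ell}=\frac{\varepsilon_n^\ell}{\gamma_n^\ell}\,\Lambda_{\varepsilon_n^\ell,\gamma_n^\ell,x_n^\ell}\,\nabla^{\rm h}\Delta_{\rm h}^{-1}\partial_3$ applied to the decomposition of $u^3_{0,n}$ --- including the term $-\nabla^{\rm h}\Delta_{\rm h}^{-1}\partial_3\psi^L_n$ in the remainder, which your proposal does not account for and which is \emph{not} small in $\cB^0$ (it is handled much later, in Section~3.2.3, via the smoothing of the heat flow).

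The concrete gap in your version is this: the profiles $\phi^\ell$ produced by a single vector-valued application of Proposition~\ref{prop:decompositiondata0} are independent of $n$, whereas the horizontal correctors $-\frac{\varepsilon_n^\ell}{\gamma_n^\ell}\nabla^{\rm h}\Delta_{\rm h}^{-1}\partial_3(\phi^\ell_\alpha+r^\ell_\alpha)$ in \eqref{decu0n3.7} depend on $n$ through $\varepsilon_n^\ell/\gamma_n^\ell$. They therefore cannot be read off from the components of your $\phi^\ell$; you must insert them by hand and push the compensating term $\sum_\ell\frac{\varepsilon_n^\ell}{\gamma_n^\ell}\Lambda_{\varepsilon_n^\ell,\gamma_n^\ell,x_n^\ell}\nabla^{\rm h}\Delta_{\rm h}^{-1}\partial_3(\phi^\ell_\alpha+r^\ell_\alpha)$ into the remainder. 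That term depends on $\alpha$, while \eqref{smallremainderpsibis} requires $\widetilde\psi^{{\rm h},L}_n$ to be independent of $\alpha$; worse, it is not small, because --- contrary to your final claim --- $f\mapsto\nabla^{\rm h}\Delta_{\rm h}^{-1}\partial_3 f$ is \emph{not} bounded on $\cB_{1,q}$ or on $\cB^0$: it maps $B^{s,s'}_{p,q}$ into $B^{s+1,s'-1}_{p,q}$, which has the same scaling but is a different space, and $r^\ell_\alpha$ has no spare vertical regularity to pay for this. This is precisely why the factor $\varepsilon_n^\ell/\gamma_n^\ell$, the dichotomy \eqref{anisoscales}, and the extra bound of $u^3_{0,n}$ in $B^{0,2}_{1,q}$ (coming from $\partial_3 u^3_{0,n}=-\dive_{\rm h}u^{\rm h}_{0,n}$, and used in the paper, via Lemma~5.3 of \cite{bg}, to kill $\phi^\ell_\alpha$ when $\varepsilon_n^\ell/\gamma_n^\ell\to\infty$) are all load-bearing. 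Your weak-limit argument showing $\dive_{\rm h}(\phi^\ell)^{\rm h}=0$ when $\varepsilon_n^\ell/\gamma_n^\ell\to 0$ and $(\phi^\ell)^3\equiv0$ when $\varepsilon_n^\ell/\gamma_n^\ell\to\infty$ is sound in spirit, but it does not repair the reconstruction of the horizontal part; to get \eqref{decu0n3.7} as an exact identity you need the stream-function splitting of $u^{\rm h}_{0,n}$ before decomposing, not after.
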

\begin{proof}[Proof of Proposition~{\rm\ref{prop:decompositiondatabis}}] Note that due to Proposition~\ref{prop:decompositiondata} which asserts  that  the hypothesis of R-convergence  is equivalent to the one of anisotropic oscillations required in \cite{bg} (see Remark~\ref{rkoscaniso}), Proposition~\ref{prop:decompositiondatabis} is nothing else than Proposition~2.4 in \cite{bg}. Let us recall the argument. First  we decompose the third component $u^3_{0,n} $ according to  Proposition~\ref{prop:decompositiondata0} and Remark~\ref{testfunctionrk}: with the above notation, this  gives rise to 
   \begin{equation}  \label{decvert}{u^{3}_{0,n}  }  =  u^{3}_0
  +  \sum_{
   \ell = 1}^{L }  \Lambda_{ {\varepsilon_n^{\ell}} , {\gamma_n^{\ell}}, {x_n^{\ell}} } \big(  \phi_\alpha^{\ell}+ r_\alpha^{\ell}  \big) + \psi_n^{L}\,,
    \end{equation} 
    with~$\displaystyle \limsup_{n \to \infty} \| \psi_n^L\|_{\cB^0 }\stackrel{L\to\infty}\longrightarrow 0 $. Moreover thanks to Proposition \ref{prop:decompositiondata}, we know that  for all $\ell \geq 1$,  we have~$ 
 \displaystyle \lim_{n \to \infty}  \: (\gamma_n^\ell)^{-1} \varepsilon_n^{\ell}$  belongs to~$\{0 , \infty\}$.

  \smallbreak
 \noindent Next thanks to the divergence-free assumption we recover the profile decomposition for~$u^{\rm h}_{0,n} $. Indeed
 there is a two-component, divergence-free vector field~$ {\nabla^{\rm h}  }^\perp  C_{0,n}$
such that
$$
u_{0,n}^{\rm h}   = {\nabla^{\rm h}  }^\perp    C_{0,n}  - \nabla^{\rm h}  \Delta_{\rm h}  ^{-1} \partial_3  u_{0,n}^3 \, ,
$$
where~$ {\nabla^{\rm h}  }^\perp= (-\partial_1, \partial_2)$, and some function~$\varphi$ such that
$$
u_{0}^{\rm h}  =  {\nabla^{\rm h}  }^\perp\varphi- \nabla^{\rm h} \Delta_{\rm h} ^{-1} \partial_3  u_{0}^3 \, .
$$
Now since~$\ds \partial_3  u_{0,n}^3= - \mbox{div}_{\rm h} \:  u_{0,n}^{\rm h} $
and ~$ u_{0,n}^{\rm h} $ is bounded in~${\mathcal B}_{1,q}$,  we deduce that~$  {\nabla^{\rm h}  }^\perp C_{0,n} $ is  a bounded  sequence in~${\mathcal B}_{1,q}$ and  similarly for~$ {\nabla^{\rm h}  }^\perp \varphi$. Thus, applying again the profile decomposition of Proposition~\ref{prop:decompositiondata0} and Remark~\ref{testfunctionrk}, 
 we get
  \begin{equation}  \label{decomphoriz}
 {\nabla^{\rm h}  }^\perp C_{0,n}   -  \nabla^\perp_h \varphi  =
 \sum_{\ell  = 1}^L   \Lambda_{ {\tilde\varepsilon_n^{\ell}} , {\tilde\gamma_n^{\ell}}, {\tilde x_n^{\ell}} } \big( \widetilde  \phi_\alpha^{{\rm h},\ell} +\widetilde  r_\alpha^{{\rm h},\ell}\big)
 + \tilde \psi_n^{{\rm h},L}
\end{equation} 
  with~$\displaystyle \limsup_{n \to \infty} \| \tilde\psi_n^{{\rm h},L}\|_{\cB^0 }\stackrel{L\to\infty}\longrightarrow 0 $.    
Moreover   Proposition \ref{prop:decompositiondata}  ensures that  for all $\ell \geq 1$, we have~$ 
 \displaystyle \lim_{n \to \infty}  \: (\tilde\gamma_n^\ell)^{-1} \tilde\varepsilon_n^{\ell}   \in \{0 , \infty\}$. \\
 Finally,  by the divergence free assumption,~$    u_{0,n}^3$ is bounded in ~$
  B^{0,2}_{1,q}$ which implies that necessarily~$ \phi_\alpha^{\ell}\equiv r_\alpha^{\ell} \equiv 0$ in the case when~$  \displaystyle \lim_{n \to \infty}  \: (\gamma_n^\ell)^{-1} \varepsilon_n^{\ell}  = \infty $ (see Lemma 5.3 in \cite{bg}). \\
  Up to relabelling the various sequences appearing in~(\ref{decvert}) and~(\ref{decomphoriz}), Proposition \ref{prop:decompositiondatabis} follows.
 \end{proof} 
 
\medbreak
 
 \subsection{Regrouping of profiles according to horizontal scales}\label{reorganizationprofiles}
 With the notation of Proposition~\ref{prop:decompositiondatabis}, let us define the following scales:~$ {\eps_n^0} \equiv  {\gamma_n^0} \equiv   { 1} $, and~$ {  x_n^0} \equiv    0$, so that one has~$
u_0 \equiv \Lambda_{ {\eps_n^{0}} , {\gamma_n^{0}}, {  x_n^{0}} } u_0
$.

\medskip
\noindent
 In order to proceed with the re-organization of the profile decomposition provided in Proposition~\ref{prop:decompositiondatabis}, we    introduce some more definitions,  keeping the notation of
Proposition~\ref{prop:decompositiondatabis}.
For a given~$L \geq 1$ 
we define recursively an increasing  (finite) sequence of indexes~$\ell_k \in  \{  1,\dots,L\}$ by
\begin{equation}\label{deflitilde}
\begin{aligned}
 \ell_0 \eqdefa 0  \, , \quad
 \ell_{k+1}   \eqdefa \min \Big \{ \ell \in \{  \ell_k+1,\dots,L\} \, / \,\, \frac {\eps_n^\ell} {\gamma_n^{\ell}}\to 0
\quad \mbox{and} \quad
\ell \notin \displaystyle  \bigcup_{k' = 0}^{k}   \Gamma^L({{ {\eps_n^{ \ell_{k'}}}}})
\Big\}  \, ,
\end{aligned}
\end{equation}
where for~$0 \leq \ell \leq L$,
we define (recalling that by Remark~\ref{equalscales} if two scales are not orthogonal, then they are equal),
\begin{equation}\label{defLambdatilde}
  \Gamma^L({{ {\eps_n^\ell}}})\eqdefa \Big \{
\ell' \in \{1,\dots,L\} \, / \, { {\eps_n^{\ell'}} } \equiv { {\eps_n^{\ell}} }  \quad \mbox{and} \quad
\eps_n^{\ell'} (\gamma_n^{\ell'}) ^{-1} \to 0 \, , \, n \to \infty
\Big\}  \, .
\end{equation}
We call~$ \cL(L)$ the largest index of the sequence~$(\ell_k)$ and we may then introduce the following partition:
\begin{equation}\label{defpartition}
     \big \{ \ell \in \{1,\dots,L\} \, / \, \eps_n^\ell ( \gamma_n^\ell)^{-1} \to 0 \big\}   = \bigcup_{k=0}^{ \cL(L)}   \Gamma^L({{ {\eps_n^{  \ell_k}}}})  \, .
\end{equation}
  We shall now regroup profiles in the decomposition~(\ref{decu0n3.7}) of~$u_{0,n}$  according to the value of their horizontal scale.
  We fix from now on an integer~$L \geq 1$.

 \subsubsection{Construction of the profiles for~$\ell = 0$}\label{j0profile}
 Before going into the technical details of the construction, let us discuss an example explaining the computations of this paragraph. Consider the particular case when~$u_{0,n}  $ is given by
 $$
 u_{0,n} (x)= u_0(x)+\big(v_0^{0} (x_{\rm h},2^{-n}x_3) +  w_0^{0,\rm h} (x_{\rm h},2^{-2n}x_3) ,0 \big) + \big(v_0^{0} (x_{1}+ n, x_2, 2^{-n}x_3) ,0 \big) \, , 
 $$ with~$v_0^{0}$ and~$w_0^{0,\rm h}$ smooth (say in~$  B^{s,s'}_{1,q}$ for all~$s,s'$ in~$\R$) and compactly supported. Let us assume that~$(u_{0,n})_{n\in \N}$  R-converges to $u_0$, as $n$ tends to infinity. Then we can write
 $$
 u_{0,n} (x)=u_0(x)+ \big( v^{0,\rm {loc}}_{0,n}(x_{\rm h},2^{-n}x_3),0 \big)+\big(v^{0,\infty}_{0,n}(x_{\rm h},2^{-n}x_3),0 \big) \, ,
 $$
 with~$v^{0,\rm {loc}}_{0,n}(y):=v_0^{0} (y) +  w_0^{0,\rm h} (y_h,2^{-n}y_3)$ and~$v^{0,\infty}_{0,n} (y) = v_0^{0} (y_{1}+ 2^n, y_2, y_3)$. We notice that~$v^{0,\rm {loc}}_{0,n}$ and~$v^{0,\infty}_{0,n}$ are uniformly bounded in~${\mathcal B}_{1,q}$, but also in~$  B^{s,s'}_{1,q}$ for any~$s$ in~$ \R$ and~$s' \geq 1/2$.

 \noindent Moreover since~$u_{0,n} \rightharpoonup u_0$ as~$n$ goes to infinity, we have that~$v_0^{0} (x_{\rm h},0) +  w_0^{\rm h} (x_{\rm h},0) \equiv 0 $, hence~$v^{0,\rm {loc}}_{0,n}(x_{\rm h},0) = 0$. 
  The initial data~$u_{0,n}$ has therefore been re-written as
 $$
 u_{0,n} (x)=u_0(x)+ \big(v^{0,\rm {loc}}_{0,n}(x_{\rm h},2^{-n}x_3),0 \big)+\big(v^{0,\infty}_{0,n}(x_{\rm h},2^{-n}x_3),0 \big) \,  \quad \mbox{with} \quad v^{0,\rm {loc}}_{0,n}(x_{\rm h},0) = 0 
 $$
 and where the support in~$x_{\rm h}$ of~$v^{0,\rm {loc}}_{0,n}(x_{\rm h},2^{-n}x_3) $ is in a fixed compact set whereas the support  in~$x_{\rm h}$ of~$v^{0,\infty}_{0,n}(x_{\rm h},2^{-n}x_3) $ escapes to infinity.
 This is of the same form   as in the statement of Theorem~\ref{mainresultprofilesdi}.

\smallskip

 \noindent  When considering all the profiles having the same horizontal scale (1 here), the point is therefore to choose the smallest vertical scale ($2^n$ here) and to write the decomposition in terms of that scale only. Of course that implies that contrary to usual profile decompositions,  the profiles are no longer fixed functions in~${\mathcal B}_{1,q}$, but  sequences of functions, bounded in~${\mathcal B}_{1,q}$.

\bigskip
\noindent In view of the above example,  let~$\ell_0^-$ be an integer such that~$ {\gamma_n^{\ell_0^-}} $ is the smallest vertical scale going to infinity, associated with profiles for~$1  \leq \ell \leq L$, having~$1$ for  horizontal scale. More precisely we ask that
$$
 {\gamma_n^{\ell_0^-}} = \min_{\ell  \in   \Gamma^L(  1)}    {\gamma_n^\ell }   \, ,
$$
where according to~(\ref{defLambdatilde}),
$$
 \Gamma^L({ {1}}) = \Big \{
\ell' \in \{1,\dots,L\} \, / \, { {\eps_n^{\ell'}} } \equiv { {1} }  \quad \mbox{and} \quad
   \gamma_n^{\ell'} \to \infty \, , \, n \to \infty
\Big\} \, .
$$
 Notice that  the minimum of   the sequences~$\gamma_n^\ell$ is well  defined in our context thanks to the fact that due to Remark~\ref{equalscales}, either two sequences are orthogonal in the sense of Definition~\ref{orthoseq}, or they are equal.
Remark also that~$\ell_0^-$ is by no means unique, as several profiles may have the same horizontal scale as well as the same vertical scale (in which case the concentration cores must be orthogonal).
\medskip
\noindent Now we denote
  \begin{equation}\label{defh0}
  { h_n^0}\eqdefa  { (\gamma_n^{\ell_0^-} )^{-1} } \, ,
\end{equation}
and we notice that~$h_n^0$ goes to zero as~$n$ goes to infinity for each~$L$. Note also that~$h_n^0$    depends on~$L$   through the choice of~$\ell_0^-$, since if~$L$ increases then~$\ell_0^-$ may also increase; this dependence is omitted in the notation for simplicity. Let us define (up to a subsequence extraction)
 \begin{equation}\label{defaell}
   a^\ell \eqdefa \lim_{n \to \infty} \Big(  x_{n,{\rm h}}^{\ell},  \frac{x_{n,3}^{\ell}}{\gamma_n^\ell}\Big) \, \cdotp
\end{equation}

\medskip
\noindent   We then define the divergence-free vector fields
    \begin{equation}\label{defv0}
v^{0,{\rm loc,  h}}_{0,n,\alpha,L} (y)  \eqdefa \sumetage{ \ell \in   \Gamma^L({ {1}})} {a_{\rm  h}^\ell \in \R^2 }\widetilde \phi_\alpha^{h,\ell} \Big(y_{\rm h}-  x_{n,{\rm   h}}^\ell \, , \frac{y_3}{h_n^0 \gamma_n^{\ell}} -  \frac{   x_{n,3}^\ell}{ \gamma_n^{\ell}}  \Big)
\end{equation}
and
  \begin{equation}\label{defw0}
\begin{aligned}
    w_{0,n,\alpha,L}^{0,\rm loc} (y)  \eqdefa \sumetage{ \ell \in   \Gamma^L({ {1}})} {a_{\rm  h}^\ell \in \R^2 } \Big(
 - \frac{1}{h_n^0\gamma_n^\ell} \nabla^{\rm h} \Delta_{\rm h}^{-1} \partial_3  \phi_\alpha^{ \ell} , \phi_\alpha^{ \ell}
\Big) \Big(y_{\rm h}-  x_{n,{\rm   h}}^\ell \, , \frac{y_3}{h_n^0 \gamma_n^{\ell}}  -  \frac{   x_{n,3}^\ell}{ \gamma_n^{\ell}}  \Big)
\, .
 \end{aligned}
\end{equation}
By construction we have
$$
    w_{0,n,\alpha,L}^{0,{\rm  loc, h}} = - \nabla^{\rm h} \Delta_{\rm h}^{-1} \partial_3     w_{0,n,\alpha,L}^{0,{\rm  loc},3} \, .
$$
Similarly we define
   \begin{equation}\label{defv0infty}
   v^{0,\infty, {\rm   h}}_{0,n,\alpha,L} (y)  \eqdefa \sumetage{ \ell \in   \Gamma^L({ {1}})} {|a_{\rm  h}^\ell| = \infty}\widetilde \phi_\alpha^{h,\ell} \Big(y_{\rm h}-  x_{n,{\rm   h}}^\ell \, , \frac{y_3}{h_n^0 \gamma_n^{\ell}} -  \frac{   x_{n,3}^\ell}{ \gamma_n^{\ell}}  \Big)
\end{equation}
and
  \begin{equation}\label{defw0infty}
\begin{aligned}
    w_{0,n,\alpha,L}^{0,\infty} (y)  \eqdefa \sumetage{ \ell \in   \Gamma^L({ {1}})} {|a_{\rm  h}^\ell| = \infty } \Big(
 - \frac{1}{h_n^0\gamma_n^\ell} \nabla^{\rm h} \Delta_{\rm h}^{-1} \partial_3  \phi_\alpha^{ \ell} , \phi_\alpha^{ \ell}
\Big) \Big(y_{\rm h}-  x_{n,{\rm   h}}^\ell \, , \frac{y_3}{h_n^0 \gamma_n^{\ell}}  -  \frac{   x_{n,3}^\ell}{ \gamma_n^{\ell}}  \Big)
\, .
 \end{aligned}
\end{equation}
By construction we have again
$$
    w_{0,n,\alpha,L}^{0,\infty,{\rm   h}} = - \nabla^{\rm h} \Delta_{\rm h}^{-1} \partial_3     w_{0,n,\alpha,L}^{0,\infty,3} \, .
$$
Moreover  recalling the notation
$$
 \big[f  ]_{ h_n^0}  (x) \eqdefa f(x_{\rm h},{    h}_{n}^0 x_3)
$$
and
   $$
  \Lambda_{ {\varepsilon_n} , {\gamma_n}, {x_n} } \phi (x) \eqdefa
  \frac1 {\varepsilon_n}  \phi\left( \frac{x_{\rm h}-x_{n,{\rm   h}} }{\varepsilon_n}, \frac{x_3-x_{n,3}} {\gamma_n} \right) \, ,
  $$
one can compute that
 \begin{equation}\label{formula0}
   \sumetage{ \ell \in   \Gamma^L({ {1}})} {a_{\rm  h}^\ell \in \R^2} \  \Lambda_{ 1, {\gamma_n^{\ell}}, {  x_n^{\ell}} }  \left(
\widetilde \phi_\alpha^{h,\ell} - \frac1{\gamma_n^\ell} \nabla^{\rm h} \Delta_{\rm h}^{-1} \partial_3  \phi_\alpha^{ \ell} , \phi_\alpha^{ \ell}
\right)   = \big[(   v_{0,n,\alpha,L}^{0,{\rm  loc, h}} + h_n^0   w_{0,n,\alpha,L}^{0,{\rm loc,h}}  ,   w_{0,n,\alpha,L}^{0,{\rm loc},3} )\big]_{ h_n^0}  
\end{equation}
and
\begin{equation}\label{formula0infty}
   \sumetage{ \ell \in   \Gamma^L({ {1}})} {|a_{\rm  h}^\ell | = \infty  } \  \Lambda_{ 1, {\gamma_n^{\ell}}, {  x_n^{\ell}} }  \left(
\widetilde \phi_\alpha^{h,\ell} - \frac1{\gamma_n^\ell} \nabla^{\rm h} \Delta_{\rm h}^{-1} \partial_3  \phi_\alpha^{ \ell} , \phi_\alpha^{ \ell}
\right)   = \big[(   v_{0,n,\alpha,L}^{0,\infty, {\rm   h}} + h_n^0    w_{0,n,\alpha,L}^{0,\infty,{\rm h}}  ,    w_{0,n,\alpha,L}^{0,\infty,3} )\big]_{ h_n^0}  
\, .\end{equation}
Let us now check that~$  v_{0,n,\alpha,L}^{0,{\rm loc, h}}$,~$  w_{0,n,\alpha,L}^{0,{\rm loc}}$,~$   v_{0,n,\alpha,L}^{0,\infty,{\rm h}}$ and~$w_{0,n,\alpha,L}^{0,\infty}$ satisfy the   bounds given in the statement of Theorem~\ref{mainresultprofilesdi}. We shall only study~$  v_{0,n,\alpha,L}^{0,{\rm loc,h}}$ and~$  w_{0,n,\alpha,L}^{0,{\rm loc}}$ as the other study is very similar.
On the one hand, by translation and scale invariance of~${\mathcal B}_{1,q}$ and using definitions~(\ref{defv0}) and~(\ref{defw0}), we get
\begin{equation}\label{stab1}
\|v_{0,n,\alpha,L}^{0,{\rm loc, h}}\|_{{\mathcal B}_{1,q}} \leq \sum_{ \ell \geq 1} \|\widetilde  \phi_\alpha^{h,\ell}\|_{{\mathcal B}_{1,q}} \quad \mbox{and} \quad
\|  w_{0,n,\alpha,L}^{0,{\rm loc}, 3}\|_{{\mathcal B}_{1,q}} \leq \sum_{ \ell \geq 1} \| \phi_\alpha^{\ell}\|_{{\mathcal B}_{1,q}} \, .
\end{equation}
By~(\ref{orthonorms}), we infer that
\begin{equation}\label{boundv0w0-unif}
\| v_{0,n,\alpha,L}^{0,{\rm loc, h}}\|_{{\mathcal B}_{1,q}} + \| w_{0,n,\alpha,L}^{0,{\rm loc}, 3}\|_{{\mathcal B}_{1,q}}  \leq C \quad  \mbox{uniformly in} \, \, \alpha \, , L \, , n \, .
\end{equation}
Moreover for each given~$\alpha$, the profiles are as smooth as needed, and since in the above sums by construction~$\gamma_{n,L}^{\ell_0^-} \leq \gamma_n^{\ell} $,   one gets also after an easy computation
\begin{equation}\label{boundv0w0-notunif}
\forall s \in \R \, , \forall s' \geq 1/2\, , \quad   \|v_{0,n,\alpha,L}^{0,{\rm loc, h}}\|_{  B^{s,s'}_{1,q}  } + \|v_{0,n,\alpha,L}^{0,{\rm loc}} \|_{  B^{s,s'}_{1,q} }   \leq C(\alpha) \quad  \mbox{uniformly in} \, \, n\, , L \,  .
   \end{equation}
Estimates~(\ref{boundv0w0-unif}) and~(\ref{boundv0w0-notunif}) give easily~(\ref{orthonorms2}) and~(\ref{orthonorms22}). 

   \medskip
   \noindent
   Finally let us estimate~$ v_{0,n,\alpha,L}^{0,{\rm loc, h}} (\cdot , 0)$ and~$w_{0,n,\alpha,L}^{0,{\rm loc}, 3}(\cdot , 0)$ in~$   B^{0}_{2,1} (\R^2)$ and prove~(\ref{smallatzerothmdata}).
  On the one hand by assumption we know that~$u_{0,n}  \convf u_0$ in the sense of distributions. On the other hand we can take weak limits in the decomposition of~$u_{0,n}  $ provided by Proposition~\ref{prop:decompositiondatabis}.
  We recall   that by~(\ref{anisoscales}), if~$\eps_n^\ell/\gamma_n^\ell \to \infty$ then~$\phi_\alpha^\ell \equiv r_\alpha^\ell \equiv 0$. Then we notice that clearly
$$
\eps_n^\ell \to 0 \, \, \mbox{or} \, \,  \eps_n^\ell \to \infty  \, \Longrightarrow  \, \Lambda_{ {\eps_n^{\ell}} , {\gamma_n^{\ell}}, {  x_n^{\ell}} } f \convf 0
$$
for any value of the sequences~$ {\gamma_n^{\ell}}, {  x_n^{\ell}}$ and any function~$f$. Moreover
$$
\gamma_n^\ell \to 0   \,\Longrightarrow   \,\Lambda_{ {1} , {\gamma_n^{\ell}}, {  x_n^{\ell}} } f \convf  0
$$
for any  sequence of cores~$  {  x_n^{\ell}}$ and any function~$f$, so we are left  with  the study of  profiles such that~$\eps_n^\ell \equiv 1$ and~$\gamma_n^\ell \to \infty$.
Then we also notice that if~$\gamma_n^\ell \to \infty$, then with Notation~(\ref{defaell}),
\begin{equation}\label{limxtoinftygamma}
| a_{\rm h}^\ell| = \infty
 \,\Longrightarrow   \,\Lambda_{ {1} , {\gamma_n^{\ell}}, {  x_n^{\ell}} } f \convf  0 \,.
\end{equation}
Consequently for each~$L \geq 1$ and each~$\alpha$ in~$]0,1[$, we have in view of \eqref{decvert} and \eqref{decomphoriz}, as~$n$ goes to infinity
\begin{eqnarray*}
u^3_{0,n}-\psi_n^{L} -\sum_{ \ell \in   \Gamma^L({ {1}})}     r_\alpha^\ell(\cdot -  x_{n,{\rm h}}^{\ell}, \frac{ \cdot -   x_{n,3}^{\ell}}{\gamma_n^\ell}) & \convf & u^3_0 +    \sumetage{ \ell \in   \Gamma^L({ {1}})}{ s.t. \, a^\ell_{\rm h} \in \R^2 }      \phi_\alpha^\ell(\cdot -  a_{h}^\ell,  0)\\ 
\nabla^\perp_h C_{0,n}-\tilde \psi_n^{h,L}-\sum_{ \ell \in   \Gamma^L({ {1}})}     \widetilde r_\alpha^{h,\ell}(\cdot -  x_{n,{\rm h}}^{\ell}, \frac{ \cdot -   x_{n,3}^{\ell}}{\gamma_n^\ell}) &\convf& \nabla^\perp_h \varphi +\sumetage{ \ell \in   \Gamma^L({ {1}})}{ s.t. \, a^\ell_{\rm h} \in \R^2}       \phi_\alpha^{{\rm h},\ell}(\cdot -  a_{h}^\ell,  0)\,.
\end{eqnarray*}
By hypothesis the sequence $(u^3_{0,n})_{n \in \N}$ converges weakly to  $ u^3_0$ and the sequence $(\nabla^\perp_h C_{0,n})_{n \in \N}$ converges weakly to  $\nabla^\perp_h \varphi$, so for each~$L \geq 1$ and all~$\alpha$ in~$] 0,1[$, we have as~$n$ goes to infinity
\begin{equation}\label{wlimGamma}
\begin{aligned}
   - {\psi_n^{L} }  
-   \sum_{ \ell \in   \Gamma^L({ {1}})}      r_\alpha^\ell(\cdot -  x_{n,{\rm h}}^{\ell},    \frac{  \cdot -   x_{n,3}^{\ell}}{\gamma_n^\ell}) & \convf \sumetage{ \ell \in   \Gamma^L({ {1}})}{ s.t. \, a^\ell_{\rm h} \in \R^2 }      \phi_\alpha^\ell(\cdot -  a_{h}^\ell, 0)
\\ -\tilde \psi_n^{h,L}-\sum_{ \ell \in   \Gamma^L({ {1}})}     \widetilde r_\alpha^{h,\ell}(\cdot -  x_{n,{\rm h}}^{\ell}, \frac{ \cdot -   x_{n,3}^{\ell}}{\gamma_n^\ell}) & \convf     \sumetage{ \ell \in   \Gamma^L({ {1}})}{ s.t. \, a^\ell_{\rm h} \in \R^2 }       \widetilde \phi_\alpha^{h,\ell}(\cdot -  a_{h}^\ell, 0)\, .
\end{aligned}
\end{equation}
Now let~$\eta>0$ be given. Then thanks to~(\ref{smallremainderpsibis}) and~(\ref{smallralpha}), 
 there is~$L_0 \geq 1$ such that for all~$L \geq L_0$  there is~$\alpha_0 \leq 1$  (depending on~$L$) such that for all~$ L \geq L_0 $ and~$\alpha \leq \alpha_0 $, uniformly in $n \in \N$
$$
  \Big \| \big(  {\widetilde \psi_n^{{\rm h},  L} }, {\psi_n^{L} }\big)\Big \|_{\cB^0}
+
\Big \|   \sum_{ \ell \in   \Gamma^L({ {1}})}    ( \widetilde r_\alpha^{h,\ell},   r_\alpha^\ell)(\cdot -  x_{n,{\rm h}}^{\ell}, \frac{ \cdot -  x_{n,3}^{\ell}}{\gamma_n^\ell})\Big \|_{\cB^0}
\leq \eta \, .
$$
Using the fact that~$\cB^0$
  is embedded in~$L^\infty(\R ;  B^{0}_{2,1}(\R^2))$, we infer from~(\ref{wlimGamma}) that 
for~$L \geq L_0$
and~$\alpha \leq \alpha_0$
 \begin{equation}\label{lessthanetatildephi}
 \Big \| \sumetage{ \ell \in   \Gamma^L({ {1}})}{ s.t. \, a^\ell_{\rm h} \in \R^2 }     \widetilde \phi_\alpha^{h,\ell} (\cdot -  a_{h}^\ell,  0)
\Big \|_{  B^{0}_{2,1}(\R^2)}
  \leq \eta 
  \end{equation}
  and 
 \begin{equation}\label{lessthanetapastildephi}
  \Big \|  \sumetage{ \ell \in   \Gamma^L({ {1}})}{ s.t. \, a^\ell_{\rm h} \in \R^2 }     \phi_\alpha^\ell(\cdot -  a_{h}^\ell, 0)
\Big \|_{  B^{0}_{2,1}(\R^2)}
  \leq \eta\, .
 \end{equation}
But by~(\ref{defv0}), we have
$$
v_{0,n,\alpha,L}^{0,{\rm loc,h}}(\cdot , 0)  = \sumetage{ \ell \in   \Gamma^L({ {1}})} {a_{\rm  h}^\ell \in \R^2 } \widetilde \phi_\alpha^{h,\ell} \Big(\cdot-  x_{n,{\rm h}}^\ell, -  \frac{   x_{n,3}^\ell}{ \gamma_n^{\ell}}  \Big)
$$
and by~(\ref{defw0}) we have also
$$
w_{0,n,\alpha,L}^{0,{\rm loc}, 3} (\cdot , 0)  =
\sumetage{ \ell \in   \Gamma^L({ {1}})} {a_{\rm  h}^\ell \in \R^2 }  \phi_\alpha^{ \ell}
  \Big(\cdot-  x_{n,{\rm h}}^\ell,   -  \frac{   x_{n,3}^\ell}{ \gamma_n^{\ell}}  \Big)
\, .
$$
It follows that we can write  for all~$ L \geq L_0 $ and~$\alpha \leq \alpha_0 $,
  $$
\begin{aligned}
 \limsup_{n \to \infty}\|v_{0,n,\alpha,L}^{0,{\rm loc,h}}(\cdot,0)\|_{{  B}^{0}_{2,1}(\R^2)} &  \leq  \big \|  \sumetage{ \ell \in   \Gamma^L({ {1}})}{  a^\ell_{\rm h} \in \R^2}   \widetilde \phi_\alpha^{h,\ell} (\cdot-  a_{h}^\ell, 0 )
\big \|_{{  B}^{0}_{2,1}(\R^2)}  \\
 &  \leq  \eta 
 \end{aligned}
$$
thanks to~(\ref{lessthanetatildephi}).
A similar estimate for~$w_{0,n,\alpha,L}^{0,{\rm loc}, 3} (\cdot,0)$ using~(\ref{lessthanetapastildephi})
gives finally
\begin{equation}\label{boundv0w0-atzero}
\lim_{L \to \infty}\lim_{\alpha \to 0}  \limsup_{n \to \infty}  \Big(\|v_{0,n,\alpha,L}^{0,{\rm loc,h}}(\cdot,0)\|_{{  B}^{0}_{2,1}(\R^2)} + \|w_{0,n,\alpha,L}^{0,{\rm loc}, 3} (\cdot,0)\|_{{  B}^{0}_{2,1}(\R^2)} \Big) = 0     \, .
\end{equation}
The results~(\ref{mainresultprofilesdieq-2}) and~(\ref{mainresultprofilesdieq-1}) involving  the cut-off function~$\theta$ are simply due to the fact that the profiles are compactly supported.
 
 \subsubsection{Construction of the profiles for~$\ell \geq 1$}\label{jprofile}
  The construction is very similar to the previous one.
 We start by considering a fixed integer~$ j  \in \{1, \dots, {\mathcal {L}}(L)\}$.
\medskip
\noindent Then we define an integer~$ \ell_j^-$ so that, up to a sequence extraction,
$$
   { \gamma_n^{ \ell_j^-}}   = \minetage{\ell  \in   \Gamma^{L}({   {\eps_n^{ \ell_j}}})}
{}
   {\gamma_n^{\ell}}\,,
$$
where as in \eqref{defLambdatilde}
$$
  \Gamma^L({{ {\eps_n^\ell}}})\eqdefa \Big \{
\ell' \in \{1,\dots,L\} \, / \, { {\eps_n^{\ell'}} } \equiv { {\eps_n^{\ell}} }  \quad \mbox{and} \quad
\eps_n^{\ell'} (\gamma_n^{\ell'})^{-1} \to 0 \, , \, n \to \infty
\Big\}  \, .
$$
Notice that necessarily~$\eps^{\ell_j^-} \not \equiv 1$.
Finally
we define
$$
 { h^j _{n} }\eqdefa   {\eps_n^{  \ell_j}}( {\gamma_n^{\ell_j^-}})^{-1}
\, .
$$
By construction we have that~$ h_{n}^j \to 0$ as~$n \to \infty$ (recall that~$ {\eps_n^{\ell_j}}‚àö√á¬¨¬®‚àö¬¢¬¨√Ñ¬¨‚Ä†\equiv  {\eps_n^{\ell_j^-}}$).
Then we define for~$j \leq   {\mathcal L}(L) $
\begin{equation}\label{notequaltozerotoolarge}
  v_{n,\alpha,L}^{j,{\rm h}} (y) \eqdefa \sum_{  \ell \in    \Gamma^{L}({   {\eps_n^{  \ell_{  j }}}})}  \widetilde  \phi_\alpha^{h,\ell}  \Big(y_h - \frac{  x_{n,{\rm h}}^\ell}{\eps_n^{  \ell_{  j}}} , \frac{ \eps_{n}^  {\ell_{  j } }}{ h_{n}^j \gamma_n^{\ell}} y_3 -\frac{  x_{n,3}^\ell}{\gamma_n^{\ell}} \Big)
\end{equation}
and
$$
\begin{aligned}
  w_{n,\alpha,L}^{j} (y) \eqdefa \sum_{  \ell \in    \Gamma^{L}({   {\eps_n^{  \ell_{  j }}}}) } \Big( -\frac{ \eps_{n}^  {\ell_{  j } }}{ h_{n}^j \gamma_n^{\ell}}   \nabla^{\rm h} \Delta_{\rm h}^{-1} \partial_3  \phi_\alpha^{ \ell} , \phi_\alpha^ {\ell} \Big) \Big(y_h - \frac{  x_{n,{\rm h}}^\ell}{\eps_n^{  \ell_{  j}}} , \frac{ \eps_{n}^  {\ell_{  j } }}{ h_{n}^j \gamma_n^{\ell}} y_3 -\frac{  x_{n,3}^\ell}{\gamma_n^{\ell}} \Big)\\
\end{aligned}
$$
and we choose
\begin{equation}\label{equaltozerotoolarge}
   {\mathcal L}(L)<j  \leq L \quad \Rightarrow \quad     v_{n,\alpha,L}^{j,{\rm h}} \equiv 0 \quad \mbox{and} \quad
    w_{n,\alpha,L}^{j} \equiv 0  \,.
\end{equation}
We notice that
$$
  w_{n,\alpha,L}^{j,{\rm h}}  = - \nabla^{\rm h} \Delta_{\rm h}^{-1} \partial_3   w_{n,\alpha,L}^{j,3}\,.
$$
Defining
$$
 { \lambda_n^j }\eqdefa {\eps_n^{  \ell_{  j}}}\, ,
$$
a computation, similar to that giving~(\ref{formula0}) implies directly that
\begin{equation}\label{formulajtilde}
\begin{aligned}
 & \sum_{  \ell \in   \Gamma^{L}({   {\eps_n^{  \ell_{  j }}}})} \Lambda_{{   {\eps_n^{  \ell_{  j }}}}, {\gamma_n^{\ell}}, {  x_n^{\ell}} }
\Big( \widetilde \phi_\alpha^{h,\ell}- \frac{ \lambda_n^j }{\gamma_n^\ell} \nabla^{\rm h} \Delta_{\rm h}^{-1} \partial_3  \phi_\alpha^{ \ell} , \phi_\alpha^{ \ell}\Big) \\
 & \qquad \qquad\qquad\qquad =  \Lambda_{{  \lambda_n^j}} \big [(  v_{n,\alpha,L}^{j,{\rm h}}+   h_{n}^j   w_{n,\alpha,L}^{j,{\rm h}},   w_{n,\alpha,L}^{j,3}) \big]_{  h_{n}^j} \, .
\end{aligned}
\end{equation}
 Notice that since~$  {\eps_n^{  \ell_{  j}}}\not\equiv   1$ as recalled above, we have that~$ { \lambda_n^j }\to 0$ or~$\infty$ as~$n \to \infty$.

 \smallskip
 \noindent The a priori bounds for the profiles~$(    {v_{n,\alpha,L}^{j,{\rm h}}} ,    {w_{n,\alpha,L}^{j,3}})_{1 \leq j \leq L}$  are obtained exactly as in the previous paragraph: let us prove that
   \begin{equation}\label{boundvjwj-unif}
     \begin{aligned}
  \sum_{j \geq 1} \big( \| v_{n,\alpha,L}^{j,{\rm h}}\|_{{\mathcal B}_{1,q}} + \| w_{n,\alpha,L}^{j,3} \|_{{\mathcal B}_{1,q}}\big) \leq C   \, , \quad \mbox{and} \\
   \forall s \in \R \, , \quad \forall s'\geq 1/2 \, , \quad    \sum_{j \geq 1} \big( \| v_{n,\alpha,L}^{j,{\rm h}}\|_{  B^{s,s'}_{1,q}} + \| w_{n,\alpha,L}^{j,3} \|_{  B^{s,s'}_{1,q}}  \big) \leq C(
       \alpha) \, .
    \end{aligned}
  \end{equation}
  We shall detail the argument for the first inequality only, and in the case of~$v_{n,\alpha,L}^{j,{\rm h}}$ as the study of~$ w_{n,\alpha,L}^{j,3} $ is similar.
    We write, using the definition of~$  v_{n,\alpha,L}^{j,{\rm h}}$ in~(\ref{notequaltozerotoolarge}),
   $$
    \begin{aligned}
       \sum_{j = 1}^{L}   \| v_{n,\alpha,L}^{j,{\rm h}}\|_{{\mathcal B}_{1,q}}       & =    \sum_{j=1}^{  {\mathcal L}(L)}
 \Big\|   \sum_{  \ell \in    \Gamma^{L}({   {\eps_n^{  \ell_{  j }}}})}  \widetilde  \phi_\alpha^{h,\ell}  \Big(y_h - \frac{  x_{n,{\rm h}}^\ell}{\eps_n^{  \ell_{  j}}} , \frac{ \eps_{n}^  {\ell_{  j } }}{ h_{n}^j \gamma_n^{\ell}} y_3 -\frac{  x_{n,3}^\ell}{\gamma_n^{\ell}} \Big)
\Big \|_{{\mathcal B}_{1,q}}   \, ,
      \end{aligned}
  $$
so by definition of the partition~(\ref{defpartition}) and by scale and translation invariance of~${\mathcal B}_{1,q}$ we find thanks to~(\ref{orthonorms}), that there is a constant~$C $ independent of~$L$ such that
$$
 \sum_{j = 1}^{L}   \| v_{n,\alpha,L}^{j,{\rm h}}\|_{{\mathcal B}_{1,q}} \leq \sum_{\ell=1}^{L}
    \|\widetilde  \phi_\alpha^{h, \ell} \|_{{\mathcal B}_{1,q}}  \leq C \, .
$$
The result is proved.

%

\subsubsection{Construction  of the remainder term}\label{remainderconstruction}
With the notation of Proposition~\ref{prop:decompositiondatabis}, let us first define
  the remainder terms
   \begin{equation}\label{defrn1}
\begin{aligned}
{ \tilde \rho^{(1),{\rm h}}_{n,\alpha,L}}\eqdefa  -   \sum_{
   \ell = 1  }^{L }  { \frac{ \varepsilon_n^{\ell} }{\gamma_n^{\ell} }}
 \Lambda_{ {\varepsilon_n^{\ell}} , {\gamma_n^{\ell}}, {x_n^{\ell}} }  \nabla^{\rm h}\Delta_{\rm h}^{-1} \partial_3  r_\alpha^{\ell}   -\nabla^{\rm h} \Delta_{\rm h}^{-1} \partial_3  {   \psi_n^{L} }
\end{aligned}
\end{equation}
and
   \begin{equation}\label{defrn2}
\begin{aligned}
 { \rho^{(2)}_{n,\alpha,L}}\eqdefa
   \sum_{\ell  = 1} ^{   L}
  \Lambda_{ {\eps_n^{\ell}} , {\gamma_n^{\ell}}, { x_n^{\ell}} } \big( \tilde  r_\alpha^{h,\ell}
,0 \big)  &+       \sum_{
   \ell = 1  }^{L }  \Lambda_{ {\varepsilon_n^{\ell}} , {\gamma_n^{\ell}}, {x_n^{\ell}} }  (0    ,   r_\alpha^{\ell}   )    +  \big( { \widetilde \psi_n^{h,  L} } ,  {\psi_n^{L} }\big)   \, .
\end{aligned}
\end{equation}
Observe that by construction,  thanks to~(\ref{smallremainderpsi}) and~(\ref{smallralpha}) and to the fact that if~$r_\alpha^\ell \not \equiv 0$, then~$\varepsilon^{\ell}_n/\gamma^{\ell}_n$ goes to zero as~$n$ goes to infinity, we have
\begin{equation}\label{limitrhon12}
\begin{aligned}
\lim_{L \to \infty } \lim_{\alpha \to 0}   \limsup_{n\to \infty} \|\tilde \rho^{(1),{\rm h}}_{\alpha,n,L}\|_{\cB^{1,-\frac12}} = 0 \, ,  \\
  \mbox{and} \quad
\lim_{L \to \infty } \lim_{\alpha \to 0}   \limsup_{n\to \infty} \|\rho^{(2)}_{\alpha,n,L}\|_{\cB^0} = 0 \,   .
\end{aligned}
\end{equation}
Then  we   notice that for each~$\ell \in \N$ and each~$\alpha \in ]0,1[$,
we have by a direct computation
   $$
\left\| \Lambda_{ {\eps_n^{\ell}} , {\gamma_n^{\ell}}, { x_n^{\ell}} } (\widetilde\phi_\alpha^{h, \ell},0)\right\|_{\cB^{1,-\frac12}} \sim    \left( \frac{\gamma_n^\ell
}{\eps_n^\ell} \right)^\frac12 \left\| \widetilde\phi_\alpha^{h, \ell}\right\|_{\cB^{1,-\frac12}}\,.
$$
We deduce that if~$\eps_n^\ell / \gamma_n^\ell \to \infty$, then ~$\Lambda_{ {\eps_n^{\ell}} , {\gamma_n^{\ell}}, {\tilde x_n^{\ell}} } (\widetilde\phi_\alpha^{h, \ell},0)$ goes to zero in~${\cB^{1,-\frac12}}$ as~$n$ goes to infinity, hence so does the sum over~$\ell \in \{1 , \dots , L\}$. It follows that  for each given~$\alpha$ in~$ ]0,1[$ and~$L \geq 1$ we may define
$$
\rho^{(1)}_{n,\alpha,L} \eqdefa  { \tilde \rho^{(1),{\rm h}}_{n,\alpha,L}}+ \sumetage{\ell = 1}{\eps_n^\ell / \gamma_n^\ell \to \infty}^L \Lambda_{ {\eps_n^{\ell}} , {\gamma_n^{\ell}}, {  x_n^{\ell}} } (\widetilde\phi_\alpha^{h, \ell},0)
$$
and we have
\begin{equation}\label{limitrhon3}
 \lim_{L \to \infty }   \lim_{\alpha \to 0}   \limsup_{n\to \infty} \| \rho^{(1)}_{n,\alpha,L}\|_{ \cB^{1,-\frac12}} = 0   \, .
\end{equation}
Finally, as~$\cD(\R^3)$ is dense in~$\cB_{1,q}$, let us choose a family~$(u_{0,\al})_\al$ of functions in~$\cD(\R^3)$ 
such that~$\|u_0-u_{0,\al}\|_{\cB_{1,q}} \leq\al$ and let us define
\begin{equation}
\label{defrn3}
 \rho_{n,\alpha,L}\eqdefa  \rho^{(1)}_{\alpha,n,L}+\rho^{(2)}_{n,\alpha,L}+u_0-u_{0,\al} \, .
\end{equation}
Inequalities~(\ref{limitrhon12}) and~(\ref{limitrhon3}) give
\begin{equation}\label{limitrhon333}
 \lim_{L \to \infty }   \lim_{\alpha \to 0}   \limsup_{n\to \infty}   \|e^{t\Delta} \rho_{n,\alpha,L}\|_{L^2(\R^+; \cB^{1})}  = 0   \, .
\end{equation}

\subsubsection{End of the proof of   Theorem~{\rm\ref{mainresultprofilesdi}}}
Let us return to the decomposition given in Proposition~\ref{prop:decompositiondatabis},  and use   definitions~(\ref{defrn1}),~(\ref{defrn2}) and~(\ref{defrn3})  which imply that
$$
\begin{aligned}
u_{0,n}  & =  u_{0,\al}   +
    \sumetage{
   \ell = 1}{\eps_n^\ell/\gamma_n^\ell \to 0}^{L } \Lambda_{ {\varepsilon_n^{\ell}} , {\gamma_n^{\ell}}, {x_n^{\ell}} } \Big( \widetilde  \phi_\alpha^{h,\ell} -\frac{ \varepsilon_n^{\ell} }{\gamma_n^{\ell} }
 \nabla^{\rm h}\Delta_{\rm h}^{-1} \partial_3  \phi_\alpha^{\ell}   ,  \phi_\alpha^{\ell} \Big)  +   \rho_{n,\alpha,L} \, .\nonumber
   \end{aligned}
   $$
   We recall that for all $ \ell$ in~$ \N$, we have
 $ \lim_{n \to \infty}  \: (\gamma_n^\ell)^{-1} \varepsilon_n^{\ell}   \in \{0 , \infty\}$ and in the case where  the ratio~$\eps_n^\ell/\gamma_n^\ell$ goes to infinity then~$ \phi_\alpha^{\ell} \equiv 0$.
   Next we separate the case when the horizontal scale is one, from the others: with the notation~(\ref{defLambdatilde}) we write
   $$
\begin{aligned}
u_{0,n}  =  u_{0,\al}  & +       \sum_{  \ell \in   \Gamma^L({ {1}})}  \Lambda_{ 1, {\gamma_n^{\ell}}, {  x_n^{\ell}} }  \left(\widetilde \phi_\alpha^{h,\ell}
 - \frac1{\gamma_n^\ell} \nabla^{\rm h} \Delta_{\rm h}^{-1} \partial_3  \phi_\alpha^{ \ell} , \phi_\alpha^{ \ell}
\right) \\
&+
  \sumetagetr{
   \ell = 1}{ {\varepsilon_n^{\ell}} \not \equiv 1}{\eps_n^\ell/\gamma_n^\ell \to 0} ^{L } \Lambda_{ {\varepsilon_n^{\ell}} , {\gamma_n^{\ell}}, {x_n^{\ell}} } \Big( \widetilde  \phi_\alpha^{h,\ell}    -\frac{ \varepsilon_n^{\ell} }{\gamma_n^{\ell} }
 \nabla^{\rm h}\Delta_{\rm h}^{-1} \partial_3  \phi_\alpha^{\ell}   ,  \phi_\alpha^{\ell} \Big)  +   \rho_{n,\alpha,L} \, .\nonumber
   \end{aligned}
   $$
With~(\ref{formula0})
this can be written
 $$
\begin{aligned}
u_{0,n}  =  u_{0,\al}  & +   \big[( v_{0,n,\alpha,L}^{0,{\rm loc,h}}+ h_n^0 w_{0,n,\alpha,L}^{0,{\rm loc,h}}  , w_{0,n,\alpha,L}^{0,{\rm loc},3} )\big]_{ h_n^0}  + \big[(  {v_{0,n,\alpha,L}^{0,\infty,{\rm   h}}+ h_n^0 w_{0,n,\alpha,L}^{0,\infty,{\rm   h}}},  {w_{0,n,\alpha,L}^{0,\infty,3}})\big]_{  {h_n^0}} \\
&+
  \sumetagetr{
   \ell = 1}{ {\varepsilon_n^{\ell}} \not \equiv 1}{\eps_n^\ell/\gamma_n^\ell \to 0} \Lambda_{ {\varepsilon_n^{\ell}} , {\gamma_n^{\ell}}, {x_n^{\ell}} } \Big( \widetilde  \phi_\alpha^{h,\ell}    -\frac{ \varepsilon_n^{\ell} }{\gamma_n^{\ell} }
 \nabla^{\rm h}\Delta_{\rm h}^{-1} \partial_3  \phi_\alpha^{\ell}   ,  \phi_\alpha^{\ell} \Big)  +   \rho_{n,\alpha,L} \, .\nonumber
   \end{aligned}
   $$
 Next we use the partition~(\ref{defpartition}), so that with notation~\refeq{deflitilde} and\refeq{defLambdatilde},
 $$
 \begin{aligned}
u_{0,n}  =  u_{0,\al}  & +  \big[( v_{0,n,\alpha,L}^{0,{\rm loc,h}}+ h_n^0 w_{0,n,\alpha,L}^{0,{\rm loc,h}}  , w_{0,n,\alpha,L}^{0,{\rm loc},3} )\big]_{ h_n^0}  + \big[(  {v_{0,n,\alpha,L}^{0,\infty,{\rm   h}}+ h_n^0 w_{0,n,\alpha,L}^{0,\infty,{\rm   h}}},  {w_{0,n,\alpha,L}^{0,\infty,3}})\big]_{  {h_n^0}}\\
& +\sum_{j = 1}^{  {\mathcal L}(L)}  \sumetage{
  \ell  \in \Gamma^L({ {\eps_n^{  \ell_j}}})}  { {\eps_n^{  \ell_j}} \not \equiv   1}  \Lambda_{ {\varepsilon_n^{\ell_j}} , {\gamma_n^{\ell}}, {x_n^{\ell}} } \Big(\widetilde  \phi_\alpha^{h,\ell}       -\frac{ \varepsilon_n^{\ell_j} }{\gamma_n^{\ell} }
 \nabla^{\rm h}\Delta_{\rm h}^{-1} \partial_3  \phi_\alpha^{\ell}   ,  \phi_\alpha^{\ell} \Big)  +   \rho_{n,\alpha,L} \, .\nonumber
   \end{aligned}
 $$
Then we finally use the identity~(\ref{formulajtilde}) which gives
 $$
\begin{aligned}
     u_{0,n}  = u_{0,\al} &+ \big[( v_{0,n,\alpha,L}^{0,{\rm loc,h}}+ h_n^0 w_{0,n,\alpha,L}^{0,{\rm loc,h}}  , w_{0,n,\alpha,L}^{0,{\rm loc},3} )\big]_{ h_n^0}  + \big[(  {v_{0,n,\alpha,L}^{0,\infty,{\rm   h}}+ h_n^0 w_{0,n,\alpha,L}^{0,\infty,{\rm   h}}},  {w_{0,n,\alpha,L}^{0,\infty,3}})\big]_{  {h_n^0}}\\
   & \quad   {} + \sum_{j = 1}^L \Lambda_{  \lambda_n^j} [(v_{n,\alpha,L}^{j,{\rm h}}+ h_{n}^j w_{n,\alpha,L}^{j,{\rm h}},w_{n,\alpha,L}^{j,3})]_{h_{n}^j} + \rho_{n,\alpha,L}\, .
    \end{aligned} $$
 The end of the proof follows from the estimates~(\ref{boundv0w0-unif}),~(\ref{boundv0w0-notunif}),~(\ref{boundv0w0-atzero}),~(\ref{boundvjwj-unif}),  along with~(\ref{limitrhon333}). 
  Theorem~\ref{mainresultprofilesdi} is proved.
 \end{proof}

\medbreak


 \section{Propagation of  profiles: proof of Theorem\refer{slowvarsimple}}
\label{propagationprofiles}
 The goal of this section is the proof of Theorem\refer{slowvarsimple}. Let us consider~$(v_0,w^3_0)$ satisfying the assumptions of that theorem. In order to prove that the initial data defined by 
 $$
 \Phi_{0} \eqdefa \big[  (  v_0 -\b\nabla^{\rm h} \Delta_{\rm h}^{-1} \partial_3w^3_0, w^3_0 )  \big]_{\b}
 $$
generates a global smooth solution for small enough~$\b$, let us  look for the solution under the form
\begin{equation}
\label{defuapppropagaprofile}
\Phi_\beta=\Phi^{\rm app}+\psi \with 
\Phi^{\rm app}\eqdefa \big[(   v + \beta  w^{\rm h} , w^3  ) \big]_{\b }
\end{equation}
 where~$v$  solves the two-dimensional Navier-Stokes equations
$$
{\rm(NS2D)}_{x_3} \quad  \left\{
\begin{array}{l}
\partial_t  v +  v\cdot \nabla^{\rm h} v -\Delta_{\rm h}  v= -\nabla^{\rm h} p  \quad \mbox{in} \, \, \, 
\R^+ \times \R^2\\
\mbox{div} _{\rm h}  v= 0\\
v_{|t=0} = v_0(\cdot ,x_3) \, ,
\end{array}
\right.
$$
while~$w^3 $ solves the transport-diffusion equation\label{defTbetapage}
$$
( T_\b ) \quad \left\{
\begin{array}{l}
\partial_t w^3  +  v\cdot \nabla^{\rm h}w^3 -\Delta_{\rm h} w^3 - \b^2 \partial_3^2 w^3 = 0 \quad \mbox{in}\, \, \, 
\R^+ \times \R^3\\
w^3_{|t=0} = w_0^3 \,
\end{array}
\right.
$$
and~$w ^{\rm h} $ is determined by the divergence free condition on~$w$ which gives~$w ^{\rm h}\eqdefa -\nabla^{\rm h} \Delta_{\rm h}^{-1} \partial_3w^3 $.

\medskip

\noindent
\textcolor{black}{In Section~\ref{twodimparameter} (resp. \ref{propagtwodimparameter}), we prove a priori estimates on~$v$ (resp. $w$), and Section~\ref{conclusionproofpropagprofile} is devoted to the conclusion of the proof of Theorem\refer{slowvarsimple}, studying the perturbed Navier-Stokes equation satisfied by~$\psi$.}

\medbreak

\noindent Before starting the proof we  recall the following definitions of space-time norms, first introduced by J.-Y. Chemin and N. Lerner in~\cite{cheminlerner},  and which are    very useful in the context of the Navier-Stokes equations:
\begin{equation}
\label{deflrtildenorm}
 \|f\|_{{\widetilde L^r}([0,T]; B^{s,s'}_{p,q} )} \eqdefa \big\|2^{ks + js'}\|\Delta_k^{\rm h} \Delta_{j}^{\rm v} f\|_{L^r([0,T];L^p)}\big\|_{\ell^q} \, .
\end{equation}
 \noindent Notice that of course~$ {\widetilde L^r}([0,T]; B^{s,s'}_{p,r} ) =  {L^r}([0,T]; B^{s,s'}_{p,r} )$, and by Minkowski's inequality, we have the embedding~$ {\widetilde L^r}([0,T]; B^{s,s'}_{p,q} ) \subset  {L^r}([0,T]; B^{s,s'}_{p,q} )$ if~$r \geq q$.

\subsection{Two dimensional flows with parameter}\label{twodimparameter}
Let us prove the following result on~$v$, the solution of~${\rm(NS2D)}_{x_3}$.

\begin{prop}
\label{regulNS2D}
{\sl
 Let~$ v_0$ be a  {two-component} divergence free vector field depending on the vertical variable~$x_3$, and belonging  to~$S_\mu$. Then the unique, global solution~$v$ to~${\rm(NS2D)}_{x_3}$ belongs to~$\cA^0$ and satisfies the following estimate:
 \begin{equation}
 \label{eqdim21}
 \|  v \|_{\cA^0} \leq \cT_1(\| v_0\|_{\cB^0}) \,. 
 \end{equation}
 Moreover, for all~$(s,s')$ in~$D_\mu$,  we have
 \beq
\label{eqdim22}
 \forall r \in [1,\infty]\,,\  \|v \|_{\wt L^r(\R^+;\cB^{s+\frac 2 r,s'})}
 \leq \cT_2(\|v_0\|_{S_\mu}).
 \eeq
} \end{prop}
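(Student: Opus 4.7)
The strategy is to exploit the fact that $v$ solves the two-dimensional Navier--Stokes system at each fixed vertical height $x_3 \in \R$, so the variable $x_3$ plays the role of a parameter only. This reduces the proof to combining the classical 2D Leray theory in the horizontal variables with anisotropic Littlewood--Paley analysis in the vertical variable. As a preliminary step, for each fixed $x_3$ the datum $v_0(\cdot,x_3) \in L^2(\R^2)$ (which belongs to $L^2$ for almost every $x_3$ by the embedding $\cB^0 \hookrightarrow L^2_{x_3}L^2_{\rm h}$) generates a unique global smooth 2D solution satisfying the energy identity
\begin{equation*}
\|v(t,\cdot,x_3)\|_{L^2(\R^2)}^2 + 2\int_0^t \|\nabla^{\rm h} v(\tau,\cdot,x_3)\|_{L^2(\R^2)}^2\,d\tau \leq \|v_0(\cdot,x_3)\|_{L^2(\R^2)}^2,
\end{equation*}
so $v$ is globally defined, and integration in $x_3$ already yields $v \in L^\infty(\R^+; L^2(\R^3)) \cap L^2(\R^+; \dot H^1_{\rm h}(\R^3))$.

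To upgrade this to the $\cA^0$-estimate~\eqref{eqdim21}, I would use that the vertical truncation $\Delta_j^{\rm v}$ commutes with $\partial_t$, $\Delta_{\rm h}$ and $\nabla^{\rm h}$, so that $v_j := \Delta_j^{\rm v} v$ solves (NS2D) with a nonlinear term $\Delta_j^{\rm v}(v\cdot \nabla^{\rm h} v)$. A vertical Bony decomposition, combined with the standard commutator estimate $\|[\Delta_j^{\rm v}, T^{\rm v}_a] b\|_{L^2}$, gives a differential inequality of the schematic form
\begin{equation*}
\frac12\frac{d}{dt}\|v_j\|_{L^2}^2 + \|\nabla^{\rm h} v_j\|_{L^2}^2 \leq c_j(t)\, 2^{-j/2}\, \|v\|_{\cB^{1}}\|v\|_{\cB^{0}},
\end{equation*}
with $(c_j)_{j\in\ZZ}$ a summable sequence of norm at most one. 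Multiplying by $2^{j/2}$, summing in $j$, and then invoking a 2D-type Gronwall argument (using that $\int_0^\infty \|\nabla^{\rm h} v\|_{L^2}^2 \leq \|v_0\|_{L^2}^2$ is \emph{globally} controlled without smallness) produces \eqref{eqdim21}, where the characteristic exponential dependence of 2D Navier--Stokes on the initial energy is absorbed in the function $\cT_1$.

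The higher-regularity bound \eqref{eqdim22} is proved along the same lines, now localizing in both directions by applying $\Delta_k^{\rm h}\Delta_j^{\rm v}$ and running a full horizontal--vertical Bony decomposition together with anisotropic product laws in $\cB^{s,s'}$. The admissible range $(s,s') \in D_\mu$ is precisely the one in which these product and commutator estimates close in~$B^{s,s'}_{2,1}$: the upper bounds $s<1$ and $s'<7/2$ are dictated by paraproduct and remainder estimates (recall $B^{1,1/2}_{2,1}\hookrightarrow L^\infty$), while $s>-2$ is required to absorb one horizontal derivative falling on $v$ in the convection term. The case $r=\infty$ gives the $\widetilde L^\infty_t \cB^{s,s'}$ bound, $r=2$ the parabolic-gain bound $\widetilde L^2_t \cB^{s+1,s'}$, and intermediate~$r$ follow by parabolic smoothing via the horizontal heat flow applied to the Duhamel formula (with the nonlinearity bounded in $\widetilde L^1_t \cB^{s,s'}$). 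The main obstacle will be to keep the constants $c_{k,j}$ in the dyadic estimates summable in both indices simultaneously, which is crucial because $\cB^{s,s'}$ has Besov third index equal to one; this forces a careful use of the Chemin--Lerner $\widetilde L^r$ scale so that the $L^2_t$ Gronwall closes on $v_{k,j}$ before summation in $(k,j)$.
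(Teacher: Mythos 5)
Your overall architecture --- treat $x_3$ as a parameter, get the 2D energy estimate at each height, then propagate vertical Besov regularity --- is the same as the paper's, but the mechanism you propose for the second step has a genuine gap. You plan to apply $\Delta_j^{\rm v}$ to the equation, derive a differential inequality for $\|\Delta_j^{\rm v}v\|_{L^2}$, and close a Gronwall argument ``using that $\int_0^\infty\|\nabla^{\rm h}v\|_{L^2}^2\,dt\leq\|v_0\|_{L^2}^2$ is globally controlled''. The problem is that the quantity the Gronwall exponent actually requires is not the fixed-$x_3$ (or $L^2_{x_3}$-averaged) dissipation, but a time-integrable control of a \emph{vertically uniform} horizontal norm, typically $\int_0^\infty\|v(t)\|^2_{L^\infty_{\rm v}(H^1_{\rm h})}\,dt$ or $\|v\|^2_{L^2(\R^+;\cB^1)}$: in the vertical paraproduct $\sum_j S^{\rm v}_{j}v\,\Delta^{\rm v}_j(\cdot)$ the low-vertical-frequency factor must be measured in $L^\infty_{\rm v}$. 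The 2D energy estimate only gives $\sup_{x_3}\int_0^\infty\|\nabla^{\rm h}v(t,\cdot,x_3)\|^2_{L^2_{\rm h}}\,dt\lesssim\|v_0\|^2_{L^\infty_{\rm v}L^2_{\rm h}}$, where the sup and the time integral are in the wrong order, and $\|v\|_{L^2(\R^+;\cB^1)}$ is itself part of the $\cA^0$ norm you are trying to bound. A closed inequality of the form $X\leq\cT(\|v_0\|_{\cB^0})\exp(CX^2)$ does not yield a global large-data bound, and you cannot chop $\R^+$ into intervals on which $\|v\|_{L^2(I;\cB^1)}$ is small without already knowing that this norm is finite. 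So the direct vertical-block Gronwall does not close as stated.

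The paper resolves exactly this point with a translation argument that your proposal is missing. One first proves an $H^s(\R^2)$ stability estimate for two 2D solutions, $\|v_1-v_2\|_{L^r(\R^+;H^{s+2/r})}\lesssim\|v_1(0)-v_2(0)\|_{H^s}\exp C\bigl(\|v_1(0)\|^2_{L^2}+\|v_2(0)\|^2_{L^2}\bigr)$, whose exponent involves only the initial 2D energies. Applying this to the pair $(v,\tau_{-z}v)$ (the vertical translate is again a solution), integrating in $z$ against $|z|^{-3/2}\,dz$ and using the finite-difference characterization of vertical Besov regularity of index $1/2$ yields $\|v\|_{L^r(\R^+;L^\infty_{\rm v}(H^{s+2/r}_{\rm h}))}\lesssim\|v_0\|_{\cB^s}E(0)$; in particular $\int_0^\infty\bigl(\|v\|^4_{L^\infty_{\rm v}(L^4_{\rm h})}+\|v\|^2_{L^\infty_{\rm v}(H^1_{\rm h})}\bigr)\,dt$ is bounded a priori in terms of $\|v_0\|_{\cB^0}$ alone. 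Only after this does a weighted Gronwall argument with an anisotropic Bony decomposition (essentially your step three) close, first for $s\in\,]-1/2,1/2[$ and $s'>0$, and then on all of $D_\mu$ by two successive applications of product laws and horizontal heat smoothing rather than in one shot. If you want to keep your direct strategy, you must insert this stability-under-vertical-translation lemma, or an equivalent a priori bound on $\|v\|_{L^2(\R^+;L^\infty_{\rm v}(H^1_{\rm h}))}$, before the Gronwall step.
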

\begin{proof}
This proposition
 is  a result about the regularity of the solution of ${\rm(NS2D)}$ when the initial data depends
on a real parameter~$x_3$,  measured in terms of Besov spaces with respect to the variable~$x_3$. Its proof is structured as follows. First, we deduce from the classical energy estimate for the two dimensional Navier-Stokes system, a stability result  in the spaces~$ {L}^r(\R^+;  H^{s+\frac2r}(\R^2))$ with~$r$ in~$[2,\infty]$ and $s$ in $]-1,1[$.  This is the purpose of Lemma\refer{regulNS2DdemoLemme1}, the proof of  which uses essentially energy estimates together with   paraproduct laws.

\noindent Then we have to  translate the  stability result  of Lemma\refer{regulNS2DdemoLemme1} in terms of Besov spaces with respect to the third variable (seen before simply as a parameter), namely by propagating the vertical regularity. First of all,  this requires to deduce from the stability in the spaces~${L}^r(\R^+;  H^{s+\frac2r}(\R^2))$  with~$r$ in~$[2,\infty]$, the fact  that the   vector field~$v $, now seen as a function of three variables, belongs to~${L^r}(\R^+;L^\infty_{\rm v}(  H^{s+\frac2r}(\R^2))$ again for~$r$ in~$[2,\infty]$.
This is the purpose of Lemma\refer{lemmedemoeq},  the proof of which relies on the equivalence of   two definitions of Besov spaces with regularity index in~$]0,1[$:  the first one involving the dyadic decomposition of the   frequency space, and the other one consisting in estimating integrals in   physical space.   

\noindent Finally for $s$ in $]-\frac 12,\frac 12[$ and $s' > 0$ a Gronwall type lemma  enables us  to propagate  the regularities.   When $\ds s' \geq \frac 12$    product  laws enable us  to gain horizontal regularity up to $\ds ]-2,1[$   and to conclude the proof of Proposition~\ref{regulNS2D}. 
\medbreak
\noindent Let us state the first lemma in this proof.
\begin{lem}
\label{regulNS2DdemoLemme1}
{\sl
For any compact set~$I$ included in~$]-1,1[$, a constant $C$ exists such that, for any $r$ in~$[2,\infty]$  and any $s$ in  ~$I$,~we have  for any two solutions~$v_1$ and~$v_2$ of the two-dimensional Navier-Stokes equations
\begin{equation}
\label{eqdim2}
\|v_1-v_2\|_{ {L}^r(\R^+;  H^{s+\frac2r}(\R^2))}\lesssim
\|v_1(0)-v_2(0)\|_{  H^s(\R^2)} \, E_{12}(0) \,  ,
\end{equation}
where we define
$$
E_{12}(0)\eqdefa \exp C \bigl(\|v_1(0)\|_{L^2}^2+\|v_2(0)\|_{L^2}^2\bigr) \, .
$$
}
\end{lem}
\begin{proof}
In the proof of this lemma, all the functional spaces are over~$\R^2$ and we no longer mention this fact in notations. Moreover, the constant which appears in the definition of~$E_{12}(0)$ can change along the proof.  Defining~$v_{12}(t)\eqdefa v_1(t)-v_2(t)$, we get
\beq
\label{regulNS2DdemoLemme1demoeq1}
\begin{split}
\partial_tv_{12}+v_2\cdot\nabla^{\rm h}{}v_{12} - \D_{\rm h} v_{12} &=-v_{12}\cdot\nabla^{\rm h}{}v_1 -\nabla^{\rm h} p \,.
\end{split}
\eeq
In order to establish \eqref{eqdim2}, we shall resort to  an energy estimate making use of  product laws and of the following estimate proved in  
\cite[Lemma 1.1]{chemin10}: 
 \beq
\label{eschem}
\big(v\cdot\nabla^{\rm h}a|a\big)_{  H^s}  \lesssim   \|\nabla^{\rm h} v\|_{L^2} \|a\|_{  H^s} \|\nabla^{\rm h} a\|_{  H^s},
\eeq
available uniformly  for any $s$ in~$[-2+\mu,1-\mu]$. 

\noindent Let us notice that thanks to the divergence free condition, taking the $  H^s$ scalar product  with $v_{12}$ in Equation \eqref{regulNS2DdemoLemme1demoeq1} implies that 
$$ \frac 1 2 \frac d { dt} \|v_{12}(t)\|_{  H^s}^2 +  \|\nabla^{\rm h} v_{12}(t)\|_{  H^s}^2= -\big(v_2(t)\cdot\nabla^{\rm h}v_{12}(t)|v_{12}(t)\big)_{  H^s} -\big(v_{12}(t)\cdot\nabla^{\rm h}v_{1}(t)|v_{12}(t)\big)_{  H^s} \, .
$$  
Whence, by time integration we get 
$$
\begin{aligned} \|v_{12}(t)\|_{  H^s}^2 + 2 \int_0^t \|\nabla^{\rm h} v_{12}(t')\|_{  H^s}^2 dt'  =   \|v_{12}(0)\|_{ H^s}^2 - 2 \int_0^t \big(v_2(t')\cdot\nabla^{\rm h}v_{12}(t')|v_{12}(t')\big)_{ H^s} \, dt' \\     - 2 \int_0^t \big(v_{12}(t')\cdot\nabla^{\rm h}v_{1}(t')|v_{12}(t')\big)_{ H^s}  \, dt' \, . 
\end{aligned}
$$
Now using Estimate  \eqref{eschem}, we deduce that there is a positive constant $C$ such that for any $s$ in~$I$, we have 
\begin{equation}\label{term1}
\begin{aligned}
 2  \, \Big|\int_0^t \big(v_2(t') & \cdot\nabla^{\rm h}v_{12}(t')|v_{12}(t')\big)_{ H^s} dt' \Big| \\
& \leq C \int_0^t \|v_{12}(t')\|_{ H^s} \|\nabla^{\rm h} v_2(t')\|_{L^2}  \|\nabla^{\rm h} v_{12}(t')\|_{ H^s}  \,  dt'
\\
&\leq\frac 1 2 \int_0^t \|\nabla^{\rm h} v_{12}(t')\|_{ H^s}^2  \, dt'  + \frac {C^2} 2 \int_0^t \|v_{12}(t')\|^2_{ H^s} \|\nabla^{\rm h} v_2(t')\|^2_{L^2}  \,    dt' \, .
\end{aligned}
\end{equation}
Noticing that 
$$\int_0^t \big(v_{12}(t')\cdot\nabla^{\rm h}v_{1}(t')|v_{12}(t')\big)_{ H^s} dt' \leq \int_0^t  \|\nabla^{\rm h} v_{12}(t')\|_{ H^s} \|v_{12}(t')\cdot\nabla^{\rm h}v_{1}(t')\|_{ H^{s-1}}   \, dt' \,,$$
we deduce  by Cauchy-Schwarz inequality and product laws in Sobolev spaces on~$\R^2$ that as long as~$s$ is in~$]0,1[$,  
\begin{equation}\label{term2}
\begin{aligned}
 2  \, \Big|\int_0^t \big(v_{12}(t')&\cdot\nabla^{\rm h}v_{1}(t')|v_{12}(t')\big)_{ H^s} dt' \Big| 
 \\
&\leq  C \int_0^t \|\nabla^{\rm h} v_{12}(t')\|_{ H^s}  \|v_{12}(t')\|_{ H^s} \|\nabla^{\rm h} v_1(t')\|_{L^2} \,     dt'  \\
&\leq \frac 1 2 \int_0^t \|\nabla^{\rm h} v_{12}(t')\|_{ H^s}^2 \,  dt'  + \frac {C^2} 2 \int_0^t \|v_{12}(t')\|^2_{ H^s} \|\nabla^{\rm h} v_1(t')\|^2_{L^2}   \,   dt'\, .
\end{aligned}
\end{equation}
 {\color{black}When~$s=0$ we simply write, by product laws and interpolation,
\begin{equation}\label{term22}
\begin{aligned}
 2  \, \Big|\int_0^t \big(v_{12}(t')&\cdot\nabla^{\rm h}v_{1}(t')|v_{12}(t')\big)_{L^2} dt' \Big| 
 \\
&\leq  C \int_0^t \| v_{12}(t')\|_{ H^\frac12}  \|v_{12}(t')\cdot \nabla^{\rm h} v_1(t')\|_{ H^{-\frac12}}    dt'  \\
&\leq \frac 1 2 \int_0^t \|\nabla^{\rm h} v_{12}(t')\|_{L^2}^2  \, dt'  + \frac {C^2} 2 \int_0^t \|v_{12}(t')\|^2_{L^2} \|\nabla^{\rm h} v_1(t')\|^2_{L^2}    \,  dt'\, .
\end{aligned}
\end{equation}
Finally in the case when~¬¨‚Ä†$s$ belongs to~$]-1,0[$, we have
\begin{equation}\label{term23}
\begin{aligned}
 2  \, \Big|\int_0^t \big(v_{12}(t')&\cdot\nabla^{\rm h}v_{1}(t')|v_{12}(t')\big)_{ H^s} dt' \Big| 
 \\
&\leq  C \int_0^t \| v_{12}(t')\|_{ H^s}  \|v_{12}(t')\cdot \nabla^{\rm h} v_1(t')\|_{ H^{s}}    dt'  \\
&\leq \frac 1 2 \int_0^t \|\nabla^{\rm h} v_{12}(t')\|_{ H^s}^2  \, dt'  + \frac {C^2} 2 \int_0^t \|v_{12}(t')\|^2_{ H^s} \|\nabla^{\rm h} v_1(t')\|^2_{L^2}   \,   dt'\, .
\end{aligned}
\end{equation}}
Combining \eqref{term1} and \eqref{term2}-\eqref{term23}, we infer that  for~$s$ in~$]-1,1[$,
$$
\begin{aligned}
\|v_{12}(t)\|_{ H^s}^2 +  \int_0^t \|\nabla^{\rm h} v_{12}(t')\|_{ H^s}^2 dt'   &  \lesssim    \|v_{12}(0)\|_{ H^s}^2 \\
 &\quad +  \int_0^t \|v_{12}(t')\|^2_{ H^s} \big(\|\nabla^{\rm h} v_1(t')\|^2_{L^2}  + \|\nabla^{\rm h} v_2(t')\|^2_{L^2}\big)   \,  dt'  \, . 
\end{aligned}
$$
Gronwall's lemma implies that there exists a positive constant $C$ such that 
$$ \|v_{12}(t)\|_{ H^s}^2 +  \int_0^t \|\nabla^{\rm h} v_{12}(t')\|_{ H^s}^2 dt' \lesssim  \|v_{12}(0)\|_{ H^s}^2 \exp C \int_0^t  \big(\|\nabla^{\rm h} v_1(t')\|^2_{L^2}  + \|\nabla^{\rm h} v_2(t')\|^2_{L^2}\big)   dt'\, . $$ 
But for any $i$ in~‚àö¬¢¬¨√†¬¨√∂‚àö√â‚Äö√á¬®$ \{1,2\}$, we have by the classical~$L^2$ energy estimate 
\begin{equation}\label{standen}
\int_0^t \|\nabla^{\rm h} v_{i}(t')\|^2_{L^2} dt'  \leq \frac 1 2 \| v_i (0)\|^2_{L^2}\,.
\end{equation}
Consequently   for~$s$ in~$]-1,1[$,
$$ \|v_{12}(t)\|_{ H^s}^2 +  \int_0^t \|\nabla^{\rm h} v_{12}(t')\|_{ H^s}^2 dt' \lesssim  \|v_{12}(0)\|_{ H^s}^2 \, E_{12}(0) \,  ,$$ 
which leads to the result by interpolation. 
\end{proof}

\medbreak

\noindent{\it Continuation of the proof of Proposition{\rm\refer{regulNS2D}}. }
\noindent Using    Lemma \ref{regulNS2DdemoLemme1}, we are going to establish  the following result, which will be of great help to control all   norms of~$v$ of the type~$\wt{ L} ^r(\R^+;\cB^{ \frac 2 r})$ for~$r$ in~$[4,\infty]$
thanks to  a Gronwall type argument. \begin{lem}
\label{lemmedemoeq}  {\sl For any compact set~$I$ included in~$]-1,1[$, a constant $C$ exists such that, for any~$r$ in~$[2,\infty]$  and   any $s$ in~$I$, we have for any solution~$v$ to~${\rm(NS2D)}_{x_3}$, 
$$
\|v\|_{L^r(\R^+; L^\infty_{\rm v}( H_{\rm h}^{s+\frac2r}))} \lesssim  \|v_0\|_{\cB^s}   E (0)
\with E (0)\eqdefa  \exp \bigl(C \|v (0)\|_{L^\infty_{\rm v} L^2_{\rm h}}^2\bigr).
$$
}\end{lem}
\begin{proof}
We shall use the characterization of Besov spaces via differences in physical space:  as is well-known (see for instance Theorem~2.36 of\ccite{BCD}), for any Banach space~$X$ of distributions   one has
\beq
\label{equivsobtranslat}
\bigl\|\bigl(2^{\frac j2} \|\D^{\rm v}_j u\|_{L^2_{\rm v} (X)}\bigr)_j\bigr\|_{\ell^1(\ZZ)}
\sim
\int_{\R}\frac { \|u-(\tau_{-z} u) \|_{
L^2_{\rm v}( X)}} {|z|^{\frac12}} \frac {dz }{|z|}
\eeq
where  the translation operator~$\tau_{-z}$ is defined by
$$
(\tau_{-z} f) (t,x_{\rm h},x_3) \eqdefa f (t,x_{\rm h},x_3+ z) \, .
$$
The above Lemma\refer{regulNS2DdemoLemme1}  implies in particular that, for any~$r$ in~$[2,\infty]$, any~$s$  in~$I$ and any couple~$(x_3,z)$ in~$ \R^2$, if~$v$ solves~${\rm(NS2D)}_{x_3}$ then
$$
\|v -\tau_{-z}v\|_{Y^{s}_r}\lesssim  \|v_0 -\tau_{-z}v_0\|_{ H^s_{\rm h}}  E (0) \with Y^s_r \eqdefa L^r(\R^+; H_{\rm h}^{s+\frac2r} )\, .
$$
Taking the~$L^2$ norm of the above inequality with respect to the~$x_3$ variable and then the~$L^1$ norm with respect to the measure~$|z|^{-\frac32}dz$ gives
\begin{equation}\label{estimatev-tauz}
\int_\R  \frac {\|v -\tau_{-z}v \|_{L^2_{\rm v}(Y^s_r)} }{|z|^{\frac12}} \frac {dz}{|z|}
 \lesssim \int_{\R}\frac { \|v_0 -\tau_{-z}v_0\|_{L^2_{\rm v}(H^s_{\rm h})}}{|z|^{\frac12}}
\frac{dz}{|z|}\, E(0)\, .
\end{equation}
Returning to the characterization~(\ref{equivsobtranslat}) with ~$X=Y^s_r$, we find
that
$$\int_\R  \frac {\|v -\tau_{-z}v \|_{L^2_{\rm v}(Y^s_r)} }{|z|^{\frac12}} \frac {dz}{|z|}
 \sim\sum_{j \in \ZZ}2^{\frac j 2} \Bigl\|\bigl\|\bigl(2^{k(s+\frac2r)} \D_j^{\rm v}\D^{\rm h}_k v (t,\cdot,z)\big)_k \bigl\|_{L^r(\R^+;\ell^2(\ZZ;L^2_{\rm h}))}\Bigr\|_{L^2_{\rm v}} \, .
$$
Similarly we have
$$
 \int_{\R}\frac { \|v_0 -\tau_{-z}v_0\|_{L^2_{\rm v}(H^s_{\rm h})}}{|z|^{\frac12}}
\frac{dz}{|z|} \sim \sum_{j \in \ZZ} 2^\frac{j}2\bigl\|\bigl(2^{ks} \|\D_j^{\rm v}\D^{\rm h}_k v_0 \|_{  L^2_{\rm h}}\bigr)_k\bigr\|_{\ell ^2(\ZZ;L^2_{\rm v})}\, ,
$$
so by   the embedding from~$\ell^1(\ZZ)$ to~$\ell^2(\ZZ)$, we get
$$
  \int_{\R}\frac { \|v_0 -\tau_{-z}v_0\|_{L^2_{\rm v}(H^s_{\rm h})}}{|z|^{\frac12}}
\frac{dz}{|z|}  \lesssim  \sum_{(j,k) \in \ZZ^2}2^\frac{j}2 2^{ks} \|\D_j^{\rm v}\D^{\rm h}_k v_0 \|_{ L^2(\R^3)}\, .
$$
Therefore, we deduce from Estimate~(\ref{estimatev-tauz}) that 
$$
\sum_{j \in \ZZ} 2^{\frac j 2} \Bigl\|\bigl\|\bigl(2^{k(s+\frac2r)} \D_j^{\rm v}\D^{\rm h}_k v (t,\cdot,z)\big)_k\bigr\|_{L^r(\R^+;\ell ^2(\ZZ;L^2_{\rm h}))}\Bigr\|_{L^2_{\rm v}}
\lesssim  \|v_0\|_{\cB^s} \, E (0) \, .
$$
As~$r\geq 2$,
Minkowski's inequality implies that
$$
\sum_{j \in \ZZ} 2^{\frac j 2} \Bigl\|\bigr\|\bigl(2^{k(s+\frac2r)} \D_j^{\rm v}\D^{\rm h}_k v (t,\cdot)\big)_k\bigr\|_{\ell ^2(\ZZ;L^2 )}\Bigr\|_{L^r(\R^+)}
\lesssim  \|v_0\|_{\cB^s} \, E (0) \, .
$$
Bernstein's inequality as stated in Lemma\refer{Bernsteinaniso} implies that
$$
 \|\D_j^{\rm v}\D^{\rm h}_k v(t,\cdot) \|_{L^\infty_{\rm v}(L^2_{\rm h})}
 \lesssim
 2^{\frac j2} \|\D_j^{\rm v}\D^{\rm h}_k v(t,\cdot) \|_{L^2 } \, ,
$$
thus we infer  that
$$
\Bigl\| \bigl\| \bigl(2^{k(s+\frac2r)} \|\D^{\rm h}_k v \|_{L^\infty_{\rm v}(L^2_{\rm h})}\bigr)_k\bigr\|_{\ell^2(\ZZ)}\Bigr\|_{L^r(\R^+)}\lesssim  \|v_0\|_{\cB^s}\, E (0)\, .
$$
Permuting the~$\ell^2$ norm and the~$L^\infty_{\rm v}$ norm thanks to Minkowski's inequality again, concludes the proof of the lemma.
\end{proof}

\medbreak

\begin{rmk}
\label{inclusionXrp}
{\sl Let us remark that thanks to the Sobolev embedding of~$ H^{\frac12}(\R^2)$ into~$L^4(\R^2)$, we have, choosing~$s=0$ and~$r = 4$ or~$r=2$,
$$
\|v\|_{L^4(\R^+; L^\infty_{\rm v}(L^4_{\rm h}))} + \|v\|_{L^2(\R^+;L^\infty_{\rm v}(  H^1_{\rm h}))} \lesssim  \|v_0\|_{\cB^0} \,E (0) \, .
$$
}
\end{rmk}
\noindent{\it Continuation of the proof of Proposition{\rm\refer{regulNS2D}}. }
Now our purpose is the proof of the following  inequality: for any~$v$ solving~${\rm(NS2D)}_{x_3}$, for any~$r$ in~$ [4,\infty]$ and any~$s$ in~$ \Bigl]-\frac 12,\frac 12\Bigr[$ and any positive~$s' $,
\beq
\label{regulNS2Ddemoeq20} \|v\|_{\wt L^r(\R^+;\cB^{s+\frac2r,s'})} \lesssim
\|v_0\|_{ \cB^{s,s'}} \exp \Bigl(\int_0^\infty C\bigl( \|v(t)\|_{L^\infty_{\rm v}(L^4_{\rm h}))}^4
+\|v(t)\|^2_{L^\infty_{\rm v}( H^1_{\rm h})}\bigr)dt\Bigr) \, .\eeq
The case when~$r$ is in~$ [2,4]$ will be dealt with later.
We are going to use a Gronwall-type argument. Let us introduce, for any nonnegative~$\lam$, the following notation: for any function~$F$ we define
$$
F_\lam (t) \eqdefa F(t)\exp \Bigl(-\lam \int_0^t\phi(t') dt' \Bigr)\with \phi(t) \eqdefa \|v(t)\|_{L^\infty_{\rm v}(L^4_{\rm h})}^4+\|v(t)\|^2_{L^\infty_{\rm v}( H^1_{\rm h})} \, .
$$
Notice that thanks to Remark~\ref{inclusionXrp}, we know that
\beq
\label{inclusionXrpeq}
\int_0^t \phi(t') \, dt' \lesssim E(0) (\|v_0\|_{\cB^0}^2+\|v_0\|_{\cB^0}^4 ) \, .
\eeq
Then we write, using the Duhamel formula and the action of the heat flow described in Lemma\refer{anisoheat},  that
\beq
\label{regulNS2Ddemoeq21}
\begin{split}
&\|\D_j^{\rm v}\D_k^{\rm h}v_\lam(t)\|_{L^2} \leq C e^{-c2^{2k}t} \|\D_j^{\rm v}\D_k^{\rm h}v_0\|_{L^2}\\
&\qquad\qquad{}
+C 2^k \int_0 ^t \exp \Bigl(-c(t-t')2^{2k} -\lam \int_{t'}^t\phi(t'')dt'' \Bigr)
\|\D_j^{\rm v}\D_k^{\rm h}(v \otimes v)_\lam(t')\|_{L^2} dt' \, .
\end{split}
\eeq
Notice that~$(v\otimes v)_\lam = v\otimes v_\lam$.
In order to study the term~$\|\D_j^{\rm v}\D_k^{\rm h}(v \otimes v)_\lam(t')\|_{L^2} $, we need an anisotropic version of Bony's paraproduct decomposition. Let us write that 
\ben
\nonumber ab & = & \sum_{\ell=1}^4 T^\ell(a,b)\with\\
\nonumber T^1(a,b) & = &\sum_{j,k} S^{\rm v}_jS^{\rm h}_k a \D^{\rm v}_j\D^{\rm h}_k b\,,\\
\label{Bonydecompaniso}T^2(a,b) & = &\sum_{j,k} S^{\rm v}_j\D^{\rm h}_k a \D^{\rm v}_jS^{\rm h}_{k+1} b\,,\\
\nonumber T^3(a,b) & = &\sum_{j,k} \D^{\rm v}_jS^{\rm h}_{k} a S^{\rm v}_{j+1}\D^{\rm h}_k b\,,\\
\nonumber T^4(a,b) & = &\sum_{j,k} \D^{\rm v}_j\D^{\rm h}_k a S^{\rm v}_{j+1}S^{\rm h}_{k+1} b \, .
\een

\noindent We shall only estimate~$T^1$ and~$T^2$, the other two terms being strictly analogous. By definition of~$T^1$, using the definition of  horizontal and vertical truncations together with the fact that the support of the Fourier  transform of the product of two functions is included in the sum of the two supports,  and Bernstein's  and H\"older's inequalities, there is some fixed nonzero integer  $N_0$ such that 
\beno
\|\D_j^{\rm v}\D_k^{\rm h}T^1(v(t) ,v_\lam(t))\|_{L^2}  &  \lesssim  &  2^{ \frac k 2}\|\D_j^{\rm v}\D_k^{\rm h}T^1(v(t) ,v_\lam(t))\|_{L^2_{\rm v}(L_{\rm h}^{\frac43})} \\
&  \lesssim & 2^{ \frac k 2} \sumetage{j'\geq j-N_0}{k'\geq k-N_0}
\|S_{j'}^{\rm v}S_{k'}^{\rm h} v(t) \|_{L^\infty_{\rm v}(L_{\rm h}^4)} \|\D_{j'}^{\rm v}\D_{k'}^{\rm h} v_\lam(t)\|_{L^2}
\\
& \lesssim & 2^{ \frac k 2} \|v(t) \|_{L^\infty_{\rm v}(L^4_{\rm h})}
\sumetage{j'\geq j-N_0}{k'\geq k-N_0}   \|\D_{j'}^{\rm v}\D_{k'}^{\rm h} v_\lam(t)\|_{L^2} \, .
\eeno
By definition of~$\wt {L}^4(\R^+; \cB^{s+\frac12, s'})$ we get
$$
\|\D_j^{\rm v}\D_k^{\rm h}T^1(v(t) ,v_\lam(t))\|_{L^2}  \lesssim  2^{ \frac k 2} \|v_\lam\|_{\wt {L}^4(\R^+; \cB^{s+\frac12, s'})}\|v(t) \|_{L^\infty_{\rm v}(L^4_{\rm h})} \sumetage{j'\geq j-N_0}{k'\geq k-N_0}  2^{-k'(s+\frac12)}2^{-j's'}
\wt f_{j',k'}(t)
$$
where~$\wt f_{j',k'}(t)$, defined by
$$
\wt f_{j',k'}(t) \eqdefa \|v_\lam\|_{\wt {L}^4(\R^+; \cB^{s+\frac12, s'})}^{-1}  2^{k'(s+\frac12 )} 2^{j's'}
 \|\D_{j'}^{\rm v}\D_{k'}^{\rm h} v_\lam(t)\|_{L^2}  \, ,
$$
 is on the sphere of~$\ell^1(\ZZ^2;L^4(\R^+))$. This implies that
$$
\longformule
{
2^{js'}2^{ks}\|\D_j^{\rm v}\D_k^{\rm h}T^1(v(t) ,v_\lam(t))\|_{L^2}
}
{{}\lesssim
 \|v_\lam\|_{\wt {L}^4(\R^+; \cB^{s+\frac12, s'})}  \|v(t) \|_{L^\infty_{\rm v}(L^4_{\rm h})}
\sumetage{j'\geq j-N_0} {k'\geq k-N_0} 2^{-(j'-j)s'} 2^{-(k'-k)(s+\frac12) }  \wt f_{j',k'}(t) \,  .}
$$
Since $\ds s > - \frac12$ and $s'> 0$,  it follows  by Young's inequality on series, that
$$
2^{js'}2^{k s}\|\D_j^{\rm v}\D_k^{\rm h}T^1(v(t) ,v_\lam(t))\|_{L^2} \lesssim
 \|v_\lam\|_{\wt {L}^4(\R^+; \cB^{s+\frac12, s'})}  \|v(t) \|_{L^\infty_{\rm v}(L^4_{\rm h})}   f_{j,k}(t)
$$
where~$f_{j,k}(t)$ is on the sphere of~$\ell^1(\ZZ^2;L^4(\R^+))$.
As~$\phi(t)$ is greater than~$\|v(t)\|^4_{L^\infty_{\rm v}(L^4_{\rm h})}$, we infer that
\beq
\label{regulNS2Ddemoeq23-1}
\begin{split}
\cT_{j,k,\lam}^1(t) & \eqdefa
2^k 2^{js'}2^{k s} \int_0 ^t \exp \Bigl(-c(t-t')2^{2k} -\lam \int_{t'}^t\phi(t'')dt'' \Bigr)\\
&\qquad\quad\qquad\qquad\quad\qquad \qquad\quad\qquad{}\times
\|\D_j^{\rm v}\D_k^{\rm h}T^1(v(t'), v_\lam(t'))\|_{L^2} dt'\\
& \lesssim   \|v_\lam\|_{\wt {L}^4(\R^+; \cB^{s+\frac12, s'})} \\
& \qquad\qquad  {}\times
2^{k}
\int_0 ^t   \exp \Bigl(-c(t-t')2^{2k} -\lam \int_{t'}^t\phi(t'')dt'' \Bigr)
\phi^{\frac 1 4} (t') f_{j,k}(t')dt' \,  .
\end{split}
\eeq
Using H\"older's  inequality, we deduce  that
$$
\longformule{
\cT_{j,k,\lam}^1(t)  \lesssim   \|v_\lam\|_{\wt {L}^4(\R^+; \cB^{s+\frac12, s'})} \biggl(\int_0 ^t   e^{-c(t-t')2^{2k} } f^4_{j,k}(t')dt'\biggr)^{\frac 1 4}
}
{
{}\times 2^{k} \biggl(\int_0 ^t   \exp \Bigl(-c(t-t')2^{2k} -\lam \int_{t'}^t\phi(t'')dt'' \Bigr)
\phi(t')^{\frac 1 3}dt'  \biggr)^{\frac 34 } \,  .
}
$$
Then H\"older's inequality  in the last term of the above inequality ensures that
\beq
\label{regulNS2Ddemoeq23}
\cT_{j,k,\lam}^1(t)  \lesssim   \frac 1 {\lam^{\frac 1 4}}\biggl(\int_0 ^t   e^{-c(t-t')2^{2k} } f^4_{j,k}(t')dt'\biggr)^{\frac 1 4}  \|v_\lam\|_{\wt {L}^4(\R^+; \cB^{s+\frac12, s'})} \,  .
\eeq
Now let us study the term with~$T^2$. Using again that  the support of the Fourier  transform of the product of two functions is included in the sum of the two supports, let us write that
$$
\|\D_j^{\rm v}\D_k^{\rm h}T^2(v(t) ,v_\lam(t))\|_{L^2}   \lesssim  \sumetage{j'\geq j-N_0}{k'\geq k-N_0}
\|S_{j'}^{\rm v}\D_{k'}^{\rm h} v(t) \|_{L^\infty_{\rm v}(L^2_{\rm h})} \|\D_{j'}^{\rm v}S_{k'+1}^{\rm h} v_\lam(t)\|_{L^2_{\rm v}(L^\infty_{\rm h})}\, .
$$
Combining  Lemma\refer{Bernsteinaniso} with  the definition of the function~$\phi$, we get \beq
\label{regulNS2Ddemoeq24}
\|S_{j'}^{\rm v}\D_{k'}^{\rm h} v(t) \|_{L^\infty_{\rm v}(L^2_{\rm h})}\lesssim2^{-k'} \|v(t)\|_{L^\infty_{\rm v}( H^1_{\rm h})} \lesssim 2^{-k'} \phi^{\frac 1 2}(t) \,  .
\eeq
Now let us observe that using  again the Bernstein inequality, we have
\beno
 \|\D_{j'}^{\rm v}S_{k'+1}^{\rm h} v_\lam(t)\|_{L^2_{\rm v}(L^\infty_{\rm h})} & \lesssim & \sum_{k''\leq k'} \|\D_{j'}^{\rm v}\D_{k''}^{\rm h} v_\lam(t)\|_{L^2_{\rm v}(L^\infty_{\rm h})}\\
 &\lesssim &\sum_{k''\leq k'} 2^{k''}\|\D_{j'}^{\rm v}\D_{k''}^{\rm h} v_\lam(t)\|_{L^2} \, .
\eeno
By definition of the~$\wt {L}^4(\R^+; \cB^{s+\frac12, s'})$ norm, we  have
$$
2^{j's'} 2^{k'(s-\frac {1}2)} \, \|\D_{j'}^{\rm v}S_{k'+1}^{\rm h} v_\lam(t)\|_{L^2_{\rm v}(L^\infty_{\rm h})} \lesssim
\|v_\lam\|_{\wt {L}^4(\R^+; \cB^{s+\frac12, s'})}
\sum_{k''\leq k'} 2^{(k'-k'')(s-\frac {1}2) }\underline f_{j',k''}(t)
$$
where~$\underline  f_{j',k''}(t)$,  on the sphere of~$\ell^1(\ZZ^2 ;L^4(\R^+))$,  is defined by
$$
\underline  f_{j',k''}(t) \eqdefa \|v_\lam\|_{\wt {L}^4(\R^+; \cB^{s+\frac12, s'})}
^{-1} 2^{j's'} 2^{k''(s +\frac12)} \|\D_{j'}^{\rm v}\D_{k''}^{\rm h} v_\lam(t)\|_{L^2} \, .
$$
Since $s< \frac12$,    this ensures by Young's inequality that 
$$
 \|\D_{j'}^{\rm v}S_{k'+1}^{\rm h} v_\lam(t)\|_{L^2_{\rm v}(L^\infty_{\rm h})} \lesssim  2^{-j's'} 2^{-k'(s-\frac {1}2)} \,
\|v_\lam\|_{\wt {L}^4(\R^+; \cB^{s+\frac12, s'})} \wt f_{j',k'}(t)
$$
where~$\wt f_{j',k'}(t)$ is on the sphere of~$\ell^1(\ZZ^2;L^4(\R^+))$.  Together with Inequality\refeq{regulNS2Ddemoeq24}, this gives
$$
2^{js'} 2^{k(s+\frac {1}2)} \, \|\D_j^{\rm v}\D_k^{\rm h}T^2(v(t) ,v_\lam(t))\|_{L^2} \lesssim \phi(t)^{\frac 1 2}
\|v_\lam\|_{\wt {L}^4(\R^+; \cB^{s+\frac12, s'})} f_{j,k}(t) \, ,
$$
where~$f_{j,k}(t)$ is on the sphere of~$\ell^1(\ZZ^2;L^4(\R^+))$.  We deduce  that
\beq
\label{regulNS2Ddemoeq23pm1}
\begin{aligned}
\cT_{j,k,\lam}^2(t) & \eqdefa 2^{k}
2^{js'} 2^{ks} \, \int_0 ^t \exp \Bigl(-c(t-t')2^{2k} -\lam \int_{t'}^t\phi(t'')dt'' \Bigr) \\
& \qquad \qquad \qquad\qquad \qquad \qquad\qquad   \times{}
\|\D_j^{\rm v}\D_k^{\rm h}T^2(v(t'), v_\lam(t'))\|_{L^2} \,  dt'\\
& \lesssim   \|v_\lam\|_{\wt {L}^4(\R^+; \cB^{s+\frac12, s'})} \\
& \qquad\qquad  \times{}
2^{\frac {k} 2}\int_0 ^t   \exp \Bigl(-c(t-t')2^{2k} -\lam \int_{t'}^t\phi(t'')dt'' \Bigr)
\phi(t')^{\frac 1 2}  f_{j,k}(t')dt' \,  .
\end{aligned}
\eeq
Using H\"older's inequality twice, we get
\ben
\nonumber
\cT_{j,k,\lam}^2(t) & \lesssim &  \|v_\lam\|_{\wt {L}^4(\R^+; \cB^{s+\frac12, s'})} \biggl(\int_0 ^t   e^{-c(t-t')2^{2k} } f^4_{j,k}(t')dt'\biggr)^{\frac 1 4}\\
\nonumber& &\qquad\qquad\qquad{}\times 2^{\frac k2} \biggl(\int_0 ^t   \exp \Bigl(-c(t-t')2^{2k} -\lam \int_{t'}^t\phi(t'')dt'' \Bigr)
\phi(t')^{\frac 2 3}dt'  \biggr)^{\frac 3 4}\\
\label{regulNS2Ddemoeq23bis}
&\lesssim& \frac 1 {\lam^{\frac 1 2}}\|v_\lam\|_{\wt {L}^4(\R^+; \cB^{s+\frac12, s'})} \biggl(\int_0 ^t   e^{-c(t-t')2^{2k} } f^4_{j,k}(t')dt'\biggr)^{\frac 1 4} \,  .
\een
As~$T^3$ is estimated like~$T^1$ and~$T^4$ is estimated like~$T^2$, this implies finally that 
$$
\longformule{
2^{js'} 2^{ks} \|\D_j^{\rm v}\D_k^{\rm h}v_\lam(t)\|_{L^2} \lesssim
2^{js'} 2^{ks} e^{-c2^{2k}t} \|\D_j^{\rm v}\D_k^{\rm h}v_0\|_{L^2}
}
{{}+
\Bigl(\int_0^t e^{-c(t-t')2^{2k}} f_{j,k}^4(t')dt'\Bigr)^{\frac14}\Bigl(\frac 1 {\lam^{\frac14}}+\frac 1{\lam^{\frac12}}\Bigr)\|v_\lam\|_{\wt {L}^4(\R^+;\cB^{s+\frac12, s'})} \,  .
}
$$
As we have
$$
\begin{aligned}
\left( \int_0^\infty\Bigl(\int_0^t e^{-c(t-t')2^{2k}} f^4_{j,k}(t')dt'\Bigr)^{\frac14\times 4 } dt\right)^\frac14&= c^{-1}d_{j,k}2^{-\frac k2} \\
\mbox{and} \,
\sup_{t\in \R^+} \Bigl(\int_0^t e^{-c(t-t')2^{2k}} f^4_{j,k}(t')dt'\Bigr)^\frac14 &=d_{j,k} \,  , \quad \mbox{with} \,  d_{j,k}  \in \ell^1( \ZZ^2)  \,  ,
\end{aligned}
$$
we infer that
$$
\begin{aligned}
2^{js'} 2^{ks} \bigl( \|\D_j^{\rm v}\D_k^{\rm h}v_\lam\|_{L^\infty(\R^+;L^2)}
&+2^{\frac k 2} \|\D_j^{\rm v}\D_k^{\rm h}v_\lam\|_{L^4(\R^+;L^2)}\bigr)\\
&\quad  \lesssim
2^{js'} 2^{ks} \|\D_j^{\rm v}\D_k^{\rm h}v_0\|_{L^2}
+ d_{j,k}  \Bigl(\frac 1 {\lam^{\frac14}}+\frac 1{\lam^{\frac12}}\Bigr)\|v_\lam\|_{\wt {L}^4(\R^+;\cB^{s+\frac12, s'})} \,   .
\end{aligned}
$$
Taking the sum over~$j$ and~$k$ and choosing~$\lam$ large enough, we have proved\refeq{regulNS2Ddemoeq20}.

\medbreak
\noindent
Let us gain~$L^2$-integrability in~$t$. Using\refeq{regulNS2Ddemoeq23-1} and\refeq{regulNS2Ddemoeq23pm1} with~$\lam=0$, we find that
$$
\begin{aligned}
2^{js'} 2^{k(s+1)} \|\D_j^{\rm v}\D_k^{\rm h}v(t)\|_{L^2} &\lesssim
2^{js'} 2^{k(s+1)} e^{-c2^{2k}t} \|\D_j^{\rm v}\D_k^{\rm h}v_0\|_{L^2}\\
& \quad +  2^{2 k} \, \|v\|_{\wt {L}^4(\R^+; \cB^{s+\frac12, s'})}
\int_0^t e^{-c(t-t')2^{2k}} \bigl((  g_{j,k}(t') + 2^{-\frac k 2} h_{j,k}(t') \bigr)dt'
\,,
\end{aligned}
$$
where~$g_{j,k}$ (resp. $h_{j,k}$) are in~$\ell^1(\ZZ^2 ; L^2(\R^+))$ (resp.~$\ell^1(\ZZ^2 ; L^{\frac43}(\R^+))$), with
$$
\sum_{(j,k) \in \ZZ^2} \|g_{j,k}\|_{L^2(\R^+)} \lesssim \|\phi\|_{L^1}^\frac14 \quad \mbox{and} \quad  \sum_{(j,k) \in \ZZ^2} \|h_{j,k}\|_{L^\frac43(\R^+)} \lesssim \|\phi\|_{L^1}^\frac12 \, .
$$
Laws of convolution in the time variable, summation over~$j$ and~$k$ and~(\ref{regulNS2Ddemoeq20}) imply that
$$
\| v\|_{\wt L^2(\R^+;  \cB^{s+1,s'})}\lesssim
\|v_0\|_{ \cB^{s,s'}}  \exp \Bigl(C\int_0^\infty\phi(t)dt\Bigr)\, .
$$
This implies by interpolation in view of    \eqref{regulNS2Ddemoeq20}  that  for all $r$ in $[2,\infty]$,  all $s$ in $]-\frac 12,\frac 12[$ and all positive $s'$ 
\beq
\label{regulNS2Ddemoeq20L2} 
 \| v\|_{\wt L^r(\R^+; \cB^{s+\frac 2r,s'})}\lesssim
\|v_0\|_{ \cB^{s,s'}}  \exp \Bigl(C\int_0^\infty\phi(t)dt\Bigr)\, ,
\eeq
which in view of\refeq{inclusionXrpeq} ensures Inequality \eqref{eqdim21} and achieves the proof of  Estimate \eqref{eqdim22} in the case when $s$ belongs to $]-\frac 12,\frac 12[$. 

\medbreak
\noindent Now we are going to double the interval, namely prove that for any~$s$ in~$ ]-1,1[$,  \textcolor{black}{any~$s'\geq 1/2$} and any~$r$ in~$[2,\infty]$ we have
\beq
\label{eqdim21demoeq11}
\|v\|_{\wt L^r(\R^+;\cB^{s+\frac2 r, s'})} \lesssim \|v_0\|_{\cB^{s,s'}}
+\|v_0\|_{\cB^{\frac s 2 ,s'}}\|v_0\|_{\cB^{\frac s 2  }}
\exp (C\|v_0\|_{\cB^0} E_0)\, .
\eeq
Proposition\refer{productlawsaniso} implies that for any~$s$ in~$ ]-1,1[$   \textcolor{black}{and any~$s'\geq 1/2$},  we have
$$
\|v(t)\otimes v(t)\|_{\cB^{s,s'}} \lesssim \|v(t)\|_{\cB^{\frac {s+1} 2}} 
\|v(t)\|_{\cB^{\frac  {s+1}  2, s'}} \, .
$$
The smoothing effect of the horizontal heat flow described in Lemma\refer{anisoheat} implies therefore  that, for any ~$s$ belonging to~$ ]-1,1[$, \textcolor{black}{any~$s'\geq 1/2$} and any~$r$ in~$[2,\infty]$,
\beno
\|v\|_{\wt L^r( \R^+;\cB^{s+\frac 2 r, s'})} & \lesssim &  \|v_0\|_{\cB^{s,s'}} +\|v\otimes v\|_{\wt L^2(\R^+;\cB^{s,s'})} \\
& \lesssim &  \|v_0\|_{\cB^{s,s'}} +\|v\|_{\wt L^4(\R^+;\cB^{\frac  {s+1}  2})}
\|v\|_{\wt L^4(\R^+;\cB^{\frac  {s+1}  2,s'})} \, .
\eeno
Finally Inequality\refeq{regulNS2Ddemoeq20} ensures that  for any~$s$ in~$ ]-1,1[$,  \textcolor{black}{any~$s'\geq 1/2$}  and any~$r$ in~$[2,\infty]$,
\beq
\label{eqdim21demoeq111}
\|v\|_{\wt L^r( \R^+;\cB^{s+\frac 2 r, s'})} \lesssim \|v_0\|_{\cB^{s,s'}} +\|v_0\|_{\cB^{\frac s 2}}
\|v_0\|_{\cB^{\frac s 2,s'}} \exp (C\|v_0\|_{\cB^0}E(0)) \, .
\eeq
This concludes the proof of Inequality\refeq{eqdim21demoeq11}.

\medbreak
\noindent Now let us conclude the proof of Estimate\refeq{eqdim22}. Again Proposition\refer{productlawsaniso}  implies that, for any~$s$ in~$ ]-2,0]$  \textcolor{black}{and any~$s'\geq 1/2$}, we have
$$
\|v(t)\otimes v(t)\|_{\cB^{s+1,s'}} \lesssim \|v(t)\|_{\cB^{\frac s 2+1} }
\|v(t)\|_{\cB^{\frac s 2+1, s'}}  \, .
$$
This gives rise to 
$$
\|v\otimes v\|_{L^1(\R^+;\cB^{s+1,s'})} \lesssim
 \|v\|_{L^2(\R^+;\cB^{\frac s 2+1})} 
\|v\|_{L^2(\R^+;\cB^{\frac s 2+1, s'})} \, .
$$
The smoothing effect of the heat flow gives, for  any~$r$ in~$[1,\infty]$ and any~$s$ in~$]-2,0]$, 
$$
\|v\|_{\wt L^r(\R^+;\cB^{s+\frac2 r, s'})} \lesssim  \|v_0\|_{\cB^{s,s'}} +
 \|v\|_{L^2(\R^+;\cB^{\frac s 2+1})} 
\|v\|_{L^2(\R^+;\cB^{\frac s 2+1, s'})} \,  .
$$
Inequality\refeq{eqdim21demoeq111} implies that, for  any~$r$ in~$[1,\infty]$ and any~$s$ in~$]-2,0]$ and~~$s'\geq 1/2$ , 
\beq
\label{eqdim21demoeq1111}
\|v\|_{\wt L^r(\R^+;\cB^{s+\frac2 r, s'})} \lesssim  \|v_0\|_{\cB^{s,s'}}
+\|v_0\|_{\cB^{\frac s 4}}^3
\|v_0\|_{\cB^{\frac s 4,s'}} \exp (C\|v_0\|_{\cB^0}E_0) \, .
\eeq
This proves the estimate\refeq{eqdim22} and thus Proposition\refer{regulNS2D}.
  \end{proof}
  
  \medbreak
  
  \subsection{Propagation of regularity by a 2D flow with parameter}\label{propagtwodimparameter}

Now let us estimate the norm of the function~$w^3$ defined as the solution of~$(T_\b)$ defined page~\pageref{defTbetapage}.  This is described in the following proposition. 
 \begin{prop}
\label{regiultransportdiff2D}
{\sl Let~$v_0$ and~$v$ be as in Proposition~{\rm\refer{regulNS2D}}.  For any non negative real number~$\b$, let us consider~$w^3$ the solution of
$$
(T_\b) \quad 
\partial_t w^3 + v\cdot \nabla^{\rm h}w^3-\Delta_{\rm h} w^3- \b^2 \partial_3^2 w^3= 0 \andf
w^3_{|t=0} = w_{0}^3\, .
$$
Then~$w^3$ satisfies the following estimates where all the constants are independent of~$\b$:
 \begin{equation}
 \label{eqdim21transportiff}
 \|  w^3 \|_{\cA^0} \lesssim \| w^3_0\|_{\cB^0}\exp \bigl(\cT_1( \| v_0 \|_{\cB^0})\bigr) \,  ,
 \end{equation}
 and for  any~$s$ in~$ [-2+\mu,0]$ and any~$s' \geq 1/2$,
we have
 \beq
\label{eqdim22transportiff}
 \ \|w^3\|_{\cA^{s,s'}} \lesssim
\bigl(\|w_0^3\|_{ \cB^{s, s'}} + \|w_0^3\|_{\cB^0} \cT_2(\|v_0\|_{S_\mu})\bigr)
\exp \bigl(\cT_1( \| v_0 \|_{\cB^0})\bigr) \, .
 \eeq
}\end{prop}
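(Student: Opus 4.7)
The plan is to mimic closely the strategy of the proof of Proposition~\refer{regulNS2D}, exploiting the crucial simplification that $(T_\b)$ is \emph{linear} in $w^3$ (with $v$ now playing the role of a given drift) and verifying that the $\b^2\partial_3^2$ dissipation, being only a nonnegative contribution to the underlying semigroup, can be safely dropped in every upper bound, so that all estimates are uniform in $\b\geq 0$. More precisely, the semigroup generated by $\D_{\rm h} + \b^2\partial_3^2$ factorizes as a tensor product, and on a horizontal dyadic block of frequency $2^k$ it satisfies $\|e^{t(\D_{\rm h} + \b^2\partial_3^2)}\D_k^{\rm h}\D_j^{\rm v} f\|_{L^2} \leq e^{-ct2^{2k}}\|\D_k^{\rm h}\D_j^{\rm v} f\|_{L^2}$ with $c$ independent of $\b$, so that the horizontal smoothing effect of Lemma~\refer{anisoheat} applies with $\b$-independent constants.

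Using $\dv_{\rm h} v = 0$ to rewrite the transport term as $\dv_{\rm h}(vw^3)$, the Duhamel formula yields an analog of~\refeq{regulNS2Ddemoeq21} for $w^3$. One then decomposes the product $vw^3$ via the anisotropic Bony paraproducts~\refeq{Bonydecompaniso} (with $a=v$ and $b=w^3$), and reproduces verbatim the paraproduct estimates performed in Proposition~\refer{regulNS2D} for $T^1,\dots,T^4$, with $v_\lam$ replaced by $w^3_\lam$ while $v$ is kept as the coefficient. To derive~\refeq{eqdim21transportiff}, introduce the weight
$$
w^3_\lam(t) \eqdefa w^3(t)\exp\Big(-\lam\int_0^t\phi(t')\,dt'\Big) \with \phi(t)\eqdefa \|v(t)\|_{L^\infty_{\rm v}(L^4_{\rm h})}^4 + \|v(t)\|_{L^\infty_{\rm v}(H^1_{\rm h})}^2\,,
$$
which by Remark~\refer{inclusionXrp} and Estimate~\refeq{eqdim21} satisfies $\int_0^\infty\phi\leq \cT_1(\|v_0\|_{\cB^0})$. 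Choosing $\lam$ large enough to absorb the quadratic right-hand side and then repeating the Duhamel step with $\lam=0$, as in the paragraph following~\refeq{regulNS2Ddemoeq23bis}, upgrades the $\widetilde L^\infty_t$-bound on $\cB^0$ into the $\widetilde L^2_t$-bound on $\cB^1$; this proves~\refeq{eqdim21transportiff}.

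To prove~\refeq{eqdim22transportiff} one then proceeds in two stages. First, keeping $s=0$, propagate the vertical regularity $s'$ by inserting the weight $2^{js'}$ in every paraproduct estimate: since $s'\geq 1/2>0$, the Young inequalities on series used in the proof of~\refeq{regulNS2Ddemoeq20} apply unchanged, and the terms in which a vertical derivative falls on $v$ are controlled, via Estimate~\refeq{eqdim22}, by $\cT_2(\|v_0\|_{S_\mu})$, producing the factor $\|w_0^3\|_{\cB^0}\cT_2(\|v_0\|_{S_\mu})$. Second, to gain horizontal regularity down to $s=-2+\mu$, invoke the anisotropic product laws of Proposition~\refer{productlawsaniso} in the spirit of the last paragraphs of the proof of Proposition~\refer{regulNS2D}, writing
$$
\|vw^3\|_{\cB^{s+1,s'}} \lesssim \|v\|_{\cB^{\frac{s+1}{2}+1}}\|w^3\|_{\cB^{\frac{s+1}{2},s'}} + \|v\|_{\cB^{\frac{s+1}{2}+1,s'}}\|w^3\|_{\cB^{\frac{s+1}{2}}}\,,
$$
and combining with Estimate~\refeq{eqdim22} for $v$ and the already-established bound~\refeq{eqdim21transportiff} together with the $s=0$ case of~\refeq{eqdim22transportiff} for $w^3$.

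\textbf{Main obstacle.} The technical core of the argument is the careful bookkeeping in the paraproduct estimates of the second stage: for each $(s,s')\in[-2+\mu,0]\times[1/2,7/2]$, vertical and horizontal regularity must be distributed on the factor ($v$ or $w^3$) that can support it, so that one gets a linear estimate in $\|w^3\|_{\cA^{s,s'}}$ closing the Gronwall argument. The set $D_\mu$ of Definition~\refer{definitionspacesmu} and the resulting space $S_\mu$ are precisely tailored so that every norm of $v$ that appears in these products is of the form $\|v\|_{\widetilde L^r(\R^+;\cB^{s+2/r,s'})}$ with $(s,s')\in D_\mu$, hence controlled by $\cT_2(\|v_0\|_{S_\mu})$ through~\refeq{eqdim22}, while no norm of $w^3$ beyond those already controlled is ever used. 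Uniformity in $\b$ is never an issue thanks to the observation of the first paragraph.
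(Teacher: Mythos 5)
Your treatment of~\refeq{eqdim21transportiff} is a legitimate alternative route: you run the exponential-weight Gronwall scheme from the proof of Proposition~\refer{regulNS2D} on the linear equation, whereas the paper instead invokes the time-slicing Lemma~\refer{propagationlemmaheathorizontal} with~$\cQ(v,a)=\dive_{\rm h}(av)$ and the algebra property of~$\cB^1$; both give~$\|w^3\|_{\cA^0}\lesssim\|w^3_0\|_{\cB^0}\exp\bigl(C\textstyle\int_0^\infty\phi(t)\,dt\bigr)$ with the exponent controlled by~$\cT_1(\|v_0\|_{\cB^0})$ via Remark~\refer{inclusionXrp}, and your observation that the~$\b^2\partial_3^2$ term only improves the semigroup bounds, uniformly in~$\b\geq 0$, is correct.

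The second stage of your proof of~\refeq{eqdim22transportiff}, however, has a genuine gap. Besides the index slip (your choice gives~$\s=(s+3)/2>1$ as soon as~$s>-1$, outside the admissible range~$]-1,1]$ of Proposition~\refer{productlawsaniso}; one should take~$\s=\s'=\frac s2+1$ as the paper does for~$v\otimes v$), any symmetric product law applied to~$vw^3$ necessarily produces a term in which~$w^3$ carries the vertical weight~$s'$ at an intermediate horizontal regularity, such as~$\|w^3\|_{\cB^{\frac s2+1,s'}}$. This is not linear in the norm~$\|w^3\|_{\cA^{s,s'}}$ you are estimating, so no absorption closes the argument; and if you instead induct by halving~$s$, the recursion terminates at~$s=0$ with a right-hand side containing~$\|w^3_0\|_{\cB^{0,s'}}\,\cT_2(\|v_0\|_{S_\mu})$, a quantity which is \emph{not} controlled by~$\|w^3_0\|_{\cB^{s,s'}}+\|w^3_0\|_{\cB^0}$ (for~$j,k>0$,~$s<0$ and~$s'>1/2$ the weight~$2^{js'}$ dominates both~$2^{ks+js'}$ and~$2^{j/2}$), so you would only prove a strictly weaker statement than~\refeq{eqdim22transportiff}. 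The missing idea is the asymmetric vertical Bony splitting~$vw^3=T^{\rm v}_vw^3+\wt T^{\rm v}_{w^3}v$ of~\refeq{bonyvertical}: the first piece satisfies~$\|T^{\rm v}_vw^3\|_{\cB^{s+1,s'}}\lesssim\|v\|_{\cB^1}\|w^3\|_{\cB^{s+1,s'}}$ by Lemma~\refer{productlawsanisodemolem} and is kept inside the linear propagation, with coefficient~$\|v(t)\|^2_{\cB^1}$ integrable in time for the Gronwall step; the second piece is treated as a source term bounded by~$\|w^3\|_{L^2(\R^+;\cB^1)}\|v\|_{L^2(\R^+;\cB^{s+1,s'})}$, so that \emph{all} the~$(s,s')$ regularity sits on~$v$ (controlled by~\refeq{eqdim22}) while~$w^3$ enters only through its~$\cB^1$ norm, already controlled by~\refeq{eqdim21transportiff}. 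This is precisely what produces the term~$\|w^3_0\|_{\cB^0}\,\cT_2(\|v_0\|_{S_\mu})$ in~\refeq{eqdim22transportiff}.
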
 \begin{proof}
This is a question of   propagating   anisotropic regularity by a transport-diffusion equation. This propagation is described by the following lemma, which will easily lead to Proposition~\ref{regiultransportdiff2D}.
 \begin{lem}
 \label{propagationlemmaheathorizontal}
{\sl  Let us consider~$(s,s')$ a couple of real numbers, and $\cQ$  a bilinear operator which maps continuously~$\cB^{1} \times \cB^{s+1,s'}$ into~$ \cB^{s,s'} $.   A constant~$C$ exists such that for any two-component vector field~$v$ in~$L^2(\R^+;\cB^{1})$, any~$f$ in~$L^1(\R^+;\cB^{s,s'})$, any $a_0$ in $\cB^{s,s'}$ and for any non negative~$\b$, if~$\D_\b\eqdefa \D_{\rm h} +\b^2\partial_z^2$ and~$a$ is the solution of
 $$
 \partial_t a -\D_\b a +\cQ(v,a) = f \andf a_{|t=0}=a_0 \, ,
 $$
 then $a$ satisfies
 $$
 \forall r\in [1,\infty]\,,\ \|a\|_{\wt L^r(\R^+; \cB^{s+\frac2 r,s'})} \leq C
 \bigl(\|a_0\|_{\cB^{s,s'}}+\|f\|_{L^1(\R^+;\cB^{s,s'})} \bigr)\exp\Bigl(C\int_0^\infty\|v(t)\|_{\cB^{1}}^2 dt\Bigr) \, .
 $$
}  \end{lem}
 \begin{proof}
 This is a Gronwall type estimate. However
the fact that the third index of the Besov spaces   is one, induces some technical difficulties which  lead us to work  first on subintervals~$I$ of~$\R^+$  on which~$\|v\|_{L^2(I;\cB^{1})}$ is small.

\medskip
\noindent Let us first consider any subinterval~$I=[\tau_0,\tau_1]$ of~$\R^+$. The Duhamel formula and the smoothing effect of the heat flow described in Lemma\refer{anisoheat} imply that 
$$
\longformule{
 \|\D_k^{\rm h}\D_j^{\rm v} a(t)\|_{L^2}  \leq e^{-c2^{2k}(t-\tau_0)} \|\D_k^{\rm h}\D_j^{\rm v} a(\tau_0)\|_{L^2} 
 }
 { {}
+C \int_{\tau_0}^t e^{-c2^{2k}(t-t')} \bigl\|\D_k^{\rm h}\D_j^{\rm v} \bigl(\cQ(v(t'),a(t'))+f(t')\bigr)\bigr\|_{L^2} dt'   \, .
}
$$
After multiplication by~$2^{ks+js'}$ and using Young's inequality in the time integral, we deduce that 
$$
\longformule{
2^{ks+js'}  \bigl(\|\D_k^{\rm h}\D_j^{\rm v} a\|_{L^\infty(I;L^2)}  
+ 2^{2k}\|\D_k^{\rm h}\D_j^{\rm v} a\|_{L^1(I;L^2)}  \bigr) \leq C 2^{ks+js'}  \|\D_k^{\rm h}\D_j^{\rm v} a(\tau_0)\|_{L^2} 
 }
 { {}
+C \int_{\tau_0}^t d_{k,j}(t') \bigl(\|v(t')\|_{\cB^1} \|a(t')\|_{\cB^{s+1,s'}}+\|f(t')\|_{\cB^{s,s'}}\bigr) dt'
}
$$
where for any~$t$,~$d_{k,j}(t)$ is an element of the sphere of~$\ell^1(\ZZ^2)$. By summation over~$(k,j)$ and using the Cauchy-Schwarz inequality, we infer that 
\beq
\label{propagationlemmaheathorizontaldeoeq1}
\begin{split}
\|a\|_{\wt L^\infty(I;\cB^{s,s'})} +\|a\|_{L^1(I;\cB^{s+2,s'})}  &
\leq
 C \|a(\tau_0)\|_{\cB^{s,s'}}  +C \|f \|_{L^1(I;\cB^{s,s'})}\\
&\qquad\qquad\qquad{}+
C\|v\|_{L^2(I;\cB^1)} \|a\|_{L^2(I;\cB^{s+1,s'})} \, .
\end{split}
\eeq
Let us define the   increasing sequence~$(T_m)_{0\leq m\leq M+1}$ by induction  such that~$T_0=0$,~$T_{ M+1}=\infty$ and
$$
\forall m <M\,,\ \int_{T_m}^{T_{m+1}} \|v(t)\|_{\cB^1}^2dt = c_0 \andf
\int_{T_{M}}^{\infty} \|v(t)\|_{\cB^1}^2dt \leq c_0 \, ,
$$
for some given~$c_0$ which will be chosen later on.  Obviously, we have
\beq
\label{propagationlemmaheathorizontaldeoeq2}
\int_0^\infty   \|v(t)\|^2_{\cB^1} dt  \geq
\int_0^{T_{M}}  \|v(t)\|^2_{\cB^1} dt =Mc_0 \, .
\eeq
 Thus the number~$M$ of~$T_m's$ such that~$T_{m}$ is finite is less than~$c_0^{-1}\|v\|_{L^2(\R^+;\cB^{1})}^2$.  
 Applying Estimate \refeq{propagationlemmaheathorizontaldeoeq1} to the interval~$[T_{m},T_{m+1}]$, we get 
 $$
 \begin{aligned}
  \|a\|_{L^\infty([T_{m},T_{m+1}];\cB^{s,s'})} +\|a\|_{L^1([T_{m},T_{m+1}];\cB^{s+2,s'})} & \leq 
 \|a\|_{L^2([T_{m},T_{m+1}];\cB^{s+1,s'})}
\\ & \quad  +C \bigl( \|a(T_{m})\|_{\cB^{s,s'}}  +C \|f\|_{L^1([T_{m},T_{m+1}];\cB^{s,s'})}\bigr)
 \end{aligned} $$
 if~$c_0$ is chosen such that ~$C \sqrt{c_0} \leq 1$. As 
 $$
\|a\|_{L^2([T_{m},T_{m+1}];\cB^{s+1,s'})} \leq \|a\|_{L^\infty([T_{m},T_{m+1}];\cB^{s,s'})} ^{\frac12}\|a\|_{L^1([T_{m},T_{m+1}];\cB^{s+2,s'})}^{\frac12}\, ,
 $$
 we infer that 
\beq
\label{propagationlemmaheathorizontaldeoeq3}
 \begin{aligned}\|a\|_{L^\infty([T_{m},T_{m+1}];\cB^{s,s'})}& +\|a\|_{L^1([T_{m},T_{m+1}];\cB^{s+2,s'})} \\
 &\quad \leq   2C\bigl(\|a(T_{m})\|_{\cB^{s,s'}}  + \|f\|_{L^1([T_{m},T_{m+1}];\cB^{s,s'})}\bigr) \, .
\end{aligned}
\eeq
Now let us us prove by induction that
$$
\ \|a\|_{L^\infty([0,T_{m}];\cB^{s,s'})}  \leq
(2C)^{m}  \bigl(\|a_0\|_{\cB^{s,s'}}+\|f\|_{L^1([0,T_{m}],\cB^{s,s'})} \bigr).
$$
Using~(\ref{propagationlemmaheathorizontaldeoeq3}) and the induction hypothesis we get
\beno
\|a\|_{L^\infty([ {T_{m}} ,T_{m+1}];\cB^{s,s'})}  
& \leq &  2C\bigl( \|a\|_{L^\infty([0,T_{m}];\cB^{s,s'})}  + \|f\|_{L^1([T_{m},T_{m+1}];\cB^{s,s'})}\bigr)\\
& \leq &  (2C)^{m+1} \bigl(\|a_0\|_{\cB^{s,s'}}+\|f\|_{L^1([0,T_{m+1}],\cB^{s,s'})} \bigr ) \, ,
\eeno
provided that $2C \geq 1$. This proves in view of \eqref{propagationlemmaheathorizontaldeoeq2} that
$$
\|a\|_{L^\infty(\R^+;\cB^{s,s'})} \leq C  \bigl(\|a_0\|_{\cB^{s,s'}}+\|f\|_{L^1(\R^+;\cB^{s,s'})} \bigr)\exp\Bigl(C\int_0^\infty\|v(t)\|_{\cB^{1}}^2 dt\Bigr) \, .
$$
We deduce from\refeq{propagationlemmaheathorizontaldeoeq3} that 
$$
\longformule{
\|a\|_{L^1([T_{m},T_{m+1}];\cB^{s+2,s'})} \leq   C\bigl(\|a_0\|_{\cB^{s,s'}}+\|f\|_{L^1(\R^+;\cB^{s,s'})} \bigr)\exp\Bigl(C\int_0^\infty\|v(t)\|_{\cB^{1}}^2 dt\Bigr) 
}
{
{} + C\|f\|_{L^1([T_{m},T_{m+1}];\cB^{s,s'})}\, .}
$$
Once noticed that~$xe^{Cx^2}\leq e^{C'x^2}$, the result comes by summation over~$ m$ and the fact that the total number of~$m$'s is less than or equal to~$c_0^{-1} \|v\|^2_{L^2(\R^+;\cB^{1})}$. The lemma is proved.
 \end{proof}

\medbreak

\noindent {\it Conclusion of the proof of Proposition~{\rm\refer{regiultransportdiff2D}}.}
  We  apply Lemma\refer{propagationlemmaheathorizontal} with $ {\color{black}{\cQ}}(v,a) =\dive_{\rm h} (av)$, $ {f=0}$, $a=w^3$, and~$(s,s')=(0,1/2)$.
  Indeed since~$\cB^1$ is an algebra we have
  $$
  \| {\cQ}(v,a)\|_{\cB^0}  \lesssim \|av\|_{\cB^1} \lesssim \|a\|_{\cB^1}  \|v\|_{\cB^1}.
  $$
So  Lemma\refer{propagationlemmaheathorizontal}  gives
  $$
 \|  w^3 \|_{\cA^0} \lesssim \| w^3_0\|_{\cB^0}\exp \Bigl( C \int_0^\infty
 \|v(t)\|_{\cB^1}^2 dt\Bigr) \, .
$$
Thanks to Estimate\refeq{eqdim21} of Proposition\refer{regulNS2D}
we deduce\refeq{eqdim21transportiff}. 

\medbreak
\noindent Now for~$s$ belonging to~$[-2+\mu,0]$, we apply Lemma\refer{propagationlemmaheathorizontal} with~$a=w^3$,~${\color{black}{\cQ}}(v,a) =  \dive_{\rm h} (T^{\rm v} _v a)$, and~$f= \dive_{\rm h} (\wt T^{\rm v} _a v)$, where with the notations of Definition \ref{deflpanisointro} 
\beq 
\label{bonyvertical}
 T^{\rm v} _v a 
  \eqdefa \sum_jS^{\rm v}_{j-1} v \D^{\rm v}_ja\,,\   R^{\rm v}(a,v) \eqdefa
\!\!\!  \sumetage{j}{ -1\leq \ell\leq 1} \D^{\rm v}_{j-\ell} a\D^{\rm v}_j v\andf 
\wt T^{\rm v} _a v 
 \eqdefa T^{\rm v}_a v+R^{\rm v} (a,v)\,. 
\eeq
  Lemma\refer{productlawsanisodemolem} implies that for  any~$s$ in~$ [-2+\mu,0]$ and any~$s' \geq 1/2$,
$$
\| T^{\rm v} _v w^3 \|_{\cB^{s+1,s'}} \lesssim \| v \|_{\cB^1} \|  w^3 \|_{\cB^{s+1,s'}}\,. 
$$ 
We infer from Lemma\refer{propagationlemmaheathorizontal}  that, for any~$r$ in~$[1,\infty]$, 
\beq
\label{useestwithr1}
\|w^3\|_{\wt L^r(\R^+; \cB^{s+\frac2 r, s'})} \lesssim  \bigl(\|w^3_0\|_{\cB^{s,s'}}
+\|\dive_{\rm h} (\wt T^{\rm v} _a v)\|_{L^1(\R^+;\cB^{s,s'})} \bigr)\exp \bigl(\cT_1( \| v_0 \|_{\cB^0})\bigr) \, .
\eeq
But we have, using laws of anisotropic  paraproduct given in Lemma~\ref{productlawsanisodemolem}, 
$$
\begin{aligned}
\|\dive_{\rm h} (\wt T^{\rm v} _{w^3} v)\|_{L^1(\R^+;\cB^{s,s'})} & \lesssim \|\wt T^{\rm v} _{w^3}  v\|_{L^1(\R^+;\cB^{s+1,s'})} \\
&\lesssim \|w^3\|_{L^2(\R^+;\cB^{1})}
\|v\|_{L^2(\R^+;\cB^{s+1,s'})} \, .
\end{aligned}
$$
Applying\refeq{eqdim21transportiff} and\refeq{eqdim22} gives\refeq{eqdim22transportiff}. Proposition\refer{regiultransportdiff2D} is   proved.
\end{proof}

\noindent As~$w^{\rm h}$ is defined by~$w^{\rm h} =- \nabla_{\rm h}\D_{\rm h}^{-1} \partial_3 w^3$, we deduce from Proposition\refer{regiultransportdiff2D}, Lemma\refer{Bernsteinaniso}  and
  the scaling property \eqref{scale},  the following corollary.
\begin{cor}
\label{regiultransportdiff2Dwh}
{\sl For any~$s$ in~$ [-2+\mu,0]$ and any~$s' \geq 1/2$,
$$
\|w^{\rm h} \|_{\cA^{s+1,s'-1}} \lesssim 
\bigl(\|w_0^3\|_{ \cB^{s, s'}} + \|w_0^3\|_{\cB^0} \cT_2(\|v_0\|_{S_\mu})\bigr)
\exp \bigl(\cT_1( \| v_0 \|_{\cB^0})\bigr) \, .
$$
}
\end{cor}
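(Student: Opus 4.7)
The plan is to deduce the corollary directly from the identity $w^{\rm h} = -\nabla_{\rm h}\Delta_{\rm h}^{-1}\partial_3 w^3$ by analyzing what this Fourier multiplier does to anisotropic Besov norms, and then inserting the bound on $w^3$ furnished by Proposition \ref{regiultransportdiff2D}.

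The key observation is the following boundedness statement: for any $(s,s')\in\R^2$, one has
\begin{equation*}
\|\nabla_{\rm h}\Delta_{\rm h}^{-1}\partial_3 g\|_{\cB^{s+1,s'-1}}\lesssim \|g\|_{\cB^{s,s'}},
\end{equation*}
with a constant independent of $g$. To see this, I would apply $\Delta_k^{\rm h}\Delta_j^{\rm v}$ to $w^{\rm h}$: the vertical derivative $\partial_3$ acting on a block localized at vertical frequency $\sim 2^j$ is a Fourier multiplier of size $2^j$ (this is precisely Bernstein's inequality \ref{Bernsteinaniso} in the vertical variable), and $\nabla_{\rm h}\Delta_{\rm h}^{-1}$ acting on a block localized at horizontal frequency $\sim 2^k$ is a bounded Fourier multiplier of size $2^{-k}$ (the horizontal Riesz transform composed with the inverse horizontal Laplacian, which is a standard $0$-homogeneous singular integral localized at scale $2^{-k}$, up to the gain of one horizontal derivative). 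Consequently,
\begin{equation*}
\|\Delta_k^{\rm h}\Delta_j^{\rm v} w^{\rm h}\|_{L^p}\lesssim 2^{j-k}\|\Delta_k^{\rm h}\Delta_j^{\rm v} w^{3}\|_{L^p},
\end{equation*}
which, after multiplication by $2^{k(s+1)+j(s'-1)}$ and summation in $(k,j)$, yields the claimed mapping property. The same pointwise-in-time estimate holds once the $L^2$-in-time norm (respectively $L^\infty$-in-time norm) is inserted, so that
\begin{equation*}
\|w^{\rm h}\|_{\cA^{s+1,s'-1}}\;=\;\|w^{\rm h}\|_{L^\infty(\R^+;\cB^{s+1,s'-1})}+\|w^{\rm h}\|_{L^2(\R^+;\cB^{s+2,s'-1})}\lesssim \|w^3\|_{\cA^{s,s'}}.
\end{equation*}

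Combining this with the estimate \eqref{eqdim22transportiff} of Proposition \ref{regiultransportdiff2D}, namely
\begin{equation*}
\|w^3\|_{\cA^{s,s'}}\lesssim \bigl(\|w_0^3\|_{\cB^{s,s'}}+\|w_0^3\|_{\cB^0}\,\cT_2(\|v_0\|_{S_\mu})\bigr)\exp\bigl(\cT_1(\|v_0\|_{\cB^0})\bigr),
\end{equation*}
immediately produces the stated bound on $\|w^{\rm h}\|_{\cA^{s+1,s'-1}}$ for every $s\in[-2+\mu,0]$ and $s'\geq 1/2$.

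There is essentially no obstacle here: the only mild point of care is to check that the horizontal Riesz-type multiplier $\nabla_{\rm h}\Delta_{\rm h}^{-1}$ acts boundedly on each horizontal dyadic block in $L^p_{\rm h}$ uniformly in $k$ (so that one truly gains the full horizontal derivative), which is standard and is precisely the content of the scaling property \eqref{scale} together with Lemma \ref{Bernsteinaniso}. Everything else is a bookkeeping of indices.
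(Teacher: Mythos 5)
Your proof is correct and follows exactly the route the paper indicates: apply the anisotropic Bernstein inequalities of Lemma~\ref{Bernsteinaniso} to the time-independent multiplier~$-\nabla^{\rm h}\Delta_{\rm h}^{-1}\partial_3$ on each block~$\Delta_k^{\rm h}\Delta_j^{\rm v}$ to get the factor~$2^{j-k}$, hence the mapping~$\cB^{s,s'}\to\cB^{s+1,s'-1}$ (and~$\cB^{s+1,s'}\to\cB^{s+2,s'-1}$ for the~$L^2$-in-time component of~$\cA^{s+1,s'-1}$), and then invoke Estimate~(\ref{eqdim22transportiff}) of Proposition~\ref{regiultransportdiff2D}. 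The index bookkeeping is consistent with Definition~\ref{definitionspaces}, so nothing further is needed.
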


{}
\medbreak

\subsection{Conclusion of the proof of Theorem\refer{slowvarsimple}}\label{conclusionproofpropagprofile}
Using the definition of the approximate solution~$\Phi^{\rm app}$ given in \eqref{defuapppropagaprofile}, we infer  from Propositions\refer{regulNS2D}
 and \refer{regiultransportdiff2D} and
 Corollary~\ref{regiultransportdiff2Dwh}
 that 
 \beq
\label{slowvarsimpleconcludemoeq1}
\|\Phi^{\rm app} \|_{L^2(\R^+;\cB^1)}   \leq \cT_1(\|(v_0, w^3_0)\|_{\cB^0}) +\b \cT_2(\|(v_0, w^3_0)\|_{S_\mu}) \, .
\eeq
 Moreover, the error term~$\psi$ satisfies the following modified Navier-Stokes equation, with zero initial data:
\beq
\begin{split}
&\partial_t \psi +\dive \bigl( \psi\otimes \psi+\Phi^{\rm app}\otimes\psi+\psi\otimes \Phi^{\rm app}\bigr) -\D\psi=  - \nabla q_\b +\sum_{\ell=1} ^4 E_\b^\ell \with\\
&\qquad\quad\qquad E^1_\b \eqdefa \partial_3^2[(v, 0)]_\b+\b(0,[\partial_3p]_\b)\,,\\
&\qquad\quad\qquad E^2_\b \eqdefa \b  {\color{black}\Bigl[}  \Bigl (w^3\partial_3(v,w^3)+\bigl( \nabla^{\rm h} \Delta_{\rm h}^{-1} \mbox{div}_{\rm h} \partial_3 (v  w^3),0\bigr)\Bigr) {\color{black}\Bigr]_\b}\,,\\
&\qquad\quad\qquad E^3_\b  \eqdefa \b {\color{black}\Bigl[}\Bigl(w^{\rm h} \cdot\nabla_h(v,w^3) +v \cdot \nabla^{\rm h} (w^{\rm h},0)\Bigr){\color{black}\Bigr]_\b} \andf\\
&\qquad\quad\qquad E^4_\b \eqdefa \b^{\color{black}2} {\color{black}\Bigl[}\Bigl(w^{\rm h}  \cdot \nabla^{\rm h} (w^{\rm h} ,0 )+  w^3 \partial_3( w^{\rm h},0)\Bigr){\color{black}\Bigr]_\b} \, .
\end{split}
\eeq
If we prove that 
\beq
\label{slowvarsimpleconcludemoeq2}
\Bigl\|\sum_{\ell=1} ^4 E_\b^\ell\Bigr\|_{\cF^0} \leq\b \cT_2\bigl(\|(v_0,w_0^3)\|_{S_\mu} \bigr) \, , 
\eeq
then according to the fact $ \psi_{|t=0} = 0$, Proposition\refer{existencepetitcB1} implies that~$\psi$ exists globally and satisfies
\begin{equation}\label{theestimateontheremainder}
\|\psi\|_{  L^2(\R^+;\cB^1)} \lesssim \b  \,\cT_2\bigl(\|(v_0,w_0^3)\|_{S_\mu} \bigr)  \, .
\end{equation}
This in turn implies that~$\Phi_0$  generates a global regular solution~$\Phi_\b$ in~$ L^2(\R^+;\cB^1)$ which satisfies
\begin{equation}\label{PhiisboundedL2B1}
\|\Phi_\b\|_{L^2(\R^+;\cB^1)} \leq  \cT_1\bigl(\|(v_0,w_0^3)\|_{\cB^0} \bigr)+ \b  \,\cT_2\bigl(\|(v_0,w_0^3)\|_{S_\mu} \bigr) \, . 
\end{equation}
Once this bound in~$   L^2(\R^+;\cB^1)$ is obtained, the bound in~$\cA^{0}$ follows  by heat flow estimates, and in~$\cA^{s,s'}$ by propagation of regularity for the Navier-Stokes equations as stated in Proposition~\ref{propaganisoNS3D} below.

\medskip
\noindent So all we need to do is  to prove Inequality\refeq{slowvarsimpleconcludemoeq2}.  
 Let us first estimate the term~$\partial_3^2[(v , 0)]_\b$. This requires the use of some~$\wt L^2(\R^+;\cB^{s,s'})$ norms. We get
$$
\|\partial_3^2[v]_\b \|_{\wt L^2(\R^+; \cB^{0,-\frac12})} \lesssim
\|[v]_\b \|_{\wt L^2(\R^+; \cB^{0,\frac32})}.
$$
Using the vertical scaling property \eqref{scale} of the space~$\cB^{0,\frac32}$, this gives
$$
\|\partial_3^2[v]_\b \|_{\wt L^2(\R^+; \cB^{0,-\frac12})}\lesssim \b  \,
\|v  \|_{\wt L^2(\R^+; \cB^{0,\frac32})}\, .
$$Using Proposition\refer{regulNS2D}, we get 
\begin{equation}
\label{partial23}
\|\partial_3^2[v]_\b \|_{\wt L^2(\R^+; \cB^{0,-\frac12})}  \leq \b  \,\cT_2(\|v_0\|_{S_\mu})\,.
 \end{equation}
Now let us  study   the pressure term. By applying the horizontal divergence to the equation satisfied by~$v$ we get, thanks to the fact that ~$\mbox{div} _{\rm h} v = 0$,
$$
\partial_{3} p = -\partial_{3} \Delta_{\rm h}^{-1} \sum_{\ell,m=1}^2 \partial_\ell\partial_m(v^\ell v^m) \, .
 $$
Using  the fact that~$\Delta_{\rm h}^{-1} \partial_\ell \partial_m $ is a zero-order horizontal Fourier multiplier (since~$\ell$ and~$m$ belong to~$\{1,2\}$), we infer that
$$
\begin{aligned}
  \big \|[\partial_{3} p]_\b \big\|_{L^1(\R^+;\cB^0)} & =   \|\partial_{3} p \|_{L^1(\R^+;\cB^0)} \\
& \lesssim   \| v \partial_3 v  \|_{L^1(\R^+;\cB^0)}\, .
\end{aligned}
$$
Laws of product  in anisotropic Besov as described by Proposition\refer{productlawsaniso} imply that
 $$
 \|v (t)\partial_3 v(t)\|_{\cB^0} \lesssim  \|v(t)\|_{\cB^1}
\|\partial_3 v(t)\|_{\cB^0}\, , $$ 
which gives rise to 
\ben
  \big \|[\partial_{3} p]_\b \big\|_{L^1(\R^+;\cB^0)} & \lesssim &
 \| v \|_{{L^2}(\R^+;\cB^1)}  \| \partial_3 v \|_{{L^2}(\R^+;\cB^0)}
\nonumber\\
\label{pressure}
&\lesssim &
 \| v \|_{L^2(\R^+;\cB^1)}  \| v \|_{L^2(\R^+;\cB^{0,\frac32})} \, .
\een
Combining \eqref{partial23} and \eqref{pressure}, we get by virtue of Proposition \ref{regulNS2D} and Lemma \ref{example}
\beq
\label{slowvarsimpleconcludemoeq211}
\|E_\b^1\|_{\cF^0} \leq\b \,\cT_2\bigl(\|v_0\|_{S_\mu} \bigr) \,  .
\eeq
Now we estimate~$E^2_\b$.  Applying again the laws of product  in anisotropic Besov spaces   (see Proposition\refer{productlawsaniso}) together  with the action of vertical derivatives, we obtain 
\beno
\|w^3(t)\partial_3(v ,w^3)(t)\|_{\cB^0} &\lesssim & \|w^3(t)\|_{\cB^1}
\|\partial_3 (v , w^3)(t)\|_{\cB^0}\\
&\lesssim & \|w^3(t)\|_{\cB^1}
\|(v , w^3)(t)\|_{\cB^{0,\frac32}} \,.
\eeno
Thus we infer that
\begin{equation}
 \label{estsec} 
 \|w^3\partial_3(v ,w^3)\|_{L^1(\R^+;\cB^0)} \lesssim
 \|w^3\|_{L^2(\R^+;\cB^1)}
\|(v , w^3)\|_{L^2(\R^+;\cB^{0,\frac32})} \,.
\end{equation}
For the other term of~$E^2_\b$, using the fact that $\nabla^{\rm h}\D_{\rm h}^{-1} \dive_{\rm h}$ is an order $0$ horizontal Fourier  multiplier and the  Leibniz formula, we infer from Lemma\refer{Bernsteinaniso} that
\beno
\|\nabla^{\rm h}\D_{\rm h}^{-1} \dive_{\rm h}\partial_3(v  w^3)(t)\|_{\cB^0} &
\lesssim & \|\partial_3(v  w^3)(t)\|_{\cB^0}\\
& \lesssim &  \|v (t)\partial_3 w^3(t)\|_{\cB^0}+ \| w^3(t)\partial_3v (t)\|_{\cB^0} \,.
\eeno
 In view of laws of product in anisotropic Besov spaces and the action of vertical derivatives,  this gives rise to 
$$
\|\nabla^{\rm h}\D_{\rm h}^{-1} \dive_{\rm h}\partial_3(v  w^3)(t)\|_{\cB^0} \lesssim
 \|v (t)\|_{\cB^{1}}\| w^3(t)\|_{\cB^{0,\frac32}}+ \| w^3(t)\|_{ \cB^{1}} \|v (t)\|_{\cB^{0,\frac32}} \,.
$$
Together with\refeq{estsec}, this leads to 
$$\begin{aligned}
\|E^2_\b\|_{L^1(\R^+;\cB^0)} &\lesssim \beta \, \|w^3\|_{L^2(\R^+;\cB^1)}
\|(v , w^3)\|_{L^2(\R^+;\cB^{0,\frac32})}\\
&\qquad\qquad\qquad\qquad\qquad {}+\beta \,\|w^3\|_{L^2(\R^+;\cB^{0,\frac32})}
\|v \|_{L^2(\R^+;\cB^1)} \,,
\end{aligned}
$$
hence by Propositions \ref{regulNS2D} and \ref{regiultransportdiff2D} along with Lemma \ref{example}
\beq
\label{slowvarsimpleconcludemoeq222}
\|E_\b^2\|_{\cF^0} \leq\b \, \cT_2\bigl(\|(v_0,w_0^3)\|_{S_\mu} \bigr) \,  .
\eeq
Let us estimate~$E^3_\b$.   Again by laws of product and the  action of horizontal derivatives, we obtain
\beno
\|w^{\rm h}  \cdot\nabla_h(v ,w^3)\|_{L^1(\R^+;\cB^0)} &\lesssim &
\|w^{\rm h} \|_{L^2(\R^+; \cB^1)}\|\nabla^{\rm h}(v ,w^3)\|_{L^2(\R^+;\cB^0)}\\
&\lesssim &
\|w^{\rm h} \|_{L^2(\R^+; \cB^1)}\|(v ,w^3)\|_{L^2(\R^+;\cB^1)} \,.
\eeno
Corollary\refer{regiultransportdiff2Dwh} and Propositions\refer{regulNS2D} and\refer{regiultransportdiff2D} imply that 
\beq
\label{estimatesvepshdemoeq3}
\|w^{\rm h} \cdot\nabla_h(v ,w^3)\|_{L^1(\R^+; \cB^0)} \leq\cT_2\bigl(\|(v_0,w_0^3)\|_{S_\mu} \bigr) \,.
\eeq
Following the same lines we get
$$
\|v  \cdot \nabla^{\rm h} (w^{\rm h} ,0)\|_{L^1(\R^+;\cB^{0})}\leq\cT_2\bigl(\|(v_0,w_0^3)\|_{S_\mu} \bigr) \,.
$$
Together with\refeq{estimatesvepshdemoeq3}, this gives thanks to Lemma\refer{example}
\beq
\label{slowvarsimpleconcludemoeq233}
 \|E_\b^3\|_{\cF^0} \lesssim \|E^3_\b \, \|_{L^1(\R^+;\cB^0  )}
 \leq\b\, \cT_2\bigl(\|(v_0,w_0^3)\|_{S_\mu} \bigr) \,  .
\eeq
Now let us estimate~$E^4_\b$. Laws of product  and the action of derivations give
\ben
\| w^{\rm h}  \cdot \nabla^{\rm h} w^{\rm h} \|_{L^1(\R^+;\cB^0 ) } 
&\lesssim &
\| w^{\rm h}  \|_{L^2(\R^+; \cB^1)} \| \nabla^{\rm h} w^{\rm h} (t)\|_{L^2(\R^+;\cB^0)}\nonumber\\
&\lesssim &
\label{estimatesvepshdemoeq5}
\| w^{\rm h} \|_{L^2(\R^+;\cB^1)}^2 \, .
\een
In the same way, we get 
$$
\|w^3  (t) \partial_3 w^{\rm h} \|_{L^1(\R^+; \cB^0)} \lesssim 
\|w^3\|_{L^2(\R^2; \cB^0)} \|w^{\rm h}\|_{L^2(\R^+;\cB^{1,\frac32})} \, .
$$
Together with\refeq{estimatesvepshdemoeq5}, this gives thanks to Corollary\refer{regiultransportdiff2Dwh} and Propositions\refer{regiultransportdiff2D}
$$
\|E^4_\b\|_{L^1(\R^+;\cB^0)} \leq\b^2\, \cT_2\bigl(\|(v_0,w_0^3)\|_{S_\mu} \bigr) \,  .
$$
Lemma\refer{example} implies that 
$$
\|E_\b^4\|_{\cF^0} \leq\b^2\, \cT_2\bigl(\|(v_0,w_0^3)\|_{S_\mu} \bigr) \,  .
$$
Together with Inequalities\refeq{slowvarsimpleconcludemoeq211},\refeq{slowvarsimpleconcludemoeq222} and\refeq{slowvarsimpleconcludemoeq233}, this gives 
$$
\|E_\b\|_{\cF^0} \leq\b \,  \cT_2\bigl(\|(v_0,w_0^3)\|_{S_\mu} \bigr) \,  .
$$
Thanks to Proposition\refer{existencepetitcB1} we obtain  that the solution~$\Phi_\b$   of (NS) with intial data
$$
 \Phi_{0}= \big[  (  v_0 -\b\nabla^{\rm h} \Delta_{\rm h}^{-1} \partial_3w^3_0, w^3_0 )  \big]_{\b}$$ is global and belongs to~$L^2(\R^+;\cB^1)$.  The whole Theorem\refer{slowvarsimple}  follows from  the next propagation result  proved in Section\refer{lpanisodivers}.
  \qed
 
 \medbreak

 \begin{prop}
 \label{propaganisoNS3D}
{\sl Let~$u$ be a solution of~{\rm(NS)} which belongs to~$L^2(\R^+;\cB^1)$ and with initial data ~$u_0$ in~$\cB^0$. Then~$u$ belongs to~$\cA^0$ and satisfies  
\beq
\label{propaganisoNS3Deq1}
\|u\|_{L^1(\R^+;\cB^{2})} + \|u\|_{L^1(\R^+;\cB^{1,\frac 32})} \lesssim \|u_0\|_{\cB^0}
+\|u\|_{L^2(\R^+;\cB^1)}^2 \, .
 \eeq
 Moreover, if the initial data~$u_0$ belongs in addition  to~$\cB^s$ for some~$s$ in~$[-1+\mu,1-\mu]$,  then
\beq
\label{propaganisoNS3Deq2}
\forall r\in [1,\infty]\,,\  \|u\|_{L^r(\R^+; \cB^{s+\frac 2r})} \leq \cT_1(\|u_0\|_{\cB^{s}}) \cT_0(\|u_0\|_{\cB^0},\|u\|_{L^2(\R^+;\cB^1)}) \, . 
\eeq
 Finally, if~$u_0$ belongs to~$\cB^{0,s'}$ for some~$s'$ greater than~$1/2$, then 
\beq
\label{propaganisoNS3Deq3}
\forall r\in [1,\infty]\,,\  \|u\|_{L^r(\R^+; B^{\frac 2 r,s'})} \leq \cT_1(\|u_0\|_{\cB^{0,s'}}) \cT_0(\|u_0\|_{\cB^0},\|u\|_{L^2(\R^+;\cB^1)}) \, .
\eeq
 }
  \end{prop}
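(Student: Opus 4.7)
The strategy mirrors that of Proposition~\ref{regiultransportdiff2D}: combine the Duhamel formula, anisotropic heat smoothing (Lemma~\ref{anisoheat}), anisotropic product and paraproduct laws (Proposition~\ref{productlawsaniso} and Lemma~\ref{productlawsanisodemolem}), and the Gronwall-type argument of Lemma~\ref{propagationlemmaheathorizontal}. I would handle the three assertions successively.

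For \eqref{propaganisoNS3Deq1}, I would write
$$
u(t)=e^{t\Delta}u_0-\int_0^t e^{(t-t')\Delta}\mathbb{P}\,\dive(u\otimes u)(t')\,dt',
$$
where $\mathbb{P}$ is the Leray projector, a Fourier multiplier of order zero, hence bounded on every anisotropic Besov space. The linear term is controlled in $\cA^0$ by $\|u_0\|_{\cB^0}$ via Lemma~\ref{anisoheat}. Since $\cB^1$ is an algebra, one has $\|u\otimes u\|_{L^1(\R^+;\cB^1)}\lesssim\|u\|_{L^2(\R^+;\cB^1)}^2$. Splitting $\dive(u\otimes u)=\dive_{\rm h}(u^{\rm h}\otimes u)+\partial_3(u^3 u)$, the horizontal divergence piece belongs to $L^1(\R^+;\cB^{0,1/2})$ and horizontal heat smoothing delivers $L^1(\R^+;\cB^2)$; the vertical piece lies in $L^1(\R^+;\cB^{1,-1/2})$, and vertical heat smoothing yields $L^1(\R^+;\cB^{1,3/2})$. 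Adding both gives \eqref{propaganisoNS3Deq1}, together with the standard $L^\infty(\R^+;\cB^0)$ bound.

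For \eqref{propaganisoNS3Deq2} I would apply the \emph{horizontal} version of the Bony decomposition~\eqref{Bonydecompaniso} to $u\otimes u$ and transfer the ``transport'' part to the left-hand side, writing the equation as $\partial_t u-\Delta u+\cQ(u,u)=f$ with $\cQ(v,a)\eqdefa\dive(T^{\rm h}_v a)$ and $f$ containing the remainder terms of the Bony decomposition and the pressure $\nabla p$. Lemma~\ref{productlawsanisodemolem} guarantees continuity $\cQ:\cB^1\times\cB^{s+1,1/2}\to\cB^{s,1/2}$ uniformly in $s\in[-1+\mu,1-\mu]$; the source $f$ is controlled in $L^1(\R^+;\cB^{s,1/2})$ by combining paraproduct estimates with \eqref{propaganisoNS3Deq1}, using for the pressure the identity $\nabla p=-\nabla\Delta^{-1}\sum_{i,j}\partial_i\partial_j(u^iu^j)$ and the boundedness of zero-order Fourier multipliers on anisotropic Besov spaces. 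An immediate adaptation of Lemma~\ref{propagationlemmaheathorizontal} to the full Laplacian then yields \eqref{propaganisoNS3Deq2}. Assertion \eqref{propaganisoNS3Deq3} is obtained along the same lines, replacing the horizontal decomposition by the vertical one~\eqref{bonyvertical}; the condition $s'>1/2$ is exactly what is required for the vertical paraproduct laws of Lemma~\ref{productlawsanisodemolem} to produce the continuity $\cB^1\times\cB^{0,s'}\to\cB^{0,s'}$ on the transport term and the complementary $L^1(\R^+;\cB^{0,s'})$ control on the remainder.

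The principal obstacle is the consistent bookkeeping of the pressure across all three regimes. In particular for \eqref{propaganisoNS3Deq3}, $\partial_3 p$ must be absorbed into the source term in $L^1(\R^+;\cB^{0,s'})$, which forces a careful separation of horizontal and vertical contributions in $-\Delta p=\sum_{i,j}\partial_i\partial_j(u^iu^j)$ so that the vertical derivatives are paired with $u^3$ (allowed by the divergence-free condition) and the horizontal derivatives with horizontal Riesz multipliers. Once this is done, the Gronwall scheme closes thanks to the integrability $\int_0^\infty\|u(t)\|_{\cB^1}^2\,dt<\infty$ provided by the hypothesis $u\in L^2(\R^+;\cB^1)$, producing the exponential factor $\cT_0(\|u_0\|_{\cB^0},\|u\|_{L^2(\R^+;\cB^1)})$ in \eqref{propaganisoNS3Deq2} and \eqref{propaganisoNS3Deq3}.
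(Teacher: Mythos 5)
Your treatment of \eqref{propaganisoNS3Deq1} and \eqref{propaganisoNS3Deq3} follows the paper's proof essentially verbatim: Duhamel formula, the splitting of $\dive(u\otimes u)$ into $\dive_{\rm h}(u^{\rm h}\otimes u)$ and $\partial_3(u^3u)$ with horizontal, respectively vertical, heat smoothing for the first assertion, and for the third the vertical Bony decomposition \eqref{bonyvertical} followed by the Gronwall scheme over subintervals on which $\|u\|_{L^2(I;\cB^1)}$ is small. Your worry about the pressure is largely unnecessary here: the spaces are $L^2$-based, so the Leray projector acts on each block $\D_j^{\rm v}\D_k^{\rm h}$ as a bounded Fourier multiplier and can be carried along silently, which is what the paper does.

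For \eqref{propaganisoNS3Deq2}, however, the plan as written has a genuine gap in the allocation of terms between the ``transport'' part and the source $f$. You keep only $\dive(T^{\rm h}_u a)$ on the left and claim the remaining Bony terms are controlled in $L^1(\R^+;\cB^{s,\frac12})$ by \eqref{propaganisoNS3Deq1}. This only works for $s=0$: bounding $\dive(\wt T^{\rm h}_u u)$ there requires a product estimate $\|u\|_{\cB^{\sigma_1}}\|u\|_{\cB^{\sigma_2}}$ with $\sigma_1+\sigma_2=s+2$, integrable on all of $\R^+$, whereas \eqref{propaganisoNS3Deq1} together with $u_0\in\cB^0$ only furnishes $u\in L^{r}(\R^+;\cB^{2/r})$, i.e.\ pairings with total horizontal regularity $2$ in $L^1(\R^+)$; for $s\neq0$ no such pairing exists (for $s>0$ the regularity is insufficient, for $s<0$ the global-in-time integrability fails), and by the symmetry of $u\otimes u$ the discarded paraproduct carries exactly the same unknown norm $\|u\|_{\cB^{s+1}}$ as the one you kept. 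These terms must therefore remain inside the Gronwall loop; the paper does this by writing the \emph{entire} nonlinearity as a linear operator $\cT_uw$ in $w$ (see \eqref{propaganisoNS3Ddemoeq1}--\eqref{propaganisoNS3Ddemoeq2}) and showing $L_{T_m}\cT_u$ is a contraction on each subinterval. A second, related point: the term $\partial_3(u^3u^\ell)$ cannot be absorbed by horizontal smoothing alone, since placing it in $\cB^{s,\frac12}$ costs a full vertical derivative; the paper paralinearizes it \emph{vertically} and propagates the two norms $L^1(I;\cB^{s+2})$ and $L^1(I;\cB^{s,\frac52})$ simultaneously, the intermediate $L^2(I;\cB^{s+1}\cap\cB^{s,\frac32})$ closing the loop. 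Your ``immediate adaptation of Lemma~\ref{propagationlemmaheathorizontal} to the full Laplacian'' has to be understood in this two-norm sense --- that lemma as stated only gains horizontal regularity --- otherwise the vertical transport term does not close.
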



\section{Interaction between profiles of scale 1: proof of Theorem~\ref{interactionprofilescale1}} 
\label{interactionprofiles1case}
The goal of this section is to prove  Theorem~\ref{interactionprofilescale1}. In the next paragraph we define an approximate solution, using   results proved in the previous section, and Paragraph~\ref{localizationpropertiesapprox} is devoted to the proof of useful localization results on the different parts entering the definition of the approximate solution. Paragraph~\ref{conclusionproofinteractionprofilescale1} concludes the proof of the theorem, using those localization results.

\subsection{The approximate solution}\label{decompositioninitialdataprofile1}
Consider the divergence free vector field $$
\Phi^0_{0,n,\alpha,L} \eqdefa u_{0,\alpha} +  \bigl[\bigl( v_{0,n,\alpha,L}^{0,\infty  }+h_n^0 w_{0,n,\alpha,L}^{0,\infty, {\rm h} },   w_{0,n,\alpha,L}^{0,\infty, 3 }\bigr)  \bigr]_{h_n^0}+  \big[(  {v_{0,n,\alpha,L}^{0,\rm{loc}}+ h_{n }^0 w_{0,n,\alpha,L}^{0,\rm{loc},{\rm   h}}},  {w_{0,n,\alpha,L}^{0,\rm{loc},3}})\big]_{  {h_{n}^0}}  \, , 
$$
with the notation of Theorem~\ref{mainresultprofilesdi}. We want to prove that for~$h_n^0$ small enough, depending only on~$u_0$ and on~$\big\|(v_{0,n,\alpha,L}^{0,\infty  },
  w_{0,n,\alpha,L}^{0,\infty, 3 }
)\big\|_{S_\mu}$  as well as~$\big\|(v_{0,n,\alpha,L}^{0,\rm{loc}  },
  w_{0,n,\alpha,L}^{0,\rm{loc}, 3 }
) \big \|_{S_\mu}$,  there is a unique, global smooth solution to (NS) with data~$\Phi^0_{0,n,\alpha,L} $.

\medskip
\noindent
Let us start by
solving globally (NS) with the data~$u_{0,\alpha}$.  By
 using the global strong stability of (NS) in~$\cB_{1,1}$ (see~\cite{bg}, Corollary~3) and the convergence result~(\ref{uoalphaclosetouo}) we deduce that for~$\alpha$ small enough there is a unique, global solution to (NS) associated with~$u_{0,\alpha}$, which we shall denote by~$
u_\alpha$ and which lies in~$L^2(\R^+;B^{2,\frac12}_{1,1})$. Moreover by the embedding of~$B^{2,\frac12}_{1,1}$ into~$\cB^1$ we have~$u_\alpha \in L^2(\R^+;\cB^1)$.

\medskip
\noindent
Next let us define
$$
 \Phi^{0, \infty}_{0,n,\alpha,L}\eqdefa  \bigl[\bigl( v_{0,n,\alpha,L}^{0,\infty  }+h_n^0 w_{0,n,\alpha,L}^{0,\infty, {\rm h} },   w_{0,n,\alpha,L}^{0,\infty, 3 }\bigr)  \bigr]_{h_n^0}\, .
$$
Thanks to Theorem~\ref{slowvarsimple}, we know that for~$h_n^0$ smaller than~$\eps_1\big(\big\|(v_{0,n,\alpha,L}^{0,\infty  },
  w_{0,n,\alpha,L}^{0,\infty, 3 }
)\big\|_{S_\mu}\big)$ there is a unique global smooth solution~$  \Phi^{0, \infty}_{n,\alpha,L}$ associated with~$  \Phi^{0, \infty}_{0,n,\alpha,L}$, which belongs to~$\cA_0$, and using the notation  and results of Section~\ref{propagationprofiles}, in particular~(\ref{defuapppropagaprofile}) and~(\ref{theestimateontheremainder}), we can write
\begin{equation}\label{decompositioninftypart}
\begin{aligned}
\Phi^{0, \infty}_{n,\alpha,L}&  \eqdefa   \Phi^{0, \infty, {\rm app}}_{n,\alpha,L} +\psi^{0, \infty}_{n,\alpha,L} 
\quad \mbox{with} \\
  \Phi^{0, \infty, {\rm app}}_{n,\alpha,L} & \eqdefa \big[
 v^{0, \infty }_{n,\alpha,L}+h_n^0 w^{0, \infty, {\rm h}}_{n,\alpha,L}, w^{0, \infty ,3}_{n,\alpha,L}
 \big]_{h_n^0} \quad \mbox{and} \\
 \|\psi^{0, \infty}_{n,\alpha,L} \|_{L^2(\R^+;\cB^1)} &\lesssim h_n^0 \cT_2\big(\big\|(v_{0,n,\alpha,L}^{0,\infty  },
  w_{0,n,\alpha,L}^{0,\infty, 3 }
)\big\|_{S_\mu}\big)  \, ,
\end{aligned}
\end{equation}
where~$v^{0, \infty }_{n,\alpha,L}$ solves ${\rm(NS2D)}_{x_3}$ with data~$v_{0,n,\alpha,L}^{0,\infty  }$ and~$w^{0, \infty,3}_{n,\alpha,L}
$ solves the transport-diffusion equation~$(T_{h_n^0}) $ defined page~\pageref{defTbetapage} with data~$ w^{0, \infty, 3}_{0,n,\alpha,L}$. Finally we recall that
$$
 w^{0, \infty, {\rm h}}_{n,\alpha,L} = - \nabla^{\rm h} \Delta_{\rm h}^{-1} \partial_3 w^{0, \infty, 3}_{n,\alpha,L}\,.
 $$  
 Similarly defining
 $$
  \Phi^{0,  {\rm loc}}_{0,n,\alpha,L} \eqdefa  \bigl[\bigl( v_{0,n,\alpha,L}^{0, {\rm loc}  }+h_n^0 w_{0,n,\alpha,L}^{0, {\rm loc}, {\rm h} },   w_{0,n,\alpha,L}^{0, {\rm loc}, 3 }\bigr)  \bigr]_{h_n^0}\, ,
 $$
 then for~$h_n^0$ smaller than~$\eps_1\big(\big\|(v_{0,n,\alpha,L}^{0,{\rm loc}  },
  w_{0,n,\alpha,L}^{0, {\rm loc}, 3 }
)\big\|_{S_\mu}\big)$ there is a unique global smooth solution~$\Phi^{0,  {\rm loc}}_{n,\alpha,L}$  associated with~$  \Phi^{0, {\rm loc}}_{0,n,\alpha,L}$, which belongs to~$\cA_0$, and 
\begin{equation}\label{decompositionlocpart}
\begin{aligned}
\Phi^{0,  {\rm loc}}_{n,\alpha,L}&  \eqdefa   \Phi^{0,  {\rm loc}, {\rm app}}_{n,\alpha,L} +\psi^{0,  {\rm loc}}_{n,\alpha,L} 
\quad \mbox{with} \\
  \Phi^{0,  {\rm loc}, {\rm app}}_{n,\alpha,L} & \eqdefa \big[
 v^{0,  {\rm loc} }_{n,\alpha,L}+h_n^0 w^{0,  {\rm loc}, {\rm h}}_{n,\alpha,L}, w^{0,  {\rm loc}, 3}_{n,\alpha,L}
 \big]_{h_n^0} \quad \mbox{and} \\
 \|\psi^{0,  {\rm loc}}_{n,\alpha,L} \|_{L^2(\R^+;\cB^1)} &\lesssim h_n^0 \cT_2\big(\big\|(v_{0,n,\alpha,L}^{0, {\rm loc}  },
  w_{0,n,\alpha,L}^{0, {\rm loc}, 3 }
)\big\|_{S_\mu}\big)  \, ,
\end{aligned}
\end{equation}
where~$v^{0,  {\rm loc} }_{n,\alpha,L}$ solves ${\rm(NS2D)}_{x_3}$ with data~$v_{0,n,\alpha,L}^{0, {\rm loc}  }$ and~$w^{0,  {\rm loc} ,3}_{n,\alpha,L}
$ solves~$(T_{h_n^0} )$ with data~$ w^{0,  {\rm loc}, 3}_{0,n,\alpha,L}$. Finally we recall that~$ w^{0,  {\rm loc}, {\rm h}}_{n,\alpha,L} = - \nabla^{\rm h} \Delta_{\rm h}^{-1} \partial_3 w^{0,  {\rm loc},3}_{n,\alpha,L}$.

\medskip
\noindent Now we look for 
the 
solution under the form
$$
\Phi^{0}_{n,\alpha,L} \eqdefa u_\alpha +  \Phi^{0, \infty}_{n,\alpha,L} +\Phi^{0,{\rm loc}}_{n,\alpha,L} + \psi_{n,\alpha,L} \, .
$$
In the next section we shall prove localization properties on~$ \Phi^{0, \infty}_{n,\alpha,L} $ and~$\Phi^{0,{\rm loc}}_{n,\alpha,L} $, namely the fact that~$\Phi^{ 0,\infty, {\rm app}}_{n,\alpha,L} $ escapes to infinity in the space variable, while~$\Phi^{0, {\rm loc}, {\rm app}}_{n,\alpha,L}$ remains localized (approximately), and we shall also prove that~$\Phi^{0, {\rm loc}, {\rm app}}_{n,\alpha,L}$ remains small near~$x_3 = 0$. Let us  recall that as claimed by~(\ref{smallatzerothmdata}),~(\ref{mainresultprofilesdieq-2}) and~(\ref{mainresultprofilesdieq-1}), those properties are true for their respective initial data. Those   localization properties will enable us to prove, in Paragraph~\ref{conclusionproofinteractionprofilescale1}, that the function~$u_\al+  \Phi^{0, \infty}_{n,\alpha,L} +\Phi^{0,{\rm loc}}_{n,\alpha,L} $   is itself an approximate solution to (NS) for the Cauchy data~$u_{0,\al} +  \Phi^{0, \infty}_{0,n,\alpha,L} +\Phi^{0,{\rm loc}}_{0,n,\alpha,L}$.

\subsection{Localization properties of the approximate solution}\label{localizationpropertiesapprox}
One   important step in the proof of Theorem~\ref{interactionprofilescale1} consists in the following result.
\begin{prop}
\label{regulNS2D+}
{\sl Under the assumptions of Proposition~{\rm\refer{regulNS2D}}, the control of the value  of~$v$ at the point~$x_3=0$  is given by
 \beq
 \label{eqdim2value0}
\forall r\in [1,\infty] \,,\  \|v (\cdot, 0)\|_{\wt L^r (\R^+;B^{\frac2 r}_{2,1}(\R^2))} \lesssim \|v_0(\cdot,0)\|_{B^0_{2,1}(\R^2)} +\|v (\cdot,0)\|_{L^2(\R^2)} ^2 \, .
 \eeq
 Moreover  we have {\color{black}{for all~$\eta$ in~$ ]0,1[$ and~$\gamma$ in~$ \{0,1\}$}}, 
 \beq
 \label{eqdim2pseudoloc}
 \|(\g-\theta_{\rm h,\eta})v\|_{\cA^0}   \leq  \bigl\|(\g-\theta_{\rm h,\eta}) v_0\bigr \|_{\cB^0}\exp \cT_1(\| v_0\|_{\cB^0}) +\eta \cT_2(\|v_0\|_{S_\mu}) \, ,
 \eeq
 with $\theta_{\rm h,\eta}$ is the  truncation function defined by \eqref{defcutof}.
} \end{prop}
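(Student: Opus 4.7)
The plan is to prove the two estimates separately, reducing each to the underlying 2D Navier--Stokes structure of~${\rm(NS2D)}_{x_3}$. For~\eqref{eqdim2value0}, I would specialize the equation at $x_3=0$: since the system is parametrized pointwise in $x_3$, the trace $v(\cdot,\cdot,0)$ is the unique solution of the 2D Navier--Stokes system on $\R^+\times\R^2$ with initial data $v_0(\cdot,0)\in B^0_{2,1}(\R^2)\hookrightarrow L^2(\R^2)$. The target estimate is then the sharp global bound for 2D Navier--Stokes in the critical Besov space $B^0_{2,1}(\R^2)$, proved via Duhamel, the 2D horizontal heat-smoothing exploited in Lemma~\ref{propagationlemmaheathorizontal} (applied in its parameter-free 2D form), and a paraproduct estimate of the type $\|v\otimes v\|_{\widetilde L^1(\R^+;B^1_{2,1})}\lesssim \|v(\cdot,0)\|_{L^\infty(\R^+;L^2)}\|v(\cdot,0)\|_{\widetilde L^1(\R^+;B^2_{2,1})}$; one closes a bootstrap in $\widetilde L^2(\R^+;B^1_{2,1})$. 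The factor $\|v(\cdot,0)\|^2_{L^2}$ on the right encodes the 2D energy identity at $x_3=0$, and general $r\in[1,\infty]$ follows by interpolation between $\widetilde L^\infty(\R^+;B^0_{2,1})$ and $\widetilde L^1(\R^+;B^2_{2,1})$.

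For~\eqref{eqdim2pseudoloc}, let $\chi(x_{\rm h})\eqdefa \gamma-\theta_{{\rm h},\eta}(x_{\rm h})$, so that $\|\nabla^{\rm h}\chi\|_{L^\infty}=O(\eta)$ and $\|\Delta_{\rm h}\chi\|_{L^\infty}=O(\eta^2)$. Multiplying ${\rm(NS2D)}_{x_3}$ by $\chi$ and commuting $\chi$ past the transport and the diffusion gives
\begin{equation*}
\partial_t(\chi v)+v\cdot\nabla^{\rm h}(\chi v)-\Delta_{\rm h}(\chi v) = F_\chi - \chi\nabla^{\rm h} p,\qquad (\chi v)_{|t=0}=\chi v_0,
\end{equation*}
with $F_\chi \eqdefa (v\cdot\nabla^{\rm h}\chi)v - 2\nabla^{\rm h}\chi\cdot\nabla^{\rm h} v - v\Delta_{\rm h}\chi$. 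Every summand of $F_\chi$ carries a factor $\nabla^{\rm h}\chi$ or $\Delta_{\rm h}\chi$ and is thus $O(\eta)$ pointwise. To handle the pressure term, which is not immediately $\eta$-small, I would apply the horizontal Leray projector $\mathbb P_{\rm h}$: splitting $\chi\nabla^{\rm h} p = \nabla^{\rm h}(\chi p) - p\nabla^{\rm h}\chi$, the gradient $\nabla^{\rm h}(\chi p)$ is killed by $\mathbb P_{\rm h}$, leaving only $\mathbb P_{\rm h}(p\nabla^{\rm h}\chi)$, again $O(\eta)$. The complementary non-solenoidal remainder $(I-\mathbb P_{\rm h})(\chi v)=\nabla^{\rm h}(-\Delta_{\rm h})^{-1}(v\cdot\nabla^{\rm h}\chi)$ is bounded directly in $\cA^0$ by an $O(\eta)$ quantity, thanks to the one-derivative gain of $\nabla^{\rm h}(-\Delta_{\rm h})^{-1}$ combined with $\|\nabla^{\rm h}\chi\|_{L^\infty}=O(\eta)$.

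I would then apply Lemma~\ref{propagationlemmaheathorizontal} to $\mathbb P_{\rm h}(\chi v)$, with transport $\cQ(v,\cdot)=v\cdot\nabla^{\rm h}(\cdot)$, forcing $\mathbb P_{\rm h}(F_\chi+p\nabla^{\rm h}\chi)+[\mathbb P_{\rm h},v\cdot\nabla^{\rm h}](\chi v)$, and $(s,s')=(0,\tfrac12)$, to obtain
\begin{equation*}
\|\mathbb P_{\rm h}(\chi v)\|_{\cA^0} \leq C\bigl(\|\chi v_0\|_{\cB^0}+\|\text{forcing}\|_{L^1(\R^+;\cB^0)}\bigr)\exp\Bigl(C\int_0^\infty\|v(t)\|_{\cB^1}^2\,dt\Bigr).
\end{equation*}
Proposition~\ref{regulNS2D} provides $\int_0^\infty\|v\|_{\cB^1}^2\,dt \leq \cT_1(\|v_0\|_{\cB^0})$, giving the exponential factor. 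Using anisotropic product laws, the bounds of Proposition~\ref{regulNS2D} on $\|v\|_{L^r(\R^+;\cB^{s+2/r,s'})}$ for the various $(r,s,s')$ needed, and the elliptic formula $p=-\Delta_{\rm h}^{-1}\sum_{i,j}\partial_i\partial_j(v^iv^j)$ to bound $\|p\|_{L^1(\R^+;\cB^0)}$, each summand in the forcing is controlled in $L^1(\R^+;\cB^0)$ by $\eta\,\cT_2(\|v_0\|_{S_\mu})$, proving~\eqref{eqdim2pseudoloc} once $(I-\mathbb P_{\rm h})(\chi v)$ is added back. The main obstacle I anticipate is the accurate tracking of the pressure contribution together with the commutator $[\mathbb P_{\rm h},v\cdot\nabla^{\rm h}](\chi v)$: these are the only places where divergence-freeness of $v$ alone does not yield an $\eta$-small bound, and one must exploit both the horizontal gradient structure of $\nabla^{\rm h}(\chi p)$ and the smallness of $\nabla^{\rm h}\chi$ to recover the required factor of $\eta$.
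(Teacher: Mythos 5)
Your overall strategy — reduce \eqref{eqdim2value0} to the 2D Navier--Stokes equation satisfied by the trace $v(\cdot,\cdot,0)$, and prove \eqref{eqdim2pseudoloc} by writing a forced equation for the localized field and feeding it to Lemma~\ref{propagationlemmaheathorizontal} — is the paper's strategy. But both halves of your execution have a genuine gap.

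For \eqref{eqdim2value0}: the Duhamel--paraproduct bootstrap you propose does not close for large data. Your bilinear estimate bounds the nonlinearity by $\|v(\cdot,0)\|_{L^\infty(\R^+;L^2)}\,\|v(\cdot,0)\|_{\wt L^1(\R^+;B^2_{2,1})}$, i.e.\ by a large constant times the unknown, and no time-splitting helps because $\|v\|_{L^\infty_t L^2}$ does not become small on subintervals; moreover $\wt L^1(\R^+;B^2_{2,1})$ is exactly the quantity you are trying to control, so the fixed point cannot be absorbed. Note also that the claimed estimate is \emph{additive} in $\|v(\cdot,0)\|_{L^2}^2$, with no exponential: even a Gronwall argument would miss that. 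The ingredient you are missing is the frequency-localized energy estimate of Lemma~1.1 of~\cite{chemin10}, namely $\bigl(\D_k^{\rm h}(v\cdot\nabla^{\rm h}v)\,\big|\,\D_k^{\rm h}v\bigr)_{L^2}\lesssim d_k(t)\,\|\nabla^{\rm h}v\|_{L^2}^2\,\|\D_k^{\rm h}v\|_{L^2}$ with $(d_k)\in\ell^1$, which uses the divergence-free structure and the fact that the two arguments coincide. Performing the $L^2(\R^2)$ energy estimate on $\D_k^{\rm h}v(\cdot,0)$, dividing by $\|\D_k^{\rm h}v(t,\cdot,0)\|_{L^2}$, integrating in time and summing over $k$ gives $\|v(\cdot,0)\|_{\wt L^\infty(B^0_{2,1})}+\|v(\cdot,0)\|_{L^1(B^2_{2,1})}\lesssim \|v_0(\cdot,0)\|_{B^0_{2,1}}+\int_0^\infty\|\nabla^{\rm h}v(t,\cdot,0)\|_{L^2}^2\,dt$, and the last integral is $\leq\tfrac12\|v_0(\cdot,0)\|_{L^2}^2$ by the 2D energy identity; general $r$ then follows by interpolation, as you say.

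For \eqref{eqdim2pseudoloc}: the terms $F_\chi$ carrying $\nabla^{\rm h}\chi$ or $\D_{\rm h}\chi$ are indeed $O(\eta)$ (measured in the right Besov norms, using $\|\nabla^{\rm h}\theta_{{\rm h},\eta}\|_{B^1_{2,1}(\R^2)}=\eta\|\nabla^{\rm h}\theta\|_{B^1_{2,1}(\R^2)}$ and Proposition~\ref{lawproductaniso2/3D}), and this part of your plan matches the paper. The gap is the pressure. Projecting with the horizontal Leray projector forces you to place $[\PP_{\rm h},v\cdot\nabla^{\rm h}](\chi v)$ in the forcing, but this is a commutator of a zero-order singular integral with multiplication by $v$: it carries no factor of $\nabla^{\rm h}\chi$ and hence no power of $\eta$, and you give no mechanism for producing one. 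The paper avoids this by \emph{not} projecting: it keeps $E^3_\eta(v)=-(\g-\theta_{{\rm h},\eta})\nabla^{\rm h}\D_{\rm h}^{-1}\sum\partial_\ell\partial_m(v^\ell v^m)$ as a source and splits it with horizontal paraproducts into (i) pieces where $\theta_{{\rm h},\eta}$ sits in the high-frequency slot, which gain $\eta$ from the scaling $\|\theta_{{\rm h},\eta}\|_{B^2_{2,1}(\R^2)}=\eta\|\theta\|_{B^2_{2,1}(\R^2)}$, and (ii) the commutator $[T^{\rm h}_{\g-\theta_{{\rm h},\eta}},\nabla^{\rm h}\D_{\rm h}^{-1}\partial_\ell\partial_m]$ — a commutator with multiplication by the \emph{cutoff}, not by $v$ — which gains $\|\nabla\theta_{{\rm h},\eta}\|_{L^\infty}=\eta\|\nabla\theta\|_{L^\infty}$ by the standard commutator lemma (Lemma~2.97 of~\cite{BCD}). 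Unless you can show your projector commutator is $O(\eta)$ (which I do not believe), you should switch to this decomposition of the pressure term.
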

\begin{proof}
In this proof we omit for simplicity the dependence of the function spaces on the space~$\R^2$.
Let us remark that the proof of Lemma~1.1 of\ccite{chemin10} claims that for all~$x_3$ in~$ \R,$
\begin{equation}\label{proofofLemma1.1}
\begin{aligned}
\bigl(\D_k^{\rm h} (v(t,\cdot,x_3)\cdot\nabla^{\rm h} v(t,\cdot,x_3)) &\big  |\D_k^{\rm h} v(t,\cdot,x_3)\bigr)_{L^2 }
\\
& \quad \lesssim d_k(t,x_3) \|\nabla^{\rm h} v(t,\cdot,x_3)\|_{L^2}^2 \|\D_k^{\rm h} v(t,\cdot,x_3)\|_{L^2}
\end{aligned}
\end{equation}
where~$(d_k(t,x_3))_{k\in \ZZ} $ is a generic element  of the sphere of~$\ell^1(\ZZ)$.
A $L^2$ energy estimate in~$\R^2$  gives therefore, taking~$x_3=0$,
$$
\frac 1 2 \frac d {dt} \|\D_k^{\rm h} v (t,\cdot,0)\|_{L^2 }^2 +c2^{2k} \|\D_k^{\rm h} v(t,\cdot,0)\|_{L^2 }^2 
\lesssim d_k(t) \|\nabla^{\rm h} v(t,\cdot,0)\|_{L^2}^2 \|\D_k^{\rm h} v(t,\cdot,0)\|_{L^2} \, , 
$$
 {\color{black}{where~$(d_k(t))_{k\in \ZZ} $ belongs to the sphere of~$\ell^1(\ZZ)$.}}
After division by~$\|\D_k^{\rm h} v(t,\cdot,0)\|_{L^2}$  and time integration, we get
\begin{equation}\label{divisionetcv}
\begin{aligned}
 \|\D_k^{\rm h} v(\cdot,0)\|_{L^\infty(\R^+; L^2 )}& +c2^{2k}  \|\D_k^{\rm h} v(\cdot,0)\|_{L^1(\R^+; L^2 )}
\\& \quad {} \leq \|\D_k v_0(\cdot,0)\|_{L^2 } + C \int_0^\infty  d_k(t)\|\nabla^{\rm h} v(t,\cdot,0)\|_{L^2 }^2 dt \, .
\end{aligned}
\end{equation}By summation over~$k$ and in view of \refeq{standen}, we obtain Inequality\refeq{eqdim2value0} of Proposition\refer{regulNS2D+}.

\medbreak
\noindent In order to prove Inequality\refeq{eqdim2pseudoloc}, let us define~$v_{\gamma,\eta} \eqdefa (\g-\theta_{\rm h,\eta})v $ and write that 
\beq
\begin{split}
&\partial_t v_{\gamma,\eta} -\D_{\rm h} v_{\gamma,\eta}+\dive_{\rm h} \bigl( v\otimes v_{\gamma,\eta} \bigr)  = E_\eta(v) =\sum_{i=1} ^3 E^i_\eta(v)\with\\
& E^1_\eta(v) \eqdefa-2 \eta(\nabla^{\rm h}\theta)_{{\rm h},\eta} \nabla^{\rm h} v-\eta^2 (\D_{\rm h} \theta)_{{\rm h},\eta} v\,,\ \\
& E^2_\eta (v) \eqdefa  \eta\, v\cdot(\nabla^{\rm h} \theta)_{{\rm h},\eta} v\andf\\
&E^3_\eta(v) \eqdefa - (\g-\theta_{{\rm h},\eta}) \nabla^{\rm h} \Delta_{\rm h}^{-1}\!\!\! \sum_{1\leq \ell,m\leq 2}\!\!\! \partial_\ell\partial_m\bigl( v^\ell v^m\bigr) \, .
\end{split}
\eeq
Let us prove that
\beq
\label{eqdim2pseudolocdemoeq111}
\|E_\eta(v)\|_{L^1(\R^+;\cB^{0})} \lesssim \eta  \,
\cT_2(\|v_0\|_{S_\mu}) \, .
\eeq
Using  Inequality\refeq{eqdim21demoeq1111} applied with~$r=1$ and~$s=-1$ (resp.~$r=2$ and~$s=-1/2$) this will follow from  
\beq
\label{eqdim2pseudolocdemoeq1}
\|E_\eta(v)\|_{L^1(\R^+;\cB^{0})} \lesssim \eta 
\bigl(\|v\|_{L^1(\R^+;\cB^1)} + \|v\|^2_{L^2(\R^+;\cB^{\frac 12 })}\bigr) \, .
\eeq
Proposition\refer{lawproductaniso2/3D} and the scaling  properties of homogeneous Besov spaces  give
\beno
\|(\nabla^{\rm h}\theta)_{{\rm h},\eta} \nabla^{\rm h} v (t)\|_{\cB^{0}} & \lesssim & 
 \|(\nabla^{\rm h}\theta)_{{\rm h},\eta}\|_{B^{1}_{2,1}(\R^2)}\| \nabla^{\rm h} v(t)\|_{\cB^{0}}\\
& \lesssim & 
 \|\nabla^{\rm h}\theta\|_{B^{1}_{2,1}(\R^2)}\|  v(t)\|_{\cB^1} \,  .
\eeno
Following the same lines, we get
\beno
\| (\D_{\rm h} \theta)_{{\rm h},\eta} v (t)\|_{\cB^{0}} & \lesssim & 
 \|(\D_{\rm h} \theta)_{{\rm h},\eta}\|_{B^{0}_{2,1}(\R^2)}\|  v (t)\|_{\cB^1}\\
& \lesssim & 
\frac 1 \eta  \|\D_{\rm h}\theta\|_{B^{0}_{2,1}(\R^2)}\|  v(t)\|_{\cB^1} \, , 
\eeno
hence
\beq
\label{eqdim2pseudolocdemoeq2}
\| E^1_\eta(v)\|_{L^1(\R^+;\cB^{0})} \lesssim \eta \|v\|_{L^1(\R^+; \cB^1)} \,.
\eeq
Let us study the term~$E_\eta^2(v)$.  Proposition\refer{lawproductaniso2/3D} implies
\beno
\|v(t)\cdot(\nabla^{\rm h} \theta)_{{\rm h},\eta} v (t)\|_{\cB^0} 
& \lesssim & 
\|(\nabla^{\rm h} \theta)_{{\rm h},\eta}\|_{B^1_{2,1}(\R^2)} \sup_{\ell, m } \|v^\ell (t) v^m (t)\|_{\cB^0} \\
& \lesssim & 
\|\nabla^{\rm h} \theta\|_{B^1_{2,1}(\R^2)}\|v(t)\|_{\cB^{\frac12}}^2.
\eeno
Thus we get
\beq
\label{eqdim2pseudolocdemoeq3}
\| E^2_\eta(v)\|_{L^1(\R^+;\cB^{0})} \lesssim \eta \|v\|_{L^2(\R^+;\cB^{\frac 12 })}^2 \, .
\eeq
Let us study the term~$E_\eta^3(v)$ which is related to the pressure. For that purpose, we shall make use of  the horizontal    paraproduct decomposition:
$$ av  =  T^{\rm h} _v a+ T^{\rm h} _a v +R^{\rm h}(a,b) \with
 T^{\rm h} _a b   \eqdefa \sum_{k} S^{\rm h}_{k-1} a \D^{\rm h}_kb \andf
 R^{\rm h} (a ,b)  \eqdefa \sum_{k}\wt  \D^{\rm h}_{k} a \D^{\rm h}_kb\,. 
$$
This allows us to write
\beq
\label{eqdim2pseudolocdemoeq4}
\begin{split}
E_\eta^3(v) & = \sum_{\ell=1}^3 E_\eta^{3,\ell}(v)\with\\
E_\eta^{3,1}(v) & \eqdefa \wt T^{\rm h} _{\nabla^{\rm h} p}  \theta_{{\rm h},\eta} \with \nabla^{\rm h} p = \nabla^{\rm h} \D_{\rm h}^{-1}\sum_{1\leq \ell,m\leq 2} \partial_\ell\partial_m (v^\ell v^m)\,,\\
E_\eta^{3,2}(v) & \eqdefa -\sum_{1\leq \ell,m\leq 2} \bigl[T^{\rm h}_{\g-\theta_{{\rm h},\eta}}, \nabla^{\rm h}\D_{\rm h} ^{-1} \partial_\ell\partial_m\bigr]v^\ell v^m \andf \\
E_\eta^{3,3}(v) & \eqdefa  \sum_{1\leq \ell,m\leq 2} \nabla^{\rm h}\D_{\rm h} ^{-1} \partial_\ell\partial_m \wt T^{\rm h}_ {v^\ell v^m} \theta_{{\rm h},\eta}.
\end{split}
\eeq
Laws of (para)product, as given in~(\ref{lawproductaniso2/3Ddemoeq1}), and scaling properties of Besov spaces give
\beno
\|\wt T^{\rm h} _{\nabla^{\rm h} p(t)} \theta_{{\rm h},\eta} \|_{\cB^{0 }} & \lesssim & \|\nabla^{\rm h} p(t)\|_{\cB^{-1}} \|\theta_{{\rm h},\eta}\|_{B^2_{2,1}(\R^2)} \\
 & \lesssim & \eta \sup_{1\leq \ell,m\leq 2} \|v^\ell (t) v^m (t)\|_{\cB^{0 }} \|\theta\|_{B^2_{2,1}(\R^2)}\\
 & \lesssim & \eta  \,  \|v(t)\|_{\cB^{\frac12 }}^2  \|\theta\|_{B^2_{2,1}(\R^2)} \,.
\eeno
Along the same lines we get
\beno
\|\nabla^{\rm h}\D_{\rm h} ^{-1} \partial_\ell\partial_m \wt T^{\rm h}_ {v^\ell(t)v^m(t)} \theta_{{\rm h},\eta}\|_{\cB^{0}} 
& \lesssim & \|\wt T^{\rm h}_ {v^\ell (t) v^m (t)} \theta_{{\rm h},\eta}\|_{\cB^{1}}\\
& \lesssim &  \|v^\ell (t) v^m (t)\|_{\cB^{0}}\|\theta_{{\rm h},\eta}\|_{B^2_{2,1}(\R^2)}\\
& \lesssim & \eta   \, \|v (t)\|_{\cB^{\frac12}}^2  \|\theta\|_{B^2_{2,1}(\R^2)} \,.
 \eeno
 This gives
 \beq
 \label{eqdim2pseudolocdemoeq5}
 \|E_\eta^{3,1}(v)+ E_\eta^{3,3}(v)\|_{L^1(\R^+;\cB^0)} \lesssim \eta  \, \|v\|_{L^2(\R^+;\cB^{\frac 12 })}^2 \, .
  \eeq
  Now let us estimate~$E_\eta^{3,2}(v)$. By definition, we have
 $$
 \displaylines{
  \bigl [T^{\rm h}_{\g-\theta_{{\rm h},\eta}}, \nabla^{\rm h}\D_{\rm h} ^{-1} \partial_\ell\partial_m\bigr]v^\ell 
 v^m   = \sum_{k} \cE_{k,\eta}(v)\with \cr 
 \cE_{k,\eta}(v)\eqdefa \bigl[S^{\rm h}_{k-N_0} (\gamma-\theta_{{\rm h},\eta}), \wt \D_k^{\rm h}\nabla^{\rm h}\D_{\rm h} ^{-1} \partial_\ell\partial_m\bigr] \D^{\rm h}_k(v^\ell v^m)
 }
$$
where~$\wt \D^{\rm h}_k\eqdefa \wt \vf (2^{-k}\xi_{\rm h})$ with~$\wt \vf$ is a smooth compactly supported (in~$\R^2\setminus\{0\}$) function which has value~$1$  near~$B(0, 2^{-N_0})+\cC$, where $\cC$ is an adequate annulus. Then by commutator estimates (see for instance Lemma 2.97 in  \cite{BCD}) 
\beno
\|\D_j^{\rm v} \cE_{k,\eta}(v(t))\|_{L^2} &  \lesssim & \|\nabla \theta_{{\rm h},\eta}\|_{L^\infty} \|\D_k^{\rm h}\D_j^{\rm v}(v^\ell (t) v^m (t))\|_{L^2} \,.
\eeno
As~$\|\nabla \theta_{{\rm h},\eta}\|_{L^\infty}=\eta\|\nabla \theta\|_{L^\infty}$,  by characterization of anisotropic Besov spaces and laws of product, we get 
$$
 \|E_\eta^{3,2}(v) \|_{L^1(\R^+;\cB^0)}  \lesssim \eta  \|v\|_{L^2(\R^+;\cB^{\frac 12 })}^2 \, .
$$
 Together with estimates\refeq{eqdim2pseudolocdemoeq2}--(\ref{eqdim2pseudolocdemoeq5}), this gives\refeq{eqdim2pseudolocdemoeq1}, hence~(\ref{eqdim2pseudolocdemoeq111}).

 \noindent Applying Lemma\refer{propagationlemmaheathorizontal} with ~$s=0$, ~$s'=1/2$, ~$a=v_{\gamma,\eta}$, ~$\cQ(v,a)=\dive_{\rm h} (v\otimes a)$, ~$f=E_\eta(v)$ and ~$\beta = 0$   allows to conclude the proof of Proposition\refer{regulNS2D+}.
 \end{proof}

\medbreak

\noindent A similar result holds for the solution~$w^3$
 of
$$
(T_\b) \quad 
\partial_t w^3 + v\cdot \nabla^{\rm h}w^3-\Delta_{\rm h} w^3- \b^2 \partial_3^2 w^3= 0 \andf
w^3_{|t=0} = w_{0}^3 \, ,
$$ 
where~$\b$ is any non negative real number. In the following statement,   all the constants are independent of~$\b$.
 \begin{prop}
\label{regiultransportdiff2D+}
{\sl Let~$v$ and~$w_3$  be as in Proposition~{\rm\refer{regiultransportdiff2D}}.  
 The control of the value of~$w^3$ at the point~$x_3=0$  is given by the following inequality. For any~$r$ in~$ [2,\infty]$,
 \beq
 \label{eqdim2value0transportiff}  
\begin{aligned}
  \|w^3 (\cdot, 0)\|_{\wt L^r (\R^+;B^{\frac2 r}_{2,1}(\R^2))} \leq 
   \cT_2(\|(v_0,w_0^3)\|_{S_\mu}) \Big(
     \|w^3_0(\cdot,0)\|^\frac{1-2\mu}{4(1-\mu)}_{B^0_{2,1}(\R^2)} 
     + \beta
   \Big) \, .
  \end{aligned}
 \eeq
 Moreover, with the notations of Theorem{\rm\refer{slowvarsimple}}, we have {\color{black}{for all~$\eta$ in~$ ]0,1[$ and~$\gamma$ in~$\{0,1\}$}}, 
 \beq
 \label{eqdim2pseudoloctransportiff}
 \|(\g-\theta_{{\rm h},\eta})w^3\|_{\cA^0}   \leq  \bigl\|(\g-\theta_{{\rm h},\eta}) w^3_{0}\bigr \|_{\cB^0}\exp \cT_1(\| v_0\|_{\cB^0}) +\eta \cT_2 (\|(v_0,w_0^3)\|_{S_\mu})\, .
 \eeq
}\end{prop}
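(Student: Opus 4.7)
The proof follows the two-step structure of Proposition~\ref{regulNS2D+}, the new technical difficulty being the source term created by the vertical diffusion $\beta^2\partial_3^2 w^3$ in~$(T_\beta)$.

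\emph{Step 1: estimate at $x_3=0$.} Restricting $(T_\beta)$ to the plane $x_3=0$ shows that $\omega\eqdefa w^3(\cdot,0)$ satisfies the 2D forced transport--diffusion equation
$$
\partial_t\omega+V\cdot\nabla^{\rm h}\omega-\Delta_{\rm h}\omega=F_\beta,\qquad V\eqdefa v(\cdot,0),\quad F_\beta\eqdefa\beta^2(\partial_3^2w^3)(\cdot,0),
$$
where $V$ is divergence free in $\R^2$. Applying horizontal Littlewood--Paley blocks $\Delta_k^{\rm h}$, taking the $L^2(\R^2)$ inner product with $\Delta_k^{\rm h}\omega$, and handling the convection by the commutator argument underlying \eqref{proofofLemma1.1} (which needs only the horizontal divergence-free property of the transport velocity), one obtains after dividing by $\|\Delta_k^{\rm h}\omega\|_{L^2}$, integrating in time and summing in~$k$ a Gronwall inequality of the form
$$
\|\omega\|_{\widetilde L^r(\R^+;B^{2/r}_{2,1}(\R^2))}\lesssim\bigl(\|\omega_0\|_{B^0_{2,1}(\R^2)}+\|F_\beta\|_{L^1(\R^+;B^0_{2,1}(\R^2))}\bigr)\exp\bigl(C\|V\|_{L^2(\R^+;B^1_{2,1}(\R^2))}^2\bigr).
$$
Proposition~\ref{regulNS2D+} applied with $r=2$ bounds the exponential by $\cT_2(\|v_0\|_{S_\mu})$, while the anisotropic trace inequality $\|g(\cdot,0)\|_{B^s_{2,1}(\R^2)}\lesssim\|g\|_{\cB^{s,1/2}}$ combined with \eqref{eqdim22transportiff} gives $\|F_\beta\|_{L^1B^0_{2,1}(\R^2)}\leq\beta^2\,\cT_2(\|(v_0,w_0^3)\|_{S_\mu})$. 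Interpolating this Gronwall bound (linear in $\|\omega_0\|_{B^0_{2,1}(\R^2)}$, with $\beta^2$ on the forcing) with the unconditional a priori bound $\|\omega\|_{\widetilde L^rB^{2/r}_{2,1}(\R^2)}\leq\cT_2(\|(v_0,w_0^3)\|_{S_\mu})$, obtained directly from~\eqref{eqdim22transportiff} at the endpoint $s=1-\mu$ through the same trace inequality, trades some of the linear dependence on $\omega_0$ against the large prefactor and converts the $\beta^2$ into a $\beta$; the fractional exponent $(1-2\mu)/(4(1-\mu))$ is precisely the interpolation weight dictated by the geometry of the horizontal regularity interval $[-2+\mu,1-\mu]$ available from $S_\mu$.

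\emph{Step 2: pseudo-localization.} Since $\theta_{{\rm h},\eta}$ depends only on~$x_{\rm h}$ and therefore commutes with~$\beta^2\partial_3^2$, the function $w^3_{\gamma,\eta}\eqdefa(\gamma-\theta_{{\rm h},\eta})w^3$ solves
$$
\partial_tw^3_{\gamma,\eta}-\Delta_{\rm h}w^3_{\gamma,\eta}-\beta^2\partial_3^2w^3_{\gamma,\eta}+v\cdot\nabla^{\rm h}w^3_{\gamma,\eta}=E_\eta^w,\qquad w^3_{\gamma,\eta}|_{t=0}=(\gamma-\theta_{{\rm h},\eta})w_0^3,
$$
with
$$
E_\eta^w\eqdefa -2\eta(\nabla^{\rm h}\theta)_{{\rm h},\eta}\cdot\nabla^{\rm h}w^3-\eta^2(\Delta_{\rm h}\theta)_{{\rm h},\eta}w^3+\eta\,v\cdot(\nabla^{\rm h}\theta)_{{\rm h},\eta}\,w^3.
$$
Contrary to the velocity case of Proposition~\ref{regulNS2D+}, there is no pressure contribution, so $E_\eta^w$ contains only direct analogues of the terms $E_\eta^1$ and $E_\eta^2$ treated in that proof. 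Each of the three pieces is estimated in $L^1(\R^+;\cB^0)$ exactly as there, combining the anisotropic product laws of Propositions~\ref{productlawsaniso} and~\ref{lawproductaniso2/3D} with the scaling identity $\|(\nabla^{\rm h}\theta)_{{\rm h},\eta}\|_{B^1_{2,1}(\R^2)}\sim\eta\|\nabla^{\rm h}\theta\|_{B^1_{2,1}(\R^2)}$ and its analogue at lower regularity, together with Propositions~\ref{regulNS2D}, \ref{regiultransportdiff2D} and Lemma~\ref{example}; this yields $\|E_\eta^w\|_{L^1(\R^+;\cB^0)}\leq\eta\,\cT_2(\|(v_0,w_0^3)\|_{S_\mu})$. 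Applying Lemma~\ref{propagationlemmaheathorizontal} with $s=0$, $s'=1/2$, $\cQ(v,a)=\dv_{\rm h}(v\otimes a)$, the full diffusion $\Delta_\beta$, and invoking~\eqref{eqdim21} to control $\|v\|_{L^2(\R^+;\cB^1)}$, then delivers~\eqref{eqdim2pseudoloctransportiff}.

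\emph{Main obstacle.} The delicate point is Step~1: a direct Gronwall argument only produces linear dependence on $\|w_0^3(\cdot,0)\|_{B^0_{2,1}(\R^2)}$ and a $\beta^2$ term; optimally trading the initial-data dependence against the $\cT_2$-controlled higher regularity from the $S_\mu$ norm so as to convert the $\beta^2$ into $\beta$, and at the same time identifying the precise interpolation weight $(1-2\mu)/(4(1-\mu))$ compatible with the range of horizontal regularities admitted by $D_\mu$, requires careful bookkeeping of the admissible Besov indices.
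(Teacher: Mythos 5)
Your Step 2 matches the paper's treatment (the paper simply observes that the argument of Proposition~\ref{regulNS2D+} carries over, there being no pressure term and $\theta_{{\rm h},\eta}$ commuting with $\beta^2\partial_3^2$). The problem is Step 1, where your argument has a genuine gap. Your Gronwall estimate at the level $B^0_{2,1}(\R^2)$ requires placing the forcing $F_\beta=\beta^2(\partial_3^2w^3)(\cdot,0)$ in $L^1(\R^+;B^0_{2,1}(\R^2))$. By the trace embedding and vertical Bernstein inequality this reduces to controlling $\|w^3\|_{L^1(\R^+;\cB^{0,5/2})}$, which in the scale of estimates \eqref{useestwithr1} corresponds to the horizontal index $s=-2$ --- strictly outside the admissible range $[-2+\mu,0]$ of Proposition~\ref{regiultransportdiff2D} (and outside $D_\mu$). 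So the bound $\|F_\beta\|_{L^1(\R^+;B^0_{2,1}(\R^2))}\leq\beta^2\,\cT_2$ does not follow from the available a priori estimates. This endpoint obstruction is exactly why the paper runs the trace estimate at the level $B^{1/2}_{2,1}(\R^2)$ instead: there the forcing only needs $\|w^3\|_{L^1(\R^+;\cB^{1/2,5/2})}$, i.e.\ $s=-3/2$, comfortably inside the admissible range.

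The interpolation you then invoke to produce the exponent $\frac{1-2\mu}{4(1-\mu)}$ and to convert $\beta^2$ into $\beta$ is also not the right mechanism. If your linear bound $\lesssim\cT_2\bigl(\|\omega_0\|_{B^0_{2,1}}+\beta^2\bigr)$ actually held, no interpolation would be needed at all (writing $x=x^\theta x^{1-\theta}$ and absorbing $x^{1-\theta}$ into $\cT_2$ turns a linear dependence into any fractional power, and $\beta^2\leq\beta$ in the relevant regime); conversely, interpolating that bound against the unconditional bound $\cT_2$ with weight $\theta=\frac{1-2\mu}{4(1-\mu)}$ produces $\beta^{2\theta}$, which for $2\theta<1$ is weaker than $\beta$. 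In the paper the exponent arises from two distinct interpolations performed on the $B^{1/2}_{2,1}$-level estimate: first on the data, $\|w^3_0(\cdot,0)\|_{B^{1/2}_{2,1}}\lesssim\|w^3_0(\cdot,0)\|^{\frac{1-2\mu}{2(1-\mu)}}_{B^0_{2,1}}\|w^3_0\|^{\frac1{2(1-\mu)}}_{S_\mu}$ (interpolating $B^{1/2}$ between $B^0$ and $B^{1-\mu}$, the top of the range allowed by $S_\mu$), then on the solution, $B^{2/r}_{2,1}$ being the midpoint of $B^{1/2+2/r}_{2,1}$ and $B^{-1/2+2/r}_{2,1}$ with the low-regularity factor bounded unconditionally by $\cT_2$; it is this second step that halves the exponent to $\frac{1-2\mu}{4(1-\mu)}$ and turns $(\beta^2)^{1/2}$ into $\beta$. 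You should rework Step 1 along these lines.
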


\begin{proof}
The proof is
very similar to the proof of Proposition~\ref{regulNS2D+}. The main difference lies in the proof of~(\ref{eqdim2value0transportiff}) due to the presence of the extra term~$ \b^2 \partial_3^2 w^3$, so let us detail that estimate: we shall first prove an estimate for~$ w^3 (t,x_h,0) $ in~$\wt L^r (\R^+;B^{\frac12+\frac2 r}_{2,1}(\R^2))$, and then  we shall interpolate that estimate with the  known a priori estimate \eqref{eqdim22transportiff} of~$w^3$ in~$\wt L^r (\R^+;\cB^{-\frac12+\frac2 r}_{2,1}(\R^2))$ to find the result.

\noindent Let us be more precise, and first obtain a bound for~$ w^3 (t,x_h,0) $ in~$\wt L^r (\R^+;B^{\frac12+\frac2 r}_{2,1}(\R^2))$.
Defining
$$
\widetilde w^3 (t,x_h) \eqdefa w^3 (t,x_h,0)  \, , \quad \widetilde w^3_0 (x_h) \eqdefa w^3_0 (x_h,0) \quad \mbox{and} \quad \widetilde v (t,x_h) \eqdefa v (t,x_h,0) \, , 
$$
we have
 \beq
 \label{eqdim2wtilde3}
\partial_t \widetilde w^3 + \widetilde v\cdot \nabla^{\rm h}\widetilde w^3-\Delta_{\rm h} \widetilde w^3=  \b^2 (\partial_3^2 w^3)  (\cdot ,0)\andf
 \widetilde w^3_{|t=0} =    \widetilde w^3_0  \, .
 \eeq
Similarly to~(\ref{proofofLemma1.1}) we write (dropping for simplicity the dependence of the spaces on~$\R^2$)
$$
\bigl(\D_k^{\rm h} ( \widetilde v \cdot\nabla^{\rm h}  \widetilde  w^3 )\big |\D_k^{\rm h}  \widetilde  w^3\bigr)_{L^2 }
\lesssim d_k(t )\, 2^{-\frac k2} \|\nabla^{\rm h}  \widetilde  v \|_{L^2}   \|\nabla^{\rm h}  \widetilde  w^3 \|_{B^{\frac12}_{2,1}}   \|\D_k^{\rm h}  \widetilde  w^3\|_{L^2} \, , 
$$ 
{\color{black}{where~$(d_k(t))_{k\in \ZZ} $ belongs to the sphere of~$\ell^1(\ZZ)$.}}
Taking the $L^2$ scalar product of $\D_k^{\rm h}$ of  Equation \eqref{eqdim2wtilde3} with $\D_k^{\rm h}  \widetilde  w^3$ implies that
$$
\begin{aligned}
\frac 1 2 2^{\frac k2}\frac d {dt} \|\D_k^{\rm h}   \widetilde  w^3\|_{L^2 }^2 +c2^{\frac {5k}2} \|\D_k^{\rm h}   \widetilde  w^3\|_{L^2 }^2 
& \lesssim d_k(t)  \|\nabla^{\rm h}  \widetilde  v (t)\|_{L^2}   \|\nabla^{\rm h}  \widetilde  w^3 \|_{B^{\frac 12}_{2,1}}   \|\D_k^{\rm h}  \widetilde  w^3\|_{L^2}
\\
& \quad +\beta^22^{\frac k2} \| \D_k^{\rm h}(\partial_3^2 w^3)  (\cdot ,0) \|_{L^2}   \|\D_k^{\rm h}  \widetilde  w^3\|_{L^2}
\, , 
\end{aligned}
$$
so as in~(\ref{divisionetcv}) we find
$$
\begin{aligned}
& 2^{\frac k2} \|\D_k^{\rm h} \widetilde  w^3\|_{L^\infty(\R^+; L^2 )} +c2^{\frac {5k}2}  \|\D_k^{\rm h} \widetilde  w^3\|_{L^1(\R^+; L^2 )}\leq 2^{\frac k2} \|\D_k \widetilde  w^3_0 \|_{L^2 } 
\\& \quad {} + C \int_0^\infty  d_k(t)\|\nabla^{\rm h}\widetilde  v (t) \|_{L^2 }  \|\nabla^{\rm h}  \widetilde  w^3  (t) \|_{B^{\frac 12}_{2,1}} dt 
+ C\beta^2  \int_0^\infty  2^{\frac k2} \| \D_k^{\rm h}(\partial_3^2 w^3)  (t,\cdot ,0) \|_{L^2}   dt  
\, .
\end{aligned}
$$
After summation  we find that
$$
\longformule{
\|\widetilde  w^3\|_{\wt L^\infty(\R^+; B^{\frac 12}_{2,1} )}+\|\widetilde  w^3\|_{L^1(\R^+; B^{\frac 52}_{2,1} )}  
}
{
{}\lesssim\| \widetilde  w^3_0 \|_{B^{\frac 12}_{2,1} } +   \|\widetilde  w^3\|_{  L^2(\R^+; B^{\frac 32}_{2,1} )}\|\nabla^{\rm h}\widetilde  v\|_{L^2(\R^+;L^2)}+ \beta^2 \|(\partial_3^2 w^3)  (\cdot ,0) \|_{L^1(\R^+;B^{\frac 12}_{2,1})} \,.
}
$$
This is 
exactly an inequality of the type~(\ref{propagationlemmaheathorizontaldeoeq1}), up to a harmless localization in time, so by the same arguments we obtain the same conclusion as in Lemma~\ref{propagationlemmaheathorizontal}, namely the fact that for all~$ r\in [1,\infty]$, 
 $$
\|\widetilde  w^3\|_{\wt L^r(\R^+; B^{\frac12+\frac2 r}  )} \lesssim
 \bigl(\| \widetilde  w^3_0 \|_{B^{\frac 12}_{2,1} }+ \beta^2 \|(\partial_3^2 w^3)  (\cdot ,0) \|_{L^1(\R^+;B^{\frac 12}_{2,1})} \bigr)\exp C\|v_0 (\cdot,0)\|_{L^2 }^2\, .
 $$
Since we have 
$$
\| (\partial_3^2 w^3)  (\cdot ,0) \|_{L^1(\R^+;B^{\frac 12}_{2,1} (\R^2))} \lesssim \| w^3\|_{L^1(\R^+;\cB^{\frac 12,\frac52})}  
$$
we infer from the a priori bounds \eqref{useestwithr1} obtained on~$w^3$ in the previous section  that
$$
\| (\partial_3^2 w^3)  (\cdot ,0) \|_{L^1(\R^+;B^{\frac 12}_{2,1} (\R^2))} \lesssim \cT_2 (\|(v_0,w_0^3)\|_{S_\mu})\, ,
$$
so we obtain that for any~$r$ in~$ [1,\infty]$, 
 \begin{equation}  \label{one}
  \|w^3 (\cdot, 0)\|_{\wt L^r (\R^+;B^{\frac12+\frac2 r}_{2,1}(\R^2))} \leq  \big( \|w^3_0(\cdot,0)\|_{B^\frac12_{2,1}(\R^2)}   + \beta^2\big) \cT_2(\|(v_0,w_0^3)\|_{S_\mu})\, . 
  \end{equation}
  Recalling that $w^3_0$ belongs to the space  $S_\mu$ introduced in Definition \ref{definitionspacesmu}, we find that 
  $$ 
  w^3_0 (\cdot, 0) \in \bigcap_{s\in [-2+\mu,1-\mu]} B^{s}_{2,1} (\R^2)\, .
  $$ 
Since $   \ds 0 <\mu <   \frac 1 2 $, we get by interpolation and Sobolev embeddings that   
$$
 \|w^3_0(\cdot,0)\|_{B^\frac12_{2,1}(\R^2)} \lesssim  \|w^3_0(\cdot,0)\|^\frac{1-2\mu}{2(1-\mu)}_{B^0_{2,1}(\R^2)} \|w^3_0 \|^\frac{1}{2(1-\mu)}_{S_\mu}\, ,
 $$  
which implies that   \eqref{one}  can be written under the form
  $$
 \begin{aligned}
  \|w^3 (\cdot, 0)\|_{\wt L^r (\R^+;B^{\frac12+\frac2 r}_{2,1}(\R^2))} \leq  \Big( \|w^3_0(\cdot,0)\|^\frac{1-2\mu}{2(1-\mu)}_{B^0_{2,1}(\R^2)} \ + \beta^2 \Big)\cT_2(\|(v_0,w_0^3)\|_{S_\mu})\, .
  \end{aligned}
$$
Now interpolating with the a priori bound obtained in Proposition~\ref{regiultransportdiff2D}, we find
  $$
\begin{aligned}
   \|w^3 (\cdot, 0)\|_{\wt L^r (\R^+;B^{-\frac12+\frac2 r}_{2,1}(\R^2))} & \lesssim    \|w^3 \|_{\wt L^r (\R^+;\cB^{-\frac12+\frac2 r})}  \\
   & \lesssim  \cT_2(\|(v_0,w_0^3)\|_{S_\mu}\, ,
      \end{aligned}
$$
so we obtain finally
$$
\begin{aligned}
  \|w^3 (\cdot, 0)\|_{\wt L^r (\R^+;B^{\frac2 r}_{2,1}(\R^2))} \leq 
   \cT_2(\|(v_0,w_0^3)\|_{S_\mu}) \Big(
     \|w^3_0(\cdot,0)\|^\frac{1-2\mu}{4(1-\mu)}_{B^0_{2,1}(\R^2)} +  \beta
   \Big) \, . 
  \end{aligned}
$$
This ends the proof of~(\ref{eqdim2value0transportiff}).
 
\medskip
\noindent We shall not detail the proof of~(\ref{eqdim2pseudoloctransportiff}) as it is very similar to the proof of~(\ref{eqdim2pseudoloc}). Proposition~\ref{regiultransportdiff2D+} is therefore proved. \end{proof}

\medskip

\noindent Propositions~\ref{regulNS2D+} and~\ref{regiultransportdiff2D+} imply easily  the following result,  using the special form of~$ \Phi^{0, \infty}_{n,\alpha,L} $ and~$\Phi^{0,{\rm loc}}_{n,\alpha,L}$  recalled in~(\ref{decompositioninftypart}) and~(\ref{decompositionlocpart}), and thanks to~(\ref{smallatzerothmdata}),~(\ref{mainresultprofilesdieq-2}) and~(\ref{mainresultprofilesdieq-1}).

\begin{cor}\label{corollarylocalization}
{\sl The vector fields~$\Phi^{0,{\rm loc}}_{n,\alpha,L} $ and~$  \Phi^{0, \infty}_{n,\alpha,L} $ satisfy the following:~$\Phi^{0,{\rm loc}}_{n,\alpha,L} $ vanishes at~$x_3 = 0$, in the sense that for all~$r$ in~$[2, \infty]$,
$$
 \lim_{L \to \infty} \lim_{\alpha \to 0}
 \limsup_{n \to \infty}
 \|\Phi^{0,{\rm loc}}_{n,\alpha,L} (\cdot, 0)\|_{\wt L^r (\R^+;B^{\frac2 r}_{2,1}(\R^2))} = 0 \, ,
$$
and there is a constant~$ C(\alpha, L) $ such that for all~$\eta$ in~$ ]0,1[$,  
 $$
 \limsup_{n \to \infty}
 \Big(  \|(1-\theta_{{\rm h},\eta})\Phi^{0,{\rm loc}}_{n,\alpha,L}\|_{\cA^0} +    \| \theta_{{\rm h},\eta}\Phi^{0,\infty}_{n,\alpha,L}\|_{\cA^0} \Big) \leq C(\alpha, L) \eta \,  .
$$
}\end{cor}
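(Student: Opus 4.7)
The plan is to derive both assertions from the localization properties of the initial profiles given by Theorem~\ref{mainresultprofilesdi}, namely~\eqref{smallatzerothmdata},~\eqref{mainresultprofilesdieq-2} and~\eqref{mainresultprofilesdieq-1}, by propagating them through the two-dimensional Navier-Stokes flow (for $v^{0,\bullet}_{n,\alpha,L}$) and the transport-diffusion flow (for $w^{0,\bullet,3}_{n,\alpha,L}$) via Propositions~\ref{regulNS2D+} and~\ref{regiultransportdiff2D+}. The splittings~\eqref{decompositioninftypart}--\eqref{decompositionlocpart} decompose each $\Phi^{0,\bullet}_{n,\alpha,L}$ into an approximate part that will inherit the desired localization, and a remainder $\psi^{0,\bullet}_{n,\alpha,L}$ of size $O(h_n^0)$; because $h_n^0\to 0$ by Theorem~\ref{mainresultprofilesdi}, this remainder will be harmless in the $\limsup_{n\to\infty}$.

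For the vanishing at $x_3=0$, I use the trivial identity $[f]_{h_n^0}(x_h,0)=f(x_h,0)$, which gives
$$
\Phi^{0,{\rm loc},{\rm app}}_{n,\alpha,L}(\cdot,0)=\bigl(v^{0,{\rm loc}}_{n,\alpha,L}(\cdot,0)+h_n^0\, w^{0,{\rm loc},{\rm h}}_{n,\alpha,L}(\cdot,0),\; w^{0,{\rm loc},3}_{n,\alpha,L}(\cdot,0)\bigr).
$$
Applying~\eqref{eqdim2value0} to the first component and~\eqref{eqdim2value0transportiff} with $\beta=h_n^0$ to the third, the prefactors $\cT_2(\cM_\alpha)$ are uniformly finite while the two pieces $\|v^{0,{\rm loc}}_{0,n,\alpha,L}(\cdot,0)\|_{B^0_{2,1}(\R^2)}$ and $\|w^{0,{\rm loc},3}_{0,n,\alpha,L}(\cdot,0)\|_{B^0_{2,1}(\R^2)}$ vanish in the triple limit by~\eqref{smallatzerothmdata}, and the additive $\beta=h_n^0$ also vanishes. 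The cross-term $h_n^0 w^{0,{\rm loc},{\rm h}}_{n,\alpha,L}(\cdot,0)$ is absorbed via Corollary~\ref{regiultransportdiff2Dwh} thanks to its $h_n^0$ prefactor. Finally, the trace of $\psi^{0,{\rm loc}}_{n,\alpha,L}$ is controlled through the Bernstein-based embedding $\cB^s\hookrightarrow L^\infty_{\rm v}(B^s_{2,1}(\R^2))$ and the $O(h_n^0)$ estimate on $\psi^{0,{\rm loc}}_{n,\alpha,L}$ in $L^2(\R^+;\cB^1)$ given by~\eqref{decompositionlocpart}, interpolated with its $\cA^0$-bound to cover all $r\in[2,\infty]$.

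For the horizontal localization, the key point is that $\theta_{{\rm h},\eta}$ depends only on $x_h$, hence it commutes with the vertical dilation $[\cdot]_{h_n^0}$, while~\eqref{scale} with $s'=1/2$, $p=2$ yields $\|[f]_\beta\|_{\cA^0}=\|f\|_{\cA^0}$. Consequently $\|(1-\theta_{{\rm h},\eta})\Phi^{0,{\rm loc},{\rm app}}_{n,\alpha,L}\|_{\cA^0}$ reduces to estimating $\|(1-\theta_{{\rm h},\eta})v^{0,{\rm loc}}_{n,\alpha,L}\|_{\cA^0}+\|(1-\theta_{{\rm h},\eta})w^{0,{\rm loc},3}_{n,\alpha,L}\|_{\cA^0}$, up to an $O(h_n^0)$ horizontal contribution. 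For $\eta\leq\eta(\alpha,L)$, property~\eqref{mainresultprofilesdieq-2} kills the initial-data terms, so~\eqref{eqdim2pseudoloc} and~\eqref{eqdim2pseudoloctransportiff} (applied with $\gamma=1$) bound both summands by $\eta\,\cT_2(\cM_\alpha)$. Symmetrically, for $n\geq n(\alpha,L,\eta)$ the property~\eqref{mainresultprofilesdieq-1} makes the initial-data terms in the $\infty$-profile vanish, so~\eqref{eqdim2pseudoloc} and~\eqref{eqdim2pseudoloctransportiff} applied with $\gamma=0$ give $\|\theta_{{\rm h},\eta}\Phi^{0,\infty,{\rm app}}_{n,\alpha,L}\|_{\cA^0}\leq\eta\,\cT_2(\cM_\alpha)$. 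The remaining range $\eta>\eta(\alpha,L)$ is covered by the trivial a priori $\cA^0$-bound on the profiles provided by Theorem~\ref{slowvarsimple}, at the cost of absorbing $\eta(\alpha,L)^{-1}$ into $C(\alpha,L)$.

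The main technical point is thus the compatibility of the purely horizontal cutoff $\theta_{{\rm h},\eta}$ with the anisotropic vertical rescaling $[\cdot]_{h_n^0}$, which is precisely what makes $\cA^0$---vertically scale-invariant by~\eqref{scale} at $s'=1/2$---the natural norm in which to measure the localization. All the genuinely analytic work has already been done in Propositions~\ref{regulNS2D+} and~\ref{regiultransportdiff2D+}; the corollary itself is a bookkeeping exercise combining those estimates with the initial-data localizations from Theorem~\ref{mainresultprofilesdi} and the fact that $h_n^0\to 0$.
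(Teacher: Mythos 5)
Your proposal is correct and follows exactly the route the paper intends: it combines the splittings~(\ref{decompositioninftypart})--(\ref{decompositionlocpart}) with the trace and pseudo-localization estimates of Propositions~\ref{regulNS2D+} and~\ref{regiultransportdiff2D+} (choosing $\gamma=1$ for the local part and $\gamma=0$ for the part escaping to infinity), the initial-data properties~(\ref{smallatzerothmdata}),~(\ref{mainresultprofilesdieq-2}),~(\ref{mainresultprofilesdieq-1}), the vertical scale invariance of~$\cA^0$ from~(\ref{scale}), and the fact that~$h_n^0\to0$ to absorb the remainders~$\psi^{0,\bullet}_{n,\alpha,L}$. The paper gives no further detail than this, so your write-up is a faithful (indeed more explicit) version of its argument.
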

\subsection{Conclusion of the proof of Theorem~\ref{interactionprofilescale1}} \label{conclusionproofinteractionprofilescale1}
Recall that we look for the solution of~(NS) under the form
$$
\Phi^{0}_{n,\alpha,L}  = u_\alpha+  \Phi^{0, \infty}_{n,\alpha,L} +\Phi^{0,{\rm loc}}_{n,\alpha,L} + \psi_{n,\alpha,L} \, , 
$$
with the notation introduced in Paragraph~\ref{decompositioninitialdataprofile1}. In particular the two vector fields~$\Phi^{0,{\rm loc}}_{n,\alpha,L} $ and~$  \Phi^{0, \infty}_{n,\alpha,L} $ satisfy Corollary~\ref{corollarylocalization}, and  furthermore     thanks to the Lebesgue theorem,
\begin{equation}\label{ugoestozeroaswell}
\lim_{\eta \to 0} \|(1-\theta_{\eta}) u_\alpha\|_{  L^2(\R^+;\cB^1)} = 0 \, .
\end{equation}
Given a small number~$\e>0$, to be chosen later, we choose~$L $,~$\alpha$  and~$\eta = \eta(\alpha,L,u_0)$ so that thanks to  Corollary~\ref{corollarylocalization} and~(\ref{ugoestozeroaswell}), for all~$r$ in~$ [2,\infty]$, and for~$n$ large enough,
\begin{equation}\label{choiceofallparametersepsilon}
 \begin{aligned}
  \|\Phi^{0,{\rm loc}}_{n,\alpha,L} (\cdot, 0)\|_{  L^r (\R^+;B^{\frac2 r}_{2,1}(\R^2))} + \|(1-\theta_{{\rm h},\eta})\Phi^{0,{\rm loc}}_{n,\alpha,L}\|_{\cA^0}   + \|(1-\theta_{\eta}) u_\alpha\|_{  L^2(\R^+;\cB^1)} \\
  +    \| \theta_{{\rm h},\eta}\Phi^{0,\infty}_{n,\alpha,L}\|_{\cA^0} \leq \e \, .
\end{aligned}
\end{equation}
In the following 
we denote for simplicity
$$
\begin{aligned}
 (\Phi^{0, \infty}_{\eps}, \Phi^{0,{\rm loc}}_{\e} ,  \psi_{\e}) & \eqdefa(  \Phi^{0, \infty}_{n,\alpha,L},  \Phi^{0,{\rm loc}}_{n,\alpha,L}  , \psi_{n,\alpha,L} ) \andf  \Phi^{{\rm app}}_{\eps} \eqdefa u_\alpha+ \Phi^{0, \infty}_{\eps} +\Phi^{0,{\rm loc}}_{\eps}   \, ,
\end{aligned}
$$
so the vector field~$ \psi_{\e} $ satisfies the following equation, with zero initial data:
\beq
\label{definRemainder}
\begin{split}
&\partial_t \psi_{\e}-\D\psi_{\e} +\dive \bigl( \psi_{\e}\otimes \psi_{\e}+\Phi^{\rm app}_{\e}\otimes\psi_{\e}+\psi_{\e}\otimes \Phi^{\rm app}_{\e}\bigr)=  - \nabla q_\e + E_\e \,,\\
&\quad \with E_\e=  E_\e^1 +E_\e^2  \andf\\
&\qquad\quad\qquad E^1_\e \eqdefa\dive\Big (\Phi^{0, \infty}_{\eps}\otimes (\Phi^{0,{\rm loc}}_{\e}+u_\alpha) +(   \Phi^{0,{\rm loc}}_{\e} +u_\alpha)\otimes\Phi^{0, \infty}_{\eps} \\
&\qquad\quad\qquad\qquad\quad\qquad\qquad\quad\qquad
+ 
\Phi^{0,{\rm loc}} \otimes  (1- \theta_{ \eta} )u_\alpha +  (1-  \theta_{ \eta} )u_\alpha \otimes\Phi^{0,{\rm loc}}
\Big)\,,\\
&\qquad\quad\qquad E^2_\e \eqdefa  \dive\big(\Phi^{0,{\rm loc}}_{\e}\otimes \theta_{ \eta}u_\alpha  +  \theta_{\eta}u_\alpha \otimes \Phi^{0,{\rm loc}}_{\e}\big)  \, .
\end{split}
\eeq
If we prove that 
\beq
\label{slowvarsimpleconcludemoeq2profile1}
\lim_{\e \to 0}\|E_\e\|_{\cF^0}=0 \, , 
\eeq
then Proposition\refer{existencepetitcB1} implies  that~$\psi_\e$ belongs to~$  L^2(\R^+;\cB^1)$, with
$$
\lim_{\e \to 0}\|\psi_\e\|_{  L^2(\R^+;\cB^1)}= 0 \, ,
$$
and we conclude the proof of Theorem~\ref{interactionprofilescale1} exactly as in the proof of Theorem~\ref{slowvarsimple}, by resorting to Proposition~\ref{propaganisoNS3D}.

\medskip
\noindent So let us prove~(\ref{slowvarsimpleconcludemoeq2profile1}). The term~$E_\e^1$ is the easiest, thanks to the separation of the spatial supports. Let us first write~$E_\e^1= E_{\e,\rm h}^1+E_{\e,3}^1$ with
$$
\begin{aligned}
 E_{\e,\rm h}^1 & \eqdefa\dive_{\rm h}\Big ( (\Phi^{0,{\rm loc} }_{\e}+u_\alpha ) \otimes \Phi^{0, \infty,{\rm h}}_{\eps}+\Phi^{0, \infty}_{\eps}\otimes (   \Phi^{0,{\rm loc},{\rm h}}_{\e} +u_\alpha^{\rm h}) \\
& \quad\quad 
+ 
 (1- \theta_{ \eta} )u_\alpha \otimes \Phi^{0,{\rm loc},{\rm h}} + \Phi^{0,{\rm loc}} \otimes (1-  \theta_{ \eta} )u_\alpha^{\rm h}
\Big) \quad \mbox{and}\\
 E_{\e,3}^1 & \eqdefa\partial_3   \Big ((\Phi^{0,{\rm loc} }_{\e}+u_\alpha )\Phi^{0, \infty,3}_{\eps} +\Phi^{0, \infty}_{\eps}(    \Phi^{0,{\rm loc},3}_{\e} +u_\alpha^3) \\
& \quad \quad
+ 
   (1- \theta_{ \eta} )u_\alpha \Phi^{0,{\rm loc},3}  +   \Phi^{0,{\rm loc}}
(1-  \theta_{ \eta} )u_\alpha^{3} \Big)  \,.
\end{aligned}
$$
Next let us write, for any two functions~$a$ and~$b$,
$$
ab = (\theta_{{\rm h},\eta} a)b + a \big ( (1- \theta_{{\rm h},\eta}) b\big) \, .
$$
Denoting
$$
u^{\infty}_\eps \eqdefa ( 1-\theta_{ \eta})u_\alpha
$$
and using by now as usual the action of   derivatives and  the fact that~$\cB^1$ is an algebra,  we infer that
\[
\begin{split}
\| E_{\e,\rm h}^1 \|_{L^1(\R^+;\cB^0)} + \| E_{\e,3}^1 \|_{L^1(\R^+;B^{1,-\frac12}_{2,1})} & \leq \| \theta_{{\rm h},\eta}   \Phi^{0, \infty}_{\eps} \|_{L^2(\R^+;\cB^1)} \|\Phi^{0,{\rm loc}}_{\e}+u_\alpha \|_{L^2(\R^+;\cB^1)}\\
&\quad {}+ \|  (  1- \theta_{{\rm h},\eta}) (\Phi^{0,{\rm loc}}_{\e}+u_\alpha)\|_{L^2(\R^+;\cB^1)} \|  \Phi^{0, \infty}_{\eps} \|_{L^2(\R^+;\cB^1)}\\
& \qquad\qquad 
 {}
+ \|\Phi^{0,{\rm loc}}_{\e}\|_{L^2(\R^+;\cB^1)}  \| u^{\infty}_\eps\|_{L^2(\R^+;\cB^1)}  \, .
 \end{split}
\]
Thanks to~(\ref{choiceofallparametersepsilon})
and to the a priori bounds on~$  \Phi^{0, \infty}_{\eps} $,   $\Phi^{0, {\rm loc}}_{\eps} $ and~$u_\alpha$, we get directly in view of the examples page~\pageref{examplespage} that
$$
\lim_{\e \to 0}\|E^1_\e\|_{\cF^0}=0 \, .
$$
Next let us turn to~$E^2_\e$. We shall follow the method of~\cite{cgz}, and in particular the following lemma will be very useful.
\begin{lem}
\label{lemmainteracslowvarying}
 {\sl  There is a constant $C$ such that for all  
functions~$a$ and~$b$, we have
$$
\|ab\|_{\cB^1} \leq C\|a\|_{\cB^1}
\|b(\cdot ,0)\|_{B^{1}_{2,1} (\R^2)} + C\|x_3 a \|_{\cB^1}
\|\partial_3b\|_{\cB^1} \, .
$$
}\end{lem}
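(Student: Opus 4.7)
The plan is to reduce the estimate to a first-order Taylor expansion of $b$ around the plane $\{x_3=0\}$. Writing
$$
b(x_{\rm h},x_3) = b(x_{\rm h},0) + x_3\int_0^1 (\partial_3 b)(x_{\rm h},sx_3)\,ds \, ,
$$
we obtain the decomposition
$$
ab = a\cdot b(\cdot,0) \; + \; (x_3 a)\cdot\int_0^1 (\partial_3 b)(x_{\rm h},sx_3)\,ds \, ,
$$
which already has the shape of the two terms appearing on the right-hand side of the lemma. It remains to bound each piece in $\cB^1$, using on the one hand a product law between $\cB^1$ and a purely horizontal Besov space, and on the other hand the algebra property of $\cB^1$ together with the vertical scale invariance of this space.

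For the first piece, I would use that $b(\cdot,0)$ depends only on $x_{\rm h}$, so that $\Delta_j^{\rm v}$ commutes with multiplication by it and the problem reduces to a horizontal product estimate, applied fibre by fibre in $x_3$. Using Bony's horizontal paraproduct decomposition and the fact that $B^{1}_{2,1}(\R^2)$ is an algebra (together with the embedding of $\cB^1$ into $L^\infty_{\rm v}(B^{1}_{2,1}(\R^2))$), one obtains
$$
\|a\cdot b(\cdot,0)\|_{\cB^1}\lesssim \|a\|_{\cB^1}\,\|b(\cdot,0)\|_{B^1_{2,1}(\R^2)}\, .
$$
For the second piece, set $g_s(x)\eqdefa [\partial_3 b]_s(x) = (\partial_3 b)(x_{\rm h},sx_3)$. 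The scaling identity~(\ref{scale}) with $p=2$, $s_2=1/2$ yields the vertical scale invariance
$\|g_s\|_{\cB^1} = \|\partial_3 b\|_{\cB^1}$, uniformly in $s\in]0,1]$. Since $\cB^1$ is an algebra, Minkowski's inequality in the $s$--integral gives
$$
\Big\|(x_3 a)\int_0^1 g_s\,ds\Big\|_{\cB^1}
\lesssim \|x_3 a\|_{\cB^1}\int_0^1 \|g_s\|_{\cB^1}\,ds
\lesssim \|x_3 a\|_{\cB^1}\,\|\partial_3 b\|_{\cB^1}\, ,
$$
as desired.

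The main subtle point is the first estimate, because $b(\cdot,0)$, viewed as a tempered distribution on $\R^3$, carries no vertical regularity at all; its vertical Fourier transform is a Dirac mass at $\xi_3=0$. The algebra property of $\cB^1$ cannot be invoked directly. One has to exploit, at the level of the anisotropic Littlewood--Paley analysis, the commutation of $\Delta_j^{\rm v}$ with multiplication by a function of $x_{\rm h}$ alone, which effectively decouples the vertical summation (carried by $\|a\|_{\cB^1}$) from the purely horizontal product estimate (carried by $\|b(\cdot,0)\|_{B^1_{2,1}(\R^2)}$). Once this observation is made, the estimate follows from standard two-dimensional paraproduct bounds, and the lemma is complete.
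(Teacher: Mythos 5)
Your proof is correct and follows essentially the same route as the paper's: the same Taylor decomposition of $b$ around $x_3=0$, with the first term handled by the mixed 2D/3D product law (this is exactly Proposition~\ref{lawproductaniso2/3D} with $\s=s=1$, $s'=1/2$, so you need not re-derive it). Your treatment of the second term --- transferring $x_3$ onto $a$, invoking the vertical scale invariance $\|[\partial_3 b]_s\|_{\cB^1}\sim\|\partial_3 b\|_{\cB^1}$ uniformly in $s$, and using the algebra property of $\cB^1$ with Minkowski's inequality --- is in fact slightly more careful than the paper's fibre-by-fibre horizontal estimate, since it keeps track of the vertical Littlewood--Paley summation throughout rather than leaving it implicit.
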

\noindent We postpone the proof of that lemma. Let us apply it to estimate $E^2_\e$. We write, as in the case of~$E^1_\e$ and defining~$u^{\rm loc}_\eps \eqdefa  \theta_{ \eta}u_\al $,
$$
\begin{aligned}
\|E^2_\e\|_{\cF^0}  \lesssim \|u^{\rm loc}_\eps \|_{L^2(\R^+;\cB^1)} \|\Phi^{0,{\rm loc}}_{\e}(\cdot, 0) \|_{L^2(\R^+;B^{1}_{2,1} (\R^2))}
\\
+ \|   x_3 u^{\rm loc}_\eps \|_{L^2(\R^+;\cB^1)} \|\partial_3 \Phi^{0,{\rm loc}}_{\e} \|_{L^2(\R^+;\cB^{1} )}\, .
\end{aligned}
$$
Thanks to~(\ref{choiceofallparametersepsilon}) as well as Inequality~(\ref{estimatesolutionunbis}) of Theorem~\ref{slowvarsimple},  we obtain
$$
\lim_{\e \to 0}\|E^2_\e\|_{\cF^0}=0 \, .
$$
This proves~(\ref{slowvarsimpleconcludemoeq2profile1}), hence Theorem~\ref{interactionprofilescale1}. \qed
 
\begin{proof}[Proof of Lemma~{\rm\ref{lemmainteracslowvarying}}]
This is essentially Lemma~3.3 of~\cite{cgz}, we recall the proof for the convenience of the reader.
Let us decompose~$b$ in the following way:
 \beq
\label{demointeractslowvareq1} b(x_{\rm h}, x_3) = b(x_{\rm h},0)+\int_0^{x_3}
\partial_3b(x_{\rm h},y_3)dy_3 \, .
 \eeq
  Laws of product   give directly on the one hand
  $$ 
   \|a (b_{|x_3=0}) \|_{\cB^1} \lesssim \|a\|_{\cB^1}
  \|b_{|x_3=0}\|_{  B^{1}_{2,1} (\R^2)} \,.
  $$
  On the other hand,  observe that 
   \beno \biggl \| a(\cdot
,x_3) \int_0^{x_3} \partial_3b(\cdot, y_3) dy_3\biggl\|_{  B^{1}_{2,1} (\R^2)} & \lesssim&  \|a(\cdot, x_3)\|_{ B^{1}_{2,1} (\R^2)}
\int_0^{x_3 } \| \partial_3b (\cdot
,y_3)\|_{  B^1_{2,1} (\R^2)} dy_3 \\
& \leq & C |x_3| \|a(\cdot ,x_3)\|_{  B^{1}_{2,1} (\R^2)} \|\partial_3
b\|_{L^\infty_v(  B^1_{2,1}(\R^2_{\rm h}))} \, .
\eeno 
The result follows.
\end{proof}

\medbreak


\section{Some results in anisotropic  Besov spaces}
\label{lpanisodivers}
\subsection{Anisotropic Besov spaces}
\label{usefulresults}

 In this section we  first recall some basic facts about (aniso\-tropic) Littlewood-Paley theory and then we prove some basic properties of anisotropic Besov spaces  introduced in Definition\refer{deflpanisointro}, in particular laws of product  which have used all along this text.

 \medbreak
\noindent
First let us recall the following estimates which are the generalization  of the classical  Bernstein's inequalities in the context of anisotropic Littlewood-Paley theory (see Lemma~6.10 of\ccite{BCD}) describing the action of  horizontal and vertical derivatives on frequency localized distributions:
\begin{lem}
\label{Bernsteinaniso}
{\sl Let~$(p_1,p_2,r)$ be in~$[1,\infty]^3$ such that $p_1$ is less than or equal to~$p_2$. Let~$m$ be a real number and~$\s_{\rm h}$ (resp.~$\s_{\rm v})$  a  smooth homogeneous function  of degree~$m$ on~$\R^2$ (resp.~$\R$). Then we have
\beno
 \|\s_{\rm h} (D_{\rm h})\Delta_k^{\rm h}  f\|_{L^{p_2}_{\rm h} L^r_{\rm v}} 
 &\lesssim &
 2^{k(m+\frac2{p_1} - \frac2{p_2} )}
 \| \Delta_k^{\rm h}f\|_{L^{p_1}_{\rm h} L^r_{\rm v}} \andf 
 \\
\|\s_{\rm v}(D_3)  \Delta_{j}^{\rm v}   f\|_{L^r_{\rm h} L^{p_2}_{\rm v}} 
& \lesssim &
2^{j(m+\frac1{p_1} - \frac1{p_2})} \| \Delta_j ^{\rm v}f\|_{L^r_{\rm h} L^{p_1}_{\rm v}} \,.
\eeno
}
\end{lem}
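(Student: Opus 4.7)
The plan is to reduce both inequalities to a computation of the $L^q$-norm of the convolution kernel of $\sigma_\star(D)\Delta^\star$ (where $\star\in\{\mathrm{h},\mathrm{v}\}$), followed by Young's convolution inequality in the directional variable, with Minkowski's inequality used to swap the $L^r$-norm in the other variable with the convolution integral. The key point is that $\Delta_k^{\rm h} f$ has horizontal Fourier support in an annulus $\{|\xi_{\rm h}|\sim 2^k\}$ away from zero, so we can write
$$
\sigma_{\rm h}(D_{\rm h})\Delta_k^{\rm h} f(x_{\rm h},x_3)=\int_{\R^2}\check g_k(x_{\rm h}-y_{\rm h})\,\Delta_k^{\rm h} f(y_{\rm h},x_3)\,dy_{\rm h},
$$
where $\widehat{g_k}(\xi_{\rm h})=\sigma_{\rm h}(\xi_{\rm h})\,\widetilde\varphi(2^{-k}\xi_{\rm h})$ and $\widetilde\varphi\in\cD(\R^2\setminus\{0\})$ is equal to $1$ on the annulus supporting the usual profile $\varphi$ that defines $\Delta_k^{\rm h}$ (so that $\widetilde\varphi(2^{-k}\cdot)\equiv 1$ on the Fourier support of $\Delta_k^{\rm h}$). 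The homogeneity of $\sigma_{\rm h}$ of degree $m$ away from $0$ is used precisely at this step to make sense of $\sigma_{\rm h}(D_{\rm h})$ restricted to frequencies $\sim 2^k$.

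Next, the crucial scaling observation is that by the change of variables $\xi_{\rm h}=2^k\eta_{\rm h}$,
$$
\check g_k(x_{\rm h})=2^{k(m+2)}\,\check g_0(2^k x_{\rm h})\with \widehat{g_0}(\eta_{\rm h})=\sigma_{\rm h}(\eta_{\rm h})\widetilde\varphi(\eta_{\rm h}),
$$
where $\check g_0$ is a fixed Schwartz function (since $\widetilde\varphi$ is smooth, compactly supported in $\R^2\setminus\{0\}$, and $\sigma_{\rm h}$ is smooth on that annulus). Hence for every $q\in[1,\infty]$,
$$
\|\check g_k\|_{L^q(\R^2)}=2^{k(m+2-\frac2q)}\|\check g_0\|_{L^q(\R^2)}.
$$
Choosing $q$ by the Young relation $1+\tfrac1{p_2}=\tfrac1q+\tfrac1{p_1}$, i.e.\ $2-\tfrac2q=\tfrac2{p_1}-\tfrac2{p_2}$, gives the exponent $m+\tfrac2{p_1}-\tfrac2{p_2}$ as required.

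Putting this together: since $r\geq 1$, Minkowski's inequality lets us take the $L^r_{\rm v}$-norm inside the horizontal convolution, and then Young's convolution inequality in $\R^2$ yields
$$
\|\sigma_{\rm h}(D_{\rm h})\Delta_k^{\rm h} f\|_{L^{p_2}_{\rm h}L^r_{\rm v}}\leq \|\check g_k\|_{L^q(\R^2)}\,\|\Delta_k^{\rm h} f\|_{L^{p_1}_{\rm h}L^r_{\rm v}}\lesssim 2^{k(m+\frac2{p_1}-\frac2{p_2})}\|\Delta_k^{\rm h} f\|_{L^{p_1}_{\rm h}L^r_{\rm v}}.
$$
The vertical estimate is proved identically, replacing the two-dimensional convolution and the factor $2$ in the exponent by a one-dimensional convolution and the factor $1$. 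The only modest obstacle in the proof is to justify $\sigma_{\rm h}(D_{\rm h})$ as a bounded Fourier multiplier on the dyadic shell, which is handled by the insertion of the auxiliary cutoff $\widetilde\varphi$ supported away from the origin; everything else is a routine consequence of scaling and Young's inequality.
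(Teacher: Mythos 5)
Your proof is correct. The paper does not actually prove this lemma — it simply recalls it, citing Lemma~6.10 of \cite{BCD} — and your argument (insert a cutoff $\widetilde\varphi$ equal to $1$ on the dyadic annulus, exploit the homogeneity of $\sigma$ to rescale the kernel, compute its $L^q$ norm by scaling, and conclude with Minkowski in the transverse variable followed by Young's inequality in the directional variable) is precisely the standard proof given in that reference, correctly adapted to the anisotropic setting.
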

\noindent
Now let us   recall  the action of the heat flow  on frequency localized  distributions in an anisotropic context.  
\begin{lem}
\label{anisoheat}
{\sl For any~$p$ in~$[1,\infty]$, we have 
 \begin{eqnarray*}
 \|e^{t\Delta}  \Delta_k^{\rm h} \Delta_{j}^{\rm v} f\|_{L^p} & \lesssim & e^{-ct (2^{2k} + 2^{2j})} \|  \Delta_k^{\rm h} \Delta_{j}^{\rm v} f\|_{L^p}\\
 \|e^{t\Delta_{\rm h}}  \Delta_k^{\rm h} \Delta_{j}^{\rm v} f\|_{L^p} &\lesssim& e^{-ct 2^{2k} } \|  \Delta_k^{\rm h} \Delta_{j}^{\rm v} f\|_{L^p}  \quad
 \mbox{and} \quad
 \\
\label{eq2}\|e^{t\partial_3^2}  \Delta_k^{\rm h} \Delta_{j}^{\rm v} f\|_{L^p} &\lesssim& e^{-ct 2^{2j} } \|  \Delta_k^{\rm h} \Delta_{j}^{\rm v} f\|_{L^p}\,.
\end{eqnarray*}
}
\end{lem}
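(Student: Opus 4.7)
The plan is to deduce all three estimates from the convolution representation of the heat semigroup combined with a standard frequency-localization argument.

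First I would address the horizontal operator. Since $\Delta_k^{\rm h}\Delta_j^{\rm v}f$ has horizontal Fourier spectrum in the annulus $2^k\cC_{\rm h}$ for a fixed annulus $\cC_{\rm h}\subset\R^2\setminus\{0\}$, one picks an auxiliary cut-off $\wt\vf\in\cD(\R^2\setminus\{0\})$ with $\wt\vf\equiv 1$ on $\cC_{\rm h}$. Then
$$
e^{t\D_{\rm h}}\D_k^{\rm h}\D_j^{\rm v}f = g_{k,t}\star_{\rm h}\D_k^{\rm h}\D_j^{\rm v}f
\quad\text{with}\quad
\widehat{g_{k,t}}(\xi_{\rm h})=e^{-t|\xi_{\rm h}|^2}\wt\vf(2^{-k}\xi_{\rm h}),
$$
the convolution being taken only in the horizontal variables. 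By Young's inequality applied slicewise in $x_3$ (which holds for any $p\in[1,\infty]$) it suffices to show that $\|g_{k,t}\|_{L^1(\R^2)}\lesssim e^{-ct 2^{2k}}$.

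The change of variables $\eta = 2^{-k}\xi_{\rm h}$ yields $\|g_{k,t}\|_{L^1(\R^2)} = \|\tg_s\|_{L^1(\R^2)}$ with $s=t2^{2k}$, where $\widehat{\tg_s}(\eta)=e^{-s|\eta|^2}\wt\vf(\eta)$. Since $|\eta|^2\geq c_0>0$ on the support of $\wt\vf$, we have $e^{-s|\eta|^2}\leq e^{-c_0 s}$ there, and each $\eta$-derivative of $e^{-s|\eta|^2}$ produces only polynomial factors in $s$, which are absorbed by the exponential decay since $s^Ne^{-c_0 s/2}$ is uniformly bounded. Consequently every derivative of $e^{-s|\eta|^2}\wt\vf(\eta)$ is bounded by $C_\alpha e^{-cs}$ with $c=c_0/2$ and compact support independent of $s$; integration by parts then gives the pointwise estimate $|\tg_s(y)|\lesssim (1+|y|)^{-N}e^{-cs}$ for every $N$, and the required $L^1$ bound follows.

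The vertical estimate for $e^{t\partial_3^2}$ is proved in exactly the same way, performing the convolution in the vertical variable and exploiting that the vertical Fourier support of $\D_j^{\rm v}f$ is contained in $2^j\cC_{\rm v}$ for a fixed annulus $\cC_{\rm v}\subset\R\setminus\{0\}$. Finally, for the full heat semigroup, the factorization $e^{t\D}=e^{t\D_{\rm h}}e^{t\partial_3^2}$ (two commuting semigroups that also commute with the projectors $\D_k^{\rm h}$ and $\D_j^{\rm v}$) allows one to chain the two estimates, producing the combined decay factor $e^{-ct(2^{2k}+2^{2j})}$. I do not anticipate any real obstacle: the only subtle point is the uniform-in-$s$ control of $\|\tg_s\|_{L^1}$, which as explained is routine once one notices that the factors of $s$ arising from differentiating $e^{-s|\eta|^2}$ on the annulus are dominated by the exponential decay.
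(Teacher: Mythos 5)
Your proof is correct and follows exactly the route the paper intends: the authors omit the argument, noting it is a ``straightforward modification of Lemma 2.3 of \cite{BCD}'', and that lemma is proved by precisely your convolution-kernel estimate (rescale, use that $|\eta|$ is bounded below on the annulus to extract the factor $e^{-cs}$, integrate by parts for the $L^1$ bound, apply Young's inequality). Your slicewise application of Young's inequality for the partial convolutions and the factorization $e^{t\Delta}=e^{t\Delta_{\rm h}}e^{t\partial_3^2}$ to chain the two anisotropic estimates are exactly the needed modifications, so there is nothing to add.
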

\noindent The proof of this lemma consists  in a straightforward  (omitted) modification of the proof of Lemma~2.3 of\ccite{BCD}.
 
\medskip \noindent The following result was mentioned in the introduction of this article (see page~\pageref{examplespage}). We refer to~(\ref{deflrtildenorm}) and to Definition~\ref{definitionspaces}
‚àö√á¬¨¬®‚àö¬¢¬¨√Ñ¬¨‚Ä†for notations.
\begin{lem}
\label{example}
{\sl
The spaces~$\wt L^2(\R^+;\cB^{s-1,s'})$, $\wt L^2(\R^+;\cB^{s,s'-1})$ are~$\cF^{s,s'}$ spaces, as well as the spaces~$  L^1(\R^+;\cB^{s,s'})$ and~$  L^1(\R^+;\cB^{s+1,s'-1})$.
}
\end{lem}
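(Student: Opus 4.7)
The plan is to use the Duhamel formula
$$
L_0 f(t) = \int_0^t e^{(t-t')\Delta} f(t')\, dt' \, ,
$$
apply~$\Delta_k^{\rm h}\Delta_j^{\rm v}$ and invoke the anisotropic heat-kernel bound of Lemma~\ref{anisoheat} to obtain, for every~$k,j \in \ZZ$,
$$
\|\Delta_k^{\rm h}\Delta_j^{\rm v} L_0 f(t)\|_{L^2}
\;\lesssim\;  \int_0^t e^{-c(t-t')(2^{2k}+2^{2j})}\,
\|\Delta_k^{\rm h}\Delta_j^{\rm v} f(t')\|_{L^2}\,dt' \, .
$$
Everything then reduces to a one–dimensional convolution estimate in the time variable, to which I would apply Young's inequality in two different ways depending on which of the four candidate spaces~$f$ lives in.

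For the two spaces~$L^1(\R^+;\cB^{s,s'})$ and~$L^1(\R^+;\cB^{s+1,s'-1})$, I would use the convolution inequality~$L^2 \hookrightarrow L^1\star L^2$, which gives
$$
\|\Delta_k^{\rm h}\Delta_j^{\rm v} L_0 f\|_{L^2(\R^+;L^2)}
\;\lesssim\; (2^{2k}+2^{2j})^{-\frac12}\,
\|\Delta_k^{\rm h}\Delta_j^{\rm v} f\|_{L^1(\R^+;L^2)} \, .
$$
Multiplying by~$2^{k(s+1)}2^{js'}$ and using respectively~$2^{k}/(2^{2k}+2^{2j})^{1/2}\leq 1$ or~$2^{j}/(2^{2k}+2^{2j})^{1/2}\leq 1$, summation in~$\ell^1(\ZZ^2)$ yields in each case a control by the corresponding~$L^1$-in-time Besov norm of~$f$.

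For the two spaces~$\wt L^2(\R^+;\cB^{s-1,s'})$ and~$\wt L^2(\R^+;\cB^{s,s'-1})$, I would use instead the convolution inequality~$L^2 \hookrightarrow L^1\star L^2$ with the heat kernel put in~$L^1_t$, namely
$$
\|\Delta_k^{\rm h}\Delta_j^{\rm v} L_0 f\|_{L^2(\R^+;L^2)}
\;\lesssim\; (2^{2k}+2^{2j})^{-1}\,
\|\Delta_k^{\rm h}\Delta_j^{\rm v} f\|_{L^2(\R^+;L^2)} \, .
$$
Multiplying again by~$2^{k(s+1)}2^{js'}$ and using respectively~$2^{2k}/(2^{2k}+2^{2j})\leq 1$ or~$2^{2j}/(2^{2k}+2^{2j})\leq 1$, one controls the~$\ell^1(\ZZ^2)$ sum by the~$\wt L^2$-in-time Besov norm of~$f$ (the point being that with the tilde norm the~$\ell^1$ sum sits outside the time~$L^2$, which is exactly what the computation produces). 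Since the resulting estimates bound~$\|L_0 f\|_{\wt L^2(\R^+;\cB^{s+1,s'})}$, and Minkowski's inequality embeds this into~$L^2(\R^+;\cB^{s+1,s'})$, this is enough to establish that each of the four spaces is an~$\cF^{s,s'}$ space.

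No step here is genuinely difficult; the only minor subtlety is keeping track of which Young inequality to apply (and hence which power of~$(2^{2k}+2^{2j})$ is gained from the heat kernel) so that the remaining factor of~$2^{k}$ or~$2^{j}$ to the appropriate power is exactly absorbed by the shift in Besov regularity between the source space and the target space~$\cB^{s+1,s'}$.
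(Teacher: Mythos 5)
Your proposal is correct and follows essentially the same route as the paper: Duhamel's formula, the anisotropic heat-kernel decay of Lemma~\ref{anisoheat}, and Young's inequality in the time variable, with the gained power of~$(2^{2k}+2^{2j})$ absorbing the regularity shift to~$\cB^{s+1,s'}$. The paper only writes out the case~$\wt L^2(\R^+;\cB^{s-1,s'})$ and declares the others similar; your bookkeeping of all four cases, including the final Minkowski embedding~$\wt L^2\subset L^2$, is exactly the intended argument.
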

\begin{proof}
Let~$f$ be a function in~$\wt L^2(\R^+;\cB^{s-1,s'})$, and let us   show that
$$
\|L_0 f\|_{\cA^{s,s'}} \lesssim \|f\|_{\wt L^2(\R^+;\cB^{s-1,s'})}.
$$
Applying Lemma\refer{anisoheat} gives
$$
\| \Delta_k^{\rm h} \Delta_{j}^{\rm v} L_0 f\|_{L^2}  \lesssim \int^t_0e^{-ct '(2^{2k} + 2^{2j})}\|  \Delta_k^{\rm h} \Delta_{j}^{\rm v} f(t')\|_{L^2} \, dt'
$$
so there is a sequence~$d_{j,k}(t')$ in the sphere of~$\ell^1(\ZZ\times \ZZ;L^2(\R^+))$ such that
$$
\| \Delta_k^{\rm h} \Delta_{j}^{\rm v} L_0 f\|_{L^2}  \lesssim\|f\|_{\wt L^2(\R^+;\cB^{s-1,s'})} 2^{-k(s-1)} 2^{-js'}  \int^t_0e^{-ct '(2^{2k} + 2^{2j})}d_{j,k}(t')\, dt' \, .
$$
Young's inequality in time therefore gives   
$$
\| \Delta_k^{\rm h} \Delta_{j}^{\rm v} L_0 f\|_{  L^2(\R^+;L^2)} \lesssim\|f\|_{\wt L^2(\R^+;\cB^{s-1,s'})} 2^{-k(s-1)-js'}  d_{j,k}\, ,
$$
where~$d_{j,k}$ is a generic sequence in the sphere of~$\ell^1(\ZZ\times \ZZ)$, which proves the result in the case when~$f$ belongs to~$\wt L^2(\R^+;\cB^{s-1,s'})$.
The argument is similar in the other cases.
\end{proof} 

\medbreak
\noindent Now let us study  laws of product.
 \begin{prop}
 \label{productlawsaniso}
{\sl 
 Let~$(\s,\s',\wt \s,\wt \s')$ be in~$]-1,1]^4$ such that 
  $$
 \s+\s'=\wt \s+\wt \s'\eqdefa \overline \s >0 \, .
 $$
 If~$s'$ is in~$]-1/2,1/2]$, we have
 \begin{equation}
 \label{lawsanisogen}
 \|ab\|_{\cB^{\overline \s-1,s'}} \lesssim \|a\|_{\cB^{\s}} \|b\|_{\cB^{\s',s'}} \, .
  \end{equation}
 If~$s'$ is greater than~$1/2$, then we have
  \begin{equation}
 \label{lawsanisogen2}
 \|ab\|_{\cB^{\overline \s-1,s'}} \lesssim \|a\|_{\cB^{\s}} \|b\|_{\cB^{\s',s'}}+\|a\|_{\cB^{\wt \s',s'}}\|b\|_{\cB^{\wt \s}} \, .
 \end{equation}
 }
 \end{prop}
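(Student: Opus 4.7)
The natural strategy is to combine horizontal and vertical Bony paraproducts simultaneously, which is done via the anisotropic decomposition \eqref{Bonydecompaniso}: $ab = \sum_{\ell=1}^4 T^\ell(a,b)$. The point is that each $T^\ell$ is a sum over dyadic pairs $(j,k)$ in which the Fourier support of one factor is localized in an annulus of size $(2^j,2^k)$ while the other factor has low-frequency localization; thus $\Delta_{k'}^{\rm h}\Delta_{j'}^{\rm v}T^\ell(a,b)$ is a finite sum in $(j,k)$ near $(j',k')$ (for $\ell=1$) or a sum over $(j,k)$ with $j \geq j' - N_0$ and $k \geq k'-N_0$ (for the ``remainder-type'' indices), as in the proof of Proposition \ref{regulNS2D}. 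The goal is to control each piece by the anisotropic Bernstein Lemma~\ref{Bernsteinaniso} together with summation over the index that has the good decay.

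\textbf{The safe regime $s' \in ]-1/2,1/2]$.} First I would estimate $T^1(a,b) = \sum_{j,k} S_j^{\rm v}S_k^{\rm h}a\,\Delta_j^{\rm v}\Delta_k^{\rm h}b$. Using the Bernstein Lemma~\ref{Bernsteinaniso} in the horizontal direction to pass from $L^\infty_{\rm h}L^2_{\rm v}$ to $L^2$ with a factor $2^{k}$, writing
$\|S_j^{\rm v}S_k^{\rm h}a\|_{L^\infty_{\rm h}L^2_{\rm v}} \lesssim \sum_{j' < j,\, k'< k} 2^{k'} \|\Delta_{j'}^{\rm v}\Delta_{k'}^{\rm h} a\|_{L^2}$
and recognizing that since $\sigma < 1$ the sum $\sum_{k' < k} 2^{k'(1-\sigma)} = O(2^{k(1-\sigma)})$ is convergent, one obtains
$\|S_j^{\rm v}S_k^{\rm h}a\|_{L^\infty_{\rm h}L^2_{\rm v}} \lesssim d_j^{(1)} 2^{k(1-\sigma)} \|a\|_{\cB^\sigma}$
with $\sum d_j^{(1)} \lesssim 1$ (here the vertical part uses only $\|a\|_{\cB^{\sigma,1/2}}$ since $1/2$ is the vertical index on which we take the $\ell^1$ norm, hence no constraint on $s'$). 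Multiplying by $\|\Delta_j^{\rm v}\Delta_k^{\rm h}b\|_{L^2}$, summing against weights $2^{k'(\overline\sigma-1)}2^{j's'}$ and using almost-orthogonality, one recovers the bound $\|a\|_{\cB^\sigma}\|b\|_{\cB^{\sigma',s'}}$ provided $\overline\sigma - 1 < 1$ (i.e.\ $\overline\sigma > 0$ for the remainder) and $\sigma' < 1$. The terms $T^2$ and $T^3$ are strictly analogous by symmetry, and $T^4$ is the ``double remainder'' where both factors are frequency-localized on annuli; here summation requires $\overline\sigma > 0$ (horizontal) and, since we need to sum over $j \geq j' - N_0$ with weights $2^{j s'}2^{-j s'_a}2^{-js'_b}$, one must have $s' < s'_a + s'_b = 1/2 + 1/2 = 1$, which is automatic in this regime.

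\textbf{The regime $s' > 1/2$.} Here the vertical index $1/2$ of $\cB^{\sigma',s'} = B^{\sigma',1/2}_{2,1}$ is below the critical one for the right-hand side, and the vertical remainder in $T^4$ can no longer be summed by putting all vertical regularity on $b$. The standard fix is to rebalance: when summing over $j \geq j' - N_0$ for $T^4$ the weight $2^{j s'}$ no longer beats $2^{-j/2}2^{-j/2}$, so one must instead put the ``high vertical regularity'' $s'$ on whichever factor has it. This is exactly what the second term $\|a\|_{\cB^{\tilde\sigma',s'}}\|b\|_{\cB^{\tilde\sigma}}$ in \eqref{lawsanisogen2} allows: one splits $T^4$ (and the vertical remainder components hidden inside $T^1,\dots,T^3$) into two pieces, estimating the vertical low-frequency of $b$ using $\|b\|_{\cB^{\tilde\sigma}}$ (no vertical weight) and the vertical high-frequency of $a$ using $\|a\|_{\cB^{\tilde\sigma',s'}}$, exploiting that the horizontal sums still converge because $\tilde\sigma + \tilde\sigma' = \overline\sigma$ and $\tilde\sigma,\tilde\sigma' < 1$.

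\textbf{Main obstacle.} The delicate point is the bookkeeping for $T^4$ in the regime $s' > 1/2$: one must check that after the rebalancing the horizontal-paraproduct regularity $\overline\sigma - 1$ is still produced, which forces the use of the second decomposition $\overline\sigma = \tilde\sigma + \tilde\sigma'$ (rather than $\sigma + \sigma'$). The rest is routine summation of almost-orthogonal double series using the Cauchy--Schwarz and Young inequalities on $\ZZ^2$, and the assumption $\overline\sigma > 0$ is used exactly once, to make the ``diagonal'' sum in the horizontal remainder converge.
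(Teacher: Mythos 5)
Your strategy is essentially the paper's: the four-term decomposition \eqref{Bonydecompaniso} is (up to index conventions) just the vertical Bony decomposition \eqref{bonyvertical} expanded horizontally, since~$T^1+T^2=\sum_j S_j^{\rm v}a\,\D_j^{\rm v}b$ is the vertical paraproduct~$T^{\rm v}_ab$ and~$T^3+T^4=\sum_j\D_j^{\rm v}a\,S_{j+1}^{\rm v}b$ is~$\wt T^{\rm v}_ba$. Like the paper, you keep the vertical regularity~$s'$ on~$b$ in the first group, and when~$s'>1/2$ you switch it onto~$a$ in the second group with the rebalanced horizontal pair~$(\wt\s',\wt\s)$; this is exactly the double application of Lemma~\ref{productlawsanisodemolem} (once with~$(\s,\s')$, once with~$(\wt\s',\wt\s)$) that closes the paper's proof.

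There is one bookkeeping slip, and it sits exactly at the lower endpoint of the hypothesis. In~$T^3$ and~$T^4$ the low factor is~$S_{j+1}^{\rm v}b$, which contains the diagonal block~$\D_j^{\rm v}b$; these terms are therefore a vertical paraproduct \emph{plus} a vertical remainder, and~$\D_j^{\rm v}a\,S_{j+1}^{\rm v}b$ is not supported in a vertical annulus. If you estimate them as written, putting~$S_{j+1}^{\rm v}b$ wholesale into~$L^\infty_{\rm v}$ with the gain~$2^{j(\frac12-s')}$, the resulting convolution kernel over~$j\ge j'-N_0$ is~$2^{(j'-j)s'}$, which is summable only for~$s'>0$; the condition you quote, ``$s'<s'_a+s'_b=1$'', is not the operative one. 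For~$s'$ in~$]-1/2,0]$ you must split off the diagonal and run the genuine remainder estimate --- both factors in~$L^2_{\rm v}$ and the vertical Bernstein factor~$2^{j'/2}$ applied to the product, as in~\eqref{lawsanisogenremain} --- and it is there, and only there, that the hypothesis~$s'>-1/2$ is used. With that correction the argument goes through and coincides with the paper's.
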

 \begin{proof}
 Let us use Bony's decomposition in the vertical variable introduced in\refeq{bonyvertical}, namely
$$ ab = T^{\rm v} _ab+T^{\rm v}_ba+R^{\rm v}(a,b).
$$
 The first  two terms are almost the same (up to the interchanging of~$a$ and~$b$). Thus we only estimate~$T^{\rm v}_ab$. This is done through the following lemma.
 \begin{lem}
 \label{productlawsanisodemolem}
{\sl  Let us consider~$(\s,\s')$ in~$]-1,1]^2$ such that $\s+\s'$ is positive and~$(s,s')$ in~$\R^2$. If~$s$ is less than or equal to~$1/2$, we have
\begin{equation}
 \label{lawsanisogenpara}
 \|T^{\rm v} _ab\|_{\cB^{\s+\s'-1,s+s'-\frac 12}} \lesssim\|a\|_{\cB^{\s,s}} \|b\|_{\cB^{\s',s'}} \, .
  \end{equation} 
If~$s+s'$ is positive, we have
\begin{equation}
 \label{lawsanisogenremain}
 \|R^{\rm v} (a,b) \|_{\cB^{\s+\s'-1,s+s'-\frac12}} \lesssim \|a\|_{\cB^{\s,s}} \|b\|_{\cB^{\s',s'}} \, .
  \end{equation}
}\end{lem}
 \begin{proof}
 Let us use Bony's decomposition of~$T^{\rm v}_ab$ with respect to the horizontal variable. 
 \beno
T^{\rm v}_a b & =& T^{\rm v}T^{\rm h}_a b + T^{\rm v}\wt T^{\rm h}_b a +  T^{\rm v}R^{\rm h}(a, b)\with\\
T^{\rm v}T^{\rm h}_a b & \eqdefa & \sum_{j,k} S_{j-1}^{\rm v} S_{k-1}^{\rm h} a\D_j^{\rm v}\D_k^{\rm h} b\,,\\
T^{\rm v}\wt T^{\rm h}_b a & \eqdefa & \sum_{j,k} S_{j-1}^{\rm v} \D_k^{\rm h} a\D_j^{\rm v}S_{k-1}^{\rm h} b\andf\\
T^{\rm v}R^{\rm h}(a, b) &\eqdefa & \sumetage {j,k}{-1\leq \ell\leq 1} 
S_{j-1}^{\rm v} \D_{k-\ell}^{\rm h} a\D_j^{\rm v}\D_k^{\rm h} b\,.
 \eeno
Following the same lines as in the proof of Proposition\refer{regulNS2D} (see the lines following decompostion\refeq{Bonydecompaniso}) we have for some large enough integer~$N_0$
$$
\D_j^{\rm v}\D_k^{\rm h} T^{\rm v}T^{\rm h}_a b= \sumetage{|j'-j|\leq N_0}{|k'-k|\leq N_0}
\D_j^{\rm v}\D_k^{\rm h}\bigl(S_{j'-1}^{\rm v}S_{k'-1} ^{\rm h} a\D_{j'}^{\rm v} \D_{k'}^{\rm h}b\bigr).
$$
By definition of the~$\cB^{\s,s'}$ norms, this gives
\beno
2^{j\left (s+s'-\frac12\right)+k(\s+\s'-1)} \|\D_j^{\rm v}\D_k^{\rm h} T^{\rm v}T^{\rm h}_a b\|_{L^2} 
& \lesssim & \!\!\!\!
 \sumetage{|j'-j|\leq N_0}{|k'-k|\leq N_0} 2^{-(j'-j)\left (s+s'-\frac12\right) -(k'-k) (\s+\s'-1)}\\
&&\!\!\!\!\!\!\!\!\!\!\!\!\!\!\!\!\!\!\!\!\!\!{}\times
2^{j'\left(s-\frac 12\right) +k'(\s-1)} \|S_{j'-1}^{\rm v}S_{k'-1} ^{\rm h} a\|_{L^\infty} 2^{j's'+k'\s'} \|\D_{j'}^{\rm v} \D_{k'}^{\rm h}b\|_{L^2}\\
& \lesssim &  \|b\|_{\cB^{\s',s'}}  \!\!\!\! \sumetage{|j'-j|\leq N_0}{|k'-k|\leq N_0} 2^{-(j'-j)s' -(k'-k) (\s+\s'-1)}\\
&&\ \quad\qquad{}\times   d_{j',k'}2^{j'\left(s-\frac 12\right) +k'(\s-1)}\|S_{j'-1}^{\rm v}S_{k'-1} ^{\rm h} a\|_{L^\infty}
\eeno
where, as in all that follows, ~$( d_{j,k})_{(j,k)\in \ZZ^2}$ lies on the sphere of~$\ell^1(\ZZ^2)$. Using anisotropic Bernstein inequalities given by Lemma\refer{Bernsteinaniso} and the definition of the~$\cB^{\s,s}$ norm, we get
\beno
2^{j'\left(s-\frac 12\right) +k'(\s-1)}\|S_{j'-1}^{\rm v}S_{k'-1}^{\rm h} a \|_{L^\infty} 
& \lesssim &\sumetage{j''\leq j'-2} {k''\leq k'-2}  2^{(j'-j'') \left(s-\frac 12\right) +(k'-k'')(\s-1)}\\
&&\qquad\qquad{}\times 2^{j''\left(s-\frac 12\right) +k''(\s-1)} \|\D_{j''}^{\rm v} \D_{k''}^{\rm h} a\|_{L^\infty}\\
& \lesssim & 
\sumetage{j''\leq j'-2} {k''\leq k'-2}  2^{(j'-j'') \left(s-\frac 12\right) +(k'-k'')(\s-1)}\\
&&\qquad\qquad{}\times 2^{j''s +k''\s} \|\D_{j''}^{\rm v} \D_{k''}^{\rm h} a\|_{L^2}\\
& \lesssim & \|a\|_{\cB^{\s,s}} \!\!\!
\sumetage{j''\leq j'-2} {k''\leq k'-2}  2^{(j'-j'') \left(s-\frac 12\right) +(k'-k'')(\s-1)}
 d_{j''k''} \, .
 \eeno
As~$s\leq 1/2$ and~$\s\leq 1$, we get
$$
2^{j'\left(s-\frac 12\right) +k'(\s-1)}\|S_{j'-1}^{\rm v}S_{k'-1}^{\rm h} a \|_{L^\infty}  \lesssim
\|a\|_{\cB^{\s,s}} \, .
$$
Young's inequality on series leads to
\beq
\label{productlawsanisodemolemdemoeq1}
\|T^{\rm v}T^{\rm h}_a b\|_{\cB^{\s+\s'-1,s+s'-\frac12}} \lesssim \|a\|_{\cB^{\s,s}}\|b\|_{\cB^{\s',s'}} \, .
\eeq
Following  exactly the same lines, we can prove \beq
\label{productlawsanisodemolemdemoeq2}
\|T^{\rm v}\wt T^{\rm h}_b a\|_{\cB^{\s+\s'-1,s+s'-\frac12}} \lesssim \|a\|_{\cB^{\s,s}}\|b\|_{\cB^{\s',s'}} \, .
\eeq
The estimate of~$T^{\rm v}R^{\rm h}(a, b) $ is a little bit different. Let us write that 
$$
\D_j^{\rm v}\D_k^{\rm h} T^{\rm v}R^{\rm h}(a, b)= 
 \sumetage {j',k'}{-1\leq \ell\leq 1} \D_j^{\rm v}\D_k^{\rm h}\bigl(S_{j'-1}^{\rm v}\D_{k'-\ell} ^{\rm h} a\D_{j'}^{\rm v} \D_{k'}^{\rm h}b\bigr)\,.
$$
Arguing as in the proof of Proposition\refer{regulNS2D}  we have for some large enough integer~$N_0$
$$
\D_j^{\rm v}\D_k^{\rm h} T^{\rm v}R^{\rm h}(a, b)= \sumetage{|j'-j|\leq N_0}{k'\geq k- N_0}\sum_{-1\leq \ell\leq 1}
\D_j^{\rm v}\D_k^{\rm h}\bigl(S_{j'-1}^{\rm v}\D_{k'-\ell} ^{\rm h} a\D_{j'}^{\rm v} \D_{k'}^{\rm h}b\bigr).
$$
Anisotropic Bernstein inequalities given by Lemma\refer{Bernsteinaniso} imply that 
\beno
\bigl\|\D_j^{\rm v}\D_k^{\rm h}\bigl(S_{j'-1}^{\rm v}\D_{k'-\ell} ^{\rm h} a\D_{j'}^{\rm v} \D_{k'}^{\rm h}b\bigr)\bigr\|_{L^2} 
& \lesssim  & 
2^k \bigl\|S_{j'-1}^{\rm v}\D_{k'-\ell} ^{\rm h} a\D_{j'}^{\rm v} \D_{k'}^{\rm h}b\bigr\|_{L^1_{\rm h}(L^2_{\rm v}) }\\
& \lesssim  & 
2^k \|S_{j'-1}^{\rm v}\D_{k'-\ell} ^{\rm h} a\|_{L^2_{\rm h}(L^\infty_{\rm v})} \|\D_{j'}^{\rm v} \D_{k'}^{\rm h}b\bigr\|_{L^2} \, .
\eeno
Thus we infer that 
\[
\begin{split}{
2^{k(\s+\s'-1)+j\left(s+ s'-\frac12\right)} \|\D_j^{\rm v}\D_k^{\rm h} T^{\rm v}R^{\rm h}(a, b)\|_{L^2} \lesssim 
\!\!\!\!\sumetage{|j'-j|\leq N_0}{k'\geq k- N_0}\!\!\sum_{-1\leq \ell\leq 1} 2^{-(k'-k)(\s+\s')}
}\\
{ {}
\times 
2^{j'\left(s-\frac12\right)+k'\s}  \|S_{j'-1}^{\rm v}\D_{k'-\ell} ^{\rm h} a\|_{L^2_{\rm h}(L^\infty_{\rm v})} 
2^{k'\s'+j' s'} \|\D_{j'}^{\rm v} \D_{k'}^{\rm h}b\|_{L^2} \, .\qquad\qquad\qquad
}
\end{split}
\]
Using again anisotropic Bernstein inequalities and by definition of the~$\cB^{\s,s}$ norm, we get
\beno
 2^{j'\left(s-\frac12\right)+k'\s}  \|S_{j'-1}^{\rm v}\D_{k'-\ell} ^{\rm h} a\|_{L^2_{\rm h}(L^\infty_{\rm v})} 
 & \lesssim & \sum_{j''\leq j'-2}  2^{(j'-j'')\left(s-\frac12\right)} 2^{j''s+k'\s} \|\D_{j''}^{\rm v}\D_{k'-\ell} ^{\rm h} a\|_{L^2} \\
 & \lesssim &  \|a\|_{\cB^{\s,s}}  \sum_{j''\leq j'-2}  2^{(j'-j'')\left(s-\frac12\right)} d_{j'',k'} \, .
\eeno
As~$s$ is less than or equal to~$1/2$, we get 
$$
 2^{j'\left(s-\frac12\right)+k'\s}  \|S_{j'-1}^{\rm v}\D_{k'-\ell} ^{\rm h} a\|_{L^2_{\rm h}(L^\infty_{\rm v})} 
\leq \|a\|_{\cB^{\s,s}} \, .
$$
By definition of  the~$\cB^{\s',s'}$ norm, this gives
$$
\longformule{
2^{k(\s+\s'-1)+j\left(s+ s'-\frac12\right)} \|\D_j^{\rm v}\D_k^{\rm h} T^{\rm v}R^{\rm h}(a, b)\|_{L^2} \lesssim  \|a\|_{\cB^{\s,s}} \|b\|_{\cB^{\s',s'}}
}
{ {}
\times 
\!\!\!\!\!\!\sumetage{|j'-j|\leq N_0}{k'\geq k- N_0}\!\!\sum_{-1\leq \ell\leq 1} 2^{-(k'-k)(\s+\s')-(j'-j)\left(s+s'-\frac12\right)}
d_{j',k'} \, .
}
$$
As~$\s+\s'$ is positive, we get that 
$$
2^{k(\s+\s'-1)+j\left(s+ s'-\frac12\right)} \|\D_j^{\rm v}\D_k^{\rm h} T^{\rm v}R^{\rm h}(a, b)\|_{L^2}
\lesssim d_{j,k} \|a\|_{\cB^{\s,s}} \|b\|_{\cB^{\s',s'}} \, .
$$
Together with\refeq{productlawsanisodemolemdemoeq1} and\refeq{productlawsanisodemolemdemoeq2} this concludes the proof of  Inequality\refeq{lawsanisogenpara}.

\medbreak
\noindent In order to prove Inequality\refeq{lawsanisogenremain}, let us use again the horizontal Bony  decomposition. Defining
$$
\wt \D_j^{\rm v}\  \hbox{(resp. $\wt\D_k^{\rm h}$)} = \sum_{\ell=-1}^1 \D_{j-\ell} ^{\rm v} \ \hbox{(resp. $\D_{k-\ell}^{\rm h}$)}
$$  
let us write that 
\beno
R^{\rm v}_a b & =& R^{\rm v}T^{\rm h}_a b + R^{\rm v} T^{\rm h}_b a +  R^{\rm v}R^{\rm h}(a, b)\with\\
R^{\rm v}T^{\rm h}_a b & \eqdefa & \sum_{j,k} \wt \D_j^{\rm v} S_{k-1}^{\rm h} a\D_j^{\rm v}\D_k^{\rm h} b\andf\\
R^{\rm v}R^{\rm h}(a, b) &\eqdefa & \sum_{j,k}
\wt \D_j^{\rm v} \wt \D_{k-\ell}^{\rm h} a\D_j^{\rm v}\D_k^{\rm h} b\,.
 \eeno
 We have for~$N_0$ a  large enough integer,
$$
\D_j^{\rm v}\D_k^{\rm h} R^{\rm v}T^{\rm h}_a b= \sumetage{j'\geq j- N_0}{|k'-k|\leq N_0}
\D_j^{\rm v}\D_k^{\rm h}\bigl(\wt \D_{j'}^{\rm v}S_{k'-1} ^{\rm h} a\D_{j'}^{\rm v} \D_{k'}^{\rm h}b\bigr).
$$
Using anisotropic Bernstein inequalities, this gives  by definition of the~$\cB^{\s,s'}$ norm, 
\beno
2^{j\left (s+s'-\frac12\right)+k(\s+\s'-1)} \|\D_j^{\rm v}\D_k^{\rm h} R^{\rm v}T^{\rm h}_a b\|_{L^2} 
&\lesssim &  2^{j(s+s')+k(\s+\s'-1)} \|\D_j^{\rm v}\D_k^{\rm h} R^{\rm v}T^{\rm h}_a b\|_{L^2_{\rm h}(L^1_{\rm v})} \\
& \lesssim & \!\!\!\! \!\!\!\! 
 \sumetage{j'\geq j- N_0}{|k'-k|\leq N_0} 2^{-(j'-j)(s+s') -(k'-k) (\s+\s'-1)}\\
&&\!\!\!\!\!\!\!\!\!\!\!\!\!\!\!\!\!\!\!\!\!\!\!\!{}\times
2^{j's +k'(\s-1)} \|\wt \D_{j'}^{\rm v}S_{k'-1} ^{\rm h} a\|_{L^\infty_{\rm h}(L^2_{\rm v})}2^{j's'+k'\s'} \|\D_{j'}^{\rm v} \D_{k'}^{\rm h}b\|_{L^2}\\
& \lesssim &  \|b\|_{\cB^{\s',s'}}  \!\!\!\! \sumetage{j'\geq j- N_0}{|k'-k|\leq N_0} 2^{-(j'-j)(s+s')}\\
&&\ \qquad\quad{}\times   d_{j',k'}2^{j's +k'(\s-1)}\|\wt \D_{j'}^{\rm v}S_{k'-1} ^{\rm h} a\|_{L^\infty_{\rm h}(L^2_{\rm v})} \, .
\eeno
Using anisotropic Bernstein inequalities  and the definition of the~$\cB^{\s,s}$ norm, we get
\beno
2^{j's +k'(\s-1)}\|\wt \D_{j'}^{\rm v}S_{k'-1}^{\rm h} a \|_{L^\infty_{\rm h}(L^2_{\rm v})} 
& \lesssim & 
\sumetage{j'-1\leq j''\leq  j'+1} {k''\leq k'-2}  2^{(k'-k'')(\s-1)}\\
&&\qquad\qquad{}\times 2^{j''s +k''(\s-1)} \|\D_{j''}^{\rm v} \D_{k''}^{\rm h} a\|_{L^\infty_{\rm h}(L^2_{\rm v})}\\
& \lesssim & 
\sumetage{j'-1\leq j''\leq  j'+1} {k''\leq k'-2}  2^{(k'-k'')(\s-1)}2^{j''s +k''\s} \|\D_{j''}^{\rm v} \D_{k''}^{\rm h} a\|_{L^2}\\
& \lesssim & \|a\|_{\cB^{\s,s}} \!\!\!
\sumetage{j'-1\leq j''\leq  j'+1} {k''\leq k'-2}  2^{(k'-k'')(\s-1)}
 d_{j''k''} \, .
 \eeno
As~$\s$ is less than or equal to~$1$, we get
$$
2^{j'\left(s-\frac 12\right) +k'(\s-1)}\|\wt \D_{j'}^{\rm v}S_{k'-1}^{\rm h} a \|_{L^\infty}  \lesssim
\|a\|_{\cB^{\s,s}} \, .
$$
Young's inequality on series leads to
\beq
\label{productlawsanisodemolemdemoremaineq1}
\|R^{\rm v}T^{\rm h}_a b\|_{\cB^{\s+\s'-1,s+s'-\frac12}} \lesssim \|a\|_{\cB^{\s,s}}\|b\|_{\cB^{\s',s'}} \, .
\eeq
By symmetry, we get
 \beq
\label{productlawsanisodemolemdemoremaineq2}
\|R^{\rm v}T^{\rm h}_b a\|_{\cB^{\s+\s'-1,s+s'-\frac12}} \lesssim \|a\|_{\cB^{\s,s}}\|b\|_{\cB^{\s',s'}} \, .
\eeq
The estimate of~$R^{\rm v}R^{\rm h}(a, b) $ is a little bit different. Arguing as in the proof of Proposition\refer{regulNS2D}, we obtain
$$
\D_j^{\rm v}\D_k^{\rm h} R^{\rm v}R^{\rm h}(a, b)= \sumetage{j'> j- N_0}{k'\geq k- N_0}
 \D_j^{\rm v} \D_k^{\rm h}\bigl(\wt \D_{j'}^{\rm v}\wt\D_{k'-\ell} ^{\rm h} a\D_{j'}^{\rm v} \D_{k'}^{\rm h}b\bigr).
$$
Anisotropic Bernstein inequalities given by Lemma\refer{Bernsteinaniso} imply that 
\beno
\bigl\|\D_j^{\rm v}\D_k^{\rm h}\bigl(\wt \D_{j'}^{\rm v}\wt \D_{k'} ^{\rm h} a\D_{j'}^{\rm v} \D_{k'}^{\rm h}b\bigr)\bigr\|_{L^2} 
& \lesssim  & 
2^{\frac j 2+k} \bigl\|\wt \D_{j'}^{\rm v}\wt \D_{k'} ^{\rm h} a\D_{j'}^{\rm v} \D_{k'}^{\rm h}b\bigr\|_{L^1}\\
& \lesssim  & 
2^{\frac j 2+k}\|\wt \D_{j'}^{\rm v}\wt \D_{k'} ^{\rm h} a\|_{L^2} \|\D_{j'}^{\rm v} \D_{k'}^{\rm h}b\bigr\|_{L^2} \, .
\eeno
Thus we infer that 
\[
\begin{split}{
2^{k(\s+\s'-1)+j\left(s+ s'-\frac12\right)} \|\D_j^{\rm v}\D_k^{\rm h} R^{\rm v}R^{\rm h}(a, b)\|_{L^2} \lesssim 
\!\!\!\!\sumetage{j'> j- N_0}{k'\geq k- N_0} 2^{-(k'-k)(\s+\s')-(j'-j)(s+s')}
}\\
{ {}
\times 
2^{j's+k'\s}  \|\wt \D_{j'}^{\rm v}\D_{k'-\ell} ^{\rm h} a\|_{L^2} 
2^{k'\s'+j' s'} \|\D_{j'}^{\rm v} \D_{k'}^{\rm h}b\|_{L^2} \, .\qquad\qquad\qquad
}
\end{split}
\]
By definition of  the~$\cB^{\s',s'}$ norm, this gives
$$
\longformule{
2^{k(\s+\s'-1)+j\left(s+ s'-\frac12\right)} \|\D_j^{\rm v}\D_k^{\rm h} R^{\rm v}R^{\rm h}(a, b)\|_{L^2} \lesssim  \|a\|_{\cB^{\s,s}} \|b\|_{\cB^{\s',s'}}
}
{ {}
\times 
\!\!\!\!\!\!\sumetage{j'> j- N_0}{k'\geq k- N_0} 2^{-(k'-k)(\s+\s')-(j'-j)(s+s')}
d_{j',k'} \, .
}
$$
As~$\s+\s'$ and~$s+s'$  are positive, we get that 
$$
2^{k(\s+\s'-1)+j\left(s+ s'-\frac12\right)} \|\D_j^{\rm v}\D_k^{\rm h} R^{\rm v}R^{\rm h}(a, b)\|_{L^2}
\lesssim d_{j,k} \|a\|_{\cB^{\s,s}} \|b\|_{\cB^{\s',s'}} \, .
$$
Together with\refeq{productlawsanisodemolemdemoremaineq1} and\refeq{productlawsanisodemolemdemoremaineq2} this concludes the proof of  Inequality\refeq{lawsanisogenpara}.
 \end{proof}
\noindent  In order to conclude the proof of Proposition\refer{productlawsaniso}, it is enough to apply Lemma\refer{productlawsanisodemolem} with~$(\s,\s')$ to~$T^{\rm v}_a b$ 	and with~$(\wt\s',\wt \s)$ to~$\wt T^{\rm v}_ba$.
 \end{proof}

\medbreak
\noindent Now let us prove laws of product in the case when one of the functions does not depend on the vertical variable~$x_3$. We have the following proposition. 
\begin{prop}
\label{lawproductaniso2/3D}
{\sl Let~$a$ be in~$B^\s_{2,1}(\R^2)$ and~$b$ in~$\cB^{s,s'}$ with~$(s,\s)$ in~$]-1,1]^2$ such that~$s+\s$ is positive and~$s'$ greater than or equal to~$1/2$. We have
\begin{equation}
\label{dim2+3}
\|ab\|_{\cB^{s+\s-1,s'}} \lesssim \|a\|_{B^\s_{2,1}(\R_{\rm h}^2)} \|b\|_{\cB^{s,s'}} \, .
\end{equation}
}
\end{prop}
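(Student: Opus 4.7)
The plan is to exploit the fact that $a=a(x_{\rm h})$ is independent of the vertical variable, so vertical Littlewood--Paley blocks commute with multiplication by~$a$: $\Delta_j^{\rm v}(ab)=a\,\Delta_j^{\rm v} b$. It therefore suffices to control, for each $(j,k)\in\ZZ^2$, the quantity $\|\Delta_k^{\rm h}(a\,\Delta_j^{\rm v} b)\|_{L^2(\R^3)}$ weighted by $2^{k(s+\sigma-1)+js'}$, and to verify that the resulting sequence is summable in $\ell^1(\ZZ^2)$ with norm $\lesssim\|a\|_{B^\sigma_{2,1}(\R^2)}\|b\|_{\cB^{s,s'}}$.

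For each fixed~$j$, I will apply the horizontal Bony decomposition
\[
a\,\Delta_j^{\rm v} b \;=\; T^{\rm h}_a\bigl(\Delta_j^{\rm v} b\bigr) + T^{\rm h}_{\Delta_j^{\rm v} b}\,a + R^{\rm h}\bigl(a,\Delta_j^{\rm v} b\bigr),
\]
and estimate the three pieces along the same lines as Lemma~\ref{productlawsanisodemolem}. For the paraproduct $T^{\rm h}_a(\Delta_j^{\rm v} b)$, horizontal almost-orthogonality restricts the sum to $|k'-k|\leq N_0$; H\"older combined with the two-dimensional Bernstein bound
\[
\|S_{k'-1}^{\rm h} a\|_{L^\infty(\R^2)}\lesssim 2^{k'(1-\sigma)}\|a\|_{B^\sigma_{2,1}(\R^2)}
\]
(valid for $\sigma\leq 1$) produces, after multiplying by the weight $2^{k(s+\sigma-1)+js'}$, exactly the $\ell^1(\ZZ^2)$ sequence $d_{k',j}\eqdefa 2^{k's+js'}\|\Delta_{k'}^{\rm h}\Delta_j^{\rm v} b\|_{L^2}$. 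For the dual paraproduct $T^{\rm h}_{\Delta_j^{\rm v} b}a$, I swap the roles and use H\"older with $\Delta_{k'}^{\rm h} a\in L^2_{\rm h}$ and $S_{k'-1}^{\rm h}\Delta_j^{\rm v} b\in L^\infty_{\rm h}(L^2_{\rm v})$; Bernstein contributes a factor $2^{k''}$ on the low-frequency block of~$b$, and the condition $s\leq 1$ makes the resulting sum converge after reindexing as a discrete convolution. For the remainder $R^{\rm h}(a,\Delta_j^{\rm v} b)$, support considerations force $k'\geq k-N_0$; the passage $L^1_{\rm h}\hookrightarrow L^2_{\rm h}$ via Bernstein costs a factor $2^k$, and the hypothesis $s+\sigma>0$ is exactly what is needed to sum the tail $\sum_{k'\geq k-N_0}2^{(k-k')(s+\sigma)}$.

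Summing the three weighted estimates over $(j,k)\in\ZZ^2$ and applying Young's inequality for discrete convolutions then yields the desired bound. The hard part will be the bookkeeping: one must carefully chase the exponents so that the weight $2^{k(s+\sigma-1)+js'}$ absorbs exactly the Bernstein gain $2^{k'(1-\sigma)}$ or $2^{k''(1-s)}$, leaving behind a sequence summable against $c_{k'}\eqdefa 2^{k'\sigma}\|\Delta_{k'}^{\rm h} a\|_{L^2(\R^2)}$, which by definition lies in~$\ell^1(\ZZ)$. The endpoint cases $\sigma=1$ or $s=1$ deserve particular care: since the relevant geometric series then degenerates, one must instead exploit the $\ell^1$ structure of the Besov norm of~$a$ (resp.\ of~$b$) rather than mere geometric decay.
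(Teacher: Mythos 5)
Your proposal is correct and follows essentially the same route as the paper: exploit the independence of~$a$ on~$x_3$ to commute the vertical blocks, perform the horizontal Bony decomposition, and estimate the two paraproducts and the remainder via two-dimensional Bernstein inequalities, using~$\s\leq 1$ for~$T^{\rm h}_a b$, $s\leq 1$ for~$T^{\rm h}_b a$, and~$s+\s>0$ for~$R^{\rm h}(a,b)$. Your remark that the endpoints~$\s=1$ or~$s=1$ are rescued by the~$\ell^1$ summability built into the third index of the Besov norms is exactly how the paper's computation closes as well, so there is nothing to add.
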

\begin{proof}
Using Bony's decomposition in the horizontal variable gives
$$
ab = T^{\rm h} _a b +T^{\rm h}_b a +R^{\rm h} (a,b).
$$
As~$a$ does not depend on the vertical variable, we have
$$
\D_j ^{\rm v}T^{\rm h} _a b = T^{\rm h} _a \D_j ^{\rm v} b\,,\ \D_j ^{\rm v}T^{\rm h}_b a= T^{\rm h}_{  \D_j ^{\rm v}b} a \andf \D_j ^{\rm v} R^{\rm h} (a,b) =  R^{\rm h} (a,\D_j ^{\rm v}b).
$$
Then,  the result follows from the  classical proofs of  mappings of paraproduct and remainder operators (see for instance Theorem 2.47 and Theorem  2.52 of\ccite{BCD}). We give a short sketch of the proof for the reader's convenience in the case of~$T^{\rm h}$. Let us write
\beno
2^{k(s+\s-1)+js'}  \|\D_j ^{\rm v}\D_k^{\rm h} T^{\rm h} _a b\|_{L^2} 
& \lesssim &
 \sum_{|k'-k|\leq N_0} 2^{k'(\s-1)} \|S_{k'-1}^{\rm h} a\|_{L^\infty_{\rm h}}  
2^{k's+js'} \|\D_j ^{\rm v}\D_{k'}^{\rm h} b\|_{L^2} \\
& \lesssim & \|b\|_{\cB^{s,s'}} \sum_{|k'-k|\leq N_0}   2^{k'(\s-1)} \|S_{k'-1}^{\rm h} a\|_{L^\infty_{\rm h}}  
 d_{k',j}\, .
\eeno
Bernstein inequalities imply that
\beno
2^{-k(1-\s)} \|S_{k-1}^{\rm h} a\|_{L^\infty_{\rm h}} 
 & \lesssim & 
 \sum_{k'\leq k-1} 2^{(k'-k)(1-\s)}  2^{k'\s}\|\D_{k'}^{\rm h} a\|_{L_{\rm h}^2}  \\
  & \lesssim & \|a\|_{B^\s_{2,1}(\R_{\rm h}^2)}
 \sum_{k'\leq k-1} 
 2^{(k'-k)(1-\s)}   d_{k'}\, .
\eeno
This gives, with no restriction on the parameter~$s$ and  with~$\s$ less than or equal to~$1$ and~$s'$ greater than or equal to~$1/2$,
\beq
\label{lawproductaniso2/3Ddemoeq1}
 \| T^{\rm h} _a b\|_{\cB^{s+\s-1,s'}} \lesssim \|a\|_{B^\s_{2,1}(\R_{\rm h}^2)} \|b\|_{\cB^{s,s'}} \, .\eeq
For the other (horizontal) paraproduct term, let us write \ben
\nonumber2^{k(s+\s-1)+js'} \|\D_j ^{\rm v}\D_k^{\rm h} T^{\rm h} _b a\|_{L^2} 
& \lesssim &
 \sum_{|k'-k|\leq N_0} 2^{k'(s-1)+js'} \|S_{k'-1}^{\rm h} \D_j^{\rm v} b\|_{L^\infty_{\rm h}(L^2_{\rm v})}  
2^{k'\s}\|\D_{k'}^{\rm h} a\|_{L^2_{\rm h}} \\
\label{lawproductaniso2/3Ddemoeq2}
& \lesssim & \|a\|_{B^\s_{2,1}(\R^2)}
 \sum_{|k'-k|\leq N_0} 2^{k'(s-1)+js'} \|S_{k'-1}^{\rm h} \D_j^{\rm v} b\|_{L^\infty_{\rm h}(L^2_{\rm v})}   d_{k'} \, .
 \een
Using Lemma\refer{Bernsteinaniso}, we get
\beno
2^{-k(1-s)+js'} \|S_{k-1}^{\rm h} \D_j^{\rm v} b\|_{L^\infty_{\rm h}(L^2_{\rm v})} 
 & \lesssim & 
 \sum_{k'\leq k-1} 2^{(k'-k)(1-s)}  2^{-k'(1-s)+js'}\|\D_{k'}^{\rm h} \D_j^{\rm v} b\|_{L^\infty_{\rm h}(L^2_{\rm v})} \\
  & \lesssim & 
 \sum_{k'\leq k-1} 
 2^{(k'-k)(1-s)}  2^{k's+js'}\|\D_{k'}^{\rm h} \D_j^{\rm v} b\|_{L^2} \, .
\eeno
By definition of  the $\cB^{s,s'}$ norm and using the fact that~$s\leq 1$, we infer that 
$$
2^{js'-k(1-s)} \|S_{k-1}^{\rm h} \D_j^{\rm v} b\|_{L^\infty_{\rm h}(L^2_{\rm v})} \leq d_j \|b\|_{\cB^{s,s'}} \, .
$$
Together with\refeq{lawproductaniso2/3Ddemoeq2}, this gives
\beq
\label{lawproductaniso2/3Ddemoeq3}
\| T^{\rm h} _b a\|_{\cB^{s+\s-1,s'}} \lesssim \|a\|_{B^\s_{2,1}(\R^2) } \|b\|_{\cB^{s,s'}} \, . 
\eeq
Now let us study the (horizontal) remainder term. Using Lemma\refer{Bernsteinaniso}, let us write that 
\beno
2^{k(s+\s-1)+js'} \|\D_j ^{\rm v}\D_k^{\rm h}  R^{\rm h} (a,b)\|_{L^2} 
& \lesssim &  
2^{k(s+\s)+js'} \|\D_j ^{\rm v}\D_k^{\rm h}  R^{\rm h} (a,b)\|_{L^2_{\rm v}(L^1_{\rm h})}\\
& \lesssim &  \sum_{k'\geq k-N_0}
2^{-(k'-k)(s+\s)}  2^{k'\s} \|\D_{k'}^{\rm h} a\|_{L^2_{\rm h}}2^{k's+js'} \|\D_j ^{\rm v}\D_{k'}^{\rm h}  b\|_{L^2} \, .
\eeno
By definition of the~$B^\s_{2,1}(\R^2)$ and~$\cB^{s,s'}$ norms, we get
$$
2^{k(s+\s-1)+js'}\|\D_j ^{\rm v}\D_k^{\rm h}  R^{\rm h} (a,b)\|_{L^2} 
\lesssim \|a\|_{B^\s_{2,1}(\R^2)} \|b\|_{\cB^{s,s'}} d_j
 \sum_{k'\geq k-N_0} 2^{-(k'-k)(s+\s)} d_{k'} \, .
$$
Together with\refeq{lawproductaniso2/3Ddemoeq1} and\refeq{lawproductaniso2/3Ddemoeq3}, this gives the result thanks to the fact that $s+\s$ is positive. Proposition\refer{lawproductaniso2/3D} is proved.
\end{proof}
  
\subsection{ Proof of Proposition\refer{existencepetitcB1} }
\label{prooftheoexistencepetitcB1}
The proof of Proposition\refer{existencepetitcB1} is reminiscent of that of Lemma\refer{propagationlemmaheathorizontal}, and we shall be using arguments of that proof here.

\medskip
\noindent
Let us recall that we want to prove that if~$U$ is in~$L^2(\R^+;\cB^1)$, if~$u_0$ is in~$\cB^0$ and~$f$ in~$\cF^0$, such that 
\beq
\label{recalltheo4}
\|u_0\|_{\cB^0} +\|f\|_{\cF^0} \leq \frac 1{C_0} \exp \Bigl(-C_0\int_0^\infty \|U(t)\|^2_{\cB^{1}} dt\Bigr) \, ,
\eeq
then the problem
$$
(NS_U)\quad 
\quad  \left\{
\begin{array}{c}
\partial_t u + \dive (u\otimes u+u\otimes U+U\otimes u)  -\Delta u= -\nabla p +f    \\\
\dive u= 0\andf u_{|t=0} = u_{0}
\end{array}
\right.
$$
has a unique global solution in~$L^2(\R^+;\cB^1)$ which  satisfies
$$
\|u\|_{L^2(\R^+;\cB^1)} \lesssim \|u_0\|_{\cB^0} +\|f\|_{\cF^0}\,.
$$
 Let us first prove that the system~$(NS_U)$ has a unique solution in~$L^2([0,T];\cB^1)$ for some small  enough~$T$.   Let us introduce some bilinear operators which distinguish the horizontal derivatives from the vertical one, namely for~$\ell $ belonging to~$ \{1,2,3\}$,
\beq
\label{prooftheoexistencepetitcB1eq0}
\cQ_{\rm h} (u,w)^\ell \eqdefa \dive_{\rm h} (w^\ell u^{\rm h})\andf \cQ_{\rm v} (u,w)^\ell\eqdefa \partial_3(w^\ell u^3).
\eeq
Then we define~$B_{{\rm h},\tau} \eqdefa L_{\tau} \cQ_{\rm h}$ and~$B_{{\rm v},\tau}\eqdefa L_{\tau} \cQ_{\rm v}$ where~$L_\tau$ is defined in Definition\refer{definitionspaces}. It is obvious that solving~$(NS_U)$ is equivalent to solving
$$
u= e^{t\D} u_0+L_0f +B_{{\rm h},0}(u,u)+B_{{\rm v},0}(u,u)
+ B_{{\rm h},0}(U,u)+B_{{\rm v},0}(U,u)+ B_{{\rm h},0}(u,U)+B_{{\rm v},0}(u,U) \, .
$$
Following an idea introduced by
G. Gui, J.  Huang and P. Zhang   in\ccite  {guihuangzhang}, let us define
$$
\cL_0\eqdefa e^{t\D} u_0+L_0f
$$
 and look for the solution~$u$ under the form~$u=\cL_0+\rho$. 
  As the horizontal and the vertical derivative are not treated exactly in the same way, let us decompose~$\rho$ into~$\rho= \rho_{\rm h} +\rho_{\rm v}$ with
 \beq
\label{prooftheoexistencepetitcB1eq1}
\begin{split}
  \rho_{\rm h} & \eqdefa B_{{\rm h},0}(\rho,\rho)+B_{{\rm h},0}(\cL_0+U,\rho)
  +B_{{\rm h},0}(\rho,\cL_0+U)+F_{\rm h}\,,\\ 
    \rho_{\rm v} & \eqdefa B_{{\rm v},0}(\rho,\rho)+B_{{\rm v},0}(\cL_0+U,\rho)
  +B_{{\rm v},0}(\rho,\cL_0+U)+F_{\rm v}\with\\
F_{\rm h} & \eqdefa   B_{{\rm h},0}(\cL_0,\cL_0)+B_{{\rm h},0}(\cL_0,U)
  +B_{{\rm h},0}(U,\cL_0)\andf\\
F_{\rm v} & \eqdefa   B_{{\rm v},0}(\cL_0,\cL_0)+B_{{\rm v},0}(\cL_0,U)
  +B_{{\rm v},0}(U,\cL_0)\, .
\end{split}
\eeq
The main lemma is the following.
\begin{lem}
\label{demopropexistencepetitcB1lemma1}
{\sl For any subinterval~$I= [a,b]$ of~$\R^+$, we have
$$
\begin{aligned}
\|B_{{\rm h},a}(u,w)\|_{L^\infty(I;\cB^0)}& +\|B_{{\rm h},a}(u,w)\|_{L^1(I;\cB^2\cap \cB^{1,\frac 32})} \\
&   +\|B_{{\rm v},a}(u,w)\|_{L^\infty(I;\cB^{1,-\frac12})} +\|B_{{\rm v},a}(u,w)\|_{L^1(I;\cB^2\cap \cB^{1,\frac 32})}\\
& \qquad\qquad\qquad    \qquad \lesssim \|u\|_{L^2(I;\cB^1)}  \|w\|_{L^2(I;\cB^1)} \, . 
\end{aligned}
$$}
\end{lem}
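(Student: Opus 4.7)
The plan is to reduce all four estimates to a product estimate on the nonlinearities $\cQ_{\rm h}$ and $\cQ_{\rm v}$ combined with sharp anisotropic heat-flow bounds for $L_a$. First I would establish that $\cB^1 = \cB^{1,1/2}$ is an algebra, which follows from Proposition \ref{productlawsaniso} applied with $\sigma = \sigma' = 1$, $\bar\sigma = 2$ and $s' = 1/2$: this gives $\|ab\|_{\cB^1} \lesssim \|a\|_{\cB^1}\|b\|_{\cB^1}$, so by the Cauchy--Schwarz inequality in time
\[
\|u\, w\|_{L^1(I;\cB^1)} \lesssim \|u\|_{L^2(I;\cB^1)}\|w\|_{L^2(I;\cB^1)} \, .
\]
Since $\dive_{\rm h}$ is a homogeneous horizontal Fourier multiplier of order one, Lemma \ref{Bernsteinaniso} gives $\|\dive_{\rm h} F\|_{\cB^{s,s'}} \lesssim \|F\|_{\cB^{s+1,s'}}$, and $\partial_3$ likewise sends $\cB^{1,1/2}$ continuously into $\cB^{1,-1/2}$. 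Plugging the algebra bound into the definitions \eqref{prooftheoexistencepetitcB1eq0} of $\cQ_{\rm h}$ and $\cQ_{\rm v}$ yields the two building blocks
\[
\|\cQ_{\rm h}(u,w)\|_{L^1(I;\cB^0)} + \|\cQ_{\rm v}(u,w)\|_{L^1(I;\cB^{1,-1/2})} \lesssim \|u\|_{L^2(I;\cB^1)}\|w\|_{L^2(I;\cB^1)} \, .
\]

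Second, I would derive the needed Duhamel estimates for $L_a$. Starting from the Duhamel formula, from the splitting $e^{t\Delta} = e^{t\Delta_{\rm h}}e^{t\partial_3^2}$ and the anisotropic heat bound of Lemma \ref{anisoheat}, one has
\[
\|\Delta_k^{\rm h}\Delta_j^{\rm v} L_a f(t)\|_{L^2} \lesssim \int_a^t e^{-c(t-t')(2^{2k}+2^{2j})}\|\Delta_k^{\rm h}\Delta_j^{\rm v} f(t')\|_{L^2}\, dt' \, .
\]
Combining this with Young's inequality in time and summation in $(j,k)$ produces, for any $(s,s')\in \R^2$, the three inequalities
\[
\|L_a f\|_{\tilde L^\infty(I;\cB^{s,s'})} + \|L_a f\|_{L^1(I;\cB^{s+2,s'})} + \|L_a f\|_{L^1(I;\cB^{s,s'+2})} \lesssim \|f\|_{L^1(I;\cB^{s,s'})} \, ,
\]
together with the mixed gain obtained from the elementary bound $(2^{2k}+2^{2j})^{-1}\lesssim 2^{-k-j}$,
\[
\|L_a f\|_{L^1(I;\cB^{s+1,s'+1})} \lesssim \|f\|_{L^1(I;\cB^{s,s'})} \, .
\]
Since $\tilde L^\infty \hookrightarrow L^\infty$ by Minkowski, these four heat-flow controls are enough to conclude.

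Third, I would assemble the lemma. Applying the above to $f=\cQ_{\rm h}(u,w) \in L^1(I;\cB^{0,1/2})$ gives the $L^\infty(I;\cB^0)$ and $L^1(I;\cB^2)$ bounds by the trivial and horizontal-gain inequalities, together with the $L^1(I;\cB^{1,3/2})$ bound from the mixed one-horizontal/one-vertical gain. Applied to $f=\cQ_{\rm v}(u,w)\in L^1(I;\cB^{1,-1/2})$, it delivers $L^\infty(I;\cB^{1,-1/2})$ and $L^1(I;\cB^{1,3/2})$ by the trivial and vertical-gain bounds, and $L^1(I;\cB^{2,1/2}) = L^1(I;\cB^2)$ by the mixed gain. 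Each term on the right-hand side is controlled by $\|u\|_{L^2(I;\cB^1)}\|w\|_{L^2(I;\cB^1)}$, proving the claim.

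The step I expect to require the most care is the product estimate, because we are forced to work exactly at the endpoint $s' = 1/2$ allowed by Definition \ref{deflpanisointro}, so that Proposition \ref{productlawsaniso} is invoked at the boundary of its range; any loss there would prevent the algebra property and the whole scheme would collapse. Once the algebra inequality and the four Duhamel bounds are in place, the argument is entirely mechanical.
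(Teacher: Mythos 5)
Your proof is correct and follows essentially the same route as the paper: bound $\cQ_{\rm h}(u,w)$ in $\cB^0$ and $\cQ_{\rm v}(u,w)$ in $\cB^{1,-\frac12}$ pointwise in time via the algebra property of $\cB^1$ and the anisotropic Bernstein inequalities, then conclude with the heat-kernel decay $e^{-ct(2^{2k}+2^{2j})}$ and Young's inequality in time, noting that $2^{2k}+2^{2j}$ dominates both $2^{2k}$ and $2^{k+j}$. The paper merely packages the four Duhamel bounds into a single convolution estimate on the weighted dyadic blocks, and your concern about the endpoint $s'=\tfrac12$ is unfounded since Proposition~\ref{productlawsaniso} explicitly allows it.
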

\begin{proof}
As~$\cB^1$ is an algebra and using Lemma\refer{Bernsteinaniso},  we get
\beno
\cQ_{j,k}(u,w)(t) & \eqdefa & 2^{\frac j 2} \|\D_j ^{\rm v}\D_k^{\rm h}\cQ_{\rm h} (u,w) (t)\|_{L^2} +2^{k-\frac j 2} \|\D_j ^{\rm v}\D_k^{\rm h}\cQ_{\rm v} (u,w) (t)\|_{L^2}\\
& \lesssim & d_{j,k}(t) \|u(t)\|_{\cB^1} \|w(t)\|_{\cB^1} \, , 
\eeno
where as usual we have denoted by~$d_{j,k}(t) $ a sequence in the unit sphere of~$ \ell^1(\ZZ^2)$ for each~$t$.
Lemma\refer{anisoheat} implies that, for any~$t$ in~¬†$[a,b]$, we have with the notation of Definition \ref{definitionspaces}
\beno
\cL_{a,j,k}(u,w)(t) & \eqdefa & 2^{\frac j 2} \| L_a\D_j ^{\rm v}\D_k^{\rm h}\cQ_{\rm h}(u,w)(t) \|_{L^2}
+2^{k-\frac j 2} \|L_a\D_j ^{\rm v}\D_k^{\rm h}\cQ_{\rm v}(u,w)(t) \|_{L^2} \\
&\lesssim & \int_a^t d_{j,k}(t') e^{-c2^{(2k+2j)}(t-t')}  \|u(t')\|_{\cB^1} \|w(t')\|_{\cB^1}dt' \, .
\eeno
Convolution inequalities imply that 
$$
\|\cL_{a,j,k}(u,w)  \|_{L^\infty(I;L^2)} +c2^{2k+2j}
\|\cL_{a,j,k}(u,w)\|_{L^1(I;L^2)} \lesssim \int_I d_{j,k}(t)  \|u(t)\|_{\cB^1} \|w(t)\|_{\cB^1} dt \, .
$$
This concludes the proof of  the lemma.
\end{proof}

\medbreak\noindent{\it Continuation of the proof of Proposition~{\rm\refer{existencepetitcB1}}. }
As we have by interpolation,
\beq
\label{prooftheoexistencepetitcB1eq5}
 \|a\|_{\cB^{1}} \leq 
\|a\|_{\cB^{0}}^{\frac12} \|a\|_{\cB^{2}}^{\frac12} \andf
 \|a\|_{\cB^{1}} \leq 
\|a\|_{\cB^{1,-\frac12}}^{\frac12} \|a\|_{\cB^{1,\frac32}}^{\frac12},
\eeq
we infer that the bilinear maps~$B_{{\rm h}, a}$ and~$B_{{\rm v},a}$ map~$L^2(I;\cB^1)\times L^2(I;\cB^1)$ into~$L^2(I;\cB^1)$. A classical fixed point theorem implies the local wellposedness in the space~$L^2(I;\cB^1)$ for initial data in the space~$\cB^0+\cB^{1,-\frac12}$.

\medbreak\noindent Now let us extend this (unique) solution to the whole interval~$\R^+$. 
Given~$\e>0$, to be  chosen small enough later on, let us define~$T_\e$ as
\beq
\label{prooftheoexistencepetitcB1eq2}
T_\e\eqdefa\sup \bigl\{T<T^\star\,,\ \|\rho\|_{L^2([0,T];\cB^1)} \leq \e\bigr\} \, .
\eeq
 As in the proof of Lemma\refer{propagationlemmaheathorizontal}, let us consider the increasing sequence~$(T_m)_{0\leq m\leq M}$ such that~$T_0=0$,~$T_M=\infty$ and for some given~$c_0$ which will be chosen later on
\beq
\label{prooftheoexistencepetitcB1eq3}
\forall m <M-1\,,\ \int_{T_m}^{T_{m+1}} \|U(t)\|_{\cB^1}^2dt = c_0 \andf
\int_{T_{M-1}}^{\infty} \|U(t)\|_{\cB^1}^2dt \leq c_0 \, .
\eeq
Let us recall that from\refeq{propagationlemmaheathorizontaldeoeq2}, we have
\beq
\label{prooftheoexistencepetitcB1eq4}
M\leq \frac 1 {c_0} 
\int_{0}^{\infty} \|U(t)\|_{\cB^1}^2dt \, .
\eeq
Let us define
\beq
\label{prooftheoexistencepetitcB1eq1aa}
\cN_0\eqdefa \|\cL_0\|_{L^2(\R^+;\cB^1)}^2+ \|\cL_0\|_{L^2(\R^+;\cB^1)} \|U\|_{L^2(\R^+;\cB^1)}.
\eeq
Let us consider any~$m$ such that~$T_{m}<T_\e$.
Lemma\refer{demopropexistencepetitcB1lemma1} implies that for any time~$T$ less than~$\min\{T_{m+1};T_\e\}$, we have
\beno
\cR^{\rm h}_m(T)& \eqdefa & \|\rho_{\rm h} \|_{L^\infty([T_m,T];\cB^{0})} +
\|\rho_{\rm h} \|_{L^1([T_m,T];\cB^{2})} \\
& \leq &  
C \|\rho_{\rm h}(T_m)\|_{\cB^{0}} + C\cN_0 \\
&&\quad{}+C\bigl( \|\rho_{\rm h}\|_{L^2([T_m,T];\cB^1)} + \|\cL_0+U\|_{L^2([T_m,T];\cB^1)}\bigr)\|\rho_{\rm h}\|_{L^2([T_m,T];\cB^1)}\\
& \leq &  
C\|\rho_{\rm h}(T_m)\|_{\cB^{0}} + C\cN_0 \\
&&\qquad\qquad\qquad\qquad{}+C\bigl(\e  + \|\cL_0\|_{L^2([T_m,T];\cB^1)}+c_0\bigr)
\|\rho_{\rm h}\|_{L^2([T_m,T];\cB^1)} \,.
\eeno
Choosing~$C_0$ large enough in~\refeq{recalltheo4},~$c_0$ small enough in\refeq{prooftheoexistencepetitcB1eq3}, and~$\e$ small enough  in\refeq{prooftheoexistencepetitcB1eq2} implies that 
\beq\label{rmhissmall}
\cR^{\rm h}_m (T)\leq C \|\rho_{\rm h}(T_m)\|_{\cB^{0}} + C\cN_0
+\frac 1 2  \|\rho_{\rm h}\|_{L^2([T_m,T];\cB^1)} \,.
\eeq
Exactly along the same lines, we get
\beno
\cR^{\rm v}_m (T) & \eqdefa & \|\rho_{\rm v} \|_{L^\infty([T_m,T];\cB^{1,-\frac12})} +
\|\rho_{\rm v} \|_{L^1([T_m,T];\cB^{1,\frac32})} \\
&\leq&  C \|\rho_{\rm v}(T_m)\|_{\cB^{1,-\frac12}} + C\cN_0
+\frac 1 2  \|\rho_{\rm v}\|_{L^2([T_m,T];\cB^1)} \,.
\eeno
We deduce that 
$$
\|\rho_{\rm h}\|_{L^2([T_m,T];\cB^1)} \leq C \bigl(\|\rho_{\rm h}(T_m)\|_{\cB^{0}}  +\cN_0\bigr)\andf
\|\rho_{\rm v}\|_{L^2([T_m,T];\cB^1)} \leq C\bigl(\|\rho_{\rm v}(T_m)\|_{\cB^{1,-\frac12}}+\cN_0\bigr) \, .
$$
This gives, for any~$m$ such that~$T_m<T_\e$ and for all~$T$ in~$[T_m;\min\{T_{m+1},T_\e\}]$,
\beq
\cR^{\rm h}_m(T) +\cR^{\rm v}_m(T)  \leq   C_1
\bigl( \|\rho_{\rm v}(T_m)\|_{\cB^{1,-\frac12}}+  \|\rho_{\rm h}(T_m)\|_{\cB^{0}} + \cN_0\bigr) \, .
\eeq
Let us observe that~$\rho_{|t=0}=0$. Thus exactly as in the proof of Lemma\refer{propagationlemmaheathorizontal}, an iteration process gives, for any~$m$ such that~$T_m<T_\e$ and any $T$ in~$[T_m,\min\{T_{m+1},T_\e\}]$,
\beno
\cR(T) &  \eqdefa & \|\rho_{\rm h} \|_{L^\infty([0,T];\cB^{0})} +
\|\rho_{\rm h} \|_{L^1([0,T];\cB^{2})} 
+ \|\rho_{\rm v} \|_{L^\infty([0,T];\cB^{1,-\frac12})} 
+\|\rho_{\rm h} \|_{L^1([0,T];\cB^{1,\frac32})}\\
& \leq & (C_1 )^{m+1} \cN_0 \,.
\eeno
By definition of~$\cN_0$ given in\refeq{prooftheoexistencepetitcB1eq1aa}, we have in view of Definition \ref{definitionspaces}
$$
\cN_0 \lesssim  \bigl( \|u_0\|_{\cB^0} +\|f\|_{\cF^0}\bigr)
 \bigl(  \|U\|_{L^2(\R^+;\cB^1)} +  \|u_0\|_{\cB^0} +\|f\|_{\cF^0}\bigr)\,.
$$
As claimed in\refeq{prooftheoexistencepetitcB1eq4} the total number of intervals is less than~$ \|U\|^2_{L^2(\R^+;\cB^1)}$. We infer that, for any~$T<T_\e$
$$
\cR(T)\leq C_2 \bigl( \|u_0\|_{\cB^0} +\|f\|_{\cF^0}\bigr)
 \bigl(  \|U\|_{L^2(\R^+;\cB^1)} +  \|u_0\|_{\cB^0} +\|f\|_{\cF^0}\bigr)\exp \bigl(C_2  \|U\|^2_{L^2(\R^+;\cB^1)}\bigr) \, .
$$
Using the interpolation inequality\refeq{prooftheoexistencepetitcB1eq5}  we infer that, for any~$T<T_\e$,
$$
\int_0^T \|\rho(t)\|^2_{\cB^1} dt \leq C_2 \bigl( \|u_0\|_{\cB^0} +\|f\|_{\cF^0}\bigr)
 \bigl(  \|U\|_{L^2(\R^+;\cB^1)} +  \|u_0\|_{\cB^0} +\|f\|_{\cF^0}\bigr)\exp \bigl(C_2  \|U\|^2_{L^2(\R^+;\cB^1)}\bigr)\, .
$$
Choosing 
$$
C_2 \bigl( \|u_0\|_{\cB^0} +\|f\|_{\cF^0}\bigr)
 \bigl(  \|U\|_{L^2(\R^+;\cB^1)} +  \|u_0\|_{\cB^0} +\|f\|_{\cF^0}\bigr)\exp \bigl(C_2  \|U\|^2_{L^2(\R^+;\cB^1)}\bigr) \leq \frac {\e^2} 2
$$
ensures that~$\ds \int_0^T \|\rho(t)\|^2_{\cB^1} dt $   remains less than~$\e^2$,  and thus there is no blow up  for the solution of~$(NS_U)$.  This concludes the proof of Proposition\refer{existencepetitcB1}.\qed

\subsection{Proof of Proposition\refer{propaganisoNS3D}} 
As a warm up, let us observe if~$u$ belongs to~$L^2(\R^+;\cB^1)$, then~$u\otimes u$ belongs to~$L^1(\R^+;\cB^1)$.  Lemma\refer{Bernsteinaniso} implies that the operators~$\cQ_{\rm h} $ and~$\cQ_{\rm v}$ defined in\refeq{prooftheoexistencepetitcB1eq0} satisfy
$$
\|\cQ_{\rm h}(u,u)\|_{L^1(\R^+;\cB^{0})} +\|\cQ_{\rm v}(u,u)\|_{L^1(\R^+;\cB^{1,-\frac12})}
\lesssim \|u\|_{L^2(\R^+;\cB^1)}^2 \, .
$$
Using the Duhamel formula and the action of the heat flow described in Lemma\refer{anisoheat}, we deduce that \beno
\|u\|_{L^1(\R^+;\cB^{2})}+\|u\|_{L^1(\R^+;\cB^{1,\frac32})}\lesssim \|u_0\|_{\cB^0}
+ \|u\|_{L^2(\R^+;\cB^1)}^2 \, ,
\eeno
which proves~(\ref{propaganisoNS3Deq1}).
Let us prove the second inequality of the proposition  which is a propagation type inequality.
 Once an appropriate (para)linearization of the terms~$\cQ_{\rm h}$ and~$\cQ_{\rm v}$  is done, the proof is quite similar to the proof of Proposition\refer{existencepetitcB1}.  Follwing the method of\ccite{chemin20},  let us observe that 
\beno
\dive (u\otimes u)^\ell & = & \dive_{\rm h} (u^\ell u_{\rm h}) +\partial_3(u^\ell u^3)\\
& = & (\dive_{\rm h} u^{\rm h} )u^\ell +  u^{\rm h}\cdot\nabla_{\rm h} u^\ell + \partial_3 \bigl( T^{\rm v}_{u^3} u^\ell
+T^{\rm v}_{u^\ell} u^3 +R^{\rm v}(u^3,u^\ell) \bigr) \, .
\eeno
Now let us define the bilinear operator~$\cT$ by
$$
(\cT_uw)^\ell \eqdefa  (\dive_{\rm h} w^{\rm h} )u^\ell  +u^{\rm h}\cdot\nabla_{\rm h} w^\ell +\partial_3 \bigl( T^{\rm v}_{u^3} w^\ell
+T^{\rm v}_{u^\ell} w^3 +R^{\rm v}(u^3,w^\ell) \bigr) \, .
$$
Let us observe that~$\cT_u u=\dive (u\otimes u)$.
The laws of product of Proposition\refer{productlawsaniso} imply that, for any~$s$ in~$[-1+\mu,1-\mu]$, 
\beq
\label{propaganisoNS3Ddemoeq1}
\|(\dive_{\rm h} w^{\rm h} )u^\ell  +u^{\rm h}\cdot\nabla_{\rm h} w^\ell \|_{\cB^s} \lesssim \|w\|_{\cB^{s+1}}\|u\|_{\cB^1}\, .
\eeq
Lemmas\refer{Bernsteinaniso} and\refer{productlawsanisodemolem} imply  that, for any~$s$ in~$[-1+\mu,1-\mu]$, 
\beq
\label{propaganisoNS3Ddemoeq2}
\bigl\|(\partial_3 \bigl( T^{\rm v}_{u^3} w^\ell
+T^{\rm v}_{u^\ell} w^3 +R^{\rm v}(u^3,w^\ell) \bigr\|_{\cB^{s} }\lesssim \|w\|_{\cB^{s,\frac32}}\|u\|_{\cB^1} \, .
\eeq
Let us notice that for any non negative~$a$, $u$ is  solution of the linear equation 
\beq
\label{propaganisoNS3Ddemoeq3}
w = e^{(t-a)\D} u(a) + L_a\cT_u w\, .
\eeq
The smoothing effect of the heat flow, as described in Lemma\refer{anisoheat}, implies that  for any non negative~$a$, and any~$t$ greater than or equal to~$a$,
\beq
\label{ouf}
\begin{aligned}
&2^{\frac j 2+ks} \|\D_j ^{\rm v}\D_k^{\rm h}L_a\cT_u w(t)\|_{L^2} 
\\
& \quad \lesssim  \int_a^t d_{j,k}(t') e^{-c2^{(2k+2j)}(t-t')}  \|u(t')\|_{\cB^1} 
\bigl(\|w(t')\|_{\cB^{s+1}}+\|w(t')\|_{\cB^{s,\frac 32}}\bigr)dt'.
\end{aligned}
\eeq
This gives, for any~$b$ in~$]a,\infty]$,
$$
\|L_a\cT_uw\|_{L^\infty(I;\cB^s)} +\|L_a\cT_uw\|_{L^1(I;\cB^{s+2}\cap \cB^{s,\frac52})}
\lesssim \|u\|_{L^2(I;\cB^1)} \|w\|_{L^2(I;\cB^{s+1}\cap \cB^{s,\frac32})}
$$
with~$I=[a,b]$.
Now  let us consider the increasing sequence~$(T_m)_{0\leq m\leq M}$ which satisfies\refeq{prooftheoexistencepetitcB1eq3}.
If~$c_0$ is choosen small enough, we have that the linear map~$L_{T_m}\cT_u$ maps the space 
$$
L^2([T_m,T_{m+1}]; \cB^1\cap \cB^{s+1}\cap \cB^{s,\frac32})
$$
into itself with a norm less than~$1$. Thus~¬†$u$ is the unique solution of\refeq{propaganisoNS3Ddemoeq3} and it satisfies, for any~$m$
$$
\|u\|_{L^\infty([T_m,T_{m+1}];\cB^s)}+ \|u\|_{L^2([T_m,T_{m+1}];\cB^{s+1}\cap \cB^{s,\frac 32})} 
\leq C_1 \|u(T_m)\|_{\cB^s} \, .
$$
Arguing as in the proofs of Lemma\refer{propagationlemmaheathorizontal} and Proposition\refer{existencepetitcB1}, we conclude that~$u$ belongs to~$\cA^s$ and that 
$$
\|u\|_{\cA^s} \lesssim  \|u_0\|_{\cB^s} \exp  \bigl(C\|u\|_{L^2(\R^+;\cB^1)}^2\bigr) \, .
$$
Inequality\refeq{propaganisoNS3Deq2} is proved.

\medbreak
\noindent In order to prove Inequality\refeq{propaganisoNS3Deq3}, let us observe that  using Bony's decomposition in the vertical variable, we get
\beno
\dive (u\otimes  u)^\ell  & = & \sum_{m=1}^3 \partial_m (u^\ell u^m)\\
 & = & \sum_{m=1}^3 \partial_m \Bigl (T^{\rm v}_{u^\ell} u^m+T^{\rm v} _{u^m} u^\ell+R^{\rm v} (u^\ell,u^m)\Bigr) \, .
\eeno
Now let us define
$$
(\overline \cT_uw)^\ell  \eqdefa 
    \sum_{m=1}^3 \partial_m \Bigl (T^{\rm v}_{u^\ell} w^m+T^{\rm v} _{u^m} w^\ell+R^{\rm v} (u^\ell,w^m)\Bigr) \, .
$$
Proposition\refer{productlawsaniso} implies that, if~$m$ equals $1$ or~$2$ then for any ~$s'$ greater than or equal to~$1/2$
\beno
\bigl\| \partial_m \bigl (T^{\rm v}_{u^\ell} w^m+T^{\rm v} _{u^m} w^\ell+R^{\rm v} (u^\ell,w^m)\bigr)\bigr\|_{L^1(\R^+;
\cB^{0,s'})}  & \lesssim  & \|u\|_{L^2(\R^+;\cB^1)} \|w\|_{L^2(\R^+;\cB^{1,s'})}
\andf\\
\bigl\| \partial_3 \bigl (T^{\rm v}_{u^\ell} w^3+T^{\rm v} _{u^3} w^\ell+R^{\rm v} (u^\ell,w^3)\bigr\|_{L^1(\R^+;
\cB^{0,s'})}  & \lesssim & \|u\|_{L^2(\R^+;\cB^1)} \|w\|_{L^2(\R^+;\cB^{0,s'+1})} \, .
\eeno
Thus we get, for any~$a$ in~$\R^+$, any~$b$ in~$I= [a,\infty]$ and any~$r$ in~$ [1,\infty]$, 
$$
 \|L_a\overline \cT_uw\|_{L^r(I;\cB^{\s,\s'+s'})} \lesssim \|u\|_{L^2(I;\cB^1)} 
\bigl(\|w\|_{L^2(I;\cB^{1,s'})}+\|w\|_{L^2(I;\cB^{0,s'+1})}\bigr)
\with \ds \s+\s'=\frac 2 r \cdotp
$$ Then the lines after Inequality\refeq{ouf} can be repeated word for word.
The proposition is proved. \qed

 \end{document}